\newtheorem{theorem}{Theorem}[section]
\newtheorem{lemma}[theorem]{Lemma}
\newtheorem{lemmadef}[theorem]{Lemma-Definition}
\newtheorem{corollary}[theorem]{Corollary}
\newtheorem{proposition}[theorem]{Proposition}
\theoremstyle{definition}
\newtheorem{definition}[theorem]{Definition}
\newtheorem{example}[theorem]{Example}
\newtheorem{conjecture}[theorem]{Conjecture}
\theoremstyle{remark}
\newtheorem{remark}[theorem]{Remark}
\numberwithin{equation}{section}
\DeclareMathOperator{\Coker}{Coker}
\DeclareMathOperator{\Conf}{Conf}
\DeclareMathOperator{\E}{E}
\DeclareMathOperator{\End}{End}
\DeclareMathOperator{\Ext}{Ext}
\DeclareMathOperator{\GL}{GL}
\DeclareMathOperator{\Gr}{Gr}
\DeclareMathOperator{\Hom}{Hom}
\DeclareMathOperator{\Id}{Id}
\DeclareMathOperator{\Img}{Im}
\DeclareMathOperator{\ind}{ind}
\DeclareMathOperator{\Irr}{Irr}
\DeclareMathOperator{\Ker}{Ker}
\DeclareMathOperator{\module}{mod}
\DeclareMathOperator{\PHom}{PHom}
\DeclareMathOperator{\Rep}{Rep}
\DeclareMathOperator{\SL}{SL}
\DeclareMathOperator{\Tr}{Tr}
\newcommand{\op}[1]{\operatorname{#1}}
\newcommand{\opp}{{\operatorname{op}}}
\newcommand{\mc}[1]{\mathcal{#1}}
\newcommand{\mb}[1]{\mathbb{#1}}
\newcommand{\mr}[1]{{\sf #1}}
\newcommand{\mf}[1]{\mathfrak{#1}}
\renewcommand{\b}[1]{\bold{#1}}
\newcommand{\bs}[1]{\boldsymbol{#1}}
\newcommand{\br}[1]{\overline{#1}}
\newcommand{\dv}{\underline{\dim}}
\newcommand{\wtd}[1]{\widetilde{#1}}
\newcommand{\e}{{\sf e}}
\newcommand{\f}{{\sf f}}
\newcommand{\g}{{\sf g}}
\newcommand{\h}{{\sf h}}
\newcommand{\zero}{\operatorname{O}}
\newcommand{\proj}{\operatorname{proj}\text{-}}
\newcommand{\ckQ}{\widehat{k\Delta}}
\newcommand{\innerprod}[1]{\langle#1\rangle}
\newcommand{\sm}[1]{{\left(\begin{smallmatrix}#1\end{smallmatrix}\right)}}
\newcommand{\lrcoef}{c^\lambda_{\mu\,\nu}}
\newcommand{\tripar}{{\mu,\nu,\lambda}}
\newcommand{\triwtf}{{\e(f),\f_-,\f_+}}
\renewcommand{\P}{{\bs{P}}}
\newcommand{\I}{{\bs{I}}}
\newcommand{\bcsl}{\backslash}
\newcommand{\uca}{\br{\mc{C}}}
\newcommand{\DCQ}{\Delta_{Q}^2}
\newcommand{\ucaCtwoQ}{\uca(\DCQ,\mc{S}_Q^2;\bs{\sigma}_{Q}^2)}
\newcommand{\ucaCtwoQno}{\uca(\DCQ,\mc{S}_Q^2)}
\newcommand{\sCQ}{\bs{\sigma}_{Q}^2}
\newcommand{\sQ}{\bs{\sigma}_{Q}}
\begin{document}
\title{Tensor Product Multiplicities via Upper Cluster Algebras}
\author{Jiarui Fei}
\address{School of Mathematical Sciences, Shanghai Jiao Tong University, Shanghai, China}
\email{jiarui@sjtu.edu.cn}
\thanks{}
%
\subjclass[2010]{Primary 13F60, 16G20; Secondary 13A50, 52B20}
%
\keywords{Graded Upper Cluster Algebra, iARt Quiver, Quiver Representation, Quiver with Potential, Mutation, Cluster Character, Configuration Space, Tensor Multiplicity, Littlewood-Richardson Coefficient, $\g$-vector Cone, Lattice Point}

\begin{abstract} For each valued quiver $Q$ of Dynkin type, we construct a valued ice quiver $\DCQ$.
Let $G$ be a simply connected Lie group with Dynkin diagram the underlying valued graph of $Q$.
The upper cluster algebra of $\DCQ$ is graded by the triple dominant weights $(\tripar)$ of $G$.
We prove that when $G$ is simply-laced, the dimension of each graded component counts the tensor multiplicity $c_{\mu,\nu}^\lambda$.
We conjecture that this is also true if $G$ is not simply-laced, and sketch a possible approach.
Using this construction, we improve Berenstein-Zelevinsky's model, or in some sense generalize Knutson-Tao's hive model in type $A$.
\end{abstract}

\maketitle

\setcounter{tocdepth}{1}
\tableofcontents

\section*{Introduction}

Finding the polyhedral model for the tensor multiplicities in Lie theory is a long-standing problem.
By {\em tensor multiplicities} we mean the multiplicities of irreducible summands in the tensor product of any two finite-dimensional irreducible representations of a simply connected Lie group $G$.
The problem asks to express the multiplicity as the number of lattice points in some convex polytope.

Accumulating from the works of Gelfand, Berenstein and Zelevinsky since 1970's,
a first quite satisfying model for $G$ of type $A$ was invented in \cite{BZa}.
Finally around 1999, building upon their work, Knutson and Tao invented their {\em hive model}, which led to the solution of the {\em saturation conjecture} \cite{KT}.
In fact, the reduction of Horn's problem to the Saturation conjecture is an important driving force for the evolution of the models.

Outside type $A$, up to now Berenstein and Zelevinsky's models \cite{BZ} are still the only known polyhedral models.
Those models lose a few nice features of Knutson-Tao's hive model.
We will have a short discussion on this in Section \ref{ss:models}.
Despite of a lot of effort to improve the Berenstein-Zelevinsky model, to author's best knowledge there is no very satisfying further results in this direction.

Recently an interesting link between the hive model and the {\em cluster algebra} theory was established in \cite{Fs1} through the Derksen-Weyman-Zelevinsky's {\em quiver with potential} model \cite{DWZ1,DWZ2} for cluster algebras. A similar but different link between the polyhedral models and tropical geometry was established by Goncharov and Shen in \cite{GS}. In fact, from the work of Berenstein, Fomin and Zelevinsky \cite{BZ,BFZ}, those links may not be a big surprise.

There are two goals in the current paper.
First we want to generalize the work \cite{Fs1} to other types.
More specifically, we hope to prove that the algebras of regular functions on certain configuration spaces are all upper cluster algebras.
Second we want to improve the Berenstein-Zelevinsky's model in the spirit of Knutson-Tao.
In fact, as we shall see, we accomplish these two goals almost simultaneously. Namely, we use our conjectural models to establish the cluster algebra structures. Once the cluster structures are established, the conjectural models are proved as well.

The key to making new models is the construction of the {\em iARt quivers}.
Let $Q$ be a valued quiver of Dynkin type.
Let $C^2Q$ be the category of projective presentations of $Q$.
We can associate this category an {\em Auslander-Reiten quiver} $\Delta(C^2Q)$ with translation (ARt quiver in short).
The ice ARt quiver (iARt quiver in short) $\Delta_Q^2$ is obtained from $\Delta(C^2Q)$ by {\em freezing} three sets of vertices,
which correspond to the {\em negative}, {\em positive}, and {\em neutral} presentations in $C^2Q$.
We can put a (quite canonical) {\em potential} $W_Q^2$ on the iARt quiver $\Delta_Q^2$.

A quiver with potential (or QP in short) $(\Delta,W)$ is related to Berenstein-Fomin-Zelevinsky's {\em upper cluster algebras} \cite{BFZ} through {\em cluster characters} evaluating on {\em $\mu$-supported $\g$-vectors} introduced in \cite{Fs1} (see Definition \ref{D:mu_sup} and \ref{D:mu_supported}).
The cluster character $C_W$ considered in this paper is the {\em generic} one \cite{P,Du}, but it can be replaced by fancier ones.
As we have seen in many different situations \cite{Fs1,Fs2,Fk1} that the set $G(\Delta,W)$ of $\mu$-supported $\g$-vectors is given by lattice points in some rational polyhedral cone.
This is also the case for the iARt QPs $(\Delta_Q^2,W_Q^2)$.

The whole Part \ref{P:I} is devoted to the construction of the iARt QP $(\Delta_Q^2,W_Q^2)$ and the polyhedral cone ${\sf G}_{\Delta_Q^2}$.
It turns out that the cone ${\sf G}_{\Delta_Q^2}$ has a very neat hyperplane presentation $\{x\in \mb{R}^{(\Delta_Q^2)_0} \mid xH\geq 0\}$,
where the columns of the matrix $H$ are given by the dimension vectors of subrepresentations of $3|Q_0|$ representations of $\Delta_Q^2$.
These $3|Q_0|$ representations are in bijection with the frozen vertices of $\Delta_Q^2$.
They also have a very simple and nice description (see Theorem \ref{T:Tv}).
The main result of Part \ref{P:I} is the following.
\begin{theorem}[Theorem \ref{T:GQ2}] The set ${\sf G}_{\Delta_Q^2} \cap \mb{Z}^{(\Delta_Q^2)_0}$ is exactly $G(\Delta_Q^2,W_Q^2)$.
\end{theorem}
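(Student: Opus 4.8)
The plan is to prove the two inclusions $\mathsf{G}_{\Delta_Q^2}\cap\mathbb{Z}^{(\Delta_Q^2)_0}\subseteq G(\Delta_Q^2,W_Q^2)$ and $G(\Delta_Q^2,W_Q^2)\subseteq\mathsf{G}_{\Delta_Q^2}$ separately, the first being the substantive one. Recall that $G(\Delta_Q^2,W_Q^2)$ is by definition the set of $\mu$-supported $\g$-vectors of decorated representations of the Jacobian algebra of $(\Delta_Q^2,W_Q^2)$, and $\mathsf{G}_{\Delta_Q^2}$ is the cone cut out by $xH\ge 0$ where the columns of $H$ record dimension vectors of subrepresentations of the $3|Q_0|$ distinguished representations $T_v$ attached to the frozen vertices (Theorem \ref{T:Tv}). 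First I would translate the defining inequality $xH\ge 0$ into a homological statement: for a decorated representation $\mathcal{M}$ with $\g$-vector $x=\g(\mathcal{M})$, the quantity $\langle \g(\mathcal{M}), \dv N\rangle$ (or the appropriate $E$-invariant / $\Hom$-$\Ext$ combination from the sign-coherence and $\g$-vector formalism of \cite{Fs1}) computes, up to the known correction terms, $\dim\Hom(\mathcal{M},\text{something})-\dim\Ext(...)$; the point is that $xH\ge 0$ should be equivalent to a nonnegativity that holds automatically for $\mu$-supported $\g$-vectors and conversely forces $\mu$-support.

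The key steps, in order: (1) Identify, for each frozen vertex $v$ and each subrepresentation $N\subseteq T_v$, the column $\dv N$ of $H$ with a weight/test vector, and show that pairing $\g(\mathcal{M})$ against it equals a genuine dimension of a space of homomorphisms or extensions in $\module J(\Delta_Q^2,W_Q^2)$ — this uses the explicit description of the $T_v$ and the structure of the iARt quiver with its potential, plus the general machinery relating $\g$-vectors to the $E$-invariant. (2) Deduce $G(\Delta_Q^2,W_Q^2)\subseteq\mathsf{G}_{\Delta_Q^2}$: a $\mu$-supported $\g$-vector comes from an actual representation, so all those dimensions are $\ge 0$, giving $xH\ge 0$. (3) For the reverse inclusion, take a lattice point $x$ with $xH\ge 0$ and build a decorated representation realizing it: here I expect to argue by induction on the "distance from the generic locus," using mutation of the iARt QP — one shows the cone $\mathsf{G}_{\Delta_Q^2}$ is stable under the piecewise-linear $\g$-vector mutation maps, that the claim is clear for the trivial/initial seed (where $\mu$-support is vacuous or immediate), and that every lattice point of the cone can be moved by a sequence of mutations into a region where it is manifestly a $\mu$-supported $\g$-vector. (4) Combine (2) and (3) with the observation that both sets are closed under the relevant operations to conclude equality, and check the integral points match (no lattice-point pathology, i.e. the cone is "saturated" with respect to $G(\Delta_Q^2,W_Q^2)$, which follows once the two sets agree).

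The main obstacle is step (3), the surjectivity/constructibility direction: producing an honest $\mu$-supported decorated representation for every integer point of $\{xH\ge 0\}$. The difficulty is two-fold — first, one must show the hyperplane description $H$ is \emph{tight}, i.e. that it does not cut out a cone strictly larger than $\mathsf{G}_{\Delta_Q^2}$, which requires knowing that the subrepresentation dimension vectors of the $T_v$ already detect all the failures of $\mu$-support; second, even granting tightness, realizing a given $\g$-vector requires either an explicit combinatorial model for representations of $J(\Delta_Q^2,W_Q^2)$ (which the iARt picture should supply, but case-by-case verification across Dynkin types is delicate, especially outside type $A$) or a clean mutation-invariance argument that transports the problem to the initial seed. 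I would lean on the mutation approach — prove $\mathsf{G}$ transforms correctly under QP mutation (this is where the "very neat" form of $H$ and the canonical nature of $W_Q^2$ are essential), reducing everything to a base case — and treat the type-by-type ARt quiver combinatorics from Part \ref{P:I} as the input that makes the base case and the invariance bookkeeping tractable.
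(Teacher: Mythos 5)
There is a genuine gap, and it comes from a misreading of what has to be proved. By Definition \ref{D:mu_supported}, $G(\Delta_Q^2,W_Q^2)$ consists of those lattice points $\g$ for which the cokernel of a \emph{general} presentation in $\PHom_J(\g)$ is supported only on mutable vertices; every $\g\in\mb{Z}^{(\Delta_Q^2)_0}$ already comes with its candidate representation $\Coker(\g)$, so there is no ``realization'' or ``constructibility'' problem at all, and the direction you call substantive does not require producing a representation for each lattice point of $\{x H\geq 0\}$. What is needed is a single equivalence: $\Coker(\g)$ vanishes on all frozen vertices if and only if $\g(\dv S)\geq 0$ for all strict subrepresentations $S$ of the modules $T_v$. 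Your step (3) --- mutating the QP so as to move every lattice point ``into a region where it is manifestly $\mu$-supported,'' with a base case at some initial seed --- is a hope rather than an argument: there is no seed at which the inequalities trivialize, you give no mechanism for the claimed stability of the cone under the piecewise-linear $\g$-vector mutation maps, and none of this machinery is needed.

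The paper's proof is a short chain of equivalences whose two key ingredients your plan never identifies. First, $\mu$-supportedness of $M=\Coker(\g)$ is exactly the vanishing $\Hom_J(M,I_v)=0$ for all frozen $v$, and the injective copresentations \eqref{eq:injres1}--\eqref{eq:injres3} by which the $T_v$ are defined, together with an induction starting at sinks, show this is equivalent to $\Hom_J(M,T_v)=0$ for all frozen $v$ (Lemma \ref{L:TIequi}). Second, each $T_v$ contains a \emph{maximal vertex} (itself a frozen vertex), and for modules with a maximal vertex Lemma \ref{L:hom=0} (i.e.\ \cite[Lemma 6.5]{Fs1}) says $\Hom_J(M,T_v)=0$ if and only if $\g(\dv S)\geq 0$ for all subrepresentations $S\subseteq T_v$. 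The only remaining point is that the inequality for $S=T_v$ itself is redundant given the strict ones, since $\dv T_v=\e_w+(\dv T_v-\e_w)$ with $w$ the maximal frozen vertex. Your step (1) gestures toward a homological reading of $xH\geq 0$ via the $E$-invariant, which is the right spirit but the wrong tool: the relevant statement is the Hom-vanishing criterion for modules with a maximal vertex, and without that property and the copresentations linking the $T_v$ to the $I_v$, the ``tightness'' worry you raise in step (3) --- whether the subrepresentations of the $T_v$ detect all failures of $\mu$-support --- cannot be resolved.
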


The upper cluster algebra $\uca(\Delta_Q^2)$ has a natural grading by the {\em weight vectors} of presentations. This grading can be extended to a triple-weight grading $\sCQ: \mb{Z}^{(\Delta_Q^2)_0} \to \mb{Z}_{\geqslant 0}^{3|Q_0|}$.
This grading slices the cone ${\sf G}_{\Delta_Q^2}$ into polytopes $${\sf G}_{\Delta_Q^2}(\mu,\nu,\lambda):=\left\{\g\in {\sf G}_{\Delta_Q^2}\mid \sCQ(\g) = (\mu,\nu,\lambda) \right\}.$$
Let $G:=G_Q$ be the simply connected simple Lie group with Dynkin diagram the underlying valued graph of $Q$.
Our conjectural model is that the lattice points in ${\sf G}_{\Delta_Q^2}(\mu,\nu,\lambda)$ counts the tensor multiplicity $\lrcoef$ for $G$.
Here, $\lrcoef$ is the multiplicity of the irreducible representation $L(\lambda)$ of highest weight $\lambda$ in the tensor product $L(\mu)\otimes L(\nu)$.
More often than not we identify a dominant weight by a non-negative integral vector.
To prove this model, we follow a similar line as \cite{Fs1}.
However, we do not have a quiver setting to work with in general.
We replace the semi-invariant rings of triple-flag quiver representations by the ring of regular functions on certain configuration space introduced in \cite{FG}.

Fix an opposite pair of maximal unipotent subgroups $(U^-,U)$ of $G$.
The quotient space $\mc{A}:=U^-\bcsl G$ is called {\em base affine space}, and the quotient space $\mc{A}^\vee:=G/U$ is called its {\em dual}.
The configuration space $\Conf_{2,1}$ is by definition $(\mc{A}\times \mc{A}\times \mc{A}^\vee)/G$, where $G$ acts multi-diagonally.
The ring of regular functions $k[\Conf_{2,1}]$ is just the invariant ring $(k[G]^{U^-}\otimes k[G]^{U^-}\otimes k[G]^U)^G$.
The ring $k[\Conf_{2,1}]$ is multigraded by a triple of weights $(\tripar)$.
Each graded component $C_{\mu,\nu}^\lambda:=k[\Conf_{2,1}]_{\tripar}$ is given by the $G$-invariant space $\left(L(\mu)\otimes L(\nu)\otimes L(\lambda)^\vee\right)^G$.
So the dimension of $C_{\mu,\nu}^\lambda$ counts the tensor multiplicity $\lrcoef$. Here is the main result of Part \ref{P:II}.

\begin{theorem}[Theorem \ref{T:main}] \label{T:intro2} Suppose that $Q$ is trivially valued. 
Then the ring of regular functions on $\Conf_{2,1}$ is the graded upper cluster algebra $\ucaCtwoQ$.
Moreover, the generic character maps the lattice points in ${\sf G}_{\Delta_Q^2}$ onto a basis of this algebra.
In particular, $\lrcoef$ is counted by lattice points in ${\sf G}_{\Delta_Q^2}(\tripar)$.
\end{theorem}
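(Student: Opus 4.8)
The plan is to establish an isomorphism between $k[\Conf_{2,1}]$ and the upper cluster algebra $\uca(\DCQ, \mc{S}_Q^2; \sCQ)$ in a way that identifies the grading by triple dominant weights on the left with the $\sCQ$-grading on the right, and then to transport the basis coming from $\mu$-supported $\g$-vectors. First I would set up the two sides carefully. On the configuration side, I would recall from \cite{FG} that $k[\Conf_{2,1}] = \bigoplus_{(\tripar)} \left(L(\mu)\otimes L(\nu)\otimes L(\lambda)^\vee\right)^G$, so that the graded pieces literally compute $\lrcoef$, and that this ring carries a cluster structure in the sense of Fock--Goncharov — in particular an upper bound and an explicit seed attached to a choice of reduced word / triangulation. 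On the cluster side, I would use Theorem \ref{T:GQ2} to know that $G(\DCQ, W_Q^2)$ is exactly the lattice points of the cone ${\sf G}_{\DCQ}$ with its hyperplane presentation via the dimension vectors of subrepresentations of the $3|Q_0|$ boundary representations $T_v$ of Theorem \ref{T:Tv}, and the general machinery of \cite{Fs1} (cluster characters $C_W$ evaluated on $\mu$-supported $\g$-vectors) to produce a candidate spanning set of $\uca(\DCQ)$ indexed by these lattice points, with multiplicativity of $\g$-vectors under the weight grading giving the refinement into the slices ${\sf G}_{\DCQ}(\tripar)$.

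The core of the argument is then a sandwich: I would show
\[
k[\Conf_{2,1}] \subseteq \uca(\DCQ,\mc{S}_Q^2) \subseteq \mc{U}',
\]
where $\mc{U}'$ is an a priori larger upper bound, and simultaneously that the span of the generic characters $C_W(\g)$ for $\g \in {\sf G}_{\DCQ}\cap\mb{Z}^{(\DCQ)_0}$ sits inside $k[\Conf_{2,1}]$ and has, in each weight $(\tripar)$, dimension at least $\lrcoef$. The first inclusion I would get by exhibiting the initial cluster variables (and frozen variables) of $\DCQ$ as explicit regular functions on $\Conf_{2,1}$ — generalized minors / Fock--Goncharov coordinates attached to the iARt seed — and checking the exchange relations hold on $\Conf_{2,1}$; since $k[\Conf_{2,1}]$ is normal and the cluster variables are functions on it, Laurent phenomenon forces containment in the upper cluster algebra. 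For the dimension count I would compare the $\g$-vector polytope ${\sf G}_{\DCQ}(\tripar)$ with a known polyhedral model (Berenstein--Zelevinsky \cite{BZ}, or in type $A$ the hive polytope via \cite{Fs1}): showing these polytopes have the same number of lattice points — ideally by an explicit unimodular-equivalence or a bijection of lattice points — yields $\dim\operatorname{span}\{C_W(\g)\} \geq \lrcoef = \dim C_{\mu,\nu}^\lambda$ in each graded piece. Linear independence of the $C_W(\g)$ (distinct $\g$-vectors give distinct leading Laurent monomials, as in \cite{Fs1,P}) upgrades "$\geq$" to equality and forces all inclusions above to be equalities, proving both that $k[\Conf_{2,1}] = \ucaCtwoQ$ and that the $C_W(\g)$ form a basis.

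The main obstacle, and the reason the hypothesis "$Q$ trivially valued" appears, is the identification of the iARt seed inside $k[\Conf_{2,1}]$ together with the matching of the two polytopes: one must produce, for the quite canonical potential $W_Q^2$ on $\DCQ$, an honest dictionary between presentations in $C^2Q$ and Fock--Goncharov coordinates, and then prove the $\g$-vector cone ${\sf G}_{\DCQ}$ coincides — on the nose, lattice point by lattice point — with a cone whose lattice points are already known to count $\lrcoef$. Absent a triple-flag quiver interpretation in general, I expect this to require a type-by-type analysis using the explicit ARt quivers (the $\iARtGtwo$, $\iARtBthree$, $\iARtDfour$, $\dots$ pictures) to verify the boundary representations $T_v$ and the hyperplane description, which is exactly where the simply-laced restriction is genuinely used; the non-simply-laced case is left as the conjecture because the semi-invariant / configuration-space input that underlies the dimension count is not available there in the needed form.
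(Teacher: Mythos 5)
Your overall skeleton (a seed of regular functions on $\Conf_{2,1}$, linear independence of generic characters with distinct $\g$-vectors, and a graded dimension count that forces all containments to be equalities) is the same shape as the paper's argument, but two of your key steps do not work as stated. First, a direction is reversed: exhibiting the cluster variables $\mc{S}_Q^2$ as regular functions, checking the adjacent exchange relations, and invoking factoriality gives (via Lemma \ref{L:RCA}, with the coprimality input of Lemma \ref{L:exchange} and Corollary \ref{C:irreducible}) the containment $\ucaCtwoQno\subseteq k[\Conf_{2,1}]$, not $k[\Conf_{2,1}]\subseteq\ucaCtwoQno$; the latter is precisely what has to be earned, and appealing to a Fock--Goncharov seed/upper bound on $\Conf_{2,1}$ is not available here --- the paper explicitly notes that outside type $A$ no initial seed can be extracted from \cite{FG} and that the equality does not follow from their results.

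The genuine gap is your dimension count. You propose to prove $|{\sf G}_{\Delta_Q^2}(\tripar)\cap\mb{Z}^{(\Delta_Q^2)_0}|\geq\lrcoef$ by exhibiting a unimodular equivalence or lattice-point bijection between ${\sf G}_{\Delta_Q^2}(\tripar)$ and the Berenstein--Zelevinsky (or hive) polytope. No such comparison is known outside type $A$, and producing one would be a separate hard theorem (the paper emphasizes that $\b{i}$-trails are hard to compute and that its results are independent of \cite{BZ,GS}); so the crucial inequality of Lemma \ref{L:noless} is left unproved in your plan. The paper's actual mechanism is different: every $s\in C_{\mu,\nu}^\lambda$ lies in $\mc{L}(\mc{S}_Q^2)$ by Corollary \ref{C:localization}, and is pulled back to $k[U]$ along the three embeddings $i_u,i_l,i_r$, where $i_l,i_r$ come from the twisted cyclic shift interpreted as the mutation sequences $\bs{\mu}_l,\bs{\mu}_r$ (Theorem \ref{T:cyclic}); the known cluster structure and cluster-model property of $k[U]$ for three seeds (Theorem \ref{T:Ucluster}, Proposition \ref{P:Ulr}), the compatibility of $\g$-vectors with these pullbacks (Lemma \ref{L:proj-g}), and the fact that the three groups of inequalities $\g H_u\geq 0$, $\g H_l\geq 0$, $\g H_r\geq 0$ cutting out ${\sf G}_{\Delta_Q^2}$ are exactly the cones for these three seeds (Theorem \ref{T:GQ2}, Proposition \ref{P:GQ}, Remark \ref{r:GQlr}) then give the lower bound. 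Nothing in your proposal substitutes for this. Finally, your account of where ``trivially valued'' enters is off: Theorems \ref{T:Tv} and \ref{T:GQ2} are proved uniformly, with no type-by-type verification of the $T_v$; the hypothesis is needed because the quiver-with-potential machinery (generic cluster character, the key mutation lemma of \cite{DWZ2}) and the preprojective-algebra input behind Theorem \ref{T:cyclic} and the equality $\mc{C}(\Delta_Q)=\uca(\Delta_Q)$ are only available in the simply-laced setting.
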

\noindent We will show by an example that the upper cluster algebra strictly contains the corresponding cluster algebra in general.
We conjecture that the trivially valued assumption can be dropped in the above theorem and the theorem below.
It is pointed in the end that the only missing ingredient for proving the conjecture is the analogue of \cite[Lemma 5.2]{DWZ2} for {\em species with potentials} \cite{LZ}.

Fock and Goncharov studied in \cite{FG} the similar spaces $\Conf_{3}$ \footnote{They considered the generic part of the quotient stack $[(\mc{A}^\vee)^3/G]$. We will work with the categorical quotient as its partial compactification.} as cluster varieties.
However, to author's best knowledge it is not clear from their discussion what an {\em initial seed} is if $G$ is not of type $A$.
Moreover the equality established in the theorem does not seem to follow from any result there.
In fact, Fock and Goncharov later conjectured in \cite{FGc} that the tropical points in their cluster $\mc{X}$-varieties parametrize bases in the corresponding (upper) cluster algebras.
Our result can be viewed as an algebraic analog of their conjecture for the space $\Conf_{2,1}$. Instead of working with the tropical points, we work with the $\g$-vectors.

To sketch our ideas, we first observe that if we forget the frozen vertices corresponding to the positive and neutral presentations,
then we get a valued ice quiver denoted by $\Delta_Q$ whose cluster algebra is isomorphic to the coordinate ring $k[U]$.
Roughly speaking, this procedure corresponds to an open embedding $i:H\times H\times U\hookrightarrow \Conf_{2,1}$, or more precisely Corollary \ref{C:localization}.
We will define the cluster $\mc{S}_Q^2$ in Theorem \ref{T:intro2} through the pullback map $i^*$.
It is then not hard to show that $k[\Conf_{2,1}]$ contains the upper cluster algebra $\ucaCtwoQ$ as a graded subalgebra.
The detail will be given in Section \ref{ss:standard}.

So far we have the graded inclusions
$$\op{Span}\left(C_W({\sf G}_{\Delta_Q^2})\right)\subseteq \ucaCtwoQ\subseteq k[\Conf_{2,1}].$$
To finish the proof, it suffices to show the containment $k[\Conf_{2,1}]\subseteq \op{Span}\left( C_W({\sf G}_{\Delta_Q^2}) \right)$.
For this, we come back to the cluster structure of $k[U]$.
It turns out that the analog of Theorem \ref{T:intro2} for $U$ is rather easy to prove. The set $G(\Delta_Q,W_Q)$ contains exactly lattice points in the polytope ${\sf G}_{\Delta_Q}$, which is defined by one of the three sets of relations of ${\sf G}_{\Delta_Q^2}$. On the other hand, we have two other embeddings $i_l,\ i_r: U\hookrightarrow \Conf_{2,1}$. They are the map $i_u:=i|_U$ followed by the {\em twisted cyclic shift} of $\Conf_{2,1}$.
Another crucial ingredient in this paper is an interpretation of the twisted cyclic shift in terms of a sequence of mutations $\bs{\mu}_{l}$. Applying $\bs{\mu}_l$ and $\bs{\mu}_l^{-1}$ to the QP $(\Delta_Q,W_Q)$, we get two other QPs $(\Delta_Q^l,W_Q^l)$ and $(\Delta_Q^r,W_Q^r)$.
The analogous polytopes ${\sf G}_{\Delta_Q^l}$ and ${\sf G}_{\Delta_Q^r}$ for them are defined by the other two sets of relations of ${\sf G}_{\Delta_Q^2}$.
Finally, after showing the good behavior of $\g$-vectors under the pullback of the three embeddings, the required inclusion will follow from the fact that
\begin{equation} \label{eq:keycontain} k[\Conf_{2,1}]\subseteq \left\{s\in \mc{L}(\mc{S}_Q^2)\mid i_{\#}^*(s)\in k[U] \text{ for $\#=u,l,r$} \right\}, \end{equation}
where $\mc{L}(\mc{S}_Q^2)$ is the Laurent polynomial ring in the cluster $\mc{S}_Q^2$.
The detail will be given in Section \ref{ss:twist}.

Except for these two main results, we have a side result for the base affine spaces.
The author would like to thank B. Leclerc and M. Yakimov for confirming that the following theorem was an open problem.
It turns out that the cluster structure of $\mc{A}$ lies between that of $U$ and $\Conf_{2,1}$.
Let $\Delta_Q^\sharp$ be the valued ice quiver obtained from $\Delta_Q^2$ by deleting frozen vertices corresponding to neutral presentations.

\begin{theorem}[Theorem \ref{T:side}] Suppose that $Q$ is trivially valued. Then the ring of regular functions on $\mc{A}$ is the graded upper cluster algebra $\uca(\Delta_Q^\sharp,\mc{M}_Q^\sharp; \varpi(\bs{\sigma}_Q^\sharp))$.
Moreover, the generic character maps the lattice points in ${\sf G}_{\Delta_Q^\sharp}$ onto a basis of this algebra.
In particular, the weight multiplicity $\dim L(\mu)_{\lambda}$ is counted by lattice points in ${\sf G}_{\Delta_Q^\sharp}(\mu,\lambda)$.
\end{theorem}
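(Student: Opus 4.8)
The plan is to mimic the proof of Theorem \ref{T:main} (the main result of Part \ref{P:II}), exploiting the observation that $\mc{A} = U^-\bcsl G$ sits strictly between $U$ and $\Conf_{2,1}$. Concretely, there is a projection $\Conf_{2,1} = (\mc{A}\times\mc{A}\times\mc{A}^\vee)/G \to \mc{A}$ remembering only one factor, dually an inclusion of coordinate rings, and the graded component $k[\mc{A}]_{(\mu,\lambda)}$ is the weight space $L(\mu)_\lambda$ (after identifying $k[\mc{A}]=\bigoplus_\mu L(\mu)$ and taking the $\lambda$-weight space under the right torus action), so $\dim k[\mc{A}]_{(\mu,\lambda)}$ does count $\dim L(\mu)_\lambda$. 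Thus it suffices to establish the chain of graded inclusions
\[
\op{Span}\left(C_{W}({\sf G}_{\Delta_Q^\sharp})\right)\ \subseteq\ \uca(\Delta_Q^\sharp,\mc{M}_Q^\sharp;\varpi(\bs{\sigma}_Q^\sharp))\ \subseteq\ k[\mc{A}]\ \subseteq\ \op{Span}\left(C_{W}({\sf G}_{\Delta_Q^\sharp})\right),
\]
so that all three coincide and the generic character is onto a basis.

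First I would treat the two easy inclusions. For the leftmost one, the span of cluster-character values on $\mu$-supported $\g$-vectors always lands inside the upper cluster algebra, by the general theory of \cite{Fs1} together with the identification ${\sf G}_{\Delta_Q^\sharp}\cap\mb{Z}^{(\Delta_Q^\sharp)_0}=G(\Delta_Q^\sharp,W_Q^\sharp)$, which one gets exactly as in Part \ref{P:I} because $\Delta_Q^\sharp$ is obtained from $\Delta_Q^2$ simply by deleting the neutral frozen vertices — only two of the three sets of relations of ${\sf G}_{\Delta_Q^2}$ survive, and the argument of Theorem \ref{T:GQ2} localizes cleanly to this sub-quiver. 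For the middle inclusion $\uca(\Delta_Q^\sharp)\subseteq k[\mc{A}]$, I would run the analogue of Section \ref{ss:standard}: there is an open embedding $H\times U\hookrightarrow\mc{A}$ (the "big cell") whose pullback identifies $k[\mc{A}]$ with a graded subring of the Laurent ring in the cluster coming from $\Delta_Q$ (whose cluster algebra is $k[U]$), and one checks the initial cluster $\mc{M}_Q^\sharp$ of $\Delta_Q^\sharp$ consists of genuine regular functions on $\mc{A}$ via this pullback together with the frozen (generalized minor) coordinates; then any exchange mutation stays inside $k[\mc{A}]$ by the usual Laurent phenomenon / codimension-two argument.

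The heart of the matter — and the main obstacle — is the reverse inclusion $k[\mc{A}]\subseteq\op{Span}\bigl(C_W({\sf G}_{\Delta_Q^\sharp})\bigr)$, proved in the same spirit as inclusion \eqref{eq:keycontain} but with one fewer embedding. Here I would use the two embeddings of $U$ into $\mc{A}$: the "unipotent" one $i_u\colon U\hookrightarrow\mc{A}$ through the big cell, and a second one $i_l$ obtained by composing $i_u$ with the appropriate twist/shift automorphism of $\mc{A}$ — the restriction to $\mc{A}$ of the twisted cyclic shift used for $\Conf_{2,1}$, realized on the cluster side by the mutation sequence $\bs{\mu}_l$, sending $(\Delta_Q,W_Q)$ to $(\Delta_Q^l,W_Q^l)$. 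The polytopes ${\sf G}_{\Delta_Q}$ and ${\sf G}_{\Delta_Q^l}$ cut out the two families of relations that together define ${\sf G}_{\Delta_Q^\sharp}$; after verifying the good behavior of $\g$-vectors under the pullbacks $i_u^*,i_l^*$ (each lands in $k[U]$ and is compatible with the character), I would deduce
\[
k[\mc{A}]\subseteq\left\{s\in\mc{L}(\mc{M}_Q^\sharp)\ \middle|\ i_\#^*(s)\in k[U]\text{ for }\#=u,l\right\}=\op{Span}\bigl(C_W({\sf G}_{\Delta_Q^\sharp})\bigr),
\]
the last equality being the one-sided/two-embedding analogue of the argument in Section \ref{ss:twist}. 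The subtlety I expect to cost the most work is pinning down precisely which twist of $\mc{A}$ gives the correct second embedding and checking that $\bs{\mu}_l$ intertwines it with the cluster structure — i.e. the $\mc{A}$-level refinement of the "twisted cyclic shift $=$ mutation sequence" dictionary; once that compatibility is in hand, the dimension count and the basis statement follow formally, and the weight-multiplicity corollary is immediate by restricting the triple-weight grading $\bs{\sigma}_Q^\sharp$ to its $(\mu,\lambda)$ slice.
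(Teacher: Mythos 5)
Your overall strategy — replaying the proof of Theorem \ref{T:main} with one fewer embedding, bounding $k[\mc{A}]$ between $\uca(\Delta_Q^\sharp)$ and the span of generic-character values — is indeed the route the paper takes. But the concrete realization has a real error in the choice of second embedding, and it propagates into the cone decomposition.

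Recall the three groups of defining conditions of ${\sf G}_{\Delta_Q^2}$: $\g H_u\geq 0$ (from the $T_v$ with $v$ a \emph{negative} frozen vertex), $\g H_l\geq 0$ ($v$ \emph{neutral}), and $\g H_r\geq 0$ ($v$ \emph{positive}). Since $\Delta_Q^\sharp$ is obtained by deleting the \emph{neutral} frozen vertices, it is the $T_v$ with $v$ neutral that disappear, so ${\sf G}_{\Delta_Q^\sharp}$ is cut out by $\g H_u\geq 0$ and $\g H_r\geq 0$ — i.e.\ by the conditions of ${\sf G}_{\Delta_Q}$ and ${\sf G}_{\Delta_Q^r}$, \emph{not} ${\sf G}_{\Delta_Q^l}$. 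You write that ${\sf G}_{\Delta_Q}$ and ${\sf G}_{\Delta_Q^l}$ together give ${\sf G}_{\Delta_Q^\sharp}$ and propose the pair of embeddings $(i_u,i_l)$ with mutation sequence $\bs{\mu}_l$; this is the wrong pair. The quiver $\Delta_Q^l$ retains only the \emph{neutral} frozen vertices, which do not exist in $\Delta_Q^\sharp$, so there is not even a coordinate-forgetting projection $\mr{i}_l^*\colon\mb{R}^{(\Delta_Q^\sharp)_0}\to\mb{R}^{(\Delta_Q^l)_0}$ to work with — the analogue of Lemma \ref{L:proj-g} would have nowhere to go. The correct choices are the standard map $i_{u'}=\iota_0|_U$ and a twisted map $i_{r'}$ with mutation sequence $\bs{\mu}_r$, targeting $\Delta_Q$ and $\Delta_Q^r$.

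Relatedly, the phrase ``the restriction to $\mc{A}$ of the twisted cyclic shift'' does not describe a well-defined map: neither $l$ nor $r=l^2$ stabilizes any copy of $\mc{A}$ inside $\Conf_{2,1}$. What the paper actually uses is a birational self-map $r'=p_2\,r\,\iota_2$ of $\mc{A}$, built from the open embedding $\iota=(\iota_1,\iota_2)\colon H\times\mc{A}\hookrightarrow\Conf_{2,1}$ and its rational inverse $p$; this $r'$ is a variant of the Fomin–Zelevinsky twist automorphism on the big cell, and it is $\bs{\mu}_r$ (not $\bs{\mu}_l$) that it realizes on the cluster side, giving $i_{r'}^*(m_f)=(r')^*(m_f)=\bs{\mu}_r(m_f)$ for mutable $f$. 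Once you replace $(i_l,\bs{\mu}_l,\Delta_Q^l,{\sf G}_{\Delta_Q^l})$ by $(i_{r'},\bs{\mu}_r,\Delta_Q^r,{\sf G}_{\Delta_Q^r})$, your outline matches the paper's proof (which also treats the middle inclusion $\uca(\Delta_Q^\sharp)\subseteq k[\mc{A}]$ directly by citing \cite{BFZ} and Lemma \ref{L:RCA}, rather than redoing the Section \ref{ss:standard} analysis).
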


\subsection{The Models} \label{ss:models}
In \cite{KT} Knutson and Tao invented a remarkable polyhedral model called hives or honeycomb. The author personally thinks that it has at least three advantages over Berenstein-Zelevinsky's model \cite{BZ}.
First, the hive polytopes have a nice presentation
$\left\{x\in \mb{R}^{3n}\mid xH\geq 0,\ x\bs{\sigma}=(\mu,\nu,\lambda)\right\}.$
Second, the cyclic symmetry of the type-$A$ tensor multiplicity is lucid from the hive model. Actually other symmetries can also follow from the hive model.
Last and most importantly, there is an operation called {\em overlaying} for honeycombs \cite{KT}.

In appropriate sense, our models share these nice properties. The first one is clear from our result. Our $H$-matrices even have all non-negative entries. However, if readers prefer the rhombus-type inequalities of the hives, one can transform our model through a totally unimodular map as in \cite{Fs1}. However, the rhombus-type inequalities are not always as neat as the ones in type $A$.
We will discuss the transformation and the analogous overlaying elsewhere.
Although the cyclic symmetry is not immediately clear from $H$ itself, we understand from our construction and Appendix \ref{ss:cyclic} that it is just hidden there.
We believe that this is probably the best we can do outside type $A$.

In a more general context of Kac-Moody algebras, the tensor multiplicity problem can be solved by P. Littelmann's path model \cite{Li}.
As pointed out in \cite{BZ}, his model can be transformed into polyhedral ones (with some non-trivial work). However, in general it involves a union of several convex polytopes.

\subsection{Relation to the work of Berenstein-Zelevinsky and Goncharov-Shen} \label{ss:BZGS}
In a groundbreaking work \cite{BZ} Berenstein and Zelevinsky invented their polyhedral model for all Dynkin types. Their main tools are Lusztig's canonical basis and tropical relations in double Bruhat cells.
The polytopes are defined explicitly in terms of their {\em $\b{i}$-trails}. But the author feels that $\b{i}$-trails are hard to compute especially in type $E$. By contrast, the subrepresentations defining our $H$ are rather easy to list in most cases. In few difficult cases, such as type $E_7$ and $E_8$, we provide an algorithm suitable for computers.

Recently Goncharov and Shen made some further progress in \cite{GS}.
Using tropical geometry and geometric Satake, they proved a more symmetric polyhedral model (see {\cite[Theorem 2.6 and (214)]{GS}}).
However, there is no further {\em explicit} description on the polytopes.
The equality of \eqref{eq:keycontain} as an intermediate byproduct of our proof is similar to this result.

Loosely speaking, our work is independent of their results, though the author did benefit a lot from reading their papers. 
The construction of iARt quivers $\Delta_Q^2$ is new. We believe that the construction and results, especially the ideas behind, are beyond just solving the tensor multiplicity problem for simple Lie groups.
The proofs in Part \ref{P:I} are similar to those in \cite{Fs1}.
In Part \ref{P:II} what we heavily rely on is the cluster structure of $k[U]$ and a mutation interpretation of the twisted cyclic shift.
Throughout the quiver with potential model for cluster algebras is most important.

\subsection*{Outline of the Paper} In Section \ref{ss:VQ} we recall the basics on valued quivers and their representations.
We define the graded upper cluster algebra attached to a valued quiver in Section \ref{ss:UCA} and \ref{ss:gv}.
In Section \ref{ss:AR_fun} we recall the Auslander-Reiten theory from a functorial point of view.
We specialize the theory to the category of presentations mostly for hereditary algebras in Section \ref{ss:C2A}.
In Section \ref{ss:iARt} we define the iARt quivers in general.
We then consider the hereditary cases in more detail in Section \ref{ss:gdim1}.
Proposition \ref{P:iARtDCQ} compares the ARt quivers of presentations with the more familiar ARt quivers of representations.
In Section \ref{S:CCQP} we review the generic cluster character in the setting of quivers with potentials.
In Section \ref{S:iARtQP} we study the iARt QPs and their $\mu$-supported $\g$-vectors.
We prove the two main results of Part \ref{P:I} -- Theorem \ref{T:Tv} and \ref{T:GQ2}.
In Appendix \ref{A:list}, we provide more examples of iARt quivers.

In Section \ref{S:Ugp} we review the rings of regular functions on base affine spaces and maximal unipotent groups, especially the cluster structure of the latter (Theorem \ref{T:Ucluster} and Proposition \ref{P:Ulr}).
In Section \ref{S:toU} we study maps relating the configuration spaces to the corresponding unipotent groups.
These are almost all the technical work required for proving the main result.
In Section \ref{S:CS} we prove our main result -- Theorem \ref{T:main}.
In Section \ref{S:epi} we prove the side result -- Theorem \ref{T:side}.
In the end we make some remark on the possible generalization to the non-simply laced cases.
In Appendix \ref{A:cyclic} we prove the mutation interpretation of the twisted cyclic shift in Theorem \ref{T:cyclic}.
As a consequence, we produce an algorithm for computing the ($\mu$-supported) $\g$-vector cones.

\subsection*{Notations and Conventions}
Our vectors are exclusively row vectors. All modules are right modules.
Arrows are composed from left to right, i.e., $ab$ is the path $\cdot \xrightarrow{a}\cdot \xrightarrow{b} \cdot$.
Unless otherwise stated, unadorned $\Hom$ and $\otimes$ are all over the base field $k$, and the superscript $*$ is the trivial dual for vector spaces.
For direct sum of $n$ copies of $M$, we write $nM$ instead of the traditional $M^{\oplus n}$.

\part{Construction of iARt QPs} \label{P:I}
\section{Graded Upper Cluster Algebras} \label{S:GUCA}
\subsection{Valued Quivers and their Representations} \label{ss:VQ}
If you are familiar with the usual quiver representations and only care about our results on the simply laced cases, you can skip this subsection.

\begin{definition} A valued quiver is a triple $Q=(Q_0,Q_1,C)$ where
\begin{enumerate}
\item $Q_0$ is a set of vertices, usually labelled by natural numbers $1,2,\dots,n$;
\item $Q_1$ is a set of arrows, which is a subset of $Q_0\times Q_0$;
\item $C=\{(c_{i,j},c_{j,i})\in \mb{N}\times\mb{N}\mid (i,j)\in Q_1\}$ is called the {\em valuation} of $Q$.
\end{enumerate}
It is called {\em symmetrizable} if there is $d=\{d_i\in\mb{N}\mid i\in Q_0\}$ such that $d_ic_{i,j}=c_{j,i}d_j$ for every $(i,j)\in Q_1$.
\end{definition}
\noindent For such a valued quiver, the pair $(Q_0,Q_1)$ is called its {\em ordinary} quiver.
Throughout this paper, all valued quivers are assumed to have no loops or oriented 2-cycles in their ordinary quivers.
If $c_{i,j}=c_{j,i}$ for every $(c_{i,j},c_{j,i})\in C$, then $Q$ is called {\em equally valued}.
To draw a valued quiver $(Q_0,Q_1,C)$, we first draw its ordinary quiver, then put valuations above its arrows, eg. $i\xrightarrow{(c_{i,j},c_{j,i})} j$. We will omit the valuation if $(i,j)$ is trivially valued, i.e., $c_{i,j}=c_{j,i}=1$.
All valued quivers in this paper will be symmetrizable. We always fix a choice of $d$, so readers may view $d$ as a part of the defining data for $Q$. We let $d_{i,j}=\gcd(d_i,d_j)$.

Let $\mb{F}$ be a finite field. We write $\br{\mb{F}}$ for an algebraic closure of $\mb{F}$.
For each positive integer $k$ denote by $\mb{F}_k$ the degree $k$ extension of $\mb{F}$ in $\br{\mb{F}}$. Note that the largest subfield of $\br{\mb{F}}$ contained in both $\mb{F}_k$ and $\mb{F}_l$ is $\mb{F}_{\gcd(k,l)}=\mb{F}_k\cap \mb{F}_l$.
If $k\mid l$ we can fix a basis of $\mb{F}_l$ over $\mb{F}_k$ and thus freely identify $\mb{F}_l$ as a vector space over $\mb{F}_k$.

A representation $M$ of $Q$ is an assignment for each $i\in Q_0$ a $\mb{F}_{d_i}$-vector space $M(i)$, and for each arrow $(i,j)\in Q_1$ an $\mb{F}_{d_{i,j}}$-linear map $M(i,j)$.
This definition is different from the original one in \cite{DR}, but it is more adapted to the cluster algebra theory (see \cite{R1}).
The equivalence of two definitions was established in \cite[(2.2)]{R1}.
The dimension vector $\dv M$ is the integer vector $(\dim_{\mb{F}_{d_i}} M(i))_{i\in Q_0}$.
Similar to the usual quiver representations, we can define a morphism $\phi:M\to N$ as the set
$$\left\{\phi_i\in \Hom_{\mb{F}_{d_i}}(M(i),N(i))\right\}_{i\in Q_0} \text{ such that } \phi_j M(i,j) = N(i,j)\phi_i \text{ for all } (i,j)\in Q_1.$$
The category $\Rep(Q)$ of all (finite-dimensional) representations of $Q$ is an abelian category, in which the kernels and cokernels are taken vertex-wise.
The category $\Rep(Q)$ is also {\em Krull-Schmidt}, that is, each object is a finite direct sum of indecomposable objects with local endomorphism rings.

Just as with usual quivers it is useful to consider an equivalent category of modules over the path algebra.
Such an analog for valued quivers is the notion of $\mb{F}$-species.
Define $\Gamma_0 = \prod_{i\in Q_0} \mb{F}_{d_i}$ and $\Gamma_1=\bigoplus_{(i,j)\in Q_1} \mb{F}_{d_ic_{i,j}}$.
Notice that $\mb{F}_{d_ic_{i,j}}$ contains both $\mb{F}_{d_i}$ and $\mb{F}_{d_j}$ and thus we have a $\Gamma_0$-$\Gamma_0$-bimodule structure on $\Gamma_1$.
Now we define the $\mb{F}$-species $\Gamma_Q$ to be the tensor algebra $T_{\Gamma_0}(\Gamma_1)$ of $\Gamma_1$ over $\Gamma_0$.
If $\Gamma_Q$ is finite-dimensional, then
it is clear that the indecomposable projective (resp. injective) modules are precisely $P_i=e_i\Gamma_Q$ (resp. $I_i=(\Gamma_Q e_i)^*$) for $i\in Q_0$,
where $e_i$ is the identity element in $\mb{F}_{d_i}$.
The category $\Rep(Q)$ has enough projective and injective objects.
The top of $P_i$ is the simple representation $S_i$ supported on the vertex $i$, which is also the socle of $I_i$.
The minimal projective and injective resolution of simple $S_i$ are given by
\begin{equation} \label{eq:resimple} 0\to \bigoplus_{(i,j)\in Q_1} c_{j,i} P_j \to P_i\to S_i\to 0\quad\text{ and }\quad 0\to S_i\to I_i\to \bigoplus_{(i,j)\in Q_1} c_{i,j} I_j\to 0.\end{equation}

The algebra $\Gamma_Q$ is {\em hereditary}, that is, it has global dimension $1$.
So for $M,N\in\Rep(Q)$, $$\innerprod{M,N}=\dim_{\mb{F}}\Hom_Q(M,N)-\dim_{\mb{F}}\Ext_Q^1(M,N)$$
is a bilinear form only depending on the dimension vectors of $M$ and $N$.
This is called ``Ringel-Euler" form, and we denote the matrix of this form by $E(Q)$.
We also define the matrix $E_l(Q):=(e_{j,i}^l)$ and $E_r(Q):=(e_{i,j}^r)$ by
$$e_{i,j}^l=\begin{cases} 1 & i=j; \\ -c_{j,i} & (i,j)\in Q_1; \\ 0 & \text{otherwise},\end{cases}\qquad
  e_{i,j}^r=\begin{cases} 1 & i=j; \\ -c_{i,j} & (i,j)\in Q_1; \\ 0 & \text{otherwise}.\end{cases}$$
These matrices are related by $E(Q)=E_l(Q)D=DE_r(Q)$, where $D$ is the diagonal matrix with diagonal entries $d_{i,i}=d_i$.

\begin{example}[$G_2$] \label{ex:G2} Consider the valued quiver $1\xrightarrow{3,1} 2$ of type $G_2$ with $d=(1,3)$. Its module category has six indecomposable objects
\begin{enumerate}
\item[$\bullet$] The simple injective $S_1: \mb{F}\to 0$, and its projective cover $P_1: \mb{F}\hookrightarrow \mb{F}_3$;
\item[$\bullet$] The simple projective $S_2: 0\to \mb{F}_3$, and its injective hull $I_2: \mb{F}^3\hookrightarrow \mb{F}_3$;
\item[$\bullet$] The module $M_1: \mb{F}^2\hookrightarrow \mb{F}_3$, which is presented by $P_2\hookrightarrow 2P_1$;
\item[$\bullet$] The module $M_2: \mb{F}^3\hookrightarrow \mb{F}_3^2$, which is presented by $P_2\hookrightarrow 3P_1$.
\end{enumerate}
\end{example}

In this paper, we will encounter two kinds of valued quivers. One is valued quivers $Q$ of Dynkin type,
and the other is bigger valued quivers $\Delta_Q$ and $\DCQ$ constructed from $Q$ (see Section \ref{S:iARt}).
We will define upper cluster algebras attached to the latter.

\subsection{Upper Cluster Algebras} \label{ss:UCA}
We mostly follow \cite{BFZ,FZ4,FP}.
To define the upper cluster algebra, we need to introduce the notion of the quiver mutation.
The {\em mutation} of valued quivers is defined through Fomin-Zelevinsky's mutation of the associated skew-symmetrizable matrix.

Every symmetrizable valued quiver $\Delta$ corresponds to a skew symmetrizable integer matrix $B(\Delta):=-E_l(\Delta)+E_r(\Delta)^T$.
So the entries $(b_{u,v})_{u,v\in \Delta_0}$ are given~by
\begin{align*} b_{u,v}=\begin{cases} c_{u,v}, & \text{if } (u,v)\in \Delta_1\\
-c_{u,v}, & \text{if } (v,u)\in \Delta_1\\
0 & \text{otherwise}.
\end{cases}
\end{align*}
The matrix $B(\Delta)$ is {\em skew symmetrizable} because $DB$ is skew-symmetric for the diagonal matrix $D$.
Conversely, given a skew symmetrizable matrix $B$, a unique valued quiver $\Delta$ can be easily defined such that $B(\Delta)=B$.

\begin{definition} \label{D:Qmu}
The {\em mutation} of a skew symmetrizable matrix $B$ on the direction $u\in \Delta_0$ is given by $\mu_u(B)=(b_{v,w}')$, where
$$b_{v,w}'=\begin{cases} -b_{v,w}, & \text{if } u\in\{v,w\},\\ b_{v,w}+\op{sign}(b_{v,u})\max(0,b_{v,u}b_{u,w}), & \text{otherwise}.\end{cases}$$
\end{definition}
\noindent We denote the induced operation on its valued quiver also by $\mu_u$.

The cluster algebras that we will consider in this paper are skew-symmetrizable cluster algebras of geometric type.
The combinatorial data defining such a cluster algebra is encoded in a {symmetrizable} valued quiver $\Delta$ with {\em frozen vertices}.
Frozen vertices are forbidden to be mutated, and the remaining vertices are {\em mutable}.
Such a valued quiver is called valued {\em ice} quiver (or VIQ in short).
The {\em mutable part} $\Delta^\mu$ is the full subquiver of $\Delta$ consisting of mutable vertices.
In general, to define an (upper) cluster algebra only $\Delta^\mu$ is required to be symmetrizable.
However, in this paper all VIQs happen to be ``globally" symmetrizable.
We usually label the mutable vertices as the first $p$ out of $q$ vertices of $\Delta$.
The {\em restricted} $B$-matrix $B_\Delta$ of $\Delta$ is the first $p$ rows of $B(\Delta)$.

Let $k$ be a field, not necessarily related in any sense to the finite field $\mb{F}$ or the base field in the rest of Part \ref{P:I}.
\begin{definition} \label{D:seeds} 
Let $\mc{F}$ be a field containing $k$.
A {\em seed} in $\mc{F}$ is a pair $(\Delta,\b{x})$ consisting of a VIQ $\Delta$ as above together with a collection $\b{x}=\{x_1,x_2,\dots,x_q\}$, called an {\em extended cluster}, consisting of algebraically independent (over $k$) elements of $\mc{F}$, one for each vertex of $\Delta$.
The elements of $\b{x}$ associated with the mutable vertices are called {\em cluster variables}; they form a {\em cluster}.
The elements associated with the frozen vertices are called
{\em frozen variables}, or {\em coefficient variables}.

A {\em seed mutation} $\mu_u$ at a (mutable) vertex $u$ transforms $(\Delta,\b{x})$ into the seed $(\Delta',\b{x}')=\mu_u(\Delta,\b{x})$ defined as follows.
The new VIQ is $\Delta'=\mu_u(\Delta)$.
The new extended cluster is
$\b{x}'=\b{x}\cup\{x_{u}'\}\setminus\{x_u\}$
where the new cluster variable $x_u'$ replacing $x_u$ is determined by the {\em exchange relation}
\begin{equation} \label{eq:exrel}
x_ux_u' = \prod_{(v,u)\in \Delta_1} x_v^{c_{v,u}} + \prod_{(u,w)\in \Delta_1} x_w^{c_{u,w}}.
\end{equation}
\end{definition}

\noindent We note that the mutated seed $(\Delta',\b{x}')$ contains the same
coefficient variables as the original seed $(\Delta,\b{x})$.
It is easy to check that one can recover $(\Delta,\b{x})$ from $(\Delta',\b{x}')$ by performing a seed mutation again at $u$.
Two seeds $(\Delta,\b{x})$ and $(\Delta',\b{x}')$ that can be obtained from each other by a sequence of mutations are called {\em mutation-equivalent}, denoted by $(\Delta,\b{x})\sim (\Delta',\b{x}')$.

\begin{definition}
The {\em cluster algebra $\mc{C}(\Delta,\b{x})$} associated to a seed $(\Delta,\b{x})$ is defined as the subring of $\mc{F}$
generated by all elements of all extended clusters of the seeds mutation-equivalent to $(\Delta,\b{x})$.
\end{definition}

\noindent Note that the above construction of $\mc{C}(\Delta,\b{x})$ depends only, up to a natural isomorphism, on the mutation equivalence class of the initial VIQ $\Delta$.
In fact, it only depends on the mutation equivalence class of the restricted $B$-matrix of $\Delta$.
So we may drop $\b{x}$ and simply write $\mc{C}(\Delta)$ or $\mc{C}(B_\Delta)$.

An amazing property of cluster algebras is the {\em Laurent Phenomenon}.
\begin{theorem}[\textrm{\cite{FZ1,BFZ}}] \label{T:Laurent}
Any element of a cluster algebra $\mc{C}(\Delta,\b{x})$ can be expressed in terms of the
extended cluster $\b{x}$ as a Laurent polynomial, which is polynomial in coefficient variables.
\end{theorem}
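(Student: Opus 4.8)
The plan is to prove this by induction on the length of a mutation sequence connecting a given seed to the fixed initial seed $(\Delta,\b{x})$, reducing the whole statement to a local assertion about a few consecutive mutations in two directions — this is the shape of Fomin–Zelevinsky's \emph{caterpillar lemma}. First I would observe that it suffices to show that every cluster variable and every exchange polynomial occurring in any seed mutation-equivalent to $(\Delta,\b{x})$ becomes, after repeated use of the exchange relations \eqref{eq:exrel}, a Laurent polynomial in $\b{x}$ that is polynomial in the coefficient variables: the cluster algebra is generated by such elements, and elements of this prescribed shape form a subring $\mc{L}(\b{x})$ of $\mc{F}$, so closure is automatic. The base cases $d=0$ (nothing to prove) and $d=1$ (immediate from \eqref{eq:exrel}) are trivial.

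For the inductive step, suppose the cluster variable $x$ is first produced by a mutation $\mu_u$ at a seed reached by a minimal sequence ending in $\mu_v$. I would isolate the portion of the exchange pattern involving only the two directions $u$ and $v$; along its ``spine'' the seeds alternate $\mu_u,\mu_v,\mu_u,\dots$, and this finite caterpillar-shaped subtree is the only place the interaction between $u$ and $v$ matters. Writing down the two or three exchange relations around this configuration and substituting the Laurent expressions available by the inductive hypothesis (all at distance $<d$), one checks that the apparent denominators cancel exactly. This splits into: (a) the two exchange polynomials that must be divided out are coprime in $\mc{L}(\b{x})$, and (b) their exponent data line up so that the division is exact. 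Part (b) is a direct manipulation of \eqref{eq:exrel} together with the mutation rule of Definition \ref{D:Qmu}; part (a) is the crux.

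The coprimality in (a) I would reduce to the statement that no exchange polynomial, written as a Laurent polynomial in $\b{x}$, is divisible by a cluster variable $x_j$. This can be proved by a parallel induction: each exchange polynomial is a binomial whose two terms are coprime monomials in the variables of its own seed (which are themselves Laurent in $\b{x}$ by hypothesis), and such a binomial cannot acquire a cluster-variable factor unless that factor divides both monomials, which a support count rules out. Throughout one keeps the bookkeeping that every denominator is a monomial in cluster variables only and that coefficient variables never enter a denominator — this is preserved at each step because the right-hand side of \eqref{eq:exrel} is visibly polynomial in the coefficient variables, so $\mc{L}(\b{x})$ is stable under every exchange relation.

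The hard part will be precisely this irreducibility/coprimality backbone: showing that no exchange polynomial is divisible by a cluster variable and that the two exchange polynomials in the caterpillar are coprime in $\mc{L}(\b{x})$. Once that is in hand, the reduction in the first paragraph, the exponent bookkeeping in part (b), and the formal induction are all routine. I expect no essential new difficulty from the presence of coefficients or from working with skew-\emph{symmetrizable} rather than skew-symmetric exchange matrices, since the mutation rule of Definition \ref{D:Qmu} has the same combinatorial form in both cases and $\mc{L}(\b{x})$ is defined so as to track coefficients correctly.
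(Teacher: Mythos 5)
Your proposal reproduces the standard Fomin--Zelevinsky ``caterpillar'' argument (from \cite{FZ1}, refined in \cite{BFZ} to show denominators involve only mutable variables), which is exactly the proof the paper is relying on by citing those references rather than proving the theorem itself. The structure — reduction to Laurentness of cluster variables, induction on mutation distance along a two-direction spine, with the crux being coprimality of successive exchange polynomials and the separate bookkeeping that frozen variables never enter denominators — is the same, so this is the intended argument rather than a new route.
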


Since $\mc{C}(\Delta,\b{x})$ is generated by cluster variables from the seeds mutation equivalent to $(\Delta,\b{x})$,
Theorem \ref{T:Laurent} can be rephrased as
$$\mc{C}(\Delta,\b{x}) \subseteq \bigcap_{(\Delta',\b{x}') \sim (\Delta,\b{x})}\mc{L}_{\b{x}'},$$
where $\mc{L}_{\b{x}}:=k[x_1^{\pm 1},\dots,x_p^{\pm 1}, x_{p+1}, \dots x_{q}]$.
Note that our definition of $\mc{L}_{\b{x}}$ is slightly different from the original one in \cite{BFZ},
where $\mc{L}_{\b{x}}$ is replaced by the Laurent polynomial $\mc{L}(\b{x}):=k[x_1^{\pm 1},\dots,x_p^{\pm 1}, x_{p+1}^{\pm 1}, \dots x_{q}^{\pm 1}]$.
\begin{definition}
The {\em upper cluster algebra} with seed $(\Delta,\b{x})$ is
$$\uca(\Delta,\b{x}):=\bigcap_{(\Delta',\b{x}') \sim (\Delta,\b{x})}\mc{L}_{\b{x}'}.$$
\end{definition}

Any (upper) cluster algebra, being a subring of a field, is an integral domain (and under our conventions, a $k$-algebra).
Conversely, given such a domain~$R$, one may be interested in identifying $R$ as an (upper) cluster algebra.
The following useful lemma is a specialization of \cite[Proposition 3.6]{FP} to the case when $R$ is a unique factorization domain.

\begin{lemma} \label{L:RCA}
Let $R$ be a finitely generated UFD over $k$.
Suppose that $(\Delta,\b{x})$ is a seed contained in $R$, and each adjacent cluster variable $x_u'$ is also in $R$.
Moreover, each pair in $\b{x}$ and each pair $(x_u,x_u')$ are relatively prime.
Then $R \supseteq \uca(\Delta,\b{x})$.
\end{lemma}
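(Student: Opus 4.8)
The plan is to deduce this from the Laurent phenomenon and the definition of the upper cluster algebra, using the hypothesis that $R$ is a UFD in an essential way. Recall that by definition $\uca(\Delta,\b{x}) = \bigcap_{(\Delta',\b{x}')\sim(\Delta,\b{x})} \mc{L}_{\b{x}'}$, where $\mc{L}_{\b{x}'} = k[x_1'^{\pm 1},\dots,x_p'^{\pm 1}, x_{p+1}', \dots, x_q']$. So it suffices to show that $R$, viewed inside the fraction field $\mc{F} = k(\b{x})$, contains every such $\mc{L}_{\b{x}'}$; equivalently, that $\uca(\Delta,\b{x}) \subseteq R$. Since $\uca(\Delta,\b{x}) \subseteq \mc{L}_{\b{x}} = k[x_1^{\pm 1},\dots,x_p^{\pm 1}, x_{p+1},\dots,x_q] \subseteq R[x_1^{-1},\dots,x_p^{-1}]$ (using that $\b{x}\subseteq R$), it is enough to show $\uca(\Delta,\b{x}) \cap R[x_1^{-1},\dots,x_p^{-1}] \subseteq R$. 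Because $R$ is a UFD, an element $s$ of the localization $R[x_1^{-1},\dots,x_p^{-1}]$ lies in $R$ iff, for each $u=1,\dots,p$, it lies in the localization $R[x_u^{-1}]$ at the single prime-power-free denominator $x_u$ — more precisely, iff $s$ has non-negative $x_u$-adic valuation at the height-one prime $(x_u)$ for every $u$ (here we need each $x_u$ to be a prime element of $R$, or at least square-free; this is where the relative-primeness hypotheses enter, as I explain below). So the crux reduces to: given $s\in\uca(\Delta,\b{x})$, show that $s$ has no pole along the divisor $\{x_u = 0\}$ for each mutable $u$.

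For this, fix a mutable vertex $u$ and apply the mutation $\mu_u$. By the Laurent phenomenon (Theorem \ref{T:Laurent}), $s$ is a Laurent polynomial in the mutated extended cluster $\b{x}' = \mu_u(\b{x})$, and moreover a genuine polynomial in the frozen variables. The variables in $\b{x}'$ other than $x_u'$ coincide with those of $\b{x}$; and the exchange relation \eqref{eq:exrel} expresses $x_u' = (x_u)^{-1}\bigl(\prod_{(v,u)\in\Delta_1} x_v^{c_{v,u}} + \prod_{(u,w)\in\Delta_1} x_w^{c_{u,w}}\bigr)$, so $x_u'$ lies in $R[x_u^{-1}]$ and in fact has $x_u$-valuation exactly $-1$ at $(x_u)$ provided the right-hand side of \eqref{eq:exrel} is not divisible by $x_u$ — which holds because $x_u$ is coprime to every $x_v$ appearing (frozen or cluster, all in $\b{x}$) and hence to the sum. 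Writing $s$ as a Laurent polynomial $\sum_m P_m(\b{x}\setminus\{x_u\})\, (x_u')^m$ with $m$ ranging over $\mb{Z}$ and $P_m\in R$ (using that $s$ is Laurent in $\b{x}$ too, all the other cluster variables are in $R$, and the $P_m$ can be taken in $R$), any negative powers of $x_u'$ contribute only positive powers of $x_u$ times polynomials, so such terms cannot produce a pole along $x_u=0$; and the non-negative powers of $x_u'$ are manifestly in $R[x_u^{-1}]$ with controlled valuation. Combining the Laurent expansions in $\b{x}$ and in $\b{x}'$ and comparing $x_u$-adic orders forces the $x_u$-valuation of $s$ at $(x_u)$ to be $\geq 0$. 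Running this over all $u=1,\dots,p$ and invoking the UFD property gives $s\in R$, hence $\uca(\Delta,\b{x})\subseteq R$, i.e. $R\supseteq\uca(\Delta,\b{x})$.

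\textbf{Where the hypotheses are used and the main obstacle.} The finite generation of $R$ guarantees it is Noetherian so that the localizations behave well and the UFD-prime decomposition is available; the UFD hypothesis is what turns ``no pole along each coordinate divisor $x_u = 0$'' into genuine membership in $R$ (it identifies $R = \bigcap_u R_{(x_u)} \cap R[x_1^{-1},\dots,x_p^{-1}]$, the intersection of $R$ with the obvious localizations, using that height-one primes are principal). The relative-primeness of the pairs in $\b{x}$ and of each $(x_u,x_u')$ is used precisely to ensure that $x_u$ is square-free and that neither monomial term in the exchange relation \eqref{eq:exrel} is divisible by $x_u$, so that $x_u'$ has a simple pole rather than a higher-order one and no spurious cancellation of the pole occurs; without it the valuation bookkeeping collapses. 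The step I expect to be the actual technical heart — and the one to write out carefully — is the comparison of the two Laurent expansions (in $\b{x}$ and in $\b{x}'$) to pin down the $x_u$-valuation: one must argue that the coefficients $P_m$ genuinely lie in $R$ and not merely in some larger ring, which requires knowing that all cluster variables of the seed $(\Delta',\b{x}')$ — both the unchanged ones and $x_u'$ itself — lie in $R[x_u^{-1}]$, and this is exactly the content of the ``each adjacent cluster variable $x_u'$ is also in $R$'' assumption. This is the argument of \cite[Proposition 3.6]{FP} specialized to the UFD case, so the remaining work is to transcribe that reduction cleanly.
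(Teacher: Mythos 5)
The paper gives no proof of its own here — it simply notes the lemma is a specialization of \cite[Proposition 3.6]{FP} — so there is nothing in-paper to compare against line by line. Your overall strategy (reduce to showing $s\in\uca(\Delta,\b{x})$ has no pole along any prime dividing any $x_u$, by switching to the adjacent cluster $\mu_u(\b{x})$ and using the UFD structure of $R$) is the correct one and is the same as the standard BFZ/FP argument.

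However, there is a real error at the heart of your valuation bookkeeping. You assert that $x_u'$ has $x_u$-adic valuation exactly $-1$, ``provided the right-hand side of \eqref{eq:exrel} is not divisible by $x_u$ --- which holds because $x_u$ is coprime to every $x_v$ appearing \dots and hence to the sum.'' That inference is a non sequitur: coprimality to each of two summands does not give coprimality to the sum. In fact the claimed conclusion is false and directly contradicts your own hypothesis $x_u'\in R$: since $x_u x_u'$ \emph{is} the exchange binomial, that binomial is necessarily divisible by $x_u$. What the hypothesis $\gcd(x_u,x_u')=1$ actually encodes is the much simpler fact that $v_p(x_u')=0$ for every prime $p$ of $R$ dividing $x_u$. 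Once you use this in place of the spurious $v_p(x_u')=-1$, the Laurent-expansion step you flagged as ``the actual technical heart'' --- and left unwritten --- becomes trivial: every monomial of $s$ in its Laurent expansion with respect to $\mu_u(\b{x})=\{x_u'\}\cup(\b{x}\setminus\{x_u\})$ involves only elements of $R$ with $v_p=0$ (by the pairwise-coprimality of $\b{x}$ and coprimality of $(x_u,x_u')$), so $v_p(s)\geq 0$; combined with $s\in\mc{L}_{\b{x}}\subset R[x_1^{-1},\dots,x_p^{-1}]$ this gives $s\in R$ by unique factorization. (Alternatively one can avoid valuations: write $s=N/D=N'/D'$ as reduced fractions over $R$ with $D$ dividing a monomial in $\b{x}$ and $D'$ dividing a monomial in $\mu_u(\b{x})$; cross-multiplying, any prime dividing $D$ divides a unique $x_u$ but then must also divide $D'$, a contradiction since it divides no variable in $\mu_u(\b{x})$.) Finally, you remark in passing that $x_u$ needs to be prime or square-free for the localization argument; it does not --- running the argument prime-factor by prime-factor of each $x_u$, rather than at the possibly non-prime ideal $(x_u)$, removes that assumption.
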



\subsection{$\g$-vectors and Gradings} \label{ss:gv}
Let $\b{x}=\{x_1,x_2,\dots,x_q\}$ be an (extended) cluster.
For a vector $\g\in \mb{Z}^q$, we write $\b{x}^\g$ for the monomial $x_1^{\g(1)}x_2^{\g(2)}\cdots x_q^{\g(q)}$.
For $u=1,2,\dots,p$, we set ${y}_u= \b{x}^{-b_{u}}$ where $b_u$ is the $u$-th row of the matrix $B_\Delta$,
and let ${\b{y}}=\{{y}_1,{y}_2,\dots,{y}_p\}$.

Suppose that an element $z\in \mc{L}(\b{x})$ can be written as
\begin{equation}\label{eq:z} z = \b{x}^{\g(z)} F({y}_1,{y}_2,\dots,{y}_p),
\end{equation}
where $F$ is a rational polynomial not divisible by any ${y}_i$, and $\g(z)\in \mb{Z}^q$.
If we assume that the matrix $B_\Delta$ has full rank,
then the elements ${y}_1,{y}_2,\dots,{y}_p$ are algebraically independent so that the vector $\g(z)$ is uniquely determined \cite{FZ4}.
We call the vector~$\g(z)$~the (extended) {\em $\g$-vector} of $z$ with respect to the pair $(\Delta,\b{x})$.
Definition implies at once that for two such elements $z_1,z_2$ we have that
$\g(z_1z_2) = \g(z_1) + \g(z_2)$.
So the set of all $\g$-vectors in any subalgebra of $\mc{L}(\b{x})$ forms a sub-semigroup of $\mb{Z}^q$.

\begin{lemma}[{\cite[Lemma 5.5]{Fs1}, {\em cf.} \cite{P}}] \label{L:independent} If the matrix $B_\Delta$ has full rank,
then any subset of $\mc{L}(\b{x})$ with distinct well-defined $\g$-vectors is linearly independent over $k$.
\end{lemma}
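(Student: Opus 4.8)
The plan is to prove that distinct well-defined $\g$-vectors force linear independence by a direct argument on the ``leading'' monomial in the Laurent expansion, exploiting the algebraic independence of the $y_u$ that follows from the full-rank hypothesis. First I would recall that any element $z$ with a well-defined $\g$-vector can be written, by \eqref{eq:z}, as $z=\b{x}^{\g(z)}F_z(y_1,\dots,y_p)$ with $F_z$ a rational polynomial having nonzero constant term (not divisible by any $y_i$); under the full-rank assumption $y_1,\dots,y_p$ are algebraically independent elements of $\mc{F}$, so this expression is genuinely unique and, more importantly, the map sending a Laurent polynomial in $\b{x}$ to its pair $(\g(z),F_z)$ is well-behaved. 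Suppose for contradiction that $z_1,\dots,z_m$ are elements of $\mc{L}(\b{x})$ with pairwise distinct $\g$-vectors $\g(z_1),\dots,\g(z_m)$ satisfying a nontrivial relation $\sum_{j=1}^m \lambda_j z_j=0$ with all $\lambda_j\in k^\times$ (discarding any zero coefficients and noting $m\geq 2$, since a single $z_j$ with a well-defined $\g$-vector is necessarily nonzero).

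The key step is to extract, from the relation $\sum_j\lambda_j\b{x}^{\g(z_j)}F_{z_j}(\b{y})=0$, a contradiction by comparing monomials in the variables $x_1,\dots,x_q$. Each $y_u=\b{x}^{-b_u}$ is itself a Laurent monomial in $\b{x}$, so $\b{x}^{\g(z_j)}F_{z_j}(\b{y})$ expands as a Laurent polynomial in $\b{x}$ whose exponent vectors all lie in $\g(z_j)+\mathrm{(row\ span\ of\ }B_\Delta)$, and among these the vector $\g(z_j)$ itself occurs with coefficient equal to the constant term of $F_{z_j}$, which is nonzero. I would then pick a linear functional $\ell$ on $\mb{Z}^q$ that is injective on the finite set $\{\g(z_1),\dots,\g(z_m)\}$ and, simultaneously, strictly negative on every nonzero vector in the row span of $B_\Delta$ that actually occurs as a difference of exponents in the finitely many $F_{z_j}$ — equivalently, choose $\ell$ generic on the relevant finite configuration of lattice vectors, then shift it by a large negative multiple of a functional vanishing on $\g(z_j)$-differences but negative on $B_\Delta$-row-combinations, arranging that the $\ell$-maximal monomial overall is $\b{x}^{\g(z_{j_0})}$ for the unique $j_0$ maximizing $\ell(\g(z_j))$. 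Collecting the coefficient of $\b{x}^{\g(z_{j_0})}$ in $\sum_j\lambda_j z_j$, only the $j=j_0$ term contributes its constant-term-of-$F$, giving $\lambda_{j_0}\cdot(\text{nonzero})=0$, a contradiction.

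The main obstacle is making the choice of $\ell$ rigorous: one must ensure that a single linear functional can be both injective on $\{\g(z_j)\}_j$ and strictly decreasing along the $B_\Delta$-row directions that appear, so that no two distinct contributions collide at the top. The cleanest way to handle this is to first fix any $\ell_0$ strictly negative on all nonzero elements of the row span of $B_\Delta$ (possible by full rank, after passing to a complement), which already guarantees that within each $z_j$ the unique $\ell_0$-maximal monomial is $\b{x}^{\g(z_j)}$; then perturb by a small generic $\ell_1$ so that $\ell=\ell_0+\varepsilon\ell_1$ separates the finitely many values $\ell(\g(z_j))$ while preserving the strict decrease property (for $\varepsilon$ small the finite list of inequalities is stable). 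With such an $\ell$ in hand the coefficient-extraction argument goes through verbatim. An alternative, perhaps even shorter, route is to invoke \cite[Lemma 5.5]{Fs1} directly as cited, but since that is precisely the statement being proved I would present the self-contained leading-term argument above, which is essentially the argument of \emph{loc.\ cit.}
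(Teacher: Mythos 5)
Your overall strategy --- a leading-term argument, picking a linear functional on exponent vectors that makes $\b{x}^{\g(z_{j_0})}$ the unique dominant monomial and then reading off a nonzero coefficient --- is the standard proof of this lemma and, as far as I can tell, matches what \cite[Lemma 5.5]{Fs1} and \cite{P} do. You also correctly identify the two places where full rank is used: once to get algebraic independence of $y_1,\dots,y_p$ (so that $(\g(z),F_z)$ is well defined) and once to get the pointedness needed for the dominant-term extraction.

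There is, however, a genuine error in the way you construct the functional in your ``cleanest way'' paragraph. You write: ``fix any $\ell_0$ strictly negative on all nonzero elements of the row span of $B_\Delta$ (possible by full rank, after passing to a complement).'' No such $\ell_0$ exists: the row span is a linear subspace, so for any nonzero $v$ in it, $-v$ is also in it, and a linear functional cannot be strictly negative on both $v$ and $-v$. The parenthetical does not rescue this; it indicates a misunderstanding of what full rank actually buys you here. What you need, and what you do gesture at earlier with the qualifier ``that actually occurs as a difference of exponents,'' is strict positivity on the \emph{pointed cone} $\sum_u \mb{R}_{\geq 0}\, b_u$ away from the origin, not negativity (or positivity) on the whole row span. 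This is precisely where full rank does its work: because the rows $b_1,\dots,b_p$ of $B_\Delta$ are linearly independent, the cone they generate is simplicial, hence pointed, and you may simply choose $\ell_0$ with $\ell_0(b_u)=1$ for every $u$ (extend arbitrarily on a complement of the row span). Then for $\e\geq 0$ with $\e\neq 0$ you get $\ell_0\bigl(\sum_u \e(u)\,b_u\bigr)=\sum_u \e(u)>0$, and since every monomial of $z_j=\b{x}^{\g(z_j)}F_{z_j}(\b{y})$ has exponent $\g(z_j)-\sum_u \e(u)\,b_u$ with $\e\geq 0$, this makes $\b{x}^{\g(z_j)}$ the unique $\ell_0$-maximal monomial of each $z_j$. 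With $\ell_0$ corrected in this way, the perturbation $\ell=\ell_0+\varepsilon\ell_1$ (to separate the finitely many values $\ell(\g(z_j))$) and the final coefficient extraction at $\b{x}^{\g(z_{j_0})}$ both go through as you describe, and the proof is complete.
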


\begin{definition} \label{D:wtconfig} A {\em weight configuration} $\bs{\sigma}$ of a lattice $\mb{L}\subseteq \mb{R}^m$ on a VIQ $\Delta$ is an assignment for each vertex $v$ of $\Delta$ a weight vector $\bs{\sigma}(v)\in \mb{L}$ such that for each mutable vertex $u$, we have that
\begin{equation} \label{eq:weightconfig}
\sum_{(v,u)\in \Delta_1} c_{v,u}\bs{\sigma}(v) = \sum_{(u,w)\in \Delta_1} c_{u,w}\bs{\sigma}(w).
\end{equation}
The {\em mutation} $\mu_u$ also transforms $\bs{\sigma}$ into a weight configuration $\bs{\sigma}'$ on the mutated quiver $\mu_u(\Delta)$ defined as
\begin{equation*} \label{eq:mu_wt}
\bs{\sigma}'(v) = \begin{cases} \displaystyle \sum_{(u,w)\in \Delta_1} c_{u,w}\bs{\sigma}(w) - \bs{\sigma}(u) & \text{if } v=u, \\ \bs{\sigma}(v) & \text{otherwise}. \end{cases}\end{equation*}
\end{definition}

\noindent
By slight abuse of notation, we can view $\bs{\sigma}$ as a matrix whose $v$-th row is the weight vector $\bs{\sigma}(v)$.
In this matrix notation, the condition \eqref{eq:weightconfig} is equivalent to that $B_\Delta\bs{\sigma}$ is a zero matrix.
So we call the cokernel of $B_\Delta$ as the {\em grading space} of $\uca(\Delta)$.
A weight configuration $\bs{\sigma}$ is called {\em full} if the corank of $B_\Delta$ is equal to the rank of $\bs{\sigma}$.
It is easy to see that for any weight configuration of $\Delta$, the mutation can be iterated.

Given a weight configuration $(\Delta;\bs{\sigma})$,
we can assign a multidegree (or weight) to the upper cluster algebra $\uca(\Delta,\b{x})$ by setting
$\deg(x_v)=\bs{\sigma}(v)$ for $v=1,2,\dots,q$.
Then mutation preserves multihomogeneity.
We say that this upper cluster algebra is $\bs{\sigma}$-graded, and denoted by $\uca(\Delta,\b{x};\bs{\sigma})$.
We refer to $(\Delta,\b{x};\bs{\sigma})$ as a graded seed.
Note that the variables in $\b{y}$ have zero degrees.
So if $z$ has a well-defined $\g$-vector as in \eqref{eq:z}, then $z$ is homogeneous of degree $\g\bs{\sigma}$.

\section{AR-theory of Presentations} \label{S:AR_P}
\subsection{Review of Auslander-Reiten theory} \label{ss:AR_fun}
We briefly review Auslander-Reiten theory for Krull-Schmidt exact categories following \cite{DRSS}. 
The theory were developed originally for module categories of Artin algebras, but without much difficulty most of the theory can be generalized to Krull-Schmidt exact categories.
Readers should consult \cite[Section 2.2]{DRSS} or the standard textbook \cite{ARS} for the basic notions in Auslander-Reiten theory, such as the left and right (minimal) almost split morphisms.

%

Let $k$ be a field, and $\mc{A}$ be a $k$-linear, $\Hom$-finite, and Krull-Schmidt category with an exact structure $\mc{E}$.
So $\mc{E}$ is a class of exact pairs which is closed under isomorphisms satisfying Gabriel-Roiter's axiom (see \cite[1.1]{DRSS}). 
Recall that a pair $(i, d)$ of composable morphisms $L \xrightarrow{i} M \xrightarrow{d} N$ in $\mc{A}$ is called {\em exact} if $i$ is a kernel of $d$ and $d$ is a cokernel of $i$.
If the underlying exact structure $\mc{E}$ is clear, we speak of projective and injective objects rather than $\mc{E}$-projective and $\mc{E}$-injective objects.
The proof of the following proposition coincides with the usual one for module categories.
\begin{proposition}[{\cite[Proposition 2.3]{DRSS}}] \label{P:almostsplit} 
Suppose that $L \xrightarrow{i} M \xrightarrow{d} N$ is an exact pair in $\mc{E}$. Then the following assertions are equivalent. \begin{enumerate}
\item $i$ is left minimal almost split.
\item $d$ is right minimal almost split.
\item $i$ is left almost split and $d$ is right almost split.
\end{enumerate}
\end{proposition}

\begin{definition} An exact pair $L \xrightarrow{i} M \xrightarrow{d} N$ in $\mc{E}$ as in the above proposition is called an {\em almost split pair}.
In this case, $L$ is called the {\em translation} of $N$ denoted by $\tau N$, and $N$ is called the inverse translation of $L$ denoted by $\tau^{-1} L$.
\end{definition}
Such an almost split pair can only exist provided $L$ is indecomposable non-injective and $N$ is indecomposable non-projective.
The exact category $(\mc{A}, \mc{E})$ is said to have almost split pairs if $\mc{A}$ has almost split morphisms and moreover for all indecomposable non-projective objects $N$ 
there exists an almost split pair $L \xrightarrow{i} M \xrightarrow{d} N$ and dually for all indecomposable non-injective objects $L$ there exists an almost split pair $L \xrightarrow{i} M \xrightarrow{d} N$.
The uniqueness of minimal almost split maps shows that almost split pairs $L \xrightarrow{i} M \xrightarrow{d} N$ are uniquely determined by $L$ or $N$. 

\begin{example} Let $A$ be a finite dimensional $k$-algebra, and $\module A$ be the category of finite dimensional (right) $A$-modules.
	{\cite[Theorem V.1.15]{ARS}} says that $\module A$ has almost split pairs, so the translation $\tau$ is defined for every indecomposable non-projective $A$-module.
	It is given by the trivial dual of {\em Auslander's transpose functor} (see \cite[IV.1]{ARS}).
\end{example}

Recall that a morphism $f\in \Hom_{\mc{A}}(M,N)$ is called {\em radical} if $\Id_M + gf$ is invertible for each $g\in \Hom_{\mc{A}}(N,M)$.
If $M$ and $N$ are indecomposable, then this is equivalent to say that $f$ is a non-isomorphism.
We denote by $\op{rad}_{\mc{A}}(M,N)$ the space of all radical morphisms in $\Hom_{\mc{A}}(M,N)$.
We define $\op{rad}_{\mc{A}}^2(M,N)$ to consist of all morphisms of form $gf$, where $f\in\op{rad}_{\mc{A}}(M,L)$ and $g\in\op{rad}_{\mc{A}}(L,N)$ for some $L\in\mc{A}$.
We denote by $\ind(\mc{A})$ the full subcategory of all indecomposable objects in $\mc{A}$.
\begin{definition} For $M,N\in\ind(\mc{A})$, an {\em irreducible morphism} $f:M\to N$ is an element in $\op{rad}_{\mc{A}}(M,N)\setminus \op{rad}_{\mc{A}}^2(M,N)$.
	We denote
	$$\Irr_{\mc{A}}(M,N):=\op{rad}_{\mc{A}}(M,N)/ \op{rad}_{\mc{A}}^2(M,N).$$
\end{definition}

For $M\in\ind(\mc{A})$, $\End_\mc{A}(M)$ is local, then 
$$D_M:= \End_\mc{A}(M)/\op{rad} \End_\mc{A}(M)$$
is a division $k$-algebra.

Let $M=\bigoplus_{i=1}^t m_iM_i$ be an object in $\mc{A}$ with $M_i$ indecomposable and pairwise non-isomorphic.
For $f\in \Hom_{\mc{A}}(M,N)$ with $N$ indecomposable, we can write $f$ as $f=(f_1,\dots,f_t)$ where $f_i=(f_{i,1},\dots,f_{i,m_i}): m_iM_i\to N$.
The following proposition was originally proved for module categories of Artin algebras (see \cite[Proposition VII.1.3]{ARS}) but the proof there also works in our setting.
\begin{proposition} 
$f$ is {\em right minimal almost split} iff. the residual classes of $f_{i,j}$'s in $\Irr_\mc{A}(M_i,N)$ form a $D_N^{\opp}$-basis for all $i$.
There is a similar statement for left minimal almost split morphisms.
\end{proposition}
We also recall a basic fact \cite{ARS} that if $L \xrightarrow{i} M \xrightarrow{d} N$ almost split, then
\begin{equation} \label{eq:irrequal} \dim_{D_{M_i}} \Irr_{\mc{A}}(M_i,N)=\dim_{D_{M_i}^\opp} \Irr_{\mc{A}}(L,M_i).
\end{equation}

\subsection{Presentations} \label{ss:C2A}
In this subsection we briefly review some results from \cite{B} in our setting.
Let $A$ be some finite dimensional $k$-algebra with valued quiver $Q$ (see \cite[III.1]{ARS}).
If you do not know what a valued quiver associated to $A$ is, then you can just take $A$ to be the $\mb{F}$-species defined in Section \ref{ss:VQ}.
Let $C^2A:=\op{Ch}_2(\proj A)$ be the category of projective presentations.
To be more precise, the objects in $C^2A$ are 2-term complexes $P_+\xrightarrow{f} P_-$ in $\proj A$ (with $P_+$ and $P_-$ in some fixed degrees). The morphisms are commutative diagrams.
Let $\mc{E}$ be the class of pairs of morphisms in $C^2A$, which is split exact in both degrees.
It is well known (eg. \cite{B}) that the category $C^2A$ is Krull-Schmidt and $\mc{E}$ is an exact structure on $C^2A$.
By abuse of notation we will denote an exact pair in $C^2A$ by an exact sequence $0\to f\to g\to h\to 0$.

Let $P_i$ be the indecomposable projective module corresponding to $i\in Q_0$.
\begin{definition} For any $\beta\in \mb{Z}_{\geqslant 0}^{Q_0}$ we denote $\bigoplus_{i\in Q_0}\beta(i)P_i$ by $P(\beta)$.
If $P_\pm=P(\beta_\pm)$, then the {\em weight vector} $(\f_-,\f_+)$ of $f$ is $(\beta_-,\beta_+)$.
The {\em reduced weight vector} $\f$ is the difference $\f_+-\f_-$.
\end{definition}

\begin{definition}
Presentations of forms $0\to P,\ P\to 0, $ and $P\xrightarrow{\Id} P$ are called {\em negative, positive}, and {\em neutral}. They are also denoted by $\zero_P^-,\ \zero_P^+$ and $\Id_P$ respectively.
If $P=P_i$, then they are called $i$-th negative, positive, and neutral presentation, and denoted by $\zero_i^-,\ \zero_i^+$ and $\Id_i$ respectively.
\end{definition}


\begin{lemma}\cite{DF} Any presentation $f$ decomposes as $f=f_+\oplus f_{\Id} \oplus f'$, where $f_+$ is positive, $f_{\Id}$ is neutral, and $f'$ is the minimal presentation of $\Coker(f)$.
\end{lemma}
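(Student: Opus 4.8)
The plan is to peel trivial (neutral, then positive) direct summands off $f$, everything being driven by the single fact that the kernel of a projective cover is superfluous. Write $M:=\Coker(f)$; since $A$ is finite dimensional, fix a projective cover $p\colon P_0\twoheadrightarrow M$, so $\Ker p\subseteq\op{rad}P_0$. First I would record a standing observation, used twice below: an epimorphism $e\colon P_0\to M$ out of (a module isomorphic to) the projective cover $P_0$ of $M$ is itself a projective cover --- lift $e$ through $p$, note the lift is onto because $\Ker p$ is superfluous, hence an isomorphism because $P_0$ has finite length, and transport $\Ker p$ across it.

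\emph{Neutral summand.} The canonical surjection $P_-\twoheadrightarrow M$ factors as $P_-\xrightarrow{q}P_0\xrightarrow{p}M$; since $\Ker p$ is superfluous $q$ is onto, and since $P_0$ is projective $q$ splits, so $P_-=P_0\oplus P_-'$ with $P_-'$ projective and $q$ the first projection. Then $\Img f=\Ker(pq)=\Ker p\oplus P_-'$, whence the $P_-'$-component $f_1\colon P_+\to P_-'$ of $f$ is onto; as $P_-'$ is projective, split $P_+=P_-'\oplus P_+'$ with $P_+'$ projective and $f_1$ the first projection. Writing $f$ as a $2\times2$ matrix with respect to $P_+=P_-'\oplus P_+'$ and $P_-=P_0\oplus P_-'$ we find an identity block $P_-'\to P_-'$; one row operation (an automorphism of $P_-$) clears the rest of that row, and after reordering summands we obtain an isomorphism of $2$-term complexes $f\cong\Id_{P_-'}\oplus f''$ with $f''\colon P_+'\to P_0$. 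Put $f_{\Id}:=\Id_{P_-'}$.

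\emph{Positive summand and identification of $f'$.} Now $\Coker(f'')=\Coker(f)=M$, and by the standing observation the induced map $P_0\twoheadrightarrow\Coker(f'')$ is a projective cover, so $\Img f''\subseteq\op{rad}P_0$ and $f''$ corestricts to an epimorphism $P_+'\twoheadrightarrow\Img f''$. Pick a projective cover $g\colon P_1\twoheadrightarrow\Img f''$ and lift to $h\colon P_+'\to P_1$ with $f''=g\circ h$; the superfluous-kernel property makes $h$ onto, hence split, so $P_+'=P_1\oplus P_+''$ with $P_+''$ projective and $f''\cong(g\colon P_1\to P_0)\oplus(P_+''\to 0)$. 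The second summand is a positive presentation $f_+:=\zero_{P_+''}^+$; for the first, $P_0\twoheadrightarrow M$ is a projective cover and $g$ is a projective cover of its image, and those two properties characterize, up to isomorphism of $2$-term complexes, the minimal projective presentation of $M$, so $(P_1\xrightarrow{g}P_0)\cong f'$. Combining the two steps gives $f\cong f_+\oplus f_{\Id}\oplus f'$. Note that $f'$ itself may contain negative summands $0\to P$, precisely when $M$ has projective summands; this is consistent with the statement.

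I do not expect a genuine obstacle: the whole argument reduces to the two split-epimorphism claims, each being the usual consequence of ``the kernel of a projective cover is superfluous'' together with ``a surjective endomorphism of a finite-length module is an isomorphism.'' The one point that must be handled with care is that every reduction is carried out as an isomorphism in $C^2A$ --- i.e. by commuting squares --- rather than merely up to homotopy: this is essential because a positive presentation $P\to 0$ is not contractible, so one cannot argue in the homotopy category. Beyond that there is only the mild bookkeeping of tracking the nested decomposition $P_+\cong P_-'\oplus P_1\oplus P_+''$ against $P_-\cong P_0\oplus P_-'$.
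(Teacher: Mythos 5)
Your proof is correct. Since the paper cites [DF] for this lemma without reproducing the argument, there is no internal proof to compare against; on its own your proposal does exactly what is needed. You split off the neutral summand by lifting $P_-\twoheadrightarrow M$ through the projective cover $P_0\twoheadrightarrow M$, which produces $P_-\cong P_0\oplus P_-'$ with $P_-'$ hit split-surjectively by $P_+$, and the elimination step then gives $f\cong\Id_{P_-'}\oplus f''$ as a genuine isomorphism in $C^2A$. You then split off the positive summand by lifting the corestriction $P_+'\twoheadrightarrow\Img f''$ through a projective cover $P_1\twoheadrightarrow\Img f''$; what remains is, by construction, the minimal presentation of $M$. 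Both splittings are correctly reduced to the two facts you isolate at the start (superfluousness of kernels of projective covers, and surjective endomorphisms of finite-length modules are automorphisms), and your remark that $f'$ may contain negative summands is also right.

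Two cosmetic points, neither affecting the argument: the entry $P_-'\to P_0$ that you clear with an automorphism of $P_-$ shares a \emph{column} with the identity block, not a row (the other entry of that row is already zero since $P_+'=\Ker f_1$); and the map $P_1\to P_0$ at the end should be the composite of $g$ with the inclusion $\Img f''\hookrightarrow P_0$ rather than $g$ itself. Finally, you are right that the argument must live in $C^2A$ rather than the homotopy category, but the obstruction you name is backwards: $P\to 0$ is indeed not contractible and therefore survives in $K^2A$; what $K^2A$ actually discards is the neutral summand $P\xrightarrow{\Id}P$, and it is recovering $f_{\Id}$ that forces the $C^2A$-level bookkeeping.
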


\begin{corollary} \label{C:indK2} An indecomposable presentation is one of the following four kinds.
They are $i$-th negative, positive, neutral presentations, and minimal presentations of indecomposable non-projective representations of $A$.
\end{corollary}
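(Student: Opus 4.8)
\textbf{Proof plan for Corollary \ref{C:indK2}.}
The plan is to deduce the classification directly from the preceding lemma, which asserts that every presentation $f$ splits as $f = f_+ \oplus f_{\Id} \oplus f'$ with $f_+$ positive, $f_{\Id}$ neutral, and $f'$ the minimal presentation of $\Coker(f)$. First I would observe that an indecomposable object admits no nontrivial direct sum decomposition, so exactly one of the three summands $f_+$, $f_{\Id}$, $f'$ is nonzero (and indecomposable) while the other two vanish. This immediately reduces the problem to understanding the indecomposable objects of each of the three types separately.

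Next I would handle the positive and neutral cases. A positive presentation has the form $\zero_P^+ = (P \to 0)$ and a neutral one has the form $\Id_P = (P \xrightarrow{\Id} P)$; in both situations the isomorphism type is determined by the projective module $P$, and the assignment $P \mapsto \zero_P^+$ (resp. $P \mapsto \Id_P$) is additive in $P$. Since $\proj A$ is Krull-Schmidt with indecomposables $\{P_i\}_{i \in Q_0}$, the indecomposable positive (resp. neutral) presentations are exactly the $\zero_i^+$ (resp. $\Id_i$) for $i \in Q_0$. The same argument shows that the only indecomposable negative presentations are the $\zero_i^- = (0 \to P_i)$; note that a negative presentation $0 \to P$ is precisely the minimal presentation of the projective module $P$ viewed as its own cokernel, so up to how one bookkeeps the projectives, the negative presentations are subsumed in the ``minimal presentation'' case.

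For the remaining case, $f = f'$ is the minimal projective presentation of $M := \Coker(f)$. Minimality means $f' \in \operatorname{rad}(P_+, P_-)$ after removing the positive and neutral parts, so $f'$ carries no trivial summand; hence $f'$ is indecomposable in $C^2A$ if and only if $M$ is indecomposable in $\module A$, because the minimal-presentation construction is a fully faithful functor from $\module A$ onto the full subcategory of reduced presentations (any decomposition of $M$ lifts to a decomposition of $f'$ and conversely a decomposition of $f'$ maps to a decomposition of its cokernel, which must be trivial). Combining the three cases — and separating the projective indecomposables (whose minimal presentations are the $\zero_i^-$) from the non-projective ones — yields exactly the four listed kinds.

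I expect the only genuinely delicate point to be the bookkeeping between the negative presentations $\zero_i^-$ and the ``minimal presentations of indecomposable representations'': the minimal presentation of an indecomposable projective $P_i$ is $0 \to P_i$, i.e. precisely $\zero_i^-$, so one must phrase the fourth kind as minimal presentations of indecomposable \emph{non-projective} representations to avoid double-counting, exactly as the statement does. Everything else is a routine application of the Krull-Schmidt property together with the cited decomposition lemma; no new ideas or estimates are needed.
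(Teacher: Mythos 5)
Your proof is correct and follows the route the paper leaves implicit: the corollary is an immediate consequence of the cited decomposition lemma $f = f_+ \oplus f_{\Id} \oplus f'$ together with the Krull--Schmidt property of $C^2A$, and you spell out exactly that. One small caveat: the phrase ``fully faithful functor from $\module A$ onto the full subcategory of reduced presentations'' overstates what holds --- minimal presentation is not a functor on $C^2A$, since lifts of module morphisms are only unique up to homotopy --- but the parenthetical reasoning you actually use (a decomposition of $M$ lifts to one of $f'$, and conversely any direct summand of $f'$ is reduced and therefore has zero cokernel only if it is zero) is the correct and sufficient statement, so no real gap results.
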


The following lemma is easy to verify.
\begin{lemma}[{\cite[Proposition 3.1]{B}}] \label{L:HomK2} For any $P_+\xrightarrow{f} P_-\in C^2A$ and $P\in\module A$, we have
\begin{enumerate}
\item $\Hom_{C^2A}(\zero_P^-,f) \cong \Hom_A(P,P_-)$;
\item $\Hom_{C^2A}(f,\zero_P^+) \cong \Hom_A(P_+,P)$;
\item $\Hom_{C^2A}(\Id_P, f) \cong \Hom_A(P,P_+)$;
\item $\Hom_{C^2A}(f,\Id_P) \cong \Hom_A(P_-,P)$.
\end{enumerate}
\end{lemma}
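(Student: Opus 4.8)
The plan is simply to unwind the definition of a morphism in $C^2A$ in each of the four cases. Recall that a morphism in $C^2A$ from $(P_+'\xrightarrow{g}P_-')$ to $f=(P_+\xrightarrow{f}P_-)$ is a commutative square, i.e.\ a pair $(\phi_+,\phi_-)$ with $\phi_\pm\in\Hom_A(P_\pm',P_\pm)$ satisfying $\phi_-g=f\phi_+$; since composition, addition and the $k$-action on such morphisms are all componentwise, any bijection we exhibit by selecting one component will automatically be $k$-linear (and visibly natural in $f$ and in $P$). So in each case it suffices to observe that one of the two components of the chain map is forced, and that the other ranges freely over the asserted $\Hom$-space.

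For (1) and (2) the source or target has a zero term. Writing $\zero_P^-$ as the complex $0\to P$ with $P$ in the degree of $P_-$, a morphism $\zero_P^-\to f$ has $\phi_+\colon 0\to P_+$ necessarily zero, and the commutativity condition $\phi_-\circ 0=f\circ 0$ is vacuous; hence $(\phi_+,\phi_-)\mapsto\phi_-$ gives the isomorphism onto $\Hom_A(P,P_-)$. Case (2) is the formal dual: with $\zero_P^+$ placed as $P\to 0$ in the degree of $P_+$, a morphism $f\to\zero_P^+$ has $\phi_-=0$ and $\phi_+\colon P_+\to P$ arbitrary, giving $\Hom_A(P_+,P)$. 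For (3) and (4) the source or target is $\Id_P=(P\xrightarrow{\Id}P)$. A morphism $\Id_P\to f$ is a pair $(\phi_+,\phi_-)$ with $\phi_-\circ\Id=f\circ\phi_+$, so $\phi_-=f\phi_+$ is determined by $\phi_+\in\Hom_A(P,P_+)$, and conversely every such $\phi_+$ extends uniquely to a chain map; dually a morphism $f\to\Id_P$ satisfies $\Id\circ\phi_+=\phi_-\circ f$, so $\phi_+=\phi_- f$ is determined by $\phi_-\in\Hom_A(P_-,P)$. This yields (3) and (4).

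There is no real obstacle here; the statement is essentially a bookkeeping exercise. The only point requiring (routine) attention is keeping track of the cohomological degree in which the single nonzero term of $\zero_P^-$, $\zero_P^+$, or $\Id_P$ is placed, so that the forced component genuinely lands in the right spot and the commuting square degenerates as claimed. I would state the degree conventions once at the start of the proof and then let the four verifications go through verbatim.
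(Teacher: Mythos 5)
Your verification is correct: since morphisms in $C^2A$ are genuine commutative squares, in each case one component is forced (zero, or the composite with $f$ or $\Id$) and the other ranges freely over the stated $\Hom$-space, which is exactly the routine check the paper leaves to the reader ("easy to verify"). No discrepancies with the paper's (omitted) argument.
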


\begin{corollary}[{\cite[Corollary 3.1, 3.2]{B}}] \label{C:PI_C2A} The indecomposable $\mc{E}$-projective objects in $C^2 A$ are precisely $\zero_i^-$ and $\Id_i$. 
	The indecomposable $\mc{E}$-injective objects in $C^2 A$ are precisely $\zero_i^+$ and $\Id_i$.
\end{corollary}

Let $f$ and $g$ be two presentations of representations $M$ and $N$, namely, $M=\Coker f$ and $N=\Coker g$. For any morphism in $\varphi\in \Hom_{C^2A}(f,g)$, we get an induced morphism $\phi\in \Hom_A(M,N)$.
$$\vcenter{\xymatrix@R=7ex@C=7ex{
P_+ \ar[r]^f \ar[d]^{\varphi_+} & P_- \ar[r] \ar[d]^{\varphi_-} & M \ar[d]^\phi \ar[r] & 0\\
R_+ \ar[r]^g & R_- \ar[r] & N \ar[r] & 0
}}$$
Conversely, any $\phi\in\Hom_A(M,N)$ lifts to a morphism in $\Hom_{C^2A}(f,g)$.
So we obtain a surjection
$$\pi:\Hom_{C^2A}(f,g)\twoheadrightarrow \Hom_A(\Coker f,\Coker g).$$
$\pi$ maps to a zero morphism if and only if
the image of $\varphi_-$ is contained in the image of $g$.
In this case, $\varphi_-$ lifts to a map in $\Hom_A(P_-,R_+)$ because $P_-$ is projective.
Hence the kernel of $\pi$ is the image of the map $\iota$
$$\iota: \Hom_A(P_-,R_+)\to \Hom_{C^2A}(f,g),\ h\mapsto gh-hf.$$
Recall that $\Hom_A(P_-,R_+)\cong \Hom_{C^2A}(f,\Id_{R^+})$.
So we can summarize the above discussion as follows. The functor $\Coker : C^2A \to \module A$ is full and dense with
the kernel consisting of those morphisms which are factored through positive and neutral presentations.
Let $\overline{C^2A}$ be the category $C^2A$ modulo the morphisms which are factorized through $\mc{E}$-injectives.
\begin{proposition}[{\cite[Proposition 3.3]{B}}] 
	The functor $\Coker$ induces an isomorphism $\overline{C^2A} \cong \module A$.
\end{proposition}

Here is a main result in \cite{B}. 
\begin{theorem}[{\cite[Theorem 5.1]{B}}] \label{T:AR_C2A} The exact category $C^2A$ has almost split pairs.
\end{theorem}
\noindent The next two propositions enable us to construct almost split pairs in $C^2A$.
For an $A$-module $M$, we write $f(M):P_+(M)\to P_-(M)$ for its minimal presentation.
\begin{proposition}[{\cite[Proposition 5.6]{B}}] \label{P:AR_from_mod} If $x: 0\to f\to e \to g\to 0$ is exact in $C^2A$ with $\Coker(f)\neq 0$ and $\Coker(g)\neq 0$,
then $x$ is almost split iff. the induced sequence $\Coker(x): 0\to \Coker(f)\to \Coker(e) \to \Coker(g)\to 0$ is almost split.
\end{proposition}

\begin{proposition}[{\cite[Proposition 5.9 and Corollary 5.3]{B}}] \label{P:AR_Linj} The almost split pair starting at $f(I_i)$ has the form:
\begin{align*}
	0&\to f(I_i) \to f(I_i/\op{soc}(I_i))\oplus \zero_R^+ \to \zero_i^+ \to 0, && \text{ if $I_i$ is not simple;}\\
	0&\to f(I_i) \to  \Id_i\oplus \zero_{P_+(I_i)}^+ \to \zero_i^+ \to 0, && \text{ if $I_i$ is simple,}
\end{align*}
where $\Hom_A(R,A)^*$ is the maximal injective summand of $E$, with $E \to I_i$ a right minimal almost split morphism in $\module A$.
\end{proposition}

\begin{corollary} We have that
	$\tau f(M)=f(\tau M)$ for $M$ non-projective and $\tau(\zero_i^+)=f(I_i)$ in $C^2A$.
\end{corollary}

From now on let us assume $A$ is the $\mb{F}$-species $\Gamma_Q$. We denote $C^2\Gamma_Q$ by $C^2 Q$.

\begin{lemma} \label{L:AR_f} Suppose that $0\to L\to M\to N\to 0$ is an almost split sequence in $\Rep(Q)$. 
Then we have the following almost split pairs in $C^2Q$.
\begin{align} \label{eq:res0} &0\to f(L)\to f(M)\to f(N)\to 0 && \text{ if } L\neq S_i; \\
\label{eq:res00} &0\to f(L)\to f(M)\oplus \Id_i \to f(N)\to 0 && \text{ if } L=S_i. \tag{2.2'}
\end{align}
\end{lemma}

\begin{proof}  Suppose that $L$ is not simple.
We can splice the minimal presentations of $L$ and $N$ together to form a presentation of $M$
$$P_+(L)\oplus P_+(N)\xrightarrow{f} P_-(L)\oplus P_-(N)\to M\to 0.$$
By construction, we have the exact sequence
$0\to f(L)\to f \to f(N)\to 0.$
We claim that $f$ is minimal. This is equivalent to that
$\dim\Hom_Q(M,S_i) = \dim\Hom_Q(L,S_i)+\dim\Hom_Q(N,S_i)$
and $\dim\Ext_Q^1(M,S_i) = \dim\Ext_Q^1(L,S_i) + \dim\Ext_Q^1(N,S_i)$ for each $S_i$.
Since $0\to L\to M\to N\to 0$ is almost split and $L$ is non-simple, it follows that
$$0\to \Hom_Q(N,S_i) \to \Hom_Q(M,S_i)\to \Hom_Q(L,S_i)\to 0,$$
$$0\to \Ext_Q^{1}(N,S_i) \to  \Ext_Q^{1}(M,S_i)\to \Ext_Q^{1}(L,S_i)\to 0$$
are both exact.

In the case when $L=S_i$, by Auslander-Reiten formula \cite[Corollary IV.4.7]{ARS}
$$\Hom_Q(\tau^{-1} L,S_i)=\Ext_Q^1(S_i,S_i)^*=0,\quad \Ext_Q^1(\tau^{-1} L,S_i)=\Hom_Q(S_i,S_i)^*=k.$$
So we have the exact sequence
$$0 \to \Hom_Q(M,S_i) \to \Hom_Q(L,S_i) \cong \Ext_Q^1(\tau^{-1} L,S_i) \to \Ext_Q^1(M,S_i) \to 0.$$
Hence $\Hom_Q(M,S_i)=\Ext_Q^1(M,S_i)=0$.
This implies the exactness of \eqref{eq:res00}.
Finally the claim follows from Proposition \ref{P:AR_from_mod}.
\end{proof}

Similarly the next lemma follows directly from Proposition \ref{P:AR_Linj}.
\begin{lemma} \label{L:res_simpleK2} We have the following almost split pairs in $C^2Q$
	\begin{align}
	\label{eq:res2} &0\to f(I_i)\to \bigoplus_{(k,i)\in Q_1}c_{i,k} f(I_k)\oplus \bigoplus_{(i,j)\in Q_1} c_{j,i} \zero_j^+ \to \zero_i^+  \to 0  \text{ if $i$ is not a source}; \\
	\label{eq:res20} &0\to f(I_i)\to  \Id_i \oplus \bigoplus_{(i,j)\in Q_1} c_{j,i} \zero_j^+   \to \zero_i^+  \to 0  \hspace{.81in} \text{ if $i$ is a source}. \tag{2.3'}
	\end{align}
\end{lemma}

\section{iARt Quivers} \label{S:iARt}

\subsection{iARt Quivers} \label{ss:iARt}
We slightly upgrade the classical Auslander-Reiten quiver by adding the translation arrows. The following definition is basically taken from \cite[VII.1]{ARS}. Let $\mc{A}$ be a category as in Section \ref{ss:AR_fun}.
Recall that for each $M\in \ind(\mc{A})$, $D_M:=\End_\mc{A}(M)/\op{rad} \End_\mc{A}(M)$ is a division $k$-algebra.

\begin{definition}[ARt quiver] The ARt valued quiver $\Delta(\mc{A})$ of $\mc{A}$ is defined as follows: \begin{enumerate}
\item The vertex of $\Delta(\mc{A})$ are the isomorphism classes of objects in $\ind \mc{A}$.
\item There is a {\em morphism arrow} $M\to N$ if $\Irr_{\mc{A}}(M,N)$ is non-empty. We assign the valuation $(a,b)$ to this arrow, where $a=\dim_{D_M}\Irr_{\mc{A}}(M,N)$ and $b=\dim_{D_N^\opp}\Irr_{\mc{A}}(M,N)$.
\item There is a {\em translation arrow} from $N$ to $\tau N$ with trivial valuation if $\tau N$ is defined.
\end{enumerate}
A vertex $u$ in an ARt quiver is called {\em transitive} if the translation and its inverse are both defined at $u$.
\end{definition}
\noindent Note that the number $a$ in the valuation $(a,b)$ can be alternatively interpreted as
the (direct sum) multiplicity of $M$ in $E$ for $E\to N$ right minimal almost split.
Similarly $b$ is the multiplicity of $N$ in $E'$ for $M\to E'$ left minimal almost split.
Moreover, if $\mc{A}$ is {\em $k$-elementary}, i.e., $D_M=k$ for any $M\in \ind(\mc{A})$, then all morphism arrows are equally valued.

\begin{definition}[iARt quivers] \label{D:iARt} {\ }
\begin{enumerate}
\item[$\bullet$] The iARt quiver $\Delta_{\mc{A}}$ is obtained from the ARt quiver $\Delta(\mc{A})$ by freezing all vertices whose translations or inverse translations are not defined.
\item[$\bullet$] The iARt quiver $\Delta_{\mc{A}}^2$ is obtained from the ARt quiver $\Delta(C^2\mc{A})$ by freezing all non-transitive vertices.
\end{enumerate}
\end{definition}

\begin{remark} When $\mc{A}$ is the module category of a finite-dimensional algebra, the frozen vertices of $\Delta_{\mc{A}}$ are precisely indecomposable projective modules.
By Theorem \ref{T:AR_C2A} and Corollary \ref{C:PI_C2A}, the frozen vertices in $\Delta_{\mc{A}}^2$ are precisely the negative, positive and neutral presentations.
\end{remark}

We use the notation \begin{align*}
&L\to M & & \text{if there is a morphism arrow from $L$ to $M$;}\\
&L\dashrightarrow M & &\text{if there is a translation arrow from $L$ to $M$;}\\
&L\rightarrowtail M & &\text{if there is an arrow from $L$ to $M$ in the ARt quiver.}
\end{align*}

\begin{lemma} \label{L:additive} Let $\theta$ be an additive function from $\mc{A}$ to some abelian group, that is,
$\theta(L)+\theta(N)=\theta(M)$ for each exact sequence $0\to L\to M \to N\to 0$ in $\mc{A}$.
Then at each transitive vertex $L$, we have that
$\sum_{M\rightarrowtail L} c_{M,L}\theta(M) = \sum_{L\rightarrowtail N} c_{L,N}\theta(N)$.
\end{lemma}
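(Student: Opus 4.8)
The plan is to use the functorial machinery from Section~\ref{ss:AR_fun}, specifically the projective resolutions of simple functors that encode the almost split sequences, exactly as set up in Lemma~\ref{L:AR_f} and Lemma~\ref{L:res_simpleK2}. The point is that an additive function $\theta$ on $\mc{A}$ (or on $C^2\mc{A}$) extends to an additive function on the Grothendieck-style relations given by almost split sequences, and the valuations $c_{M,L}$ on the ARt quiver are precisely the multiplicities appearing in the middle term of the almost split sequence ending (or starting) at $L$.

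First I would recall that, by definition, a vertex $L$ of an ARt quiver is transitive when both $\tau L$ and $\tau^{-1}L$ are defined. So there are two almost split sequences attached to $L$: one ending at $L$,
\[
0\to \tau L \to E \to L\to 0,
\]
and one starting at $L$,
\[
0\to L\to E'\to \tau^{-1}L\to 0.
\]
Applying $\theta$ to each gives $\theta(\tau L)+\theta(L)=\theta(E)$ and $\theta(L)+\theta(\tau^{-1}L)=\theta(E')$. Next I would decompose $E=\bigoplus_{M}c_{M,L}M$, where by Definition~\ref{D:iARt}'s preamble $c_{M,L}=\dim_{E_M}\Irr_{\mc{A}}(M,L)$ is the multiplicity of $M$ in the middle term of the almost split sequence ending at $L$ (here one uses the identification of the valuation $a$ with this multiplicity and the relation \eqref{eq:irrequal}). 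Similarly $E'=\bigoplus_{N}c_{L,N}N$. The arrows into $L$ in the ARt quiver are exactly the morphism arrows $M\to L$ with $M$ a summand of $E$, together with the translation arrow $\tau^{-1}L\dashrightarrow L$; and the arrows out of $L$ are the morphism arrows $L\to N$ with $N$ a summand of $E'$, together with the translation arrow $L\dashrightarrow \tau L$. Since translation arrows are trivially valued, writing $M\rightarrowtail L$ to include both kinds of arrows, I get
\[
\sum_{M\rightarrowtail L}c_{M,L}\theta(M)=\theta(E)+\theta(\tau^{-1}L)=\bigl(\theta(\tau L)+\theta(L)\bigr)+\theta(\tau^{-1}L),
\]
and symmetrically
\[
\sum_{L\rightarrowtail N}c_{L,N}\theta(N)=\theta(E')+\theta(\tau L)=\bigl(\theta(L)+\theta(\tau^{-1}L)\bigr)+\theta(\tau L).
\]
The two right-hand sides are equal, which is the claim.

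The step I expect to be the main obstacle is the bookkeeping of the valuations: one must be careful that the coefficient $c_{M,L}$ attached to a morphism arrow $M\to L$ really equals the multiplicity of $M$ as a direct summand of the middle term $E$ of the almost split sequence ending at $L$, rather than, say, the reversed valuation entry, and that in the non-simple/edge cases of Lemmas~\ref{L:AR_f} and~\ref{L:res_simpleK2} (where the almost split sequence picks up an extra neutral presentation $\Id_i$ as a summand of the middle term) the corresponding arrow with its valuation is still counted correctly on both sides. For a general transitive vertex in $C^2\mc{A}$ the relevant ``exact sequences'' are the short exact sequences of presentations appearing in \eqref{eq:res0}, \eqref{eq:res00}, and the left exact portions of \eqref{eq:res1}--\eqref{eq:res3}; since $\theta$ is additive on short exact sequences in $\mc{A}$, and these are genuine short exact sequences in the ambient abelian category, the argument goes through verbatim once the multiplicities are pinned down. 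A secondary point worth a line is that at a transitive vertex both $\tau$ and $\tau^{-1}$ are defined, so both almost split sequences genuinely exist and no degenerate case (projective, injective, or frozen vertex) intervenes.
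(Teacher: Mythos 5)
Your proof is correct and is essentially the paper's own argument: the paper likewise applies $\theta$ to the two almost split sequences $0\to\tau L\to\bigoplus_{M\to L}c_{M,L}M\to L\to 0$ and $0\to L\to\bigoplus_{L\to N}c_{L,N}N\to\tau^{-1}L\to 0$ and then accounts for the trivially valued translation arrows $\tau^{-1}L\dashrightarrow L$ and $L\dashrightarrow\tau L$ on the two sides, exactly as you do. The valuation bookkeeping you single out as the main obstacle is handled just as you propose, via the remark after the ARt-quiver definition identifying the valuation components with the middle-term multiplicities, so no further argument is needed.
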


\begin{proof}  We have two almost split sequences
\begin{align*}
& 0\to \tau L\to \bigoplus_{M\to L} c_{M,L} M \to L\to 0,\\
& 0\to L\to \bigoplus_{L\to N} c_{L,N} N\to \tau^{-1}L \to 0.
\end{align*}
By the additivity of $\theta$, we have that
\begin{gather*}
\theta(L)=\sum_{M\to L} c_{M,L}\theta(M) -\theta(\tau L) = \sum_{L\to N} c_{L,N}\theta(N)-\theta(\tau^{-1}L),\\
\Rightarrow \quad \sum_{M\rightarrowtail L} c_{M,L}\theta(M) = \sum_{L\rightarrowtail N} c_{L,N}\theta(N).
\end{gather*}
\end{proof}

\noindent A typical additive function in $C^2A$ is the weight vector.
In some special cases including examples below, indecomposable presentations are uniquely determined by their weight vectors.
So we can label them on an iARt quiver by their weight vectors.
We will use the ``exponential form" as a shorthand.
For example, a vector $(3,1,0,0,-2,0,-1)$ is written as $(5^27,1^32)$.

\begin{example} \label{ex:D4mu} Let $A$ be the Jacobian algebra of the quiver with potential $(Q,W)$ (see Section \ref{ss:QP}), where
$Q=\vcenter{\xymatrix@R=3ex@C=3ex{
& {3} \ar[dr] &  \\
{2} \ar[ur] \ar[dr] &&  {1} \ar[ll] \\
& {4} \ar[ur] }
}$ and $W$ is the difference of two oriented triangles.
The iARt quiver $\Delta_{\module A}^2$ is drawn below. We always put frozen vertices in boxes.
$$\iARtDfourmu$$
Two vertices with the same weight label $(1,2)$ are identified.
The translation arrow going out from $(0,2)$ ends in $(1,0)$.
\end{example}

\subsection{Hereditary Cases} \label{ss:gdim1}
\noindent In particular, if we take $\mc{A}:=\Rep(Q)$ for some valued quiver $Q$, we get two ARt quivers $\Delta(Q):=\Delta(\Rep(Q))$ and $\Delta(C^2Q)$. We denote the corresponding iARt quivers by $\Delta_Q$ and $\DCQ$.

\begin{proposition} \label{P:iARtDCQ} The ARt quiver $\Delta(C^2 Q)$ can be obtained from $\Delta(Q)$ as follows.
\begin{enumerate}
\item We add $|Q_0|$ vertices corresponding to $\zero_i^+$ and another $|Q_0|$ vertices corresponding to $\Id_i$.
\item For each $i\xrightarrow{(a,b)} j$ in $Q$, we draw morphism arrows $\zero_j^+\xrightarrow{(a,b)} \zero_i^+$,
and $f(I_i)\xrightarrow{(b,a)} \zero_j^+$. We add translation arrows from $\zero_i^+$ to $f(I_i)$.
\item We draw morphism arrows $f(S_i)\xrightarrow{} \Id_i$, and $\Id_i\xrightarrow{} \tau^{-1}f(S_i)$.
\end{enumerate}
\end{proposition}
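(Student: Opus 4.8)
The plan is to build $\Delta(C^2Q)$ by combining the functorial projective resolutions from Lemma \ref{L:AR_f} and Lemma \ref{L:res_simpleK2} with the standard dictionary (Lemma \ref{L:ARfun}) translating such resolutions in $\module(\module(C^2Q)^{\opp})$ back into almost split sequences, hence into morphism and translation arrows of the ARt quiver. First I would recall the classification in Corollary \ref{C:indK2}: the indecomposable objects of $C^2Q$ are the negative presentations $\zero_i^-$, the positive presentations $\zero_i^+$, the neutral presentations $\Id_i$, and the minimal presentations $f(M)$ of indecomposable non-projective $M\in\Rep(Q)$. (Note $f(P_i)=\zero_i^-$, so the negative presentations also account for the projectives.) The vertex set of $\Delta(C^2Q)$ is therefore the vertex set of $\Delta(Q)$ — using $M\mapsto f(M)$ for non-projectives and $P_i\mapsto\zero_i^-$ for projectives — together with $2|Q_0|$ new vertices $\{\zero_i^+\}$ and $\{\Id_i\}$, which is exactly item (1).

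Next I would read off the arrows incident to each new vertex. For $\Id_i$: resolution \eqref{eq:res3} is $0\to\Hom(-,f(S_i))\to\Hom(-,\Id_i)\to S_{\Id_i}\to 0$, a minimal projective resolution of $S_{\Id_i}$ of length one, so by (the presentation-category analogue of) Lemma \ref{L:ARfun}.(2) the map $f(S_i)\to\Id_i$ is right minimal almost split; this gives the morphism arrow $f(S_i)\to\Id_i$ in item (3), and dually (resolving $S_{f(S_i)}$, or equivalently applying $\tau^{-1}$) the arrow $\Id_i\to\tau^{-1}f(S_i)$. For $\zero_i^+$ one uses \eqref{eq:res2}/\eqref{eq:res20}: the resolution has length two, so $S_{\zero_i^+}$ is the simple at a non-projective object of $\module(C^2Q)^{\opp}$, and Lemma \ref{L:ARfun}.(1) turns it into an almost split sequence
$$0\to \tau(\zero_i^+)\to \bigoplus_{(k,i)\in Q_1}c_{i,k}f(I_k)\oplus\bigoplus_{(i,j)\in Q_1}c_{j,i}\zero_j^+\ \big(\oplus\,\Id_i\big)\to \zero_i^+\to 0,$$
with the $\Id_i$ summand present exactly when $i$ is a source; here $\tau(\zero_i^+)=f(I_i)$ by the convention fixed after Lemma \ref{L:res_simpleK2} (when $i$ is a source $I_i=S_i$ and $f(I_i)=f(S_i)$ is the relevant object). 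Reading the valuations of this almost split sequence off the multiplicities $c_{i,k}$, $c_{j,i}$ gives the morphism arrows $\zero_j^+\xrightarrow{(a,b)}\zero_i^+$ and $f(I_i)\xrightarrow{(b,a)}\zero_j^+$ for each $i\xrightarrow{(a,b)}j$, and the translation arrow $\zero_i^+\dashrightarrow f(I_i)$; resolution \eqref{eq:res1} symmetrically recovers the (unchanged) morphism arrows among the $\zero_i^-$, so that on the negative/non-projective part $\Delta(C^2Q)$ restricts to $\Delta(Q)$ — this is the content of item (2) together with the claim that the old arrows of $\Delta(Q)$ survive. Lemma \ref{L:AR_f} is used to check that for non-simple $L$ an almost split sequence $0\to L\to M\to N\to 0$ in $\Rep(Q)$ lifts to an almost split sequence $0\to f(L)\to f(M)\to f(N)\to 0$ in $C^2Q$ (and in the $L=S_i$ case one picks up the extra $\Id_i$ summand, which is precisely the arrow $f(S_i)\to\Id_i$ reappearing), so the translation $\tau$ on the non-projective part of $C^2Q$ agrees with $\tau$ on $\Rep(Q)$.

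Finally, transitivity: a vertex is non-transitive iff $\tau$ or $\tau^{-1}$ fails there. For $f(M)$ with $M$ indecomposable non-projective, $\tau f(M)=f(\tau M)$ and $\tau^{-1}f(M)=f(\tau^{-1}M)$ are defined unless $M$ is projective (excluded) or $\tau^{-1}M$ fails to exist, i.e. $M$ injective, in which case $f(M)=f(I_i)=\tau(\zero_i^+)$ so $\tau^{-1}f(M)=\zero_i^+$ is still defined — hence all $f(M)$ are transitive. The vertex $\zero_i^-=f(P_i)$ has no incoming translation ($P_i$ projective), $\zero_i^+$ has no outgoing translation, and $\Id_i$ has both $\tau\Id_i$ and $\tau^{-1}\Id_i$ undefined (the resolution \eqref{eq:res3} is length one on both sides / $\Id_i$ is both a "projective-like" and "injective-like" object in $C^2Q$); so the non-transitive vertices are exactly the negative, positive and neutral presentations, which is the last sentence. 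I expect the main obstacle to be bookkeeping the valuations in the non-simply-laced case — verifying that the transpose/dual symmetry \eqref{eq:irrequal} flips $(a,b)$ to $(b,a)$ correctly on the arrows $f(I_i)\to\zero_j^+$ versus $\zero_j^+\to\zero_i^+$, and handling the source/non-source dichotomy in \eqref{eq:res2} versus \eqref{eq:res20} uniformly — rather than any conceptual difficulty, since the structural skeleton is forced by Lemmas \ref{L:AR_f} and \ref{L:res_simpleK2}.
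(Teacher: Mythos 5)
Your proposal is correct and takes essentially the same route as the paper: identify the vertex set via Corollary \ref{C:indK2}, then translate the minimal projective resolutions of simple functors in Lemmas \ref{L:AR_f} and \ref{L:res_simpleK2} back into almost split sequences via Lemma \ref{L:ARfun} to read off the morphism and translation arrows. The paper's proof is terser (it essentially just cites which of the resolutions \eqref{eq:res0}--\eqref{eq:res3} justifies each step and leaves transitivity implicit), whereas you spell out the transitivity check explicitly and correctly, including the edge case $\tau^{-1}f(I_i)=\zero_i^+$. One cosmetic caveat: under the paper's convention the translation arrow points from $N$ to $\tau N$, so $\zero_i^-$ lacks an \emph{outgoing} translation arrow ($\tau\zero_i^-$ undefined) and $\zero_i^+$ lacks an \emph{incoming} one ($\tau^{-1}\zero_i^+$ undefined) — you have the labels swapped — but your transitivity conclusions are all right, so this is just a terminology slip rather than a gap.
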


\begin{proof} The vertices of $\Delta(Q)$ are identified with vertices of $\Delta(C^2Q)$ via minimal presentations.
By Corollary \ref{C:indK2}, we only need to add the vertices as in (1).
Step (2) is due to \eqref{eq:res2} and \eqref{eq:res20}.
Note that $\tau^{-1}f(S_i)$ is equal to $f(\tau^{-1}S_i)$ if $i$ is not a source, otherwise it is equal to $\zero_i^+$.
So Step (3) is due to \eqref{eq:res00} and \eqref{eq:res20}.
We do not need anything else because of \eqref{eq:res0} and the easy fact that $\bigoplus_{(i,j)\in Q_1}c_{j,i}\zero_j^- \to \zero_i^-$ is right minimal almost split.
\end{proof}
\noindent Due to this proposition, we will freely identify $\Delta_{Q}$ as a subquiver of $\DCQ$.

From now on, we let $Q$ be a valued quiver of Dynkin type.
In this case, any indecomposable presentation $f$ is uniquely determined by its weight vector.
The quiver $\Delta_Q$ was already consider in \cite{BFZ,GLSa}.
In \cite{BFZ} the authors associated an ice quiver to any reduced expression of the longest element $w_0$ in the Weyl group of $Q$.
The iARt quiver $\Delta_Q$ only corresponds to those reduced expressions {\em adapted} to $Q$.

\begin{example} The iARt quiver $\DCQ$ for $Q$ of type $A_n$ is the ice hive quiver $\Delta_n$ constructed in \cite{Fs1} up to some arrows between frozen vertices.
\end{example}

\begin{example}[The iARt quiver $\DCQ$ for $Q$ a $D_4$-quiver] \label{ex:iARtD4}
$$\iARtDfour$$
Readers can find a few other iARt quivers in Appendix \ref{A:list}.
\end{example}

\begin{remark} One natural question is whether the iARt quivers $\Delta_Q^2$ and $\Delta_{Q'}^2$ (or $\Delta_Q$ and $\Delta_{Q'}$) are mutation-equivalent if $Q$ and $Q'$ are reflection-equivalent.
The answer is positive at least in trivially valued cases. We conjecture that this is also true in general.
As pointed in \cite[Remark 2.14]{BFZ}, for $\Delta_Q$ with $Q$ trivially valued, by the Tits lemma every two reduced words can be obtained form each other by a sequence of elementary 2- and 3-moves (see \cite[Section 2.1]{SSVZ});
by \cite[Theorem 3.5]{SSVZ} every such move either leaves the seed unchanged, or replaces it by an adjacent seed.
Finally, similar to the proof of Corollary \ref{C:mu_sqrtl}, the result can be extended from $\Delta_Q$ to $\Delta_Q^2$.

However, reflection-equivalent quivers can not replaced with mutation-equivalent Jacobian algebras (see Section \ref{ss:QP}). 
The Jacobian algbera in Example \ref{ex:D4mu} is obtained from the above path algebra of $D_4$ by mutating at the vertex 2.
According to \cite{Ke}, the iARt quiver in Example~\ref{ex:iARtD4} is mutation-equivalent to a finite mutation type quiver $E_6^{(1,1)}$, while the one in Example \ref{ex:D4mu} is mutation equivalent to a wild acyclic quiver, which is of infinite mutation type.
\end{remark}

It follows from \eqref{eq:irrequal} and the fact that each $Q$ is symmetrizable that
\begin{lemma} The $B$-matrix of $\DCQ$ is skew-symmetrizable.
\end{lemma}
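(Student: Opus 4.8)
The plan is to reduce this to showing that $\DCQ$, as a valued quiver, is symmetrizable. A direct entrywise check against the formula $B(\Delta)=-E_l(\Delta)+E_r(\Delta)^{T}$ shows that, for a diagonal matrix $D=\op{diag}(d)$ with positive integer entries, $DB(\Delta)$ is skew-symmetric if and only if $d$ is a symmetrizer of $\Delta$ in the sense $d_u c_{u,v}=d_v c_{v,u}$ on every arrow $u\to v$; and freezing vertices does not alter the underlying valued quiver, so the $B$-matrix of $\DCQ$ is that of $\Delta(C^2Q)$. Hence it suffices to produce a symmetrizer of $\Delta(C^2Q)$.

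The candidate is $f\mapsto d_f:=\dim_k E_f$, where $E_f=\End_{C^2Q}(f)/\op{rad}\End_{C^2Q}(f)$; this is a positive integer since $E_f$ is a finite-dimensional division $k$-algebra. On a morphism arrow $f\xrightarrow{(a,b)}g$ the space $\Irr_{C^2Q}(f,g)$ is an $E_f$-$E_g$-bimodule over division rings, hence free on each side, so
\[ a\,d_f=\dim_k\Irr_{C^2Q}(f,g)=b\,d_g, \]
which is precisely the symmetrizer relation for that arrow. On a translation arrow $g\dashrightarrow\tau g$, whose valuation is $(1,1)$, one must check $d_g=d_{\tau g}$: this is where \eqref{eq:irrequal} enters. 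Applying it to the two almost split sequences ending at $g$ and starting at $\tau g$ identifies, for each indecomposable summand $M_i$ of the middle term, the valuation of $M_i\to g$ with the transpose of that of $\tau g\to M_i$; comparing the displayed bimodule-dimension identity along the mesh then forces $d_{\tau g}=d_g$ (equivalently, Auslander's transpose and the $k$-dual are dualities, so $E_{\tau g}\cong E_g^{\opp}$).

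Finally one records the boundary values. By Corollary~\ref{C:indK2} and Lemma~\ref{L:HomK2}, $\End_{C^2Q}(\zero_i^-)=\End_{C^2Q}(\zero_i^+)=\End_{C^2Q}(\Id_i)=\End_{\Gamma_Q}(P_i)=\mb{F}_{d_i}$, so $d_{\zero_i^-}=d_{\zero_i^+}=d_{\Id_i}=d_i$. Running through the arrows of Proposition~\ref{P:iARtDCQ}: on the representation part $\Delta(Q)\subseteq\DCQ$ the relation already holds by the morphism-arrow computation, and on each new arrow ($\zero_j^+\to\zero_i^+$, $f(I_i)\to\zero_j^+$, $f(S_i)\to\Id_i$, $\Id_i\to\tau^{-1}f(S_i)$, and the translation arrows $\zero_i^+\dashrightarrow f(I_i)$) the identity $d_u c_{u,v}=d_v c_{v,u}$ collapses, after substituting the boundary values, to the defining relation $d_i c_{i,j}=d_j c_{j,i}$ of the symmetrizability of $Q$. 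Thus $d$ is a symmetrizer of $\Delta(C^2Q)$, so $B(\DCQ)$ is skew-symmetrizable with $D=\op{diag}(d)$.

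The one non-formal point is the translation step $d_g=d_{\tau g}$. Through \eqref{eq:irrequal} this is mesh bookkeeping, but one must track carefully which side of $\Irr$ each component of a valuation lives on, and handle the exceptional meshes of Lemmas~\ref{L:AR_f} and~\ref{L:res_simpleK2} (the cases $L=S_i$, resp. $i$ a source) in which an almost split sequence acquires an extra neutral summand $\Id_i$: there one checks that the assignment $d_{\Id_i}=d_i$ is consistent with both adjacent meshes. Everything else follows from the structure already in place in Sections~\ref{ss:AR_fun}--\ref{ss:gdim1}.
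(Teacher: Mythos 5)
Your proof is correct, but it is more explicit than what the paper does: the paper disposes of this lemma in one sentence, citing \eqref{eq:irrequal} together with the symmetrizability of $Q$, i.e.\ it implicitly propagates the symmetrizer of $Q$ through the meshes, whereas you construct an intrinsic symmetrizer $d_f=\dim_k E_f$ and verify the relation arrow by arrow. Your bimodule count $a\,d_f=\dim_k\Irr_{C^2Q}(f,g)=b\,d_g$ is valid (over a division ring every one-sided module is free, and $\dim_k E_g^{\opp}=\dim_k E_g$), and it handles every morphism arrow with no input from $Q$ at all; the symmetrizability of $Q$ and the boundary identifications $d_{\zero_i^\pm}=d_{\Id_i}=d_i$ only enter when you match your valuations against those listed in Proposition~\ref{P:iARtDCQ}. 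What your route buys is a canonical symmetrizer and a statement that works for the ARt quiver of any $\Hom$-finite Krull--Schmidt category; what the paper's terser route buys is brevity, since for Dynkin species all the $E_f$ are finite fields and everything is forced by \eqref{eq:irrequal}. Two small points to tighten. First, in the translation step, \eqref{eq:irrequal} as stated only gives $a_i=b_i'$ (both equal the multiplicity $m_i$ of $M_i$ in the middle term); combined with the bimodule counts this yields $a_i'\,d_{\tau g}=b_i\,d_g$, so to conclude $d_{\tau g}=d_g$ you also need the companion equality $b_i=a_i'$, which is the dual statement in \cite{ARS} rather than \eqref{eq:irrequal} itself -- your parenthetical duality argument is the cleaner way out. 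Second, that parenthetical should read $E_{\tau g}\cong E_g$ (the translation is a composition of two dualities, hence a covariant stable equivalence), not $E_g^{\opp}$; this is harmless here since only $k$-dimensions matter and, in the present setting, all these division algebras are finite fields and hence commutative.
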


It was constructed in \cite[Theorem 8.3]{BZ} a family of compatible pairs $\{(B(\mathrm{i}),\Lambda(\mathrm{i}))\}_{\mathrm{i}}$, i.e, $B(\mathrm{i})$ and $\Lambda(\mathrm{i})$ satisfy that $B(\mathrm{i})\Lambda(\mathrm{i}) = (I,0)$.
The family $\{B(\mathrm{i})\}_{\mathrm{i}}$ contains the restricted $B$-matrix of $\Delta_Q$. It follows that
\begin{lemma} The restricted $B$-matrices of $\Delta_Q$, and thus of $\DCQ$, have full ranks.
\end{lemma}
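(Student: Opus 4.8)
The plan is to first reduce the statement for $\DCQ$ to the statement for $\Delta_Q$, and then prove the latter by exhibiting a maximal-size square submatrix of $B_{\Delta_Q}$ with nonzero determinant. For the reduction I would read off Proposition~\ref{P:iARtDCQ}: the quiver $\DCQ$ is $\Delta_Q$ together with the extra vertices $\zero_i^+$ and $\Id_i$ and arrows every one of which is incident to one of these new vertices, so the full subquiver of $\DCQ$ on the vertices of $\Delta_Q$ is $\Delta_Q$ itself, and the mutable (transitive) vertices of $\DCQ$ are exactly the presentations $f(M)$ of the non-projective indecomposable representations $M$, which we identify with the mutable vertices of $\Delta_Q$. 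Hence $B_{\Delta_Q}$ is obtained from $B_{\DCQ}$ by deleting the $2|Q_0|$ columns indexed by the $\zero_i^+$ and $\Id_i$, with no surviving entry changed. Since a submatrix cannot have larger rank and the two matrices have the same number of rows, once $B_{\Delta_Q}$ has full rank (rank equal to the number of mutable vertices) so does $B_{\DCQ}$; thus it suffices to treat $\Delta_Q$.

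For $\Delta_Q$ I would use the structure of the AR quiver of a Dynkin quiver. Since $\Rep(Q)$ is representation-finite and hereditary, every indecomposable is $\tau^{-t}P_i$ for a unique pair $(i,t)$ with $i\in Q_0$ and $t\in\mb{Z}_{\geqslant0}$; write $M_{i,t}$ for it, so the projectives are the $M_{i,0}$, the mutable vertices of $\Delta_Q$ are the $M_{i,t}$ with $t\geqslant1$, and $\tau M_{i,t}=M_{i,t-1}$, which gives a bijection from mutable vertices to non-injective indecomposables. The AR quiver embeds in the translation quiver $\mb{Z}\bar Q$, so every morphism arrow at $M_{i,t}$ joins it to a vertex at level $t-1$, $t$ or $t+1$, and $M_{i,t}$ and $M_{i,t-1}$ are not joined by any morphism arrow. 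Consequently, in the row of $B_{\Delta_Q}$ indexed by $M_{i,t}$: the entry in column $M_{i,t-1}$ comes only from the (trivially valued) translation arrow and equals $1$; the nonzero entries occur only in columns at levels $t-1,t,t+1$; and, after fixing a linear order $\prec$ on $Q_0$ along which every arrow of $Q$ decreases (``sinks first''), the column $M_{i,t-1}$ is the $\prec$-largest among the nonzero columns of that row at level $t-1$, since the level-$(t-1)$ morphism neighbours of $M_{i,t}$ are the $M_{j,t-1}$ with $i\to j$ in $Q$.

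The remaining step is bookkeeping: order both the mutable rows and the non-injective columns lexicographically by decreasing level and then by $\prec$. The bijection $M_{i,t}\mapsto M_{i,t-1}$ is order-preserving for these orders, so the square submatrix $B'$ of $B_{\Delta_Q}$ on the non-injective columns has the translation entries $b_{M_{i,t},M_{i,t-1}}=1$ exactly on its diagonal, and by the previous paragraph every entry of $B'$ strictly above the diagonal vanishes. Hence $B'$ is triangular with unit diagonal, $\det B'=\pm1\neq0$, so $B_{\Delta_Q}$ — and therefore $B_{\DCQ}$ — has full rank. For a valued Dynkin quiver the argument is identical with $\bar Q$ the appropriate valued translation quiver: translation arrows remain trivially valued, so the diagonal of $B'$ is still $1$, and the triangularity uses only the level bookkeeping, not the particular valuations.

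I expect the only real obstacle to be making the two orderings in the last paragraph genuinely consistent — so that the distinguished translation entries do form the diagonal and nothing nonzero sits strictly above it — which comes down to the observation that the morphism-arrow neighbours of $M_{i,t}$ at level $t-1$ all carry index $\prec i$, i.e.\ exactly the ``arrows of $Q$ decrease $\prec$'' condition. The reduction in the first paragraph is routine once Proposition~\ref{P:iARtDCQ} is read carefully enough to confirm that passing from $\DCQ$ to $\Delta_Q$ only deletes columns.
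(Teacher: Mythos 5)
The paper itself gives no argument here --- it dismisses the lemma as ``an easy exercise in linear algebra'' --- so there is nothing to compare against, and your proposal has to stand or fall on its own. The overall structure is correct: the reduction from $\DCQ$ to $\Delta_Q$ is right (Proposition~\ref{P:iARtDCQ} really does show that every new arrow is incident to a vertex $\zero_i^+$ or $\Id_i$, so $B_{\Delta_Q}$ is literally a column-deleted submatrix of $B_{\DCQ}$ with the same rows), the bijection $M_{i,t}\mapsto\tau M_{i,t}=M_{i,t-1}$ from mutable to non-injective vertices is well-defined, the translation arrows are trivially valued and contribute $1$'s to those positions, and making the resulting square submatrix triangular via a level-respecting order is exactly the right idea.

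The one slip is in the direction of the arrows of $Q$. With the conventions of the paper ($P_i=e_i\Gamma_Q$, right modules, left-to-right path composition) one has $\operatorname{rad}P_i\cong\bigoplus_{i\to j}P_j$, hence irreducible maps $P_j\to P_i$ for $i\to j$, hence $P_i\to\tau^{-1}P_j$ for $i\to j$; $\tau^{-(t-1)}$-translating, the level-$(t-1)$ morphism neighbours of $M_{i,t}$ are the $M_{j,t-1}$ with $j\to i$ in $Q$, not $i\to j$. (Check on $1\to2\to 3$: the unique level-$0$ neighbour of $M_{3,1}=\tau^{-1}P_3$ is $M_{2,0}=P_2$, and $2\to3$.) Consequently, to make $M_{i,t-1}$ the $\prec$-last column at its level you need $\prec$ to \emph{increase} along arrows of $Q$ (``sources first''), not decrease. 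Flip those two choices simultaneously and the rest of the bookkeeping --- triangularity with unit diagonal, identical treatment of the valued case since translation arrows stay trivially valued --- goes through without further change.
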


By Lemma \ref{L:additive}, the assignment $f\mapsto (\f_-,\f_+)$ is a weight configuration of $\DCQ$.
However, it is not full (see Section \ref{ss:gv}).
We want to extend it to a full one which is useful for the second half of the paper.
Since $Q$ is of finite representation type, each non-neutral $f\in \ind(C^2Q)$ is translated from a unique indecomposable positive presentation, that is, $f=\tau^t(\zero_i^+)$ for some $i\in Q_0$ and $t\in \mb{Z}_{\geqslant 0}$.
Now for each $f\in \ind(C^2Q)$, we assign a triple-weight vector as follows.
\begin{definition} \label{D:triplewt} If $f$ is translated from $\zero_{i}^+$,
then the {\em triple weights} $\wtd{\f}\in \mb{Z}^{3|Q_0|}$ of $f$ is given by $(\e(f), \f_-, \f_+)$ where $\e(f):=\e_i$ the unit vector supported on $i$.
If $f=\Id_i$, then we set $\wtd{\f}:=(0,\e_i,\e_i)$.
We also define another weight vector $\br{\f}\in \mb{Z}^{|Q_0|}$ attached to $f$ by $\br{\f}:=\e_i+\f_--\f_+$.
\end{definition}

\begin{corollary} \label{C:WC} The assignment $\sCQ: f\mapsto \wtd{\f}$ (resp. $\sQ: f \mapsto \br\f$) defines a full weight configuration for the iARt quiver $\DCQ$ (resp. $\Delta_Q$).
\end{corollary}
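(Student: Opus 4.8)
The plan is to check the two defining conditions of a full weight configuration (Definition \ref{D:wtconfig}) for the assignments $\sCQ$ and $\sQ$, relying on the explicit description of $\DCQ$ obtained in Proposition \ref{P:iARtDCQ} together with the resolutions in Lemma \ref{L:AR_f} and Lemma \ref{L:res_simpleK2}. First I would dispose of the $\Delta_Q$ case, which is essentially classical: the mutable vertices of $\Delta_Q$ are the non-projective indecomposables, and at such a vertex $L=f(M)$ the weight-configuration identity \eqref{eq:weightconfig} for $\sQ$ is exactly Lemma \ref{L:additive} applied to the additive function $f\mapsto\br\f$. That $f\mapsto\br\f=\e(f)+\f_--\f_+$ is additive on short exact sequences of presentations follows because $f\mapsto(\f_-,\f_+)$ is additive (this is already noted in the text, via Lemma \ref{L:additive}) and, for a non-simple translation, $\e(f(M))$ behaves additively in the almost split sequence $0\to\tau L\to E\to L\to 0$ since all three terms are translated from positive presentations with the ``same'' source-index bookkeeping; the simple case $L=S_i$ requires the modified resolution \eqref{eq:res00}, which is precisely why the correction term $\e(f)$ appears. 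One then observes that $\sQ$ is just the restriction of $\sCQ$ to the subquiver $\Delta_Q\subseteq\DCQ$, so the $\DCQ$-statement for $\sCQ$ at transitive (equivalently mutable, by Proposition \ref{P:iARtDCQ}) vertices implies the $\Delta_Q$-statement.

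For $\sCQ$ on $\DCQ$, the mutable vertices are precisely the transitive ones, i.e. the $f(M)$ with $M$ indecomposable and non-simple (for $M=S_i$ the presentation $f(S_i)$ is still transitive, but its incoming almost split data is \eqref{eq:res00}). At such a vertex the identity \eqref{eq:weightconfig} for the first block $\e(-)$, the second block $\f_-$, and the third block $\f_+$ must each be verified. For the second and third blocks this is again Lemma \ref{L:additive} applied to the additive functions $f\mapsto\f_-$ and $f\mapsto\f_+$, using the almost split sequences around $f(M)$; the only subtlety is the presence of the neutral summand $\Id_i$ in \eqref{eq:res00} and in the positive-presentation resolutions \eqref{eq:res2}, \eqref{eq:res20}, whose weight vector is $(0,\e_i,\e_i)$ and which therefore contributes consistently to both the $\f_-$ and $\f_+$ sums. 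For the first block one uses that every non-neutral indecomposable $f$ is $\tau^t(\zero_i^+)$ for a unique pair $(i,t)$, so $\e(-)$ is constant along each $\tau$-orbit; combining this with the shape of the arrows described in Proposition \ref{P:iARtDCQ}.(2)--(3) — morphism arrows preserve the $\tau$-orbit of the ``positive end'' except across the explicit reflection moves encoded by \eqref{eq:res00} and \eqref{eq:res3} — gives the needed balance of $\e$-vectors at each transitive vertex.

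Finally, fullness: by Definition \ref{D:wtconfig}, $\sCQ$ is full iff the rank of the matrix $\sCQ$ equals the corank of $B_{\DCQ}$, and similarly for $\sQ$. Here I would argue that the $3|Q_0|$ images $\wtd{\f}$ of the frozen vertices — the negative presentations $\zero_i^-$ with $\wtd{\f}=(0,\e_i,0)$, the positive presentations $\zero_i^+$ with $\wtd{\f}=(\e_i,0,\e_i)$ modulo the bookkeeping, and the neutral $\Id_i$ with $\wtd{\f}=(0,\e_i,\e_i)$ — already span a rank-$3|Q_0|$ sublattice of $\mb{Z}^{3|Q_0|}$, so $\sCQ$ has full rank $3|Q_0|$; since $B_{\DCQ}$ has full row rank (the lemma just before), its corank is exactly the number of columns minus the number of mutable vertices, which one checks equals $3|Q_0|$ by counting: $\DCQ$ has $|\Delta(Q)_0|+2|Q_0|$ vertices of which $|Q_0|$ are projective (frozen in $\Delta_Q$ already), $|Q_0|$ are positive and $|Q_0|$ are neutral, so exactly $3|Q_0|$ are frozen. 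For $\sQ$ the same counting on $\Delta_Q$ gives corank $|Q_0|$, matched by the rank of the $|Q_0|$ vectors $\br{\f}$ attached to the projectives.

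The main obstacle I expect is not any single computation but bookkeeping the exceptional (simple / source) cases uniformly: the extra $\Id_i$ summands in \eqref{eq:res00}, \eqref{eq:res20}, \eqref{eq:res3} must be shown to cancel correctly in all three blocks of the weight identity simultaneously, and one must be careful that the definition $\tau(\zero_i^+):=f(I_i)$ and $\tau^{-1}f(S_i)$ (which equals $\zero_i^+$ when $i$ is a source) make the $\e(-)$-block identity hold even at the ``seam'' vertices where a $\tau$-orbit of positive presentations meets a $\tau$-orbit coming from a non-projective representation.
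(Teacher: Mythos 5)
The core difficulty here — proving the balance of the first component $\e(-)$ at each mutable vertex — is not actually handled in your proposal, and the argument you give for it is incorrect. In your $\Delta_Q$ paragraph you assert that $\e(f(M))$ is additive on the almost split sequence $0\to\tau L\to E\to L\to 0$ ``since all three terms are translated from positive presentations with the `same' source-index bookkeeping.'' But $\e$ is constant only along a single $\tau$-orbit; the middle term $E$ is a direct sum of indecomposables lying in \emph{different} $\tau$-orbits, and the claimed identity $\e(\tau L)+\e(L)=\e(E)$ is exactly the statement to be proved, not an obvious feature of bookkeeping. Similarly, in the $\DCQ$ paragraph the phrase ``morphism arrows preserve the $\tau$-orbit of the `positive end''' is false: a morphism arrow $g\to u$ goes between different $\tau$-orbits (e.g.\ $\zero_j^+\to\zero_i^+$ for $(i,j)\in Q_1$). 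The paper's proof is more careful precisely at this point: it introduces \emph{regular} vertices (transitive $u$ such that $\tau^{-1}v$ is defined for all $v\to u$ and $\tau w$ is defined for all $u\to w$), and invokes \eqref{eq:irrequal} to match each incoming morphism arrow $g\to u$ (with multiplicity $c_{g,u}$) against the outgoing morphism arrow $u\to\tau^{-1}g$ (with the same multiplicity); since $\e(g)=\e(\tau^{-1}g)$, this gives the balance. It then observes that the only transitive non-regular vertices are $f(S_i)$ and $\tau^{-1}f(S_i)$, whose morphism arrows to or from $\Id_i$ do not spoil the identity because $\e(\Id_i)=0$. Without \eqref{eq:irrequal} and this case analysis there is no proof.

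Two smaller remarks. Your formula for the triple weight of a negative frozen vertex should be $\wtd{\f}(\zero_i^-)=(\e_{i^*},\e_i,0)$, not $(0,\e_i,0)$: the orbit of $\zero_i^+$ ends at $\zero_{i^*}^-$, so $\e(\zero_i^-)=\e_{i^*}$. This does not alter the fullness conclusion (the $3|Q_0|\times 3|Q_0|$ block formed by the frozen rows is still nonsingular), but the stated weight is wrong. Your reduction of the $\Delta_Q$ case to the $\DCQ$ case by restriction is the same observation the paper makes (namely that $\br{\f}$ vanishes on positive and neutral frozen vertices), and the fullness counting is sound, but neither of these rescues the missing core step.
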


\begin{proof} Due to Lemma \ref{L:additive}, it suffices to show that for each mutable $f$
$$\sum_{g\rightarrowtail f} c_{g,f} \e(g) = \sum_{f\rightarrowtail h} c_{f,h} \e(h).$$
We call a vertex $u$ {\em regular} if it is transitive and $\tau^{-1}v$ is defined for each $v\to u$, and $\tau w$ is defined for each $u\to w$.
It is clear from \eqref{eq:irrequal} that the equation holds at each regular vertex.
From the description of Proposition \ref{P:iARtDCQ}, we see that all transitive vertices are regular except for $f(S_i)$ and $\tau^{-1}f(S_i)$.
The problem is that these vertices may have (morphism) arrows to neutral frozen vertices, whose translation is not defined.
But the first component of the triple weights of $\Id_i$ is a zero vector so the equality still holds at these vertices.

For the case of $\Delta_Q$, it is enough to observe that the weight vector $\br{\f}$ is zero on the positive and neutral frozen vertices of $\DCQ$, and $\Delta_Q$ is obtained from $\DCQ$ by deleting these vertices.
\end{proof}

\noindent We shall consider the graded upper cluster algebra $\uca(\DCQ;\sCQ)$ and the graded cluster algebra $\mc{C}(\Delta_Q;\sQ)$ later.

\section{Cluster Character from Quivers with Potentials} \label{S:CCQP}
\subsection{Quivers with Potentials} \label{ss:QP}
The mutation of quivers with potentials is invented in \cite{DWZ1} and \cite{DWZ2} to model the cluster algebras.
In this and next section and Appendix \ref{A:cyclic}, we switch back to the usual quiver notation. A quiver $\Delta$ is a quadruple $(\Delta_0,\Delta_1,h,t)$,
where the maps $h$ and $t$ map an arrow $a\in \Delta_1$ to its head and tail $h(a), t(a)\in \Delta_0$.
Following \cite{DWZ1}, we define a potential $W$ on an ice quiver $\Delta$ as a (possibly infinite) linear combination of oriented cycles in $\Delta$.
More precisely, a {\em potential} is an element of the {\em trace space} $\Tr(\ckQ):=\ckQ/[\ckQ,\ckQ]$,
where $\ckQ$ is the completion of the path algebra $k\Delta$ and $[\ckQ,\ckQ]$ is the closure of the commutator subspace of $\ckQ$.
The pair $(\Delta,W)$ is an {\em ice quiver with potential}, or IQP for short.
For each arrow $a\in \Delta_1$, the {\em cyclic derivative} $\partial_a$ on $\widehat{k\Delta}$ is defined to be the linear extension of
$$\partial_a(a_1\cdots a_d)=\sum_{k=1}^{d}a^*(a_k)a_{k+1}\cdots a_da_1\cdots a_{k-1},$$
where $a^*(b)=1$ if $a=b$ and zero otherwise.
For each potential $W$, its {\em Jacobian ideal} $\partial W$ is the (closed two-sided) ideal in $\ckQ$ generated by all $\partial_a W$.
The {\em Jacobian algebra} $J(\Delta,W)$ is the quotient algebra $\widehat{k\Delta}/\partial W$.
\footnote{Unlike the definition in \cite{BIRS}, we need to include $\partial_a W$ in the ideal $\partial W$ even if $a$ is an arrow between frozen vertices.}
If $W$ is polynomial and the quotient of $k\Delta$ by the unclosed ideal generated by all $\partial_a W$ is finite-dimensional, then the completion is unnecessary to define $J(\Delta,W)$.
This is the case throughout this paper.

The key notion introduced in \cite{DWZ1,DWZ2} is the {\em mutation} of quivers with potentials and their decorated representations.
For an ice quiver with {\em nondegenerate potential} (see \cite{DWZ1}), the mutation in certain sense ``lifts" the mutation in Definition \ref{D:Qmu}.
We have a short review in Appendix \ref{ss:mu}.

\begin{definition} A {\em decorated representation} of a Jacobian algebra $J:=J(\Delta,W)$ is a pair $\mc{M}=(M,M^+)$,
where $M\in \Rep(J)$, and $M^+$ is a finite-dimensional $k^{\Delta_0}$-module.
\end{definition}

Let $\mc{R}ep(J)$ be the set of decorated representations of $J(\Delta,W)$ up to isomorphism.
Let $K^2 J$ be the homotopy category of $C^2 J$.
There is a bijection between two additive categories $\mc{R}ep(J)$ and $K^2 J$ mapping any representation $M$ to its minimal presentation in $\Rep(J)$, and the simple representation $S_u^+$ of $k^{\Delta_0}$ to $P_u\to 0$.
Suppose that $\mc{M}$ corresponds to a projective presentation
$P(\beta_+)\to P(\beta_-)$.

\begin{definition} \label{D:gv} The {\em $\g$-vector} $\g(\mc{M})$ of a decorated representation $\mc{M}$ is the reduced weight vector $\beta_+-\beta_-$.
\end{definition}

\begin{definition} A potential $W$ is called {\em rigid} on a quiver $\Delta$ if
every potential on $\Delta$ is cyclically equivalent to an element in the Jacobian ideal $\partial W$.
Such a QP $(\Delta,W)$ is also called {\em rigid}.
A potential $W$ is called {\em $\mu$-rigid} on an ice quiver $\Delta$ if its restriction to the mutable part $\Delta^\mu$ is rigid.
\end{definition}

\noindent It is known \cite[Proposition 8.1, Corollary 6.11]{DWZ1} that every rigid QP is $2$-acyclic, and the rigidity is preserved under mutations. In particular, any rigid QP is nondegenerate.

\begin{definition} Two QPs $(\Delta,W)$ and $(\Delta',W')$ on the same vertex set $\Delta_0$ are call {\em right-equivalent} if there is an isomorphism $\varphi: k\Delta\to k\Delta'$ such that $\varphi\mid_{k\Delta_0}=\Id$ and $\varphi(W)$ is cyclically equivalent to $W'$.
Two IQPs $(\Delta,W)$ and $(\Delta',W')$ are call {\em $\mu$-right-equivalent} if they are right-equivalent when restricted to their mutable parts.
\end{definition}


\begin{definition} \label{D:mu_sup} A representation is called {\em $\mu$-supported} if its supporting vertices are all mutable.
We denote by $\mc{R}ep^{\mu}(J)$ the full subcategory of all $\mu$-supported decorated representations of $J$.
\end{definition}

\begin{remark} \label{r:mu_equi} If $(\Delta,W)$ and $(\Delta',W')$ are $\mu$-right-equivalent,
then $\mc{R}ep^\mu(J)$ and $\mc{R}ep^\mu(J')$ are equivalent.
Indeed, we write $W=W_\mu+W_\diamond$ where $W_\mu$ is the restriction of $W$ to the mutable part of $\Delta$.
We find that any cyclic derivative $\partial_a W_\diamond$ is a sum of paths passing some frozen vertices.
Such a sum gives rise to a trivial relation on the $\mu$-supported representations.
\end{remark}

\subsection{The Generic Cluster Character} \label{ss:CC}
\begin{definition}
To any $\g\in\mathbb{Z}^{\Delta_0}$ we associate the {\em reduced} presentation space $$\PHom_J(\g):=\Hom_J(P([\g]_+),P([-\g]_+)),$$
where $[\g]_+$ is the vector satisfying $[\g]_+(u) = \max(\g(u),0)$.
We denote by $\Coker(\g)$ the cokernel of a general presentation in $\PHom_J(\g)$.
\end{definition}
\noindent Reader should be aware that $\Coker(\g)$ is just a notation rather than a specific representation.
If we write $M=\Coker(\g)$, this simply means that we take a presentation general enough (according to context) in $\PHom_J(\g)$, then let $M$ to be its cokernel.

\begin{definition} \label{D:mu_supported}
A $\g$-vector $\g$ is called {\em $\mu$-supported} if $\Coker(\g)$ is $\mu$-supported.
Let $G(\Delta,W)$ be the set of all $\mu$-supported $\g$-vectors in $\mb{Z}^{\Delta_0}$.
\end{definition}
\noindent It turns out that for a large class of IQPs the set $G(\Delta,W)$ is given by lattice points in some rational polyhedral cone.
Such a class includes the IQPs introduced in \cite{Fs1,Fs2,Fk1}, and the ones to be introduced in Section \ref{ss:iARt2}.

\begin{definition}[\cite{P,Du}]
We define the {\em generic character} $C_W:G(\Delta,W)\to \mb{Z}(\b{x})$~by
\begin{equation} \label{eq:genCC}
C_W(\g)=\b{x}^{\g} \sum_{\e} \chi\big(\Gr^{\e}(\Coker(\g)) \big) {\b{y}}^{\e},
\end{equation}
where $\Gr^{\e}(M)$ is the variety parametrizing $\e$-dimensional quotient representations of $M$, and $\chi(-)$ denotes the topological Euler-characteristic.
\end{definition}

\begin{theorem}[{\cite[Corollary 5.14]{Fs1}, {\em cf.} \cite[Theorem 1.1]{P}}] \label{T:GCC} Suppose that IQP $(\Delta,W)$ is non-degenerate and $B_\Delta$ has full rank.
The generic character $C_W$ maps $G(\Delta,W)$ (bijectively) to a set of linearly independent elements in $\br{\mc{C}}(\Delta)$ containing all cluster monomials.
\end{theorem}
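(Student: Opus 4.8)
I would prove the three assertions bundled in the statement in turn: (i) $C_W(\g)\in\uca(\Delta)$ for every $\g\in G(\Delta,W)$; (ii) $C_W$ is injective with $k$-linearly independent image; (iii) the image contains every cluster monomial. Assertion (ii) is immediate from the shape of \eqref{eq:genCC}: the summand of the inner sum indexed by $\e=0$ contributes $\b{x}^{\g}\chi(\Gr^0(\Coker(\g)))=\b{x}^{\g}$, so $C_W(\g)=\b{x}^{\g}F$ with $F=\sum_{\e}\chi(\Gr^{\e}(\Coker(\g)))\b{y}^{\e}$ a polynomial of constant term $1$, hence divisible by no $y_i$. Because $B_\Delta$ has full rank, the uniqueness clause of \eqref{eq:z} applies and gives $\g(C_W(\g))=\g$; thus the elements $C_W(\g)$, $\g\in G(\Delta,W)$, have pairwise distinct $\g$-vectors, so Lemma~\ref{L:independent} makes them linearly independent over $k$, and in particular $C_W$ is injective.

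For assertion (i) the plan is to show that $C_W$ intertwines the Derksen--Weyman--Zelevinsky mutation of decorated representations \cite{DWZ1,DWZ2} with seed mutation: if $\mc{M}$ has $\g$-vector $\g$ and $\mu_k\mc{M}$ has $\g$-vector $\g'$, then the Laurent expansion of $C_W(\g)$ in the cluster $\mu_k(\b{x})$ equals $C_{\mu_k W}(\g')$, with the correct integral dependence on the frozen variables. This is the generic-character counterpart of the $F$-polynomial mutation formula \cite{FZ4,DWZ2}. Carrying it out amounts to checking that a generic presentation in $\PHom_J(\g)$ has cokernel whose quotient-Grassmannian Euler characteristics are precisely those dictated by that formula, and that this genericity is transported by $\mu_k$; here the nondegeneracy hypothesis on $(\Delta,W)$ is used, to keep every mutation of the IQP defined. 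Granting this, $C_W(\g)$ has the required Laurent expansion in $\mc{L}_{\b{x}'}$ for every seed $(\Delta',\b{x}')\sim(\Delta,\b{x})$, hence $C_W(\g)\in\bigcap_{\b{x}'}\mc{L}_{\b{x}'}=\uca(\Delta)$.

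For assertion (iii): the initial cluster variable $x_i$ equals $C_W(\e_i)$, since $\PHom_J(\e_i)=\Hom_J(P_i,0)=0$ forces $\Coker(\e_i)=0$ and $C_W(\e_i)=\b{x}^{\e_i}=x_i$; likewise any monomial in the initial cluster variables, times any monomial in the frozen variables, is $C_W$ of the corresponding nonnegative combination of unit vectors, the cokernel still being zero. An arbitrary cluster monomial lies in some seed $(\Delta',\b{x}')\sim(\Delta,\b{x})$; mutating the IQP $(\Delta,W)$ along the matching sequence $\bs{\mu}$ turns it into a monomial in the initial variables of the mutated IQP, so by the mutation compatibility of step (i) it is the value of $C_W$ at the $\g$-vector obtained by the inverse $\g$-vector transformation --- which lies in $G(\Delta,W)$ because its cokernel is zero. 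Hence every cluster monomial is in the image of $C_W$.

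The main obstacle is the genericity bookkeeping inside step (i): proving that the cokernel of a generic element of $\PHom_J(\g)$ mutates to that of a generic element of $\PHom_{\mu_kJ}(\g')$, and that the Euler characteristics of its quotient Grassmannians transform exactly as the $F$-polynomial recursion demands. Everything else above is then formal. This is precisely the analysis carried out in \cite{Fs1} (compare \cite{P}), which is why the result is quoted rather than reproved here.
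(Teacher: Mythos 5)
The paper imports this result by citation (to \cite[Corollary~5.14]{Fs1}, cf.\ \cite{P}) and gives no proof of it, so there is no in-paper argument to compare your sketch against; what follows is a critique of the sketch on its own terms.

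Your assertion (ii) is fine. But there is a concrete error in assertion (iii). You claim that the $\g$-vector $\g_0$ assigned to a cluster monomial of a mutated seed $(\Delta',\b{x}')$ lies in $G(\Delta,W)$ ``because its cokernel is zero.'' It is not zero. After pulling back through the mutation sequence, the decorated representation of $J(\Delta,W)$ that the DWZ correspondence attaches to $\g_0$ is the image of a negative simple $(0,S_u^+)$ under the inverse sequence of mutations; its module part is a nonzero rigid indecomposable whenever the cluster variable lies outside the initial seed, and $\Coker(\g_0)$ is exactly this module. The correct reason $\g_0\in G(\Delta,W)$ is different: DWZ mutation at a mutable vertex $v$ alters the module part of a decorated representation only at $v$, so $\mu$-supportedness is preserved along any mutation sequence, and the module part of $(0,S_u^+)$ is trivially $\mu$-supported, hence so is the reachable module. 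One must also use that a reachable rigid presentation is \emph{general} in $\PHom_J(\g_0)$ (this is \cite{DF}), so that $\Coker(\g_0)$ really equals that reachable module; none of this is conveyed by ``cokernel is zero,'' which is simply false.

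A secondary point: the paper's $\mc{L}_{\b{x}}$ is Laurent in mutable variables but \emph{polynomial} in frozen ones, and membership in $\uca(\Delta)=\bigcap\mc{L}_{\b{x}'}$ therefore demands frozen-polynomiality in \emph{every} seed. Your step (i) as written establishes only Laurentness; the frozen-polynomiality is exactly where the hypothesis $\g\in G(\Delta,W)$ does its real work, and compressing it to ``the correct integral dependence on the frozen variables'' skips over the principal nontrivial content of \cite[Corollary~5.14]{Fs1}.
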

%

\begin{definition} \label{D:model} We say that an IQP $(\Delta,W)$ {\em models} an algebra $\mc{C}$ if the generic cluster character maps $G(\Delta,W)$ (bijectively) onto a basis of $\mc{C}$.
If $\mc{C}$ is the upper cluster algebra $\uca(\Delta)$, then we simply say that $(\Delta,W)$ is a {\em cluster model}.
\end{definition}

\begin{remark} \label{r:cluster_model}
Suppose that $(\Delta,W)$ and $(\Delta',W')$ are $\mu$-right-equivalent.
By Remark \ref{r:mu_equi}, $\mc{R}ep^\mu(J)$ and $\mc{R}ep^\mu(J')$ are equivalent via some isomorphism $\varphi: k\Delta^\mu\to k(\Delta')^\mu$.
By abuse of notation we denote the equivalence also by $\varphi$.
Since $\varphi\mid_{\Delta_0}=\Id$, $\varphi(M)$ and $M$ are isomorphic and have the same $\g$-vector.
We see from \eqref{eq:genCC} that if $(\Delta,W)$ is a cluster model, then so is $(\Delta',W')$.
\end{remark}


\section{iARt QPs} \label{S:iARtQP}

\subsection{The iARt QP $(\Delta_Q^2,W_Q^2)$} \label{ss:iARt2}
For the time being, let us assume $Q$ is a trivially valued Dynkin quiver.
A {\em translation triangle} in an iARt quiver is an oriented cycle of the form
$$\vcenter{\xymatrix@R=5ex@C=5ex{
& M \ar[dr]\\
\tau L \ar[ur] &&  L \ar@{-->}[ll]
}}$$
For each iARt quiver $\DCQ$, we define the potential $W_Q^2$ as an alternating sum of all translation triangles.
We make this more precise as follows.
We can also label each non-neutral $f=\tau^{-t} \zero_i^-$ by the pair $(i,t)^*=(i^*(f),t^*(f))$.
The arrows of $\DCQ$ are thus classified into three classes
\begin{align*}
& \emph{Type $A$ arrows} & (i,t)^* &\to (j,t+1)^*,\ f(S_i)\to \Id_i;\\
& \emph{Type $B$ arrows} & (i,t)^* &\to (j,t)^*,\ \Id_i\to \tau^{-1}f(S_i); \\
& \emph{Type $C$ arrows} & (i,t)^* &\dashrightarrow (i,t-1)^*.
\end{align*}
Let $\dot{a}$ (resp. $\dot{b}$ and $\dot{c}$) denote the sum of all type $A$ ($B$ and $C$) arrows.
The potential $W_Q^2$ is defined as $\dot{a}\dot{c}\dot{b}-\dot{a}\dot{b}\dot{c}$.
Thus the Jacobian ideal is generated by the elements
\begin{align}
\label{eq:rel} & e_u(\dot{a}\dot{c}-\dot{c}\dot{a})e_v,\ e_u(\dot{c}\dot{b}-\dot{b}\dot{c})e_v,\\
\label{eq:rel'} & e_u(\dot{b}\dot{a}-\dot{a}\dot{b})e_v. & &  
\end{align}
\noindent Let $J:=J(\Delta_Q^2,W_Q^2)$ be the Jacobian algebra of $(\Delta_Q^2,W_Q^2)$.
For the rest of this section, we denote a single arrow by the lowercase letter of its type with some superscript (eg. $a$ and $a'$).
We observe that for each non-neutral vertex in $\Delta_Q^2$ there are exactly one incoming arrow and one outgoing arrow of type $C$.
Moreover, if non-neutral $u$ and $v$ are connected by an arrow of type $B$ or $A$, then the relations \eqref{eq:rel} say that 
\begin{equation} \label{eq:movec} \tag{5.1.1}
\text{$e_u ac e_v= e_u c'a' e_v\ $ or $\ e_u cb e_v= e_ub'c'e_v$ \  for some $a,a',b,b',c,c'$.}
\end{equation}
In general, the relations \eqref{eq:rel'} do not have a similar implication because there is a trivalent vertex for $Q$ of type $D$ or $E$.
We have that 
\begin{equation} \label{eq:exchangeab} \tag{5.2.1}
\text{$e_u ba e_v= \sum_{a',b'} (e_u a'b' e_v$ or $e_u b'a' e_v$).}
\end{equation}
If $u$ and $v$ are not trivalent, then the right sum has only one summand.
If both $u$ and $v$ are negative (resp. positive), then some translation arrows are undefined so the relations \eqref{eq:movec} reduce to the following 
\begin{equation} \tag{5.1.2} \label{eq:relpm}
\text{$e_uac e_v=0$ (resp. $e_u ca e_v= 0$ or $e_u cb e_v= 0$).}
\end{equation}
Similarly if $u$ (resp. $v$) is neutral, then
\begin{equation} \tag{5.1.3} \label{eq:relne}
\text{$e_u bc e_v= 0$ (resp. $e_u ca e_v=0$).}
\end{equation}

\begin{lemma} \label{L:fdrigid} The IQP $(\DCQ,W_{Q}^2)$ is rigid and $J(\DCQ,W_{Q}^2)$ is finite-dimensional.
\end{lemma}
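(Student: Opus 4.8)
The plan is to prove the two assertions in sequence: finite-dimensionality of $J:=J(\DCQ,W_Q^2)$ first, so that the completion becomes superfluous and $\partial W_Q^2$ is literally the ideal generated by the relations \eqref{eq:rel}--\eqref{eq:rel'}, and then rigidity. Throughout I would use that $\DCQ$ is a \emph{finite} quiver: as $Q$ is of Dynkin type, $C^2Q$ has finitely many indecomposables by Corollary \ref{C:indK2}, so $\Delta(C^2Q)$, hence $\DCQ$, has finitely many vertices and arrows. Note also that by the definition of $W_Q^2=\dot a\dot c\dot b-\dot a\dot b\dot c=\dot a(\dot c\dot b-\dot b\dot c)$ the potential itself lies in $\partial W_Q^2$, a fact used in the rigidity step.

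For finite-dimensionality I would show that every sufficiently long path dies in $J$, by reducing paths to a normal form. Each non-neutral vertex has exactly one incoming and one outgoing type-$C$ arrow, so the type-$C$ arrows assemble into a disjoint union of finite directed chains (the $\tau$-orbits), of length at most $|C^2Q|$. Relation \eqref{eq:movec} transports a type-$C$ arrow past an adjacent type-$A$ or type-$B$ arrow, and \eqref{eq:relpm}--\eqref{eq:relne} say that whenever such a transport would run into a frozen vertex the path is $0$; hence, modulo $\partial W_Q^2$, every path is a $k$-linear combination of paths of the shape (a stretch of type-$C$ arrows)$\,\cdot\,$(a stretch of morphism arrows), or is zero. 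The first stretch has bounded length. For the second, the subquiver of $\DCQ$ spanned by the morphism arrows is acyclic: by Proposition \ref{P:iARtDCQ} it is obtained from the (acyclic) morphism part of $\Delta(Q)$ by adjoining the positive vertices, which form a copy of $Q^{\opp}$ receiving but not emitting arrows back to the old vertices, and the neutral vertices, which only insert the shortcuts $f(S_i)\to\Id_i\to\tau^{-1}f(S_i)$ compatible with the Auslander--Reiten order. A finite acyclic quiver has bounded path length, and the rewriting of morphism stretches caused by \eqref{eq:exchangeab} terminates (order morphism paths by, e.g., the lexicographic record of the ``$t$''-coordinates visited). So paths past an explicit length bound vanish in $J$ and $\dim_k J<\infty$.

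For rigidity I would show that every cyclic path in $\DCQ$ is cyclically equivalent to an element of $\partial W_Q^2$, arguing by induction on length (legitimate now that $J$ is finite-dimensional). Reading \eqref{eq:rel}--\eqref{eq:rel'} as rewriting rules on cyclic words and working modulo $\partial W_Q^2$, one straightens an arbitrary cycle by collecting all its type-$C$ arrows into a single stretch with \eqref{eq:movec} and normalizing the remaining morphism stretch with \eqref{eq:exchangeab}; tracking the ``$t$''-coordinate (raised by type $A$, fixed by type $B$, lowered by type $C$) then forces the surviving cyclic words to be $k$-linear combinations of translation triangles. It remains to see each translation triangle is itself cyclically equivalent to an element of $\partial W_Q^2$: the triangles abutting the frozen vertices lie in $\partial W_Q^2$ because the relations \eqref{eq:relpm}--\eqref{eq:relne} degenerate there, and one propagates inward along the cyclic derivatives $\partial_a W_Q^2$, each of which links a triangle to its neighbours (the factorization $W_Q^2=\dot a(\dot c\dot b-\dot b\dot c)$ giving the global consistency). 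This yields rigidity, whence also nondegeneracy by \cite[Proposition 8.1, Corollary 6.11]{DWZ1}.

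The step I expect to be hardest is the behaviour at the trivalent vertices occurring for $Q$ of type $D$ or $E$: there \eqref{eq:exchangeab} (equivalently \eqref{eq:rel'}) is not a clean ``move-past'' identity but a multi-term mesh relation, so the straightening of morphism stretches is not obviously confluent and the reduction of cycles to translation triangles must be checked by a local analysis around each trivalent vertex -- the type-$A$ treatment in \cite{Fs1} does not encounter this. The same multi-term relation is the only delicate point in making the termination part of the finite-dimensionality argument rigorous, where a suitable height function on morphism paths should suffice.
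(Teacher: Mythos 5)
Your finite-dimensionality argument is essentially the paper's: modulo \eqref{eq:movec} and the degenerate relations \eqref{eq:relpm}--\eqref{eq:relne}, every path is congruent to (type-$C$ stretch)$\cdot$(morphism stretch) or to zero; the paper then identifies the surviving paths inside $\bigoplus_{t}\Hom_{C^2Q}(\tau^t f,g)$ rather than bounding lengths by acyclicity, but either way this half is fine (and your worry about confluence of \eqref{eq:exchangeab} is moot, since one only needs a spanning set of bounded length, not a normal form).

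The rigidity half has a genuine gap. All generators \eqref{eq:rel}--\eqref{eq:rel'} of the Jacobian ideal are homogeneous of path length two, so both $\partial W_Q^2$ and the commutator subspace are graded by path length; consequently a cycle of length greater than three can never be cyclically equivalent to a linear combination of translation triangles --- it has to be shown to lie in $\partial W_Q^2$ outright. Your $t$-coordinate bookkeeping does not force this: a length-six cycle with two arrows each of types $A$ and $C$ and two of type $B$ passes the count and is untouched by your reduction. Moreover, even if every individual translation triangle were shown to be cyclically equivalent to an element of $\partial W_Q^2$ (your boundary-plus-propagation step), that is a statement about the cyclic class of a closed length-three word and cannot be applied to a triangle-shaped subpath sitting inside a longer cycle, so it would not dispose of those longer cycles. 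The paper supplies exactly the missing mechanism: it reduces an arbitrary cycle, via \eqref{eq:movec} and \eqref{eq:exchangeab}, to sums of concatenations of \emph{open} segments $e_u acb e_{u'}$ with $u,u'$ mutable, and then proves each such segment is \emph{zero} in the Jacobian algebra by transporting it, with two applications of \eqref{eq:movec} per step, along the type-$C$ arrows until it reaches a negative vertex, where \eqref{eq:relpm} kills it. Being a vanishing statement about an open path, this can be used locally inside any cycle; your plan needs this (or an equivalent) vanishing statement, and the propagation along the derivatives $\partial_a W_Q^2$, besides being unverified at the trivalent vertices, only yields the weaker cyclic-equivalence assertion.
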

\begin{proof} To show that $J(\DCQ,W_{Q}^2)$ is finite-dimensional, it suffices to observe that any nonzero path from $f$ to $g$ in $J$ can be uniquely identified as an element in $\bigoplus_{t=0}^{t^*(f)} \Hom_{C^2Q}(\tau^t f, g)$.
Indeed, suppose that $p$ is a path from $f$ to $g$. By \eqref{eq:exchangeab} and \eqref{eq:relne}, we can make $p$ avoid any neutral vertex.
By \eqref{eq:movec} we can move all arrows of type $C$ to the left. If we remove all arrows of type $C$, then the truncated path can be interpreted as a morphism from $\tau^t f$ to $g$.
	
Due to relations \eqref{eq:movec} and \eqref{eq:exchangeab}, any cycle in the Jacobian algebra is equivalent to a sum of composition of cycles $e_u(acb)e_{u'}$ with $u$ and $u'$ mutable. 
It suffices to show that each $e_uacbe_{u'}$ is in fact zero in the Jacobian algebra. 
Applying the relation \eqref{eq:movec} twice (if $u$ is not negative), we see that $e_uacbe_{u'}$ is equivalent to $e_{w}a'c'b'e_u$ where $u$ and $w$ are connected by an arrow of type $C$.
If $u$ is mutable, then there is some negative vertex $v$ connected to $u$ by arrows of type $C$.
So $e_uacbe_{u'}$ is equivalent to $e_va''c''b''$, which is zero by \eqref{eq:relpm}. 
\end{proof}

We delete all translation arrows of $\Delta_Q^2$, and obtain a subquiver denoted by $\wedge_Q^2$.
Let $R$ be the direct sum of all presentations in $\ind(C^2Q)$.
It is well-known that the {\em Auslander algebra} $A_{Q}^2:=\End_{C^2Q}(R)$ is equal to $k{\wedge}_Q^2$ modulo the {\em mesh relations} \cite[VII.1]{ARS}.
The Auslander algebra $A_{Q}^2$ is the quotient of Jacobian algebra $J$ by the ideal generated by translation arrows.

Let $f:P_+ \xrightarrow{} P_-$ be a presentation in $\ind(C^2Q)$.
We denote by $\P_f$ (resp. $\I_f$) the indecomposable projective (resp. injective) representation of $J$ corresponding to the vertex $f$.
\begin{lemma} \label{L:iARt_path} We have the following for the module $\P_f$ in $J(\Delta_Q^2,W_Q^2)$
\begin{enumerate}
\item $\P_f(\zero_i^-)\cong \Hom_Q(P_{i^*(f)}, P_i)$;
\item $\P_f(\zero_i^+)\cong \Hom_Q(P_+,P_i)$;
\item $\P_f(\Id_i)\cong \Hom_Q(P_-, P_i)$.
\end{enumerate}
\end{lemma}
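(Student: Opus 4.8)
The plan is to compute $\P_f$ directly as a space of paths. Writing $J=J(\Delta_Q^2,W_Q^2)$, the indecomposable projective $\P_f=e_f J$ (all modules being right modules) has $\P_f(g)=e_f J e_g$ at each vertex $g$, namely the span of the classes of paths $f\to g$ in $J$; so the lemma asks for these path spaces at the three kinds of frozen vertex. The basic tool is the normal form already extracted in the proof of Lemma~\ref{L:fdrigid}: using \eqref{eq:movec}, \eqref{eq:exchangeab} and \eqref{eq:relne} one pushes every translation ($C$-type) arrow of a path to the front, so $\P_f(g)$ is spanned by classes $c^t q$ with $c^t$ the translation string $f\dashrightarrow\tau f\dashrightarrow\cdots\dashrightarrow\tau^t f$ ($0\leqslant t\leqslant t^*(f)$) and $q$ a morphism $\tau^t f\to g$ in $C^2Q$, i.e.\ a class of $e_{\tau^t f}A_Q^2 e_g=\Hom_{C^2Q}(\tau^t f,g)$. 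Under $J\twoheadrightarrow A_Q^2$ the $C$-free classes map isomorphically onto $\Hom_{C^2Q}(f,g)$, so $\P_f(g)\cong\Hom_{C^2Q}(f,g)\oplus K_g$ with $K_g$ spanned by the classes of paths $f\to g$ using at least one translation arrow; the point in each case is to identify $K_g$.

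For $g=\Id_i$ I would show $K_{\Id_i}=0$. The unique arrow of $\Delta_Q^2$ into $\Id_i$ is the $A$-type arrow $f(S_i)\to\Id_i$, and $f(S_i)$ carries a unique translation arrow $c$ into it, with $c$ followed by $f(S_i)\to\Id_i$ zero by \eqref{eq:relne}. Given a path $f\to\Id_i$ using a translation arrow, sliding its last translation arrow rightward past the intervening $A$- and $B$-type arrows via \eqref{eq:movec} — the intermediate vertices being non-neutral, so the slide is legitimate, and $\tau^{-1}\Id_i$ being undefined, so the slide cannot escape past $f(S_i)$ — lands that arrow on $c$, whence the path is killed by \eqref{eq:relne}. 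Hence $\P_f(\Id_i)\cong\Hom_{C^2Q}(f,\Id_i)\cong\Hom_Q(P_-,P_i)$ by Lemma~\ref{L:HomK2}(4). The case $g=\zero_i^+$ runs the same way: no translation arrow has target $\zero_i^+$, so sliding the last translation arrow of a path $f\to\zero_i^+$ all the way right produces a factor $(C\text{-type})\cdot(\text{arrow into }\zero_i^+)$, which \eqref{eq:rel} equates to a word that would need a translation arrow into $\zero_i^+$ — hence to $0$; so $\P_f(\zero_i^+)\cong\Hom_{C^2Q}(f,\zero_i^+)\cong\Hom_Q(P_+,P_i)$ by Lemma~\ref{L:HomK2}(2).

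For $g=\zero_i^-$ the roles are reversed, and $K_{\zero_i^-}$ carries the answer. A morphism $h\to\zero_i^-$ in $C^2Q$ is a map into $P_i$ annihilating the differential of $h$, hence factoring through $\Coker h$, so $\Hom_{C^2Q}(\tau^t f,\zero_i^-)\cong\Hom_Q(\Coker(\tau^t f),P_i)$. For $0\leqslant t<t^*(f)$ the representation $\Coker(\tau^t f)$ is indecomposable and non-projective, and a hereditary algebra admits no nonzero homomorphism from an indecomposable non-projective module to a projective one (the image of such a map, being a submodule of a projective, is projective, so a nonzero direct summand of the source, forcing the source to be projective); so all these $\Hom$'s vanish. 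Only $t=t^*(f)$ survives, and there $\tau^{t^*(f)}f=\zero_{i^*(f)}^-$ gives $\Hom_{C^2Q}(\zero_{i^*(f)}^-,\zero_i^-)\cong\Hom_Q(P_{i^*(f)},P_i)$ by Lemma~\ref{L:HomK2}(1); a word $c^{t^*(f)}q$ is already in normal form and is annihilated by no Jacobian relation — these relations preserve the number of $C$-type arrows and cannot create a vanishing factor along a word running $f\dashrightarrow\cdots\dashrightarrow\zero_{i^*(f)}^-\to\cdots\to\zero_i^-$ through classical presentations only — so this contribution is genuine. Assembling the summands gives $\P_f(\zero_i^-)\cong\Hom_Q(P_{i^*(f)},P_i)$ in all cases (for $t^*(f)\geqslant 1$ the $C$-free part $\Hom_{C^2Q}(f,\zero_i^-)$ vanishes, while if $f=\zero_{i^*(f)}^-$ then $t^*(f)=0$ and this is exactly the $C$-free part).

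I expect the one genuinely delicate step to be the path bookkeeping behind the $\Id_i$, $\zero_i^+$ and $\zero_i^-$ cases: making the two ``slide the final translation arrow to the right'' arguments and the non-annihilation of $c^{t^*(f)}q$ rigorous amounts to rerunning the local analysis of paths from the proof of Lemma~\ref{L:fdrigid}, now tracking exact dimensions rather than a bare finiteness bound, in combination with the description in Proposition~\ref{P:iARtDCQ} of the arrows entering each frozen vertex. Everything else is an immediate appeal to Lemma~\ref{L:HomK2} and to elementary facts about $\Rep(Q)$.
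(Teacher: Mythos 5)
Your proposal is correct and follows essentially the same route as the paper: it uses the path normal form extracted from the proof of Lemma \ref{L:fdrigid}, kills every path through a translation arrow ending at a positive or neutral vertex via the relations \eqref{eq:rel}, \eqref{eq:relpm}, \eqref{eq:relne}, identifies the surviving classes with morphism spaces in $C^2Q$, and concludes with Lemma \ref{L:HomK2}, exactly as the paper does (for part (1) you simply rederive the vanishing $\Hom_{C^2Q}(\tau^t f,\zero_i^-)=0$ for $t<t^*(f)$ from the hereditary property instead of quoting it). The extra detail you supply (the explicit sliding of the last translation arrow and the $C$-degree homogeneity of the Jacobian relations) only fleshes out steps the paper states tersely.
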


\begin{proof} (1). Recall from the proof of Lemma \ref{L:fdrigid} that $\P_f(\zero_i^-)\subset \bigoplus_{t=0}^{t^*(f)} \Hom_{C^2Q}(\tau^t f, \zero_i^-)$.
We know that $\Hom_{C^2(Q)}(f,\zero_i^-)=0$ unless $f$ is negative. This implies that $\P_f(\zero_i^-)\subset \Hom_Q(P_{i^*(f)}, P_i)$.
Conversely, we identify an element in $\Hom_Q(P_{i^*(f)}, P_i)$ by a path consisting solely of arrows of type $B$. 
By adjoining $t^*(f)$ arrows of type $C$ we get a path from $f$ to $\zero_i^-$, which is easily seen to be nonzero.

(2). We observe that any path from $f$ to $\zero_i^+$ containing a translation arrow must be equivalent to zero due to the relations \eqref{eq:rel} and \eqref{eq:relpm}.
So $\P_f(\zero_i^+)$ is the same as $\P_f(\zero_i^+)$ restricted to the Auslander algebra.
The result follows from Lemma \ref{L:HomK2}.(4).

(3). Similar to (2), any path from $f$ to $\Id_i$ containing a translation arrow must be equivalent to zero.
The result follows from Lemma \ref{L:HomK2}.(2).
\end{proof}

\subsection{The Cone ${\sf G}_{\Delta_Q^2}$} \label{ss:cone}
We consider the following set of representations $T_v$
\begin{align}
\label{eq:injres1} & 0\to T_v \to \I_v \to \bigoplus_{i\to j} \I_{\zero_j^-} & & \text{for $v=\zero_i^-$},\\
\label{eq:injres2} & 0\to T_v \to \I_v \to \bigoplus_{i\to j} \I_{\zero_j^+} & & \text{for $v=\zero_i^+$},\\
\label{eq:injres3} & 0\to T_v \to \I_v \to \bigoplus_{i\to j} \I_{\Id_j} & & \text{for $v=\Id_i$}.
\end{align}
The maps in \eqref{eq:injres1} and \eqref{eq:injres2} are canonical, that is, the map from $\I_{\zero_i^\pm}$ to $\I_{\zero_j^\pm}$ is given by the morphism arrow $\zero_j^\pm\to \zero_i^\pm$.
For the map from $\I_{\Id_i}$ to $\I_{\Id_j}$, let us recall from Lemma \ref{L:AR_f}.(3) that $\Hom(\I_{\Id_i},\I_{\Id_j})\cong \P_{\Id_j}(\Id_i)\cong \Hom_Q(P_j, P_i)$. 
We take the map to be the irreducible map in $\Hom_Q(P_j, P_i)$.
It will follow from the proof of Theorem \ref{T:Tv} that the rightmost maps in \eqref{eq:injres1}--\eqref{eq:injres3} are all surjective.

For $j\in Q_0$, let $j^*=i^*(\zero_j^+)\in Q_0$.
It is well-known that $j\mapsto j^*$ is a (possibly trivial) involution.
The involution does not depend on the orientation of $Q$. Its formula is listed in \cite[Section 2.3]{GLSa}.
For any map between projective modules $f:P(\beta_1)\to P(\beta_2)$,
the {\em $i$-$th$ top restriction} of $f$ is the induced map $\op{top} P(\beta_1(i)) \to \op{top}P(\beta_2(i))$.
\begin{theorem} \label{T:Tv} We have the following description for the modules $T_v$. \begin{enumerate}
\item The module $T_{\zero_{i}^-}$ is the indecomposable module supported on all vertices translated from $\zero_{i^*}^+$ with dimension vector $(1,1,\dots,1)$;
\item The defining linear map $f\to g$ in $T_{\zero_{i}^+}$ is given by the $i$-th top restriction of $\varphi_+$;
\item The defining linear map $f\to g$ in $T_{\Id_{i}}$ is given by the $i$-th top restriction of $\varphi_-$,
\end{enumerate}
where $\varphi=(\varphi_+,\varphi_-)$ is the irreducible morphism from $f$ to $g$.

In particular, the dimension vector $\theta_v$ of $T_v$ is given by \begin{align*}
& \theta_v(f) = \e(f)(i^*) & \text{for } v=\zero_i^-,\\
& \theta_v(f) = \f_+(i) & \text{for } v=\zero_i^+,\\
& \theta_v(f) = \f_-(i) & \text{for } v=\Id_i.
\end{align*}
\end{theorem}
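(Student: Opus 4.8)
The plan is to compute each $T_v$ directly from its defining short exact sequence \eqref{eq:injres1}--\eqref{eq:injres3} by evaluating on each indecomposable presentation $f$ and extracting the induced linear maps. The key input is Lemma \ref{L:iARt_path}, which identifies the values $\I_v(f)=\P_f(v)$ (via the standard duality $\I_v(f)\cong\P_f(v)^*$, or equivalently working with $\Hom$-spaces directly): namely $\I_{\zero_i^-}(f)\cong\Hom_Q(P_{i^*(f)},P_i)$, $\I_{\zero_i^+}(f)\cong\Hom_Q(P_+,P_i)$, and $\I_{\Id_i}(f)\cong\Hom_Q(P_-,P_i)$. From these formulas the dimension of $\I_v(f)$ at each vertex is immediate, and the dimension vector of $T_v$ will be read off once we know the rightmost map in each sequence is surjective.

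\emph{Step 1: the negative case.} For $v=\zero_i^-$, evaluating \eqref{eq:injres1} at $f$ gives, by Lemma \ref{L:iARt_path}.(1), the complex $0\to T_v(f)\to\Hom_Q(P_{i^*(f)},P_i)\to\bigoplus_{i\to j}\Hom_Q(P_{i^*(f)},P_j)$, where the right-hand map is induced by the canonical maps $\zero_j^-\to\zero_i^-$, i.e.\ by the left-minimal-almost-split map $P_i\to\bigoplus_{i\to j}c_{i,j}P_j$ out of $P_i$ (this is the map $f(S_i)$ read the right way). Since $Q$ is hereditary Dynkin and $P_{i^*(f)}$ is projective indecomposable, $\Hom_Q(P_{i^*(f)},-)$ is exact on the defining sequence $0\to S_{i^*}\to P_{i^*}\to\dots$; after untangling, $T_v(f)$ is $\Hom_Q(P_{i^*(f)},S_i)$, which is $k$ if $i^*(f)=i$ and $0$ otherwise. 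But $i^*(f)=i$ exactly when $f$ is translated from $\zero_{i}^+$, equivalently (using the involution $j\mapsto j^*$) from $\zero_{i^*}^+$ in the indexing of the theorem. This yields that $T_{\zero_i^-}$ has dimension $1$ on precisely the vertices translated from $\zero_{i^*}^+$ and $0$ elsewhere; indecomposability follows because its support is a single $\tau$-orbit, which is a directed path in the iARt quiver, and the mesh/translation relations force all the connecting maps to be isomorphisms. This also proves surjectivity of the rightmost map in \eqref{eq:injres1} by a dimension count.

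\emph{Step 2: the positive and neutral cases.} For $v=\zero_i^+$, evaluate \eqref{eq:injres2} at $f:P_+\to P_-$ to get $0\to T_v(f)\to\Hom_Q(P_+,P_i)\to\bigoplus_{i\to j}\Hom_Q(P_+,P_j)$, the map again induced by $P_i\to\bigoplus c_{i,j}P_j$. Applying $\Hom_Q(P_+,-)$ to $0\to S_i\to P_i\to\bigoplus c_{i,j}P_j$ (exactness by projectivity of $P_+$ in the hereditary setting, or rather the relevant piece of it), $T_v(f)\cong\Hom_Q(P_+,S_i)=\op{top}P_+(i)$, which has dimension $\f_+(i)$; tracing an irreducible morphism $\varphi=(\varphi_+,\varphi_-):f\to g$ through the functorial sequences shows the structure map $T_v(f)\to T_v(g)$ is exactly the $i$-th top restriction of $\varphi_+$, which is assertion (2). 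The neutral case $v=\Id_i$ is identical with $P_+$ replaced by $P_-$ via Lemma \ref{L:iARt_path}.(3): $T_v(f)\cong\op{top}P_-(i)$ of dimension $\f_-(i)$, with structure map the $i$-th top restriction of $\varphi_-$, giving (3). In both cases the dimension count forces the rightmost maps in \eqref{eq:injres2}--\eqref{eq:injres3} to be surjective, completing that deferred claim.

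\textbf{The main obstacle} I anticipate is bookkeeping the surjectivity and exactness at the right-hand end uniformly across all four kinds of indecomposable presentations (Corollary \ref{C:indK2}) --- in particular handling the boundary behavior at negative, positive, and neutral frozen vertices, where some translation arrows are missing and the relations degenerate to \eqref{eq:relpm}--\eqref{eq:relne}, so that the naive "mesh exactness" argument must be replaced by the appropriate truncated version. A secondary subtlety is confirming that the map chosen in \eqref{eq:injres3} (the irreducible map in $\Hom_Q(P_j,P_i)$) is the correct one making the sequence exact, rather than merely a map with the right source and target; this should follow by comparing with the explicit description of arrows among neutral vertices in Proposition \ref{P:iARtDCQ}.(3) and the identification $\Hom(\I_{\Id_i},\I_{\Id_j})\cong\Hom_Q(P_j,P_i)$.
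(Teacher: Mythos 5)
Your proposal follows essentially the same route as the paper's proof: evaluate the defining sequences \eqref{eq:injres1}--\eqref{eq:injres3} at each indecomposable $f$, identify the values via Lemma \ref{L:iARt_path}, and use the radical sequence of $P_i$ together with projectivity of $P_\pm$ to conclude $T_v(f)\cong \Hom_Q(P_\bullet,S_i)$ (up to duality), with the maps and surjectivity read off from naturality. One bookkeeping correction: since $\I_v(f)\cong\Hom_Q(P_\bullet,P_i)^*$, the arrows in your evaluated complexes point the wrong way as written, and the auxiliary sequence you invoke should be the radical sequence $0\to\bigoplus_{(i,j)\in Q_1}c_{j,i}P_j\to P_i\to S_i\to 0$ (not ``$0\to S_i\to P_i\to\cdots$''); applying $\Hom_Q(P_\bullet,-)$ and then dualizing gives exactly the paper's identification $T_v(f)=\Hom_Q(P_\bullet,S_i)^*$, so your dimension counts, map descriptions, and the surjectivity claim all stand once this is fixed.
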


\begin{proof} By Lemma \ref{L:iARt_path}.(1), we can identify $\I_{\zero_i^-}(f)$ with $\Hom_{Q}(P_{i^*(f)},P_i)^*$.
From the definition of $T_{\zero_i^-}$ and the exact sequence
$$ 0 \to \bigoplus_{i\to j} \Hom_{Q}(P_{i^*(f)},P_j) \rightarrow \Hom_{Q}(P_{i^*(f)},P_i) \rightarrow \Hom_{Q}(P_{i^*(f)},S_i)\to 0 ,$$
we conclude that $T_{\zero_i^-}(f) = \Hom_Q(P_{i^*(f)},S_i)^*$.

By Lemma \ref{L:iARt_path}.(2), we can identify $\I_{\zero_i^+}(f)$ with $\Hom_{Q}(P_+,P_i)^*$.
From the definition of $T_{\zero_i^+}$ and the exact sequence
$$ 0 \rightarrow \bigoplus_{i\to j} \Hom_{Q}(P_+,P_j) \rightarrow \Hom_{Q}(P_+,P_i) \rightarrow \Hom_{Q}(P_+,S_i)\to 0,$$
we conclude that $T_{\zero_i^+}(f) = \Hom_Q(P_+,S_i)^*$.
The description of maps follows from the naturality.

By Lemma \ref{L:iARt_path}.(3), we can identify $\I_{\Id_i}(f)$ with $\Hom_{Q}(P_-,P_i)^*$.
From the definition of $T_{\Id_i}$ and the exact sequence
$$ 0 \rightarrow \bigoplus_{i\to j} \Hom_{Q}(P_-,P_j) \rightarrow \Hom_{Q}(P_-,P_i) \rightarrow \Hom_{Q}(P_-,S_i)\to 0,$$
we conclude that $T_{\Id_i}(f) = \Hom_Q(P_-,S_i)^*$ and the description of maps follows from the naturality.
\end{proof}

\begin{definition} A vertex $v$ is called {\em maximal} in a representation $M$
if $\dim M(v)=1$ and all strict subrepresentations of $M$ are not supported on $v$.
\end{definition}
\noindent We note that every $T_v$ contains a maximal vertex $w$, which is another frozen vertex.
For example, $w=\zero_{i^*}^+$ (resp. $\Id_i$ and $\zero_i^-$) for $v=\zero_i^-$ (resp. $\zero_i^+$ and $\Id_i$).
For the rest of this article, when we write $\g(-)$, we view $\g$ as a linear functional via the usual dot product.

\begin{lemma} \cite[Lemma 6.5]{Fs1} \label{L:hom=0} Suppose that a representation $T$ contains a maximal vertex. Let $M=\Coker(\g)$, then $\Hom_J(M,T)=0$ if and only if $\g(\dv S)\geq 0$ for all subrepresentations $S$ of $T$.
\end{lemma}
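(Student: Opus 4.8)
The plan is to read off $\Hom_J(M,T)$ from the minimal projective presentation $P([\g]_+)\xrightarrow{f}P([-\g]_+)\to M\to 0$ defining $M=\Coker(\g)$, where $f$ may be taken general in $\PHom_J(\g)$. Applying $\Hom_J(-,N)$ and using $\Hom_J(P_u,N)\cong N(u)$ yields, for every representation $N$ of $J$, an exact sequence $0\to\Hom_J(M,N)\to\Hom_J(P([-\g]_+),N)\xrightarrow{f^*}\Hom_J(P([\g]_+),N)$; thus $\Hom_J(M,N)=\Ker f^*$ and $\dim\Hom_J(M,N)\ge[-\g]_+\cdot\dv N-[\g]_+\cdot\dv N=-\g(\dv N)$.

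The ``only if'' direction then costs nothing and needs neither genericity nor a maximal vertex: if $\Hom_J(M,T)=0$, then for each subrepresentation $S\subseteq T$ the monomorphism $S\hookrightarrow T$ induces $\Hom_J(M,S)\hookrightarrow\Hom_J(M,T)=0$, and the displayed inequality with $N=S$ forces $\g(\dv S)\ge 0$.

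For the ``if'' direction it suffices to produce one $f\in\PHom_J(\g)$ with $\Ker f^*=0$, since $\{f:\Hom_J(\Coker f,T)=0\}$ is open and hence equals the generic locus once it is nonempty. I would count dimensions on the incidence variety $X=\{(f,[\psi])\in\PHom_J(\g)\times\mathbb{P}(\Hom_J(P([-\g]_+),T)):\psi\circ f=0\}$, stratified by the subrepresentation $U=\Img\psi\subseteq T$. A stratum is nonempty exactly when $U$ is a quotient of $P([-\g]_+)$ — by projectivity the surjections $P([-\g]_+)\twoheadrightarrow U$ are then dense in $\Hom_J(P([-\g]_+),U)$ — and over such a $U$ the fibre of $X$ above $[\psi]$ is the linear space $\Hom_J(P([\g]_+),\Ker\psi)$; a short computation gives the stratum dimension $\dim\PHom_J(\g)-1-\g(\dv U)$. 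Hence $X\to\PHom_J(\g)$ is not dominant — equivalently, the generic $f$ has $\Ker f^*=0$ — precisely when $\g(\dv U)\ge 0$ for every subrepresentation $U\subseteq T$ that is a quotient of $P([-\g]_+)$, which is implied by the hypothesis $\g(\dv S)\ge 0$ for all $S\subseteq T$. The maximal vertex $w$ of $T$ serves to streamline the enumeration: since every proper subrepresentation of $T$ misses $w$, the module $T$ is cyclic with simple top $S_w$, so the relevant quotient-subrepresentations $U$ are exactly those generated through $w$ and the extremal strata are honest subrepresentations of $T$ rather than subquotients.

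The main obstacle is rigour in the stratified count: checking that each nonempty stratum of $X$ really has the expected dimension and that non-dominance of $X\to\PHom_J(\g)$ genuinely delivers a single $f$ with $\Ker f^*=0$, not merely a dimension inequality. A more hands-on fallback, which I would use if the incidence count proves awkward, is induction on $\dim T$: present $T$ as cyclic with top $S_w$, split $0\to T'\to T\to S_w\to 0$, observe $\dim\Hom_J(M,S_w)=[-\g]_+(w)$ since $\operatorname{top}M=\operatorname{top}P([-\g]_+)$, and choose the $w$-components of $f$ generically to absorb that term before recursing on $T'$; the delicate point there is that $T'$ need not carry a maximal vertex, so one must retain enough of the subrepresentation lattice of $T$ to keep the inductive hypothesis usable.
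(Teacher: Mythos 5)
Your ``only if'' half is correct (and indeed needs neither genericity nor the maximal vertex), and the reduction of the ``if'' half to exhibiting a single $f\in\PHom_J(\g)$ with $\Ker f^*=0$ is fine by upper semicontinuity. The gap is in the incidence count. You stratify $X$ by the individual subrepresentation $U=\Img\psi$, and your bound $\dim\PHom_J(\g)-1-\g(\dv U)$ is correct for each \emph{fixed} $U$; but $U$ can move in a positive-dimensional family (a quiver Grassmannian of submodules of $T$ can be positive-dimensional even when $T$ has a maximal vertex), so what the count actually gives is $\dim X\le\dim\PHom_J(\g)-1+\max_{\e}\bigl(\dim\Gr_{\e}(T)-\g\cdot\e\bigr)$, and non-dominance via dimensions would require $\g\cdot\e\ge\dim\Gr_\e(T)$, strictly stronger than the hypothesis $\g\cdot\e\ge 0$. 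A concrete instance where your argument breaks while the lemma holds: take $J$ the path algebra of $1\rightrightarrows 2\xrightarrow{\ c\ }3$, $T=P_1$ (vertex $1$ is maximal), $\g=(0,-1,1)$. Every subrepresentation dimension vector of $T$, namely $(0,0,1),(0,0,2),(0,1,1),(0,1,2),(0,2,2),(1,2,2)$, pairs nonnegatively with $\g$; yet the submodules of dimension vector $(0,1,1)$ form a $\mathbb{P}^1$ and each is a quotient of $P([-\g]_+)=P_2$, so $\dim X=1=\dim\PHom_J(\g)$ and the projection $X\to\PHom_J(\g)$ cannot be seen to be non-dominant by dimension reasons alone. (It is non-surjective here --- $\Coker(\g)=S_2$ and $\Hom_J(S_2,P_1)=0$ --- but only because the fibres over the image are positive-dimensional, which your count never controls.) So the hypothesis $\g(\dv S)\ge0$ cannot be converted into the inequality your stratified count needs; this is exactly where the cited proof of \cite[Lemma 6.5]{Fs1} has to do real work, and note the present paper does not reprove the lemma but imports it.

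Two further points. Your proposed role for the maximal vertex is misplaced: the relevant images $U$ are quotients of $P([-\g]_+)$, hence have tops supported at vertices where $\g<0$; typically they lie inside the unique maximal proper submodule of $T$ and never pass through $w$, so maximality does not trim the strata in the way you describe (in this paper the maximal vertex is used for a different purpose, e.g.\ in the proof of Theorem \ref{T:GQ2}, where $\dv T_v-\e_w$ being a subrepresentation dimension vector makes the condition $\g(\dv T_v)\ge0$ redundant). Finally, the fallback induction is not yet a proof: as you yourself note, the maximal proper submodule $T'$ need not have a maximal vertex, and the step ``choose the $w$-components of $f$ generically to absorb $[-\g]_+(w)$'' is not justified; until the inductive statement is reformulated so that it survives passage to $T'$ (or the fibre dimensions of $X\to\PHom_J(\g)$ over its image are controlled), the ``if'' direction remains open in your write-up.
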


\begin{lemma} \label{L:TIequi} Let $M=\Coker(\g)$, then $\Hom_J(M,T_v)=0$ for each frozen $v$ if and only if $\Hom_J(M,I_v)=0$ for each frozen $v$.
\end{lemma}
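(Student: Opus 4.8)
The plan is to establish the two implications separately; the reverse one is formal, and the forward one is a short induction on the combinatorics of $Q$. For ``$\Hom_J(M,\I_v)=0$ for all frozen $v$ $\Rightarrow$ $\Hom_J(M,T_v)=0$ for all frozen $v$'', I would simply apply the left-exact functor $\Hom_J(M,-)$ to the inclusion $T_v\hookrightarrow\I_v$ coming from \eqref{eq:injres1}--\eqref{eq:injres3}, obtaining $0\to\Hom_J(M,T_v)\to\Hom_J(M,\I_v)$; vanishing of the right-hand term forces vanishing of the left-hand term, for each frozen $v$ at once. Nothing in this uses that $M=\Coker(\g)$, so I would phrase the lemma for an arbitrary $J$-module $M$.

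For the forward implication, assume $\Hom_J(M,T_v)=0$ for every frozen $v$, and observe that the three sequences \eqref{eq:injres1}--\eqref{eq:injres3} each stay inside one of the three families of frozen vertices: for $v=\zero_i^-$ (resp. $v=\zero_i^+$, $v=\Id_i$) the quotient $\I_v/T_v$ embeds into $\bigoplus_{i\to j}\I_{w_j}$, where $w_j$ is the frozen vertex $\zero_j^-$ (resp. $\zero_j^+$, $\Id_j$) and the sum runs over the arrows $i\to j$ of $Q$. Applying $\Hom_J(M,-)$ to $0\to T_v\to\I_v\to\bigoplus_{i\to j}\I_{w_j}$ and using left-exactness together with $\Hom_J(M,T_v)=0$, I get an embedding $\Hom_J(M,\I_v)\hookrightarrow\bigoplus_{i\to j}\Hom_J(M,\I_{w_j})$. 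Since $Q$ is a finite acyclic Dynkin quiver, I would then induct on $\ell(i)$, the length of a longest path of $Q$ starting at $i$: if $i$ is a sink (so $\ell(i)=0$) the index set is empty, the sequence degenerates to $T_v=\I_v$, and $\Hom_J(M,\I_v)=\Hom_J(M,T_v)=0$ by hypothesis; if $\ell(i)>0$ then every $j$ with $i\to j$ satisfies $\ell(j)<\ell(i)$, so $\Hom_J(M,\I_{w_j})=0$ by induction, and the embedding above forces $\Hom_J(M,\I_v)=0$. Running this over the three families (which together exhaust the frozen vertices) yields $\Hom_J(M,\I_v)=0$ for all frozen $v$.

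I do not expect a genuine obstacle: the only inputs are the defining sequences \eqref{eq:injres1}--\eqref{eq:injres3} and left-exactness of $\Hom$, and I do not even need surjectivity of their rightmost maps. The one point deserving care is to confirm, from the construction preceding Theorem~\ref{T:Tv}, that each of the three sequences really lands in its own family of frozen injectives and that the maps there are indexed by the arrows out of $i$ in $Q$ — in particular that the map $\I_{\Id_i}\to\I_{\Id_j}$ is the one induced by the irreducible map $P_j\to P_i$. Once this is recorded, the well-foundedness needed for the induction is automatic because $Q$ is Dynkin.
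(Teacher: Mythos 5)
Your proof is correct and matches the paper's argument: the reverse implication by left-exactness of $\Hom_J(M,-)$ applied to $T_v\hookrightarrow\I_v$, and the forward implication by inducting up the arrows of $Q$ from the sinks using the exact sequences \eqref{eq:injres1}--\eqref{eq:injres3}. You make the paper's phrase ``in an appropriate order'' explicit by inducting on $\ell(i)$, which is fine, and your observation that the statement holds for any $J$-module $M$ (not just $\Coker(\g)$) is accurate but not needed here.
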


\begin{proof}  Since each subrepresentation of $T_v$ is also a subrepresentation of $I_v$, one direction is clear.
Conversely, let us assume that $\Hom_{J}(M,T_v)=0$ for each frozen vertex $v$.
We prove that $\Hom_{J}(M,I_v)=0$ by induction.

We first notice that for $v=\zero_i^-,\zero_i^+,\Id_i$, $T_v=I_v$ if $i$ is a sink.
In general, for an injective presentation of $T$: $0\to T\to I_1\to I_0$ with $\Hom_{J}(M,I_0)=0$,
we have that $\Hom_{J}(M,I_1)=0$ is equivalent to $\Hom_{J}(M,T)=0$.
Now we perform the induction from a sink $i$ in an appropriate order using \eqref{eq:injres1}--\eqref{eq:injres3}.
We can conclude that $\Hom_{J}(M,I_v)=0$ for all frozen $v$.
\end{proof}

\begin{definition} \label{D:GQ2} We define a cone $\mr{G}_{\Delta_Q^2} \subset \mb{R}^{(\Delta_Q^2)_0}$ by $\g(\dv S)\geq 0$ for all strict subrepresentations $S\subseteq T_v$ and all frozen $v$.
\end{definition}

\begin{theorem} \label{T:GQ2} The set of lattice points ${\sf G}_{\Delta_Q^2} \cap \mb{Z}^{(\Delta_Q^2)_0}$ is exactly $G(\Delta_Q^2,W_Q^2)$.
\end{theorem}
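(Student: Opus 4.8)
The plan is to prove the asserted set equality by a short chain of equivalences turning the homological description of $G(\Delta_Q^2,W_Q^2)$ into the polyhedral description of ${\sf G}_{\Delta_Q^2}$, assembling three facts already available: the identification of $\mu$-supportedness with vanishing of $\Hom$ into indecomposable injectives, Lemma~\ref{L:TIequi} which trades those injectives for the test modules $T_v$, and Lemma~\ref{L:hom=0} which turns $\Hom$-vanishing against a module carrying a maximal vertex into inequalities on dimension vectors of subrepresentations.

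Concretely, I would fix $\g\in\mb{Z}^{(\Delta_Q^2)_0}$, write $J:=J(\Delta_Q^2,W_Q^2)$, and let $M:=\Coker(\g)$ be the cokernel of a general presentation in $\PHom_J(\g)$. By Definition~\ref{D:mu_supported}, $\g\in G(\Delta_Q^2,W_Q^2)$ exactly when $M$ is $\mu$-supported, i.e.\ $M(v)=0$ for every frozen vertex $v$. Since $\I_v$ is the indecomposable injective $J$-module at $v$, $\Hom_J(M,\I_v)=0$ holds precisely when $M(v)=0$, so $M$ is $\mu$-supported if and only if $\Hom_J(M,\I_v)=0$ for all frozen $v$; by Lemma~\ref{L:TIequi} this is equivalent to $\Hom_J(M,T_v)=0$ for all frozen $v$. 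Each $T_v$ contains a maximal vertex — by the remark following Theorem~\ref{T:Tv} it is $\zero_{i^*}^+$, $\Id_i$, or $\zero_i^-$ according as $v=\zero_i^-$, $\zero_i^+$, or $\Id_i$ — so Lemma~\ref{L:hom=0} applies and gives $\Hom_J(M,T_v)=0$ if and only if $\g(\dv S)\geq 0$ for every subrepresentation $S\subseteq T_v$. Taking the conjunction over all frozen $v$ produces the system of inequalities $\g(\dv S)\geq 0$ for all $S\subseteq T_v$, all frozen $v$; from this the inclusion $G(\Delta_Q^2,W_Q^2)\subseteq{\sf G}_{\Delta_Q^2}$ is immediate, since these inequalities in particular hold for the strict subrepresentations used in Definition~\ref{D:GQ2}.

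The step I would single out as needing real care is the reverse inclusion ${\sf G}_{\Delta_Q^2}\cap\mb{Z}^{(\Delta_Q^2)_0}\subseteq G(\Delta_Q^2,W_Q^2)$, which requires that the ``strict subrepresentation'' inequalities of Definition~\ref{D:GQ2} already imply the remaining one, $\g(\theta_v)\geq 0$ coming from $S=T_v$ in Lemma~\ref{L:hom=0}. I would argue it as follows: a maximal vertex $w$ of $T_v$ generates $T_v$, so $\op{top}T_v=S_w$ and the strict subrepresentation $\op{rad}T_v\subsetneq T_v$ has $\dv T_v=\dv(\op{rad}T_v)+\e_w$; hence $\g(\theta_v)\geq 0$ reduces to $\g(\e_w)\geq 0$ with $w$ a frozen maximal vertex. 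Because the maximal-vertex assignment cycles through all frozen vertices ($\zero_i^-\mapsto\zero_{i^*}^+\mapsto\Id_{i^*}\mapsto\zero_{i^*}^-\mapsto\cdots$), this reads as $\g_w\geq 0$ for every frozen $w$, and $\g_w\geq 0$ is itself a strict inequality since $S_w$ lies in the socle of $T_w$ — from the sequences \eqref{eq:injres1}--\eqref{eq:injres3} and the shape of $\Delta_Q^2$, no arrow leaves $w$ while staying inside $\op{supp}T_w$. I expect this reconciliation, together with the attendant care over the trivalent vertex in types $D$ and $E$ when chasing mesh relations inside the $T_v$, to be the only genuine friction; everything else is a formal consequence of Theorem~\ref{T:Tv} and Lemmas~\ref{L:hom=0} and~\ref{L:TIequi}, so the real substance of the statement lies in those earlier results.
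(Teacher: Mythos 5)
Your proposal is correct and follows essentially the same route as the paper: combine Lemmas~\ref{L:hom=0} and~\ref{L:TIequi} to translate $\mu$-supportedness of $\Coker(\g)$ into the condition $\g(\dv S)\geq 0$ for \emph{all} subrepresentations $S\subseteq T_v$, then observe that the one extra inequality $\g(\dv T_v)\geq 0$ is implied by the strict-subrepresentation inequalities via $\dv T_v=\e_w+\dv(\op{rad}T_v)$, where $w$ is the maximal (frozen) vertex of $T_v$. Your write-up usefully makes explicit the step $\g(\e_w)\geq 0$, which the paper's one-sentence redundancy argument leaves implicit; however, your route to it is more roundabout than needed — the cycle observation plays no role, since for each frozen $v$ the maximal vertex $w$ of $T_v$ is itself frozen, and $S_w=\op{soc}(T_w)$ is directly a strict subrepresentation of $T_w$ (because $T_w\subseteq\I_w$ and $\op{soc}\I_w=S_w$), so $\g(\e_w)\geq 0$ is literally one of the inequalities of Definition~\ref{D:GQ2}. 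The hedging about trivalent vertices and mesh relations is unnecessary: the structure of each $T_v$ is already fixed by Theorem~\ref{T:Tv}, and no mesh-relation chasing enters the present proof.
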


\begin{proof} Due to Lemma \ref{L:hom=0} and \ref{L:TIequi}, it suffices to show that ${\sf G}_{\Delta_Q^2}$ is defined by $\g(\dv S)\geq 0$ for all subrepresentations $S$ of $T_v$ and all $v$ frozen.
We notice that these conditions are the union of the defining conditions of
${\sf G}_{\Delta_Q^2}$ and $\g(\dv T_v)\geq 0$.
But the latter conditions are redundant
because $\dv T_v = \e_w + (\dv T_v-\e_w)$ and $\dv T_v-\e_w$ is the dimension vector of a strict subrepresentation of $T_v$,
where $w$ is the maximal (frozen) vertex of $T_v$.
\end{proof}

\begin{example}[Example \ref{ex:iARtD4} continued] \label{ex:iARtD4_ct}
In this case, it is almost trivial to list all strict subrepresentations for all $T_v$.
Readers can easily find that there are 3 subrepresentations for all $\zero_i^-$, and $7,6,1,1$ subrepresentations for $\zero_1^+,\zero_2^+,\zero_3^+,\zero_4^+$ respectively, and $1,2,7,7$ subrepresentations for $\Id_1,\Id_2,\Id_3,\Id_4$ respectively.
All these subrepresentations are needed to define ${\sf G}_{\Delta_Q^2}$, so there are 44 inequalities. 
Readers can find an extended version of this example in \cite{Ficra}.
This example can be easily generalized to $Q$ of type $D_n$ with a similar orientation.
\end{example}

\begin{remark}[The Valued Cases]
To deal with the general valued quiver $Q$, we could have worked with species analogue of QP, but this require some lengthy preparation.
To avoid this we can define the analogous Jacobian algebra by the Ext-completion algebra $\prod_{i\geq 0}\Ext_A^2(A^*,A)^{\otimes_A i}$ for the Auslander algebra $A:=A_Q^2$.

We can even define ${\sf G}_{\Delta_Q^2}$ without introducing the Jacobian algebra.
Without the Jacobian algebra we are unable to define the module $T_v$ by injective presentations, but it still makes perfect sense to define $T_v$ via Theorem \ref{T:Tv}. Once $T_v$'s are defined, we define the cones ${\sf G}_{\Delta_Q^2}$ by Definition \ref{D:GQ2}. The same remark also applies for the cone ${\sf G}_{\Delta_Q}$ to be defined below.

In general, some of the defining conditions of $\mr{G}_{\Delta_Q^2}$ may be redundant as shown in the following example.

\begin{example} The most complicated $T_v$ for $Q$ of type $G_2$ (see Example \ref{ex:G2} and Appendix \ref{A:list}) is $T_{\Id_1}$.
Its dimension vector is
$\GtwodimIdone$, and it strict subrepresentations have $13$ distinct dimension vectors.
However, we only need $5$ of them to define ${\sf G}_{\Delta_Q^2}$.
Readers can find a full list of inequalities in \cite{Ficra}.
In fact, we can download the full $H$-matrices of all exceptional types from author's web page \cite{Fweb}.
\end{example}
\end{remark}

Now we come back to the iARt quiver $\Delta_Q$.
We define a potential $W_Q$ on $\Delta_Q$ by the same formula defining $W_Q^2$.
The iARt QP $(\Delta_Q,W_Q)$ is nothing but the restriction of $(\Delta_Q^2,W_Q^2)$ to the subquiver $\Delta_Q$.
For each $\zero_i^-$, we define the representation $T_{\zero_i^-}$ of $J(\Delta_Q,W_Q)$ by the same injective presentation \eqref{eq:injres1}.
Similar to Theorem \ref{T:Tv}.(1), the module $T_{\zero_{i}^-}$ is the indecomposable module supported on all vertices translated from $\zero_{i^*}^+$ with dimension vector $(1,1,\dots,1)$.

We define a cone $\mr{G}_{\Delta_Q} \subset \mb{R}^{(\Delta_Q)_0}$ by $\g(\dv S)\geq 0$ for all subrepresentations $S\subseteq T_{\zero_i^-}$ and all $i$.
Note that we ask all subrepresentations not just strict ones.
We observe that the defining conditions of $\mr{G}_{\Delta_Q}$ and those of $\mr{G}_{\Delta_Q^2}$ are related as follows.
We group the defining conditions of $\mr{G}_{\Delta_Q^2}$ into three sets
$$\g H_u\geq 0,\ \g H_l\geq 0,\ \g H_r\geq 0.$$
They arise from the subrepresentations of $T_v$ for $v$ negative, neutral and positive respectively.
Then the defining conditions of $\mr{G}_{\Delta_Q}$ are exactly $\g H_u\geq 0$.

Similar to Theorem \ref{T:GQ2}, we have the following proposition. 
\begin{proposition} \label{P:GQ}
The set of lattice points ${\sf G}_{\Delta_Q} \cap \mb{Z}^{(\Delta_Q)_0}$ is exactly $G(\Delta_Q,W_Q)$.
\end{proposition}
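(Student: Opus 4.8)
The plan is to run the argument of Theorem~\ref{T:GQ2} in the present, simpler setting. The only frozen vertices of $\Delta_Q$ are the $\zero_i^-$, $i\in Q_0$, so a decorated representation $M$ of $J:=J(\Delta_Q,W_Q)$ is $\mu$-supported exactly when $M(\zero_i^-)=0$ for every $i$, that is, when $\Hom_J(M,\I_{\zero_i^-})=0$ for every $i$ (the injective $\I_{\zero_i^-}$ being the injective envelope of the simple at $\zero_i^-$). Thus $\g\in G(\Delta_Q,W_Q)$ if and only if $\Hom_J(\Coker(\g),\I_{\zero_i^-})=0$ for all $i$, and the task is to rewrite this $\Hom$-vanishing as the system of inequalities cutting out ${\sf G}_{\Delta_Q}$.

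First I would prove the exact analogue of Lemma~\ref{L:TIequi}: for $M=\Coker(\g)$, one has $\Hom_J(M,\I_{\zero_i^-})=0$ for all $i$ if and only if $\Hom_J(M,T_{\zero_i^-})=0$ for all $i$. One implication is immediate because each subrepresentation of $T_{\zero_i^-}$ is a subrepresentation of $\I_{\zero_i^-}$. For the converse I would induct on $i$, processing the vertices of $Q$ so that the target of every arrow is treated before its source (sinks first): when $i$ is a sink the sequence \eqref{eq:injres1} degenerates to $T_{\zero_i^-}=\I_{\zero_i^-}$; in general, applying $\Hom_J(M,-)$ to the left-exact sequence $0\to T_{\zero_i^-}\to \I_{\zero_i^-}\to\bigoplus_{i\to j}\I_{\zero_j^-}$ of \eqref{eq:injres1} identifies $\Hom_J(M,T_{\zero_i^-})$ with the kernel of $\Hom_J(M,\I_{\zero_i^-})\to\Hom_J(M,\bigoplus_{i\to j}\I_{\zero_j^-})$, and since the target vanishes by the inductive hypothesis, $\Hom_J(M,\I_{\zero_i^-})=\Hom_J(M,T_{\zero_i^-})$. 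Only the left-exactness of \eqref{eq:injres1} is used here, so nothing about surjectivity of its right-hand map enters.

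Next I would check that each $T_{\zero_i^-}$ contains a maximal vertex, so that Lemma~\ref{L:hom=0} applies to it. By the description recalled right after \eqref{eq:injres1} (the analogue of Theorem~\ref{T:Tv}.(1)), $T_{\zero_i^-}$ is the indecomposable module with dimension vector $(1,\dots,1)$ supported on the part of the $\tau$-orbit of $\zero_{i^*}^+$ that survives in $\Delta_Q$; since this orbit carries no morphism arrows, the module is uniserial along its chain of translation arrows, and the source of that chain---namely $f(I_{i^*})$, which equals $\zero_i^-$ when the orbit reduces to a point---is a maximal vertex of $T_{\zero_i^-}$. Lemma~\ref{L:hom=0} then gives, for each $i$, that $\Hom_J(\Coker(\g),T_{\zero_i^-})=0$ iff $\g(\dv S)\ge 0$ for every subrepresentation $S\subseteq T_{\zero_i^-}$. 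Conjoining over $i$ this is exactly the system of inequalities defining ${\sf G}_{\Delta_Q}$, and stringing together the three equivalences (for $\g\in\mb{Z}^{(\Delta_Q)_0}$) yields ${\sf G}_{\Delta_Q}\cap\mb{Z}^{(\Delta_Q)_0}=G(\Delta_Q,W_Q)$.

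There is no real obstacle here beyond one bookkeeping point that must not be overlooked, the one already flagged in the text: over $\Delta_Q$ the maximal vertex of $T_{\zero_i^-}$ is \emph{mutable}, not frozen, since the frozen vertex $\zero_{i^*}^+$ that was maximal in $T_{\zero_i^-}$ inside $\DCQ$ has been deleted. Consequently the inequality $\g(\dv T_{\zero_i^-})\ge 0$ is no longer a consequence of the other defining inequalities, which is precisely why ${\sf G}_{\Delta_Q}$ must be cut out by \emph{all} subrepresentations of the $T_{\zero_i^-}$ rather than only the strict ones; the redundancy step that closes the proof of Theorem~\ref{T:GQ2} is then simply omitted. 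The remaining prerequisites---finite-dimensionality of $J(\Delta_Q,W_Q)$, which follows from the argument of Lemma~\ref{L:fdrigid}, and full rank of $B_{\Delta_Q}$, established in Section~\ref{ss:gdim1}---are already in place.
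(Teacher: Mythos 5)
Your proposal is correct and follows essentially the same route as the paper, which justifies Proposition \ref{P:GQ} only by saying it is ``similar to Theorem \ref{T:GQ2}'': you carry out exactly that adaptation (µ-supportedness as vanishing of $\Hom$ into the injectives at the $\zero_i^-$, the analogue of Lemma \ref{L:TIequi} by induction starting from sinks, and Lemma \ref{L:hom=0} applied to each $T_{\zero_i^-}$ via its maximal vertex at the source of the translation chain). Your closing remark—that the maximal vertex is now mutable, so the inequality for the full $T_{\zero_i^-}$ is no longer redundant and ${\sf G}_{\Delta_Q}$ must be cut out by all subrepresentations rather than only strict ones—is precisely the point the paper flags in its definition of ${\sf G}_{\Delta_Q}$.
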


\begin{definition} \label{D:polytope} Given a weight configuration $\bs{\sigma}$ of a quiver $\Delta$ and a convex polyhedral cone $\mr{G} \subset \mb{R}^{\Delta_0}$,
we define the (not necessarily bounded) convex polytope $\mr{G}(\sigma)$ as $\mr{G}$ cut out by the hyperplane sections $\g \bs{\sigma} = \sigma$.
\end{definition}

Our conjectural model for the tensor multiplicity is that the multiplicity $c_{\mu,\nu}^\lambda$ is counted by ${\sf G}_{\Delta_Q^2}(\mu,\nu,\lambda)$ for the weight configuration $\bs{\sigma}_Q^2$.
We will prove this model for $G$ of type ADE in Part \ref{P:II}.


\begin{appendices}
\section{A List of some iARt Quivers} \label{A:list}
For $Q$ of type $B,C$ and $D$, we only draw the iARt quiver $\Delta_Q^2$ for $B_3,C_3$ and $D_5$ ($D_4$ is our running example).
Reader should have no difficulty to draw the general ones.
The cases of $E_7$ and $E_8$ can be found in \cite{Ficra}.
We label the vertices of $Q$ in the same way as the software LiE \cite{LiE} so that you can check things conveniently.

\hspace{0.7in} $G_2$ \hspace{0.2in}
$\iARtGtwo$

\hspace{-0.2in} $B_3$ \hspace{0.1in}
$\iARtBthree$ \hspace{0.2in}
$C_3$ \hspace{0.1in}
$\iARtCthree$
\hspace{-0.0in} $D_5$ \hspace{0.1in}
$\iARtDfive$

\hspace{-0.1in} $F_4$ \hspace{0.1in}
$\iARtFfour$

\hspace{-0.2in}$E_6$ \hspace{-0.4in}
$\iARtEsix$

%
%
%
%
%
%
\end{appendices}
\vspace{.3in}

\part{Isomorphism to $k[\Conf_{2,1}]$} \label{P:II}
\section{Cluster Structure of Maximal Unipotent Groups} \label{S:Ugp}
\subsection{Basic Notation for a Simple Lie Group}
Let $k$ be an algebraically closed field of characteristic zero.
From now on, $G$ will always be a simply connected linear algebraic group over $k$ with Lie algebra $\mf{g}$.
We assume that the Dynkin diagram of $\mf{g}$ is the underlying valued graph of $Q$.
The Lie algebra $\mf{g}$ has the Cartan decomposition $\mf{g}=\mf{n}^-\oplus \mf{h}\oplus \mf{n}$.
Let $e_i, \alpha_i^\vee$, and $f_i$ ($i\in Q_0$) be the Chevalley generators of $\mf{g}$.
The \emph{simple coroots} $\alpha_i^\vee$ of $\mf{g}$ form a basis
of a Cartan subalgebra $\mf{h}$.
The \emph{simple roots} $\alpha_i\ (i\in Q_0)$ form a basis in the dual space
$\mf{h}^*$ such that
$[h, e_i] = \alpha_i (h) e_i$, and $[h,f_i] = - \alpha_i (h) f_i$
for any $h \in \mf{h}$ and $i\in Q_0$.
The structure of $\mf{g}$ is uniquely determined by the \emph{Cartan matrix} $C(G) = (c_{i,j}')$
given by $c_{i,j}' = \alpha_j (\alpha_i^\vee)$.
We have that $c'_{i,i}=2$ and $c_{i,j}'=-c_{j,i}$ so we have that $C(G)=E_l(Q)+E_r(Q)^T$.

Let $U^-$, $H$ and $U:=U^+$ be closed subgroups of $G$ with Lie algebras $\mf{n}^-$, $\mf{h}$ and $\mf{n}$.
Thus $H$ is a \emph{maximal torus}, and $U^-$ and $U$ are two opposite \emph{maximal unipotent subgroups} of~$G$.
Let $U^{\pm}_{i}~ (i\in Q_0)$ be the simple root subgroup of $U^{\pm}$.
By abuse of notation, we let $\alpha_i^{\vee}: \mathbb{G}_m \to H$ be the simple coroot corresponding to the root $\alpha_i: H\to \mathbb{G}_m$.
For all $i\in Q_0$, there are isomorphisms
$x_i: \mathbb{G}_{a}\to U_{i}^{+}$ and $y_{i}: \mathbb{G}_a\to U_i^{-}$ such that the maps
$$\begin{pmatrix}
      1 & a \\
      0 & 1 \\
   \end{pmatrix}
    \mapsto x_i(a), \quad
      \begin{pmatrix}
         1 & 0 \\
         b & 1 \\
      \end{pmatrix}
    \mapsto y_{i}(b),   \quad
       \begin{pmatrix}
          t & 0 \\
          0 & t^{-1} \\
       \end{pmatrix}
    \mapsto \alpha_i^{\vee}(t)$$
provide homomorphisms $\phi_i: {\SL}_2 \to G.$
We denote by $g\mapsto g^T$ the transpose anti-automorphism of $G$ defined by
$$x_i(a)^T=y_{i}(a),\ y_{i}(b)^T=x_i(b),\ h^T=h,\ (i\in Q_0, h\in H).$$

Let $s_i ~(i\in Q_0)$ be the simple reflections generating the Weyl group of $G$. Set
$$\br{s_i}:=x_{i}(-1)y_{i}(1)x_{i}(-1).$$ 
The elements $\br{s_i}$ satisfy the braid relations. So we can associate to each $w\in W$ its representative $\overline{w}$ in such a way that for any reduced decomposition $w=s_{i_1}\cdots s_{i_\ell}$ one has $\br{w}=\br{s_{i_1}}\cdots \br{s_{i_\ell}}$.
Denote by $w_0$ be the longest element of the Weyl group.
In general $s_G:=\br{w_0}^2$ is not the identity but an order two central element in $G$.
It is well-known that $w_0(\alpha_i)=-\alpha_{i^*}$, where $i\mapsto i^*$ is the same involution in Section \ref{ss:iARt2}.

The \emph{weight lattice} $P(G)$ of $G$ consists of all $\gamma \in \mf{h}^*$ such that
$\gamma (\alpha_i^\vee) \in \mb{Z}$ for all $i$.
Thus $P(G)$ has a $\mb{Z}$-basis $\{\varpi_i\}_{i\in Q_0}$ of \emph{fundamental weights}
given by $\varpi_j (\alpha_i^\vee) = \delta_{i,j}$.
We can thus identify a weight by an integral vector $\lambda\in \mb{Z}^{Q_0}$. We write $\varpi(\lambda)$ for $\sum_i \lambda(i) \varpi_i$.
In this notation, $\varpi_i=\varpi(\e_i)$.
We stress that throughout the paper, this identification is widely used.
A weight $\lambda\in \mb{Z}^{Q_0}$ is \emph{dominant} if it is non-negative.

\subsection{The Base Affine Space} \label{ss:baseaff}
The natural $G\times G$-action on $G$:
$$(g_1,g_2)\cdot g = g_1gg_2^{-1},\ $$
induces the left and right translation of $G$ on $k[G]$:
$$(g_1,g_2)\varphi(g)=\varphi(g_1^{-1}gg_2)\ \text{ for $\varphi\in k[G]$}.$$
The algebraic Peter-Weyl theorem \cite[Theorem 4.2.7]{GW} says that as a $G\times G$-module $k[G]$ decomposes as
$$k[G]= \bigoplus_{\lambda\in \mb{Z}_{\geqslant 0}^{Q_0}} L(\lambda)^\vee \otimes L(\lambda),$$
where $L(\lambda)$ is the irreducible $G$-module of highest-weight $\varpi(\lambda)$, and $L(\lambda)^\vee$ is its dual.
Quotienting out the left translation of $U^-$, we get a $G$-module decomposition of the ring of regular functions on the {\em base affine space} $\mc{A}:=U^-\bcsl G$:
$$k[\mc{A}]=\left\{\varphi\in k[G]\mid \varphi(u^-g)=\varphi(g) \text{ for $u^-\in U^-,\ g\in G$} \right\}=\bigoplus_{\lambda\in \mb{Z}_{\geqslant 0}^{Q_0}} L(\lambda).$$
Each $G$-module $L(\lambda)$ can be realized as the subspace of
$k[\mc{A}]$:
\begin{equation} \label{eq:irrG} L(\lambda) \cong \left\{\varphi\in k[\mc{A}]\mid \varphi(hg)=h^{\varpi(\lambda)} \varphi(g)
\text{ for } h\in H,\ g\in G\right\}.
\end{equation}
Similarly for the {\em dual} base affine space $\mc{A}^\vee:=G/U$,
we have that
$$k[\mc{A}^\vee]=\left\{\varphi\in k[G]\mid \varphi(gu)=\varphi(g) \text{ for $u\in U,\ g\in G$} \right\}=\bigoplus_{\lambda\in \mb{Z}_{\geqslant 0}^{Q_0}} L(\lambda)^\vee,$$
\begin{equation}\label{eq:irrGdual} L(\lambda)^\vee \cong \left\{\varphi\in k[\mc{A}^\vee]\mid \varphi(gh)=h^{\varpi(\lambda)} \varphi(g)
\text{ for } h\in H,\ g\in G \right\}.\end{equation}
Keep in mind that the $G$-actions on $k[U^-\bcsl G]$ and $k[G/U]$ are via the right and left translations respectively.

We fix an additive character $U^-\to \mb{G}_a$, then a coset $U^-g$ (resp. $gU$) determines a point in $\mc{A}$ (resp. $\mc{A}^\vee$).
We refer readers to \cite[1.1.1]{GS} for the detail.

For each fundamental representation $L(\e_i)$ and its dual $L(\e_i)^\vee$, we choose a $U^-$-fixed vector $u_i$ and a $U$-fixed vector $u_i^\vee$,
normalized by $\innerprod{u_i^\vee\mid u_i}=1$.
Following~\cite{FZ} we define for each fundamental weight $\varpi_i$ and each pair $w,w'\in W$
the {\em generalized minor} $m_{w,w'}^{\varpi_i}\in k[G]$
\begin{equation} \label{eq:genminor} g\mapsto \innerprod{\br{w}u_{i}^\vee\mid g\br{w'}u_{i}}.\end{equation}
More concretely, the Bruhat decomposition $G=\coprod_{w\in W} U^-HwU$ implies that any $g\in G$ can be written as $g=u^- h \br{w} u$.
Suppose that $g$ lies in the open set $G_0:=U^-HU$, then
the regular function $m^{\varpi_i}:=m_{e,e}^{\varpi_i}$ restricted to $G_0$ is given by
$m^{\varpi_i}(g)=h^{\varpi_i}$
(For the type $A_r$, this is a principal minor of the matrix $g$).
Then the definition of $m^{\varpi_i}$ can be extended to whole $G$ as in \cite[Proposition 2.4]{FZ}.
\begin{proposition} \label{P:pminor} For $g=u^- h \br{w} u \in G$, we have that
$$m^{\varpi_i}(g)=h^{\varpi_i} \text{ if } w(\varpi_i)=\varpi_i\ \text{ and }\ m^{\varpi_i}(g)=0 \text{ otherwise.}$$
\end{proposition}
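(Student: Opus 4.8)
The plan is to reduce everything to the defining formula \eqref{eq:genminor} for the generalized minor $m^{\varpi_i} = m_{e,e}^{\varpi_i}$, namely $m^{\varpi_i}(g) = \innerprod{u_i^\vee \mid g u_i}$, where $u_i \in L(\e_i)$ is the chosen $U^-$-fixed (highest-weight) vector and $u_i^\vee \in L(\e_i)^\vee$ is the chosen $U$-fixed vector, normalized by $\innerprod{u_i^\vee \mid u_i} = 1$. First I would record the two invariance properties that pin these vectors down: since $u_i$ is a highest-weight vector of weight $\varpi_i$, we have $u^- u_i = u_i$ for $u^- \in U^-$ (using that $\mf n^-$ generates the representation going down, so the highest weight line is $U^-$-fixed — wait, here one must be careful about conventions; in the paper's setup $U^-$ acts trivially on $u_i$ by the description of $k[\mc A] = U^- \backslash G$-side, so $u_i$ is the $U^-$-fixed line) and $h u_i = h^{\varpi_i} u_i$ for $h \in H$. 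Dually $u_i^\vee$ is fixed by $U$ on the left and transforms by $h^{\varpi_i}$ under $H$; equivalently $\innerprod{u_i^\vee \mid v} = 0$ for every weight vector $v$ of weight $\ne \varpi_i$, i.e. $u_i^\vee$ is the projection onto the highest weight line followed by the isomorphism of that line with $k$.

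The main step is the computation of $g u_i$ for $g = u^- h \br w u$. I would proceed right to left: $u \in U$ acts on the highest-weight vector, and since $u_i$ spans the highest weight line which is stable under $U$ and $u$ acts unipotently, $u u_i = u_i + (\text{lower weight terms})$; more precisely $u u_i - u_i$ lies in the sum of weight spaces of weight strictly below $\varpi_i$. Next $\br w$ sends the weight-$\mu$ space to the weight-$w(\mu)$ space, so $\br w u u_i = \br w u_i + (\text{terms of weight} < w(\varpi_i))$, and $\br w u_i$ is a nonzero vector of weight $w(\varpi_i)$ (a normalized extremal weight vector). Then $h$ scales each weight space, and finally $u^- \in U^-$ acts unipotently lowering weights, so $u^- h \br w u u_i$ decomposes into weight spaces all of weight $\le w(\varpi_i)$ in the appropriate partial order, with the component in weight $w(\varpi_i)$ equal to $h^{w(\varpi_i)} \br w u_i$. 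Now I pair with $u_i^\vee$: by the dual invariance, $\innerprod{u_i^\vee \mid g u_i}$ picks out only the weight-$\varpi_i$ component of $g u_i$. If $w(\varpi_i) = \varpi_i$, the only weight-$\varpi_i$ contribution is $h^{\varpi_i} \br w u_i$, and since $\br w$ fixes the highest weight line in this case (as $w$ stabilizes $\varpi_i$, $\br w u_i$ is again a highest-weight vector, and with the standard normalization $\br w u_i = u_i$), we get $m^{\varpi_i}(g) = h^{\varpi_i} \innerprod{u_i^\vee \mid u_i} = h^{\varpi_i}$. If $w(\varpi_i) \ne \varpi_i$, then $w(\varpi_i) < \varpi_i$ strictly (extremal weights are $\le$ the highest weight, with equality only for the highest weight itself), so $g u_i$ has no component in weight $\varpi_i$ at all, and hence $\innerprod{u_i^\vee \mid g u_i} = 0$.

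The step I expect to be the main (though minor) obstacle is the normalization bookkeeping: verifying that $\br w u_i = u_i$ exactly — not just up to scalar — when $w(\varpi_i) = \varpi_i$. This uses that the representatives $\br s_j = x_j(-1)x_{\br j}(1)x_j(-1)$ were chosen compatibly with braid relations, and that for a simple reflection $s_j$ with $s_j(\varpi_i) = \varpi_i$ (equivalently $c'_{i,j} = \varpi_i(\alpha_j^\vee) = 0$, so $f_j$ and $e_j$ annihilate $u_i$) one computes directly in the $\SL_2$ attached to $\phi_j$ that $\br s_j u_i = u_i$; the general case follows by writing $w$ (restricted to the subgroup fixing $\varpi_i$, a parabolic Weyl subgroup generated by such $s_j$) as a product of these. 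Everything else is the standard weight-space estimate, and the argument is exactly the one sketched in \cite[Proposition 2.4]{FZ}, which I would cite for the extension of $m^{\varpi_i}$ from $G_0$ to all of $G$ and invoke rather than reprove.
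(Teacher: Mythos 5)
The paper offers no proof; it cites \cite[Proposition 2.4]{FZ}, and your weight-space analysis of the matrix coefficient $\innerprod{u_i^\vee\mid g\,u_i}$ is precisely the computation underlying that citation. The strategy and conclusion are correct, and you correctly isolate the one non-obvious ingredient: the exact (scalar-free) identity $\br{w}u_i=u_i$ for $w$ in the parabolic Weyl subgroup stabilizing $\varpi_i$.

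Two things should nevertheless be cleaned up. You flag a conventions worry and leave it hanging, but it is not a matter of taste: if $u_i$ were literally $U^-$-fixed (as the paper writes), it would be the \emph{lowest}-weight vector, $u_i^\vee$ would project onto the lowest weight line, and $\innerprod{u_i^\vee\mid g\,u_i}$ would be the lowest-weight matrix coefficient; for $\SL_2$ on the open Bruhat cell this gives $ctb+t^{-1}$, not $h^{\varpi_1}=t$, and the proposition would be false. The statement is only true with $u_i$ the $U$-fixed highest-weight vector and $u_i^\vee$ the $U^-$-fixed functional vanishing off the $\varpi_i$-weight line (the Fomin--Zelevinsky convention, which your second and third paragraphs silently adopt); you should say so outright rather than leaving the parenthetical ``wait'' unresolved. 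Separately, the inference ``$u u_i = u_i + (\text{lower weight terms})$, hence $\br{w}u u_i = \br{w}u_i + (\text{terms of weight}< w(\varpi_i))$'' is invalid as stated: the Weyl group does not preserve the dominance order, so $\mu<\varpi_i$ does not imply $w(\mu)<w(\varpi_i)$. The argument survives only because those error terms are identically zero --- $uu_i=u_i$ exactly, since the highest-weight line is $U$-\emph{fixed}, not merely $U$-stable --- so the weight control comes entirely from the final $u^-$ factor. (Also a small slip: $\varpi_i(\alpha_j^\vee)=\delta_{i,j}$ is not the Cartan integer $c'_{i,j}$; the condition you need for $j\ne i$ is simply $\varpi_i(\alpha_j^\vee)=0$, which gives $e_ju_i=f_ju_i=0$ and hence $\br{s_j}u_i=u_i$.)
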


\noindent The function $m_{w,w'}^{\varpi_i}$ is then given by
\begin{equation} \label{eq:genminor0} m_{w,w'}^{\varpi_i}(g)=m^{\varpi_i}(\br{w}^{-1} g \br{w'}). \end{equation}
It follows from \eqref{eq:genminor0} or \eqref{eq:genminor} that

\begin{lemma} \label{L:wtminor} The weight for the $H\times H$-action on $m_{w,w'}^{\varpi^i}$ is $(w(\varpi_i),-w'(\varpi_i))$, i.e.,
$$m_{w,w'}^{\varpi_i}(h_1gh_2^{-1})=h_1^{w(\varpi_i)}h_2^{-w'(\varpi_i)}m_{w,w'}^{\varpi_i}(g).$$
\end{lemma}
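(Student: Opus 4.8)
The plan is to reduce the statement to Proposition~\ref{P:pminor} via the translation identity \eqref{eq:genminor0}. First I would write, for arbitrary $h_1,h_2\in H$ and $g\in G$,
\[
m_{w,w'}^{\varpi_i}(h_1gh_2^{-1})=m^{\varpi_i}\!\big(\br{w}^{-1}h_1gh_2^{-1}\br{w'}\big)
=m^{\varpi_i}\!\big((\br{w}^{-1}h_1\br{w})\,(\br{w}^{-1}g\br{w'})\,(\br{w'}^{-1}h_2^{-1}\br{w'})\big),
\]
using that $H$ is normalized by each Weyl representative, so that $\br{w}^{-1}h_1\br{w}$ and $\br{w'}^{-1}h_2^{-1}\br{w'}$ again lie in $H$. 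Thus it suffices to control how $m^{\varpi_i}$ transforms under two-sided multiplication by the torus.

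The key step is the claim: for all $h,h'\in H$ and $x\in G$ one has $m^{\varpi_i}(hxh')=h^{\varpi_i}(h')^{\varpi_i}\,m^{\varpi_i}(x)$. To prove it I would fix a Bruhat decomposition $x=u^-h_x\br{v}u$ and push the outer torus factors inward: $hu^-h^{-1}\in U^-$, $uh'=h'(h'^{-1}uh')$ with $h'^{-1}uh'\in U$, and $\br{v}h'=(\br{v}h'\br{v}^{-1})\br{v}$ with $\br{v}h'\br{v}^{-1}\in H$. Hence $hxh'$ lies in the same cell $U^-H\br{v}U$ as $x$, now with torus part $h\,h_x\,(\br{v}h'\br{v}^{-1})$. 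By Proposition~\ref{P:pminor}, both $m^{\varpi_i}(x)$ and $m^{\varpi_i}(hxh')$ vanish unless $v(\varpi_i)=\varpi_i$; and when $v(\varpi_i)=\varpi_i$ — so also $v^{-1}(\varpi_i)=\varpi_i$ — we get
\[
m^{\varpi_i}(hxh')=\big(h\,h_x\,\br{v}h'\br{v}^{-1}\big)^{\varpi_i}=h^{\varpi_i}\,h_x^{\varpi_i}\,(h')^{v^{-1}(\varpi_i)}=h^{\varpi_i}(h')^{\varpi_i}\,m^{\varpi_i}(x),
\]
using the standard identity $(\br{u}\,t\,\br{u}^{-1})^{\chi}=t^{u^{-1}(\chi)}$ for $t\in H$ and any representative $\br{u}$ of $u\in W$, together with $m^{\varpi_i}(x)=h_x^{\varpi_i}$.

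Finally I would apply the claim with $x=\br{w}^{-1}g\br{w'}$, $h=\br{w}^{-1}h_1\br{w}$, and $h'=\br{w'}^{-1}h_2^{-1}\br{w'}$, and evaluate the two torus characters by the same conjugation identity: $(\br{w}^{-1}h_1\br{w})^{\varpi_i}=h_1^{w(\varpi_i)}$ and $(\br{w'}^{-1}h_2^{-1}\br{w'})^{\varpi_i}=h_2^{-w'(\varpi_i)}$. Combining with \eqref{eq:genminor0} gives exactly $m_{w,w'}^{\varpi_i}(h_1gh_2^{-1})=h_1^{w(\varpi_i)}h_2^{-w'(\varpi_i)}m_{w,w'}^{\varpi_i}(g)$, valid for every $g\in G$ since the computation was carried out cell by cell. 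The main thing to be careful about is purely bookkeeping — keeping straight whether $w$ or $w^{-1}$ acts in the conjugation formula, and making sure the argument really is cell-by-cell rather than only on the big cell $G_0$ (which is why the full Proposition~\ref{P:pminor}, not just its restriction-to-$G_0$ version, is used). A more conceptual alternative would invoke the matrix-coefficient description \eqref{eq:genminor} and the fact that $\br{w'}u_i$ and $\br{w}u_i^\vee$ are $H$-eigenvectors, but that requires first pinning down the $H$-weights of the chosen vectors $u_i,u_i^\vee$, which the route above avoids.
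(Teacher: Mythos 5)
Your proof is correct. The paper does not actually spell out a proof of this lemma — it simply asserts that it ``follows from \eqref{eq:genminor0} or \eqref{eq:genminor}'' — and you have chosen to elaborate the route via \eqref{eq:genminor0} and Proposition~\ref{P:pminor}, which is one of the two the paper points at. Your cell-by-cell Bruhat argument is airtight: you correctly push the torus factors through $U^-$, through $U$, and across the Weyl representative $\br{v}$, track the change of torus part $h_x\mapsto h\,h_x\,(\br{v}h'\br{v}^{-1})$, invoke $v(\varpi_i)=\varpi_i$ to discard the irrelevant cells, and apply the conjugation identity $(\br{u}t\br{u}^{-1})^\chi=t^{u^{-1}(\chi)}$ with the right direction of $w$ and $w'$ (noting that $\br{w}^{-1}$ and $\br{w^{-1}}$ differ only by an element of $H$, so they conjugate $H$ identically). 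The alternative you mention in your last sentence — working directly from \eqref{eq:genminor} using that $u_i$ is the highest weight vector of $L(\e_i)$ of $H$-weight $\varpi_i$ and $u_i^\vee$ the lowest weight vector of $L(\e_i)^\vee$ of weight $-\varpi_i$, so that $\br{w'}u_i$ and $\br{w}u_i^\vee$ are $H$-eigenvectors of weights $w'(\varpi_i)$ and $-w(\varpi_i)$ — is arguably the shorter of the two routes once those weights are pinned down, but both are valid and both are what the paper has in mind.
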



\subsection{Cluster Structure on $k[U]$} \label{ss:clusterU}
Now we are ready to recall the cluster algebra structure of $k[U]$.
For this, we associate to each non-neutral indecomposable presentation $f$, a generalized minor $m_f$ as follows.
Suppose that $\e(f)=\e_i$, i.e., $f$ is translated from $\zero_i^+$,
and moreover $w(\varpi_i)=\varpi(\f)$, then we put $m_f:=m_{e,w}^{\varpi_i}$.
Note that if $f$ is positive, then $f=m_{e,e}^{\varpi_i}$ is a principal minor.
We take the iARt quiver $\Delta_Q$, and let $\mc{M}_Q:=\{m_{f}\}_{f\in (\Delta_Q)_0}$.
It is known \cite{BFZ,GLSa} that $k[U]$ is the upper cluster algebra with this {\em standard} seed $(\Delta_Q, \mc{M}_Q)$.
Here we again identify $\Delta_Q$ as the subquiver of $\Delta_Q^2$.
Moreover, the cluster algebra is equal to its upper cluster algebra when $Q$ is simply-laced \cite{GLSk}.

Recall the weight configuration $\bs{\sigma}_Q$ in Definition \ref{D:triplewt}.
For each vertex $f\in (\Delta_Q)_0$, we set $\bs{\sigma}_Q(f)=\e(f)-\f$.
By Lemma \ref{L:wtminor} the degree of $m_f$ for the conjugation action of $H$ is exactly $\varpi(\bs{\sigma}_Q(f))$.
Let $\bs{\rho}_Q$ be the matrix with rows also indexed by $(\Delta_Q)_0$ such that
$\bs{\rho}_Q(f)$ is the positive root of $G$ corresponding to $f$, i.e., $\bs{\rho}_Q(f)=\sum_i \dim(M(i))\alpha(i)$ where $M=\Coker(f)$.
Note that by the {\em generalized Gabriel's theorem} \cite{DR}, $\bs{\rho}_Q$ contains exactly all positive roots of $G$.
The {\em Kostant's partition function} $p_Q(\gamma)$ by definition counts the lattice points in the polytope
$${\sf K}(\gamma):=\{\h\geq 0 \mid h\bs{\rho}_Q=\gamma\}.$$

Now we consider a labelling dual to the one in Section \ref{ss:iARt2}.
Let $(i,t)=(i(f),t(f))$ be the number such that $f=\tau^{t}(\zero_i^+)$.
Note that $i(f)^* =i^*(f)$ and $t(f)+t^*(f)=t\big(\zero_{i^*(f)}^-\big)=t^*\big(\zero_{i(f)}^+\big)$.

\begin{lemma} \label{L:sigma2rho} $\varpi\left(\bs{\sigma}_Q(f)\right) = \sum_{t=0}^{t(f)-1} \bs{\rho}_Q(\tau^{-t} f)$.
\end{lemma}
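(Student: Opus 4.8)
The plan is to reduce the identity to a telescoping sum along the $\tau$-orbit of a positive presentation. First, $\zero_i^+$ has weight vector $(\f_-,\f_+)=(0,\e_i)$, so $\f(\zero_i^+)=\e_i$ and therefore $\bs{\sigma}_Q(\zero_i^+)=\e(\zero_i^+)-\f(\zero_i^+)=\e_i-\e_i=0$. Write $i=i(f)$ and $m=t(f)$, so $f=\tau^{m}(\zero_i^+)$. As $t$ runs over $0,1,\dots,m-1$ the presentations $\tau^{-t}f=\tau^{m-t}(\zero_i^+)$ run over $\tau^{s}(\zero_i^+)$ for $s=m,m-1,\dots,1$; in particular all of them lie in the $\tau$-orbit of $\zero_i^+$ and are non-neutral, so $\e(-)$ and $\bs{\rho}_Q(-)$ make sense on them. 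Hence it suffices to prove, for $1\le s\le m$, the telescoping step
\begin{equation*}
\bs{\rho}_Q\big(\tau^{s}(\zero_i^+)\big)=\varpi\big(\bs{\sigma}_Q(\tau^{s}(\zero_i^+))\big)-\varpi\big(\bs{\sigma}_Q(\tau^{s-1}(\zero_i^+))\big),
\end{equation*}
since summing over $s$ and using $\bs{\sigma}_Q(\zero_i^+)=0$ then yields the lemma.

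Put $g=\tau^{s-1}(\zero_i^+)$, so $\tau g=\tau^{s}(\zero_i^+)$. Since $g$ and $\tau g$ lie in the same $\tau$-orbit, $\e(g)=\e(\tau g)$, whence $\bs{\sigma}_Q(\tau g)-\bs{\sigma}_Q(g)=\f(g)-\f(\tau g)$, and the step becomes $\varpi(\f(g)-\f(\tau g))=\bs{\rho}_Q(\tau g)$. The key input is the value of the reduced weight vector on a minimal presentation: for a module $N$ with minimal projective presentation $P_+\to P_-\to N\to 0$, applying $\Hom_Q(-,S_k)$ — and using that, by hereditarity, the presentation map has image in $\op{rad}P_-$ — gives $\innerprod{N,S_k}=-d_k\,\f(f(N))(k)$, that is, $\f(f(N))=-\dv N\cdot E_l(Q)$.

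Now take $g$ non-positive, say $g=f(M')$ with $M'=\Coker g$; then $M:=\Coker(\tau g)$ is indecomposable with $M'=\tau^{-1}M$. The Auslander--Reiten formula $\innerprod{M,N}=-\innerprod{N,\tau M}$ for the hereditary algebra $\Gamma_Q$, rewritten through the Euler matrix, reads $\dv(\tau M)\cdot E(Q)^T=-\dv M\cdot E(Q)$; combining this with the previous formula gives $\f(\tau g)=-\dv M\cdot E_l(Q)$ and $\f(g)=\dv M\cdot E_r(Q)^T$, so
\begin{equation*}
\f(g)-\f(\tau g)=\dv M\cdot\big(E_l(Q)+E_r(Q)^T\big)=\dv M\cdot C(G).
\end{equation*}
Since $\alpha_k=\sum_i c'_{i,k}\varpi_i$, this says $\varpi(\f(g)-\f(\tau g))=\sum_k\dv M(k)\,\alpha_k=\bs{\rho}_Q(\tau g)$, as wanted (here one uses that $C(G)$ is symmetric in the trivially valued case). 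The one remaining case is $s=1$, i.e.\ $g=\zero_i^+$, $\tau g=f(I_i)$, $M=I_i$: there $\f(g)=\e_i$, and $\e_i=\dv I_i\cdot E_r(Q)^T$ follows from $\innerprod{N,I_i}=\dim_{\mb F}\Hom_Q(N,I_i)=d_i\,\dv N(i)$ (as $\Ext_Q^1(-,I_i)=0$ and $I_i$ is the injective envelope of $S_i$), so the same computation applies.

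The main obstacle is the linear-algebra bookkeeping: one must keep the Euler matrix $E(Q)$, its one-sided variants $E_l(Q),E_r(Q)$, the Cartan matrix $C(G)=E_l(Q)+E_r(Q)^T$, and the symmetrizer $D$ (via $E(Q)=E_l(Q)D=DE_r(Q)$) consistent with the row-vector conventions, and match the action of $C(G)$ hidden in $\varpi(\bs{\sigma}_Q(-))$ with the one in $\bs{\rho}_Q(-)$; this is transparent when $Q$ is trivially valued but needs a little more care in general. One should also double-check the orbit combinatorics used above — that $\Coker(\tau^{s}\zero_i^+)=\tau^{s-1}I_i$ for $s\ge 1$ and that the $\tau$-orbit of $\zero_i^+$ terminates at a negative presentation $\zero_j^-$ with $\Coker(\zero_j^-)=P_j$ and $j^*=i$ — which is exactly where the case $f=\zero_j^-$ of the lemma gets absorbed.
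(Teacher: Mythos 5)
Your proof is correct and takes essentially the same approach as the paper: the paper likewise telescopes along the $\tau$-orbit, using $C(G)=E_l(Q)+E_r(Q)^T$, the identity $\f(f(N))=-\dv N\,E_l(Q)$, and the relation between $E_r(Q)^T$ and the translate to write each $\dv(\tau^{-t}M)\,C(G)$ as a difference of reduced weight vectors, with the boundary term $\e(f)$ supplied by $\zero_i^+$ (equivalently $I_i$), exactly as in your base case. The only cosmetic differences are that you obtain the $E_r(Q)^T$-step from the Auslander--Reiten formula instead of quoting the minimal injective copresentation fact $\f^\vee(M)=-\f(\tau^{-1}M)$, and your parenthetical caveat about symmetry of $C(G)$ in the non-trivially valued case points to a transpose/convention subtlety that the paper's own reformulation passes over in the same way.
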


\begin{proof} Let $M=\Coker(f)$. The equality is equivalent to that
$$\e(f)-\f = \bigg(\sum_{t=0}^{t(f)-1} \dv(\tau^{-t} M) \bigg)C(G).$$
Recall that the Cartan matrix $C(G)$ is equal to $E_l(Q)+E_r(Q)^T$.
It follows from \eqref{eq:resimple} that matrix $E_l(Q)$ transforms $-\dv M$ to the reduced weight vector of $f(M)$, and
$E_r(Q)^T$ transforms $\dv M$ to the reduced weight vector $\f^\vee(M)$ of the minimal injective presentation of $M$, or equivalently, $\f^\vee(M)=-\f(\tau^{-1} M)$.
So the righthand side is equal to
$$\sum_{t=0}^{t(f)-1} -\f(\tau^{-t} M) - \f^\vee(\tau^{-t} M).$$
After some cancellations, only two terms survive.
They are $\e(f)$ and $-\f=-\f(M)$.
\end{proof}

For the weight configuration $\varpi(\bs{\sigma}_Q):=\{\varpi(\bs{\sigma}_Q(f))\}_{f\in (\Delta_Q)_0}$ and the polyhedral cone ${\sf G}_{\Delta_Q}$, we consider the polytope ${\sf G}_{\Delta_Q}(\gamma)$ as in Definition \ref{D:polytope}.
\begin{lemma} \label{L:parfun} The function $|{\sf G}_{\Delta_Q}(-)\cap \mb{Z}^{(\Delta_Q)_0}|$ is the partition function $p_Q(-)$.
\end{lemma}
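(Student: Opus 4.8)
The plan is to exhibit an explicit unimodular change of coordinates on $\mb{Z}^{(\Delta_Q)_0}$ that carries ${\sf G}_{\Delta_Q}\cap\mb{Z}^{(\Delta_Q)_0}$ onto the nonnegative orthant and intertwines the two gradings $\varpi(\bs{\sigma}_Q)$ and $\bs{\rho}_Q$. Such a map will restrict to a bijection ${\sf G}_{\Delta_Q}(\gamma)\cap\mb{Z}^{(\Delta_Q)_0}\xrightarrow{\ \sim\ }{\sf K}(\gamma)\cap\mb{Z}^{(\Delta_Q)_0}$, and the lemma is then immediate from the definition of $p_Q$ as $|{\sf K}(-)\cap\mb{Z}^{(\Delta_Q)_0}|$. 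Two inputs are needed: an explicit list of the subrepresentations of each $T_{\zero_i^-}$ (this produces the change of coordinates and shows it linearizes ${\sf G}_{\Delta_Q}$), and a comparison of $\varpi(\bs{\sigma}_Q)$ with $\bs{\rho}_Q$, for which Lemma \ref{L:sigma2rho} is tailor-made.

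First I would pin down the $J(\Delta_Q,W_Q)$-module $T_{\zero_i^-}$. By (the $\Delta_Q$-analogue of) Theorem \ref{T:Tv}.(1) it is the indecomposable thin module supported on the $\tau$-orbit of $\zero_{i^*}^+$; traversed in the $\tau$-direction this orbit reads $v_0=\zero_{i^*}^+,\,v_1,\dots,v_{T_i}=\zero_i^-$ with $v_{k+1}=\tau v_k$ and $T_i\ge 1$. Viewing $\Delta_Q$ inside $\DCQ$ via minimal presentations, the positive presentation $v_0$ is absent from $\Delta_Q$, while $v_1=f(I_{i^*}),\dots,v_{T_i}$ all survive; by Proposition \ref{P:iARtDCQ} the only arrows of $\Delta_Q$ among $v_1,\dots,v_{T_i}$ are the translation arrows $v_k\dashrightarrow v_{k+1}$, and indecomposability of $T_{\zero_i^-}$ forces these to act as isomorphisms while every other arrow out of the support acts as $0$. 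Hence a subrepresentation of $T_{\zero_i^-}$ is exactly a suffix $S_k^{(i)}$ supported on $\{v_k,\dots,v_{T_i}\}$, with $\dv S_k^{(i)}=\sum_{j=k}^{T_i}\e_{v_j}$ (for $k=1,\dots,T_i$, together with $S=0$). I would then define $\h\in\mb{Z}^{(\Delta_Q)_0}$ by the suffix sums $\h(v_k^{(i)}):=\g(\dv S_k^{(i)})=\sum_{j=k}^{T_i}\g(v_j^{(i)})$. Since each vertex of $\Delta_Q$ lies in a unique such orbit, $\g\mapsto\h$ is unitriangular along orbits, hence a unimodular automorphism of $\mb{Z}^{(\Delta_Q)_0}$, and the defining inequalities $\g(\dv S)\ge 0$ of ${\sf G}_{\Delta_Q}$ become precisely $\h\ge 0$.

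It then remains to check that $\g\,\varpi(\bs{\sigma}_Q)=\h\,\bs{\rho}_Q$. Using $\tau^{-t}v_k^{(i)}=v_{k-t}^{(i)}$ for $0\le t\le k$, Lemma \ref{L:sigma2rho} gives $\varpi(\bs{\sigma}_Q(v_k^{(i)}))=\sum_{s=1}^{k}\bs{\rho}_Q(v_s^{(i)})$. Therefore, summing over all $f$,
\begin{align*}
\sum_{f}\g(f)\,\varpi(\bs{\sigma}_Q(f))
&=\sum_i\sum_{k=1}^{T_i}\g(v_k^{(i)})\sum_{s=1}^{k}\bs{\rho}_Q(v_s^{(i)})\\
&=\sum_i\sum_{s=1}^{T_i}\bs{\rho}_Q(v_s^{(i)})\sum_{k=s}^{T_i}\g(v_k^{(i)})
=\sum_f \h(f)\,\bs{\rho}_Q(f),
\end{align*}
so under $\g\mapsto\h$ the hyperplane section $\g\,\varpi(\bs{\sigma}_Q)=\gamma$ corresponds to $\h\,\bs{\rho}_Q=\gamma$. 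Combining this with the previous step (and Proposition \ref{P:GQ}, Definition \ref{D:polytope}), $\g\mapsto\h$ carries ${\sf G}_{\Delta_Q}(\gamma)\cap\mb{Z}^{(\Delta_Q)_0}$ bijectively onto $\{\h\ge 0:\h\bs{\rho}_Q=\gamma\}\cap\mb{Z}^{(\Delta_Q)_0}={\sf K}(\gamma)\cap\mb{Z}^{(\Delta_Q)_0}$; as the rows of $\bs{\rho}_Q$ are exactly the positive roots of $G$, this count is $p_Q(\gamma)$ by definition.

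The hard part will be Step 1: one must be sure that, inside $\Delta_Q$, the support of $T_{\zero_i^-}$ carries no arrows other than the single string of translation arrows and that these act invertibly — everything after that is bookkeeping with Lemma \ref{L:sigma2rho}. (As a sanity check and an alternative route, once the cluster model of $k[U]$ is available the statement is immediate: the generic character identifies the lattice points of ${\sf G}_{\Delta_Q}(\gamma)$ with a basis of the $\gamma$-component of $k[U]$ for the conjugation $H$-action, and $\dim k[U]_\gamma=p_Q(\gamma)$ because $U\cong\prod_{\alpha>0}\mb{G}_a$ equivariantly for $H$; but the change-of-coordinates argument above is self-contained and does not presuppose that model.)
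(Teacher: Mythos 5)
Your proposal is correct and follows essentially the same route as the paper's proof: both introduce the suffix-sum variables $\h(f)=\sum_{t\ge 0}\g(\tau^t f)$, observe that this unimodular (unitriangular) change of coordinates carries ${\sf G}_{\Delta_Q}$ onto the nonnegative orthant, and then use Lemma \ref{L:sigma2rho} together with a reindexing of the double sum to match the gradings $\g\,\varpi(\bs{\sigma}_Q)=\h\,\bs{\rho}_Q$. Your write-up is merely more explicit about the structure of the thin modules $T_{\zero_i^-}$ and the unitriangularity of the transformation.
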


\begin{proof} Recall that ${\sf G}_{\Delta_Q}(\gamma)$ is defined by
$\sum_{t=0}^{t^*(f)} \g(\tau^t f) \geq 0$ for $f\in (\Delta_Q)_0$ and $\g\varpi(\bs{\sigma}_Q) = \gamma$.
We introduce new variables $\h(f)$ for each $f$ satisfying that
$\h(f) = \sum_{t=0}^{t^*(f)} \g(\tau^t f)$. Then
$$\h\bs{\rho}_Q=\sum_f\sum_{t=0}^{t^*(f)} \g(\tau^tf)\bs{\rho}_Q(f)=\sum_f\sum_{t=0}^{t(f)-1} \g(f)\bs{\rho}_Q(\tau^{-t}f)=\g\varpi(\bs{\sigma}_Q).$$
The second equality is established through an easy bijection and the last one is due to Lemma \ref{L:sigma2rho}.
So the defining condition for $\mr{G}_{\Delta_Q}(\gamma)$ is equivalent to that for the polytope ${\sf K}(\gamma)$.
Finally, we observe that the transformation from $\g$ to $\h$ is totally unimodular. In particular, the transformation and its inverse preserve lattice points.
\end{proof}

\begin{theorem} \label{T:Ucluster} The coordinate ring of $U$ is equal to the graded cluster algebra $\mc{C}\left(\Delta_Q,\mc{M}_Q;\varpi(\bs{\sigma}_Q)\right)$.
Moreover, if $Q$ is trivially valued, then $(\Delta_Q,W_Q)$ is a cluster model (Definition \ref{D:model}).
\end{theorem}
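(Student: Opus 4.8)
The plan is to treat the two assertions in turn. For the first, I would assemble known facts: the identification of $k[U]$ with the upper cluster algebra $\uca(\Delta_Q,\mc{M}_Q)$ is due to \cite{BFZ,GLSa}, and the coincidence of this upper cluster algebra with the cluster algebra $\mc{C}(\Delta_Q,\mc{M}_Q)$ is \cite{GLSk} for simply-laced $Q$ (with the analogous results of Geiss--Leclerc--Schr\"oer covering the symmetrizable case). To upgrade this to a graded statement I would invoke Lemma~\ref{L:wtminor}: the generalized minor $m_f$ is homogeneous of weight $\varpi(\bs{\sigma}_Q(f))$ for the $H$-conjugation action, so declaring $\deg(m_f)=\varpi(\bs{\sigma}_Q(f))$ reproduces exactly the natural grading of $k[U]$; since seed mutation intertwines with mutation of weight configurations, the isomorphism is one of graded algebras, and $\varpi(\bs{\sigma}_Q)$ is a full weight configuration by Corollary~\ref{C:WC}.

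For the ``moreover'' part I restrict to trivially valued $Q$, so that the IQP $(\Delta_Q,W_Q)$ is at our disposal. First I would check that $(\Delta_Q,W_Q)$ is rigid with finite-dimensional Jacobian algebra: the argument of Lemma~\ref{L:fdrigid} goes through after deleting the positive and neutral frozen vertices, because the reductions used there --- moving type-$C$ arrows via \eqref{eq:movec} and killing the cycles $e_u acb e_{u'}$ via \eqref{eq:relpm} at a negative vertex --- only involve the negative frozen vertices, which are still present in $\Delta_Q$. In particular $(\Delta_Q,W_Q)$ is nondegenerate; together with the full-rankness of $B_{\Delta_Q}$ established in Section~\ref{ss:gdim1}, this lets me apply Theorem~\ref{T:GCC}: the generic character $C_{W_Q}$ is injective on $G(\Delta_Q,W_Q)$, its image consists of linearly independent elements of $\br{\mc{C}}(\Delta_Q)\subseteq\uca(\Delta_Q)=k[U]$, and it contains all cluster monomials.

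It remains to prove the image of $C_{W_Q}$ spans $k[U]$, which I would do one graded piece at a time. By the discussion closing Section~\ref{ss:gv}, $C_{W_Q}(\g)$ is homogeneous of $H$-weight $\g\,\varpi(\bs{\sigma}_Q)$; fixing $\gamma$ and writing $\gamma=\g\,\varpi(\bs{\sigma}_Q)$, the set of such $\g$ in $G(\Delta_Q,W_Q)$ is exactly ${\sf G}_{\Delta_Q}(\gamma)\cap\mb{Z}^{(\Delta_Q)_0}$ by Proposition~\ref{P:GQ}, and this set has cardinality $p_Q(\gamma)$ by Lemma~\ref{L:parfun}. On the other hand $k[U]\cong S(\mf{n}^{*})$ as a graded vector space, so $\dim k[U]_{\gamma}=p_Q(\gamma)$ as well (the PBW/Kostant count). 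Since $C_{W_Q}$ sends the $p_Q(\gamma)$ vectors of weight $\gamma$ to linearly independent elements of $k[U]_{\gamma}$, they form a basis of $k[U]_{\gamma}$; taking the union over all $\gamma$ shows $C_{W_Q}$ maps $G(\Delta_Q,W_Q)$ bijectively onto a basis of $k[U]=\uca(\Delta_Q)$, i.e.\ $(\Delta_Q,W_Q)$ is a cluster model in the sense of Definition~\ref{D:model}.

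The main obstacle is not any single hard step but the need to match up three gradings consistently: the $\bs{\sigma}_Q$-grading on $\uca(\Delta_Q)$, the $H$-conjugation grading on $k[U]$, and the root-lattice grading underlying both Lemma~\ref{L:parfun} and Kostant's partition function. These are reconciled by the chain Lemma~\ref{L:wtminor}, Lemma~\ref{L:sigma2rho}, Lemma~\ref{L:parfun}, and it is exactly this reconciliation that makes the dimension count \emph{tight} --- so that the linear independence supplied by Theorem~\ref{T:GCC} is forced to be a spanning property rather than leaving a gap. Everything else is an assembly of Theorem~\ref{T:GCC}, Proposition~\ref{P:GQ}, and the classical description of $k[U]$.
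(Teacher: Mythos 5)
Your proposal is correct and follows essentially the same route as the paper: the first statement is quoted from \cite{BFZ,GLSa,GLSk} with the grading supplied by Lemma \ref{L:wtminor}, and the cluster-model statement is the same dimension count combining Theorem \ref{T:GCC}, Proposition \ref{P:GQ} and Lemma \ref{L:parfun} against $\dim k[U]_\gamma=p_Q(\gamma)$ (your $S(\mf{n}^*)$/PBW count is just the paper's graded-dual-of-$U(\mf{n})$/Kostant count in different clothing). The extra verifications you include (rigidity/nondegeneracy of $(\Delta_Q,W_Q)$, full rank of $B_{\Delta_Q}$, the weight-by-weight spanning argument) are details the paper leaves implicit, not a different method.
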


\begin{proof} We only need to prove the second statement.
Recall that the universal enveloping algebra $U(\mf{n})$ has a standard grading by $\deg(e_i)=\alpha_i$.
It is a classical fact \cite{K} that the partition function $p_Q(\gamma)$ counts the dimension of the homogeneous component $U(\mf{n})_\gamma$.
The algebra $k[U]$ is graded dual to $U(\mf{n})$ with (see \cite{Z})
$$k[U]_\gamma = \left\{\varphi\in k[U]\mid \varphi(huh^{-1})=h^{\gamma}\varphi(u) \text{ for $h\in H,\ u\in U$} \right\}.$$
We have seen that the degree of $\mc{M}_Q(f)=m_f$ is $\varpi(\bs{\sigma}_Q(f))$.
So the second statement follows from Theorem \ref{T:GCC}, Lemma \ref{L:parfun}, and the fact that $\mc{C}(\Delta_Q)=\uca(\Delta_Q)$.
\end{proof}

There are another two seeds of this cluster structure of $k[U]$.
One is called {\em left standard}, and the other is called {\em right standard}.
Both are obtained from the standard seed by a sequence of mutations.
The sequences of mutations $\bs{\mu}_l$ and $\bs{\mu}_r$ will be defined and studied in Appendix \ref{A:cyclic}.
Let
$$\big(\wtd{\Delta}_Q^\#,\wtd{W}_Q^\#,\mc{M}_Q^\#;\bs{\sigma}_Q^\#\big)=\bs{\mu}_\#(\Delta_Q,W_Q,\mc{M}_Q;\bs{\sigma}_Q)\ \text{ for $\#=l,r$.}$$
For what follows in this Section, $\#$ always represents $l$ and $r$.
Let $\Delta_Q^l$ (resp. $\Delta_Q^r$) be the ice quiver obtained from $\Delta_Q^2$ by deleting the negative and positive (resp. negative and neutral) frozen vertices.
By Corollary \ref{C:mu_l}, $\wtd{\Delta}_Q^\#$ and $\Delta_Q^\#$ are equal up to arrows between frozen vertices.
Since the QP $(\Delta_Q,W_Q)$ is rigid, so is $(\wtd{\Delta}_Q^\#,\wtd{W}_Q^\#)$.
Let $W_Q^\#$ be the potential $W_Q^2$ restricted to $\Delta_Q^\#$.
By \cite[Proposition 8.9]{DWZ1} $(\Delta_Q^\#,W_Q^\#)$ is also rigid.
In particular, both $(\wtd{\Delta}_Q^\#,\wtd{W}_Q^\#)$ and $(\Delta_Q^\#,W_Q^\#)$ are $\mu$-rigid.
By the equivalent definition of rigidity (\cite[Definition 6.10, Remark 6.8]{DWZ1}), $(\wtd{\Delta}_Q^\#,\wtd{W}_Q^\#)$ is $\mu$-right-equivalent to $(\Delta_Q^\#,W_Q^\#)$.

\cite[Proposition 5.15]{Fs1} implies that being a cluster model is mutation-invariant.
It follows that $(\wtd{\Delta}_Q^\#,\wtd{W}_Q^\#)$ is also a cluster model. In view of Remark \ref{r:cluster_model}, we can replace $(\wtd{\Delta}_Q^\#,\wtd{W}_Q^\#)$ by $(\Delta_Q^\#,W_Q^\#)$.
\begin{proposition} \label{P:Ulr} $(\Delta_Q^l,\mc{M}_Q^l;\bs{\sigma}_Q^l)$ and $(\Delta_Q^r,\mc{M}_Q^r;\bs{\sigma}_Q^r)$ are another two graded seeds for the cluster algebra $k[U]$.
Moreover, if $Q$ is trivially valued, both $(\Delta_Q^l,W_Q^l)$ and $(\Delta_Q^r,W_Q^r)$ are cluster models.
\end{proposition}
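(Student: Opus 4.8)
The plan is to assemble the ingredients collected in the paragraph preceding the statement. The essential observation is that $\bs{\mu}_l$ and $\bs{\mu}_r$ are genuine sequences of seed mutations, so running them on the standard seed of $k[U]$ automatically produces new seeds of the \emph{same} graded (upper) cluster algebra, and that both the graded-seed structure and the cluster-model property transport along mutations — and, for the latter, along $\mu$-right-equivalences via Remark \ref{r:cluster_model}. So nothing new needs to be computed; what remains is to track the identifications carefully.

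For the first assertion I would start from Theorem \ref{T:Ucluster}, which gives that $\big(\Delta_Q,\mc{M}_Q;\varpi(\bs{\sigma}_Q)\big)$ is a graded seed for $k[U]=\mc{C}(\Delta_Q)$. Applying the mutation sequence $\bs{\mu}_\#$ to this graded seed yields, by the very definition of mutation of graded seeds, a mutation-equivalent graded seed $\big(\wtd{\Delta}_Q^\#,\mc{M}_Q^\#;\bs{\sigma}_Q^\#\big)$, which is therefore again a graded seed for $k[U]$. It then only remains to replace $\wtd{\Delta}_Q^\#$ by $\Delta_Q^\#$: by Corollary \ref{C:mu_l} the two quivers differ solely by arrows between frozen vertices, and such arrows affect neither the restricted $B$-matrix (hence neither the exchange relations nor the cluster algebra), nor the cluster $\mc{M}_Q^\#$, nor the weight-configuration identity \eqref{eq:weightconfig}, which involves only mutable vertices and the arrows incident to them. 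Hence $\big(\Delta_Q^l,\mc{M}_Q^l;\bs{\sigma}_Q^l\big)$ and $\big(\Delta_Q^r,\mc{M}_Q^r;\bs{\sigma}_Q^r\big)$ are graded seeds for $k[U]$.

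For the second assertion, assume $Q$ is trivially valued. Then $(\Delta_Q,W_Q)$ is a cluster model by Theorem \ref{T:Ucluster}; since being a cluster model is invariant under mutation of QPs by \cite[Proposition 5.15]{Fs1}, the mutated QP $(\wtd{\Delta}_Q^\#,\wtd{W}_Q^\#)=\bs{\mu}_\#(\Delta_Q,W_Q)$ is again a cluster model. As recalled just above the statement, $(\wtd{\Delta}_Q^\#,\wtd{W}_Q^\#)$ is $\mu$-right-equivalent to $(\Delta_Q^\#,W_Q^\#)$ — both being rigid, hence $\mu$-rigid, by \cite{DWZ1} — so Remark \ref{r:cluster_model} transports the cluster-model property and gives that $(\Delta_Q^\#,W_Q^\#)$ is a cluster model as well.

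The only genuinely nontrivial input, and thus the main obstacle, is not in this proposition itself but in what it relies on: the identification of the mutation sequences $\bs{\mu}_l,\bs{\mu}_r$ with the twisted cyclic shift and the resulting description of $\wtd{\Delta}_Q^\#$ up to frozen-vertex arrows, i.e.\ Theorem \ref{T:cyclic} and Corollary \ref{C:mu_l} of Appendix \ref{A:cyclic}. Once those are granted, the frozen-vertex bookkeeping in the argument above is harmless and everything else is a formal consequence of mutation-invariance.
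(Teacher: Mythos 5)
Your proposal is correct and follows essentially the same route as the paper: both arguments mutate the standard seed of Theorem \ref{T:Ucluster} by $\bs{\mu}_\#$, invoke Corollary \ref{C:mu_l} to identify $\wtd{\Delta}_Q^\#$ with $\Delta_Q^\#$ up to frozen-to-frozen arrows (which affect neither the restricted $B$-matrix nor the weight configuration), and then transport the cluster-model property via mutation-invariance \cite[Proposition 5.15]{Fs1} together with $\mu$-right-equivalence coming from rigidity and Remark \ref{r:cluster_model}. The bookkeeping and the appeals to the appendix are the same as in the text preceding the proposition.
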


\begin{remark} \label{r:GQlr} Later we will also need a concrete description of $G(\Delta_Q^l,W_Q^l)$ and $G(\Delta_Q^r,W_Q^r)$.
Both are given by lattice points in rational polyhedral cones.
Recall that the three sets of defining conditions of the polyhedral cone ${\sf G}_{\Delta_Q^2}$
$$\g H_u\geq 0,\ \g H_l\geq 0,\ \g H_r\geq 0.$$
We define the polyhedral cones ${\sf G}_{\Delta_Q^l}\subset \mb{R}^{(\Delta_Q^l)_0}$ and ${\sf G}_{\Delta_Q^r}\subset \mb{R}^{(\Delta_Q^l)_0}$ by the relations
$\g H_l\geq 0$ and $\g H_r\geq 0$ respectively.
Almost the same proof as Theorem \ref{T:GQ2} can show that
$G(\Delta_Q^l,W_Q^l)={\sf G}_{\Delta_Q^l}\cap\mb{Z}^{(\Delta_Q^l)_0}$ and $G(\Delta_Q^r,W_Q^r)={\sf G}_{\Delta_Q^r}\cap\mb{Z}^{(\Delta_Q^r)_0}$.
\end{remark}

\section{Maps Relating Unipotent Groups} \label{S:toU}
\subsection{Standard Maps} \label{ss:standard}
Recall the base affine space $\mc{A}$ and its dual $\mc{A}^\vee$ defined in Section \ref{ss:baseaff}.
Let $\Conf_{n,1}:= (\mc{A}^{n} \times \mc{A}^\vee)/G$ be the categorical quotient in the category of varieties \footnote{For our purpose, it is enough to work with the categorical quotient, which has the same ring of regular functions on the corresponding quotient stack.}.
\begin{lemma} \label{L:UFD} The ring of regular functions on $\Conf_{n,1}$ is a unique factorization domain. 
\end{lemma}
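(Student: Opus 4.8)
The plan is to exhibit $k[\Conf_{n,1}]$ as the ring of invariants of a finitely generated factorial $k$-algebra under a rational action of a \emph{connected} algebraic group with \emph{no nontrivial rational characters}, and then to apply the classical fact that such an invariant ring is again factorial. Concretely, the input result I would use is: if a connected algebraic group $\mc{G}$ with $\mathcal{X}(\mc{G})=0$ acts rationally on a finitely generated UFD $R$ over an algebraically closed field, then $R^{\mc{G}}$ is a UFD. (The proof is short — a nonzero $p\in R^{\mc{G}}$ that is irreducible in $R^{\mc{G}}$ factors in $R$ into finitely many irreducibles, which $\mc{G}$ permutes up to units; connectedness fixes each factor up to a character, triviality of $\mathcal{X}(\mc{G})$ fixes each factor outright, so $p$ stays irreducible in $R$, hence prime, hence prime in $R^{\mc{G}}$; Noetherianity of $R^{\mc{G}}$ finishes it — but I would simply cite it.)

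First I would package the definition: by \S\ref{ss:baseaff} we have $k[\mc{A}]=k[G]^{U^-}$ (left $U^-$-invariant functions), $k[\mc{A}^\vee]=k[G]^{U}$ (right $U$-invariant functions), and $k[\Conf_{n,1}]=\bigl(k[\mc{A}]^{\otimes n}\otimes k[\mc{A}^\vee]\bigr)^G$. Since $k[G^{n+1}]=k[G]^{\otimes(n+1)}$ and invariants under a product of groups acting factor-wise are the tensor product of the individual invariants, this identifies $k[\mc{A}]^{\otimes n}\otimes k[\mc{A}^\vee]$ with $k[G^{n+1}]^{N}$, where $N:=(U^-)^{n}\times U$ acts on $G^{n+1}$ by left translation on each of the first $n$ factors and by right translation on the last. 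The multidiagonal $G$-action defining $\Conf_{n,1}$ lifts to the action of $G$ on $G^{n+1}$ by right translation on the first $n$ factors and left translation on the last; as left and right translations commute, $N$ and $G$ together give an action of the connected group $\mc{G}:=N\times G$, and $k[\Conf_{n,1}]=k[G^{n+1}]^{\mc{G}}$. Now $G^{n+1}$ is again a simply connected semisimple group, so $\operatorname{Pic}(G^{n+1})=0$ by Popov's theorem; being smooth and affine it has $\operatorname{Cl}(k[G^{n+1}])=0$, and since $k[G^{n+1}]$ is a finitely generated (hence Noetherian) normal domain it is a UFD. Finally $\mathcal{X}(\mc{G})=\mathcal{X}(N)\times\mathcal{X}(G)=0$, because $N$ is unipotent and $G$ is semisimple (this also forces $k[G^{n+1}]^{\times}=k^{\times}$ by Rosenlicht's theorem, should the cited invariant-UFD statement require a units hypothesis), and $k[\Conf_{n,1}]$ is a finitely generated $k$-algebra (the rings $k[\mc{A}]$, $k[\mc{A}^\vee]$ are finitely generated — $U^-,U$ being Grosshans subgroups, as also visible from the decompositions recalled in \S\ref{ss:baseaff} — and $G$ is reductive). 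Applying the input result to $R=k[G^{n+1}]$ and $\mc{G}$ yields the claim.

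The step requiring the most care is making sure the cited invariant-UFD theorem genuinely applies in the present generality: the acting group $\mc{G}$ is a mixed unipotent-times-reductive group rather than a torus or a reductive group, so I would want a reference (Samuel/Nagata-style, or the statement in the invariant-theory literature) that covers connected $\mc{G}$ with trivial character group acting on a finitely generated UFD — the normality and finite-generation hypotheses needed there do hold, precisely because $k[G^{n+1}]$ is integrally closed and $k[\mc{A}],k[\mc{A}^\vee]$ are finitely generated. As a fallback, essentially equivalent route avoiding $G^{n+1}$ altogether, one can instead prove directly that $k[\mc{A}]$ and $k[\mc{A}^\vee]$ are UFDs (again via vanishing of $\operatorname{Pic}$, equivalently $\operatorname{Cl}$, of the base affine space for simply connected $G$), deduce that the multigraded tensor product $k[\mc{A}]^{\otimes n}\otimes k[\mc{A}^\vee]$ is a UFD by the class-group formula for products of normal affine varieties all of whose invertible regular functions are constant, and then pass to $G$-invariants as above; here the delicate point migrates to the product formula for divisor class groups, which is where I would be most careful.
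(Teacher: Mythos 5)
Your proposal is correct and follows essentially the same route as the paper: the paper's two-line proof cites \cite{KKV} for factoriality of $k[G]$ and then invokes \cite[Theorem 3.17]{PV} (factoriality of invariant rings under a connected group with trivial character group — here the unipotent groups and the semisimple $G$) to conclude $k[\Conf_{n,1}]$ is a UFD. Your version simply spells out the details — identifying $k[\Conf_{n,1}]$ with $k[G^{n+1}]^{\mc{G}}$ for $\mc{G}=(U^-)^n\times U\times G$, checking $\mathcal{X}(\mc{G})=0$, units, and finite generation — all of which are implicit in the paper's citation.
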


\begin{proof} 
It is well-known \cite{KKV} that $k[G]$ is a UFD.
Since the group $U$ and $G$ have no multiplicative characters,
by \cite[Theorem 3.17]{PV} $k[\Conf_{n,1}]$ is also a UFD.
\end{proof}

By the Bruhat decomposition, any pair $(A_1,A_0^\vee)\in\Conf_{1,1}$ has a representative $(U^-h\br{w},U)$ for some $h\in H, w\in W$.
A pair $(A_1,A_0^\vee)\in\Conf_{1,1}$ is called {\em generic} if the $w\in W$ can be chosen as the identity.
Let $\Conf_{n,1}^\circ$ be the open subset of $\Conf_{n,1}$ where each pair $(A_i,A_{0}^\vee)$ is generic (but we do not impose any condition among $A_i$'s).
By definition, we have an isomorphism $H\cong \Conf_{1,1}^\circ$.
Let $\iota:H\hookrightarrow \Conf_{1,1}$ be the open embedding $h\mapsto (U^-h,U)$.
For each $i\in Q_0$ we define the regular function $\wtd{\varpi}_i$ on $\Conf_{1,1}$ by
$\wtd{\varpi}_i(U^-g_1,g_0U) = m^{\varpi_i}(g_1g_0)$.
It is clear that the definition does not depend on the representatives.
It follows from Proposition \ref{P:pminor} that

\begin{lemma} $k[H]$ is exactly the localization of $k[\Conf_{1,1}]$ at all $\wtd{\varpi}_i$.
\end{lemma}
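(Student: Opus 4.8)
The plan is to show two containments: $k[H]$ is contained in the localization of $k[\Conf_{1,1}]$ at all the $\wtd\varpi_i$, and conversely that localization is contained in $k[H]$. For the second containment one notes first that $\iota^*(\wtd\varpi_i) = m^{\varpi_i}\circ\iota$, which by Proposition \ref{P:pminor} sends $h\mapsto h^{\varpi_i}$; since the $\varpi_i$ form a $\mb Z$-basis of the weight lattice and $H$ is a torus, the functions $\iota^*(\wtd\varpi_i)$ are precisely a set of generating characters of $k[H]$, each invertible in $k[H]$. Hence $\iota^*$ extends to a ring homomorphism from the localization $k[\Conf_{1,1}][\wtd\varpi_i^{-1}\mid i\in Q_0]$ to $k[H]$, and in fact this extension is surjective because the $\iota^*(\wtd\varpi_i)^{\pm 1}$ already generate $k[H]$.

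For the first containment, and for identifying the kernel of the above map as trivial, I would exhibit $\iota$ as an open immersion onto $\Conf_{1,1}^\circ$ — which is already stated in the excerpt (the isomorphism $H\cong \Conf_{1,1}^\circ$, $\iota: H\hookrightarrow \Conf_{1,1}$, $h\mapsto(U^-h,U)$). The open subset $\Conf_{1,1}^\circ$ is precisely the generic locus, where a representative $(U^-h\br w, U)$ can be chosen with $w=e$. By the Bruhat decomposition $G=\coprod_{w} U^-Hw U$, the complement of $\Conf_{1,1}^\circ$ in $\Conf_{1,1}$ is the union of the strata with $w\neq e$, i.e. the closed subset where $w(\varpi_i)\neq\varpi_i$ for some $i$. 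By Proposition \ref{P:pminor}, $\wtd\varpi_i$ vanishes exactly on the strata where $w$ does not fix $\varpi_i$, so the non-vanishing locus of the single function $\prod_i \wtd\varpi_i$ is contained in $\Conf_{1,1}^\circ$; conversely on $\Conf_{1,1}^\circ$ every $\wtd\varpi_i$ takes the value $h^{\varpi_i}\neq 0$. Therefore $\Conf_{1,1}^\circ$ is exactly the principal open subset $D(\prod_i\wtd\varpi_i)\subseteq\Conf_{1,1}$.

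Combining these, $\iota$ identifies $H$ with the principal open affine subscheme $D(\prod_i\wtd\varpi_i)$ of $\Conf_{1,1}$, and the ring of regular functions on a principal open $D(s)$ of an affine variety $X=\Spec R$ is just the localization $R[s^{-1}]$. Thus $k[H]=k[\Conf_{1,1}][(\prod_i\wtd\varpi_i)^{-1}] = k[\Conf_{1,1}][\wtd\varpi_i^{-1}\mid i\in Q_0]$, which is the localization of $k[\Conf_{1,1}]$ at all the $\wtd\varpi_i$, as claimed. (A small point to check along the way: $\Conf_{1,1}$ is an affine variety, which follows since $\mc A\times\mc A^\vee$ is affine and $k[\Conf_{1,1}]$ is its ring of $G$-invariants, and one should confirm $\Conf_{1,1}$ is reduced and irreducible so that "localization at the $\wtd\varpi_i$" is unambiguous — this is implicit in Lemma \ref{L:UFD} for $n=1$.)

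The main obstacle, such as it is, is verifying cleanly that the generic locus $\Conf_{1,1}^\circ$ coincides with the principal open set cut out by $\prod_i \wtd\varpi_i$ rather than something smaller — i.e. that vanishing of \emph{all} the $\wtd\varpi_i$ is not strictly weaker than "$w\neq e$". This is exactly the content of Proposition \ref{P:pminor}: $w$ fixes every fundamental weight if and only if $w=e$ (since the $\varpi_i$ span $\mf h^*_{\mb R}$ and $W$ acts faithfully), so $w=e$ is equivalent to all $m^{\varpi_i}(g)\neq 0$, which is equivalent to all $\wtd\varpi_i\neq 0$. Once this identification is in hand the rest is the standard fact that regular functions on a principal open subset of an affine variety form the corresponding localization, and no serious computation is required.
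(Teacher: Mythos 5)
Your argument is correct and is essentially the paper's intended reasoning: the paper gives no separate proof, saying only that the lemma ``follows from Proposition \ref{P:pminor}'', and the content you supply — Bruhat representatives $(U^-h\br{w},U)$ together with Proposition \ref{P:pminor} identify the common nonvanishing locus of the $\wtd{\varpi}_i$ with $\Conf_{1,1}^\circ\cong H$, whence the localization is the coordinate ring of that principal open set — is exactly that argument spelled out. One cosmetic correction: $\mc{A}\times\mc{A}^\vee$ is only quasi-affine, but $\Conf_{1,1}$ is affine anyway because it is by definition $\Spec$ of the invariant ring $(k[G]^{U^-}\otimes k[G]^{U})^G$, so your appeal to regular functions on a principal open subset of an affine variety is unaffected.
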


From now on, we will focus on the space $\Conf_{2,1}$.
Its ring of regular functions has a triple-weight decomposition
$$k[\Conf_{2,1}]=\bigoplus_{\tripar\in Z_{\geqslant 0}^{Q_0}} \left(L(\mu)\otimes L(\nu)\otimes L(\lambda)^\vee\right)^{G}.$$
We write very often $C_{\mu,\nu}^\lambda$ for the graded component $k[\Conf_{2,1}]_\tripar$.
It is clear that $\lrcoef=\dim C_{\mu,\nu}^\lambda$.

We recall several rational maps defined in \cite{FG,GS}.
Let $i: H\times H\times U \hookrightarrow \Conf_{2,1}$ be the open embedding
$$(h_1,h_2,u)\mapsto(U^-h_1,U^-h_2u,U).$$
It is an embedding because the stabilizer of the generic pair $(U^-h,U)$ is $1_G$.
It is clear that the image of $i$ is exactly $\Conf_{2,1}^\circ$.
By restriction we get an embedding $i^*: k[\Conf_{2,1}]\hookrightarrow k[H\times H\times U]$.
We will view two $H$'s and $U$ as subgroups of $H\times H\times U$ in the natural way,
and write $i_1,i_2$ and $i_u$ for the restriction of $i$ on first $H$, second $H$ and $U$ respectively.
A function $s$ in $C_{\mu,\nu}^\lambda$ is uniquely determined by its restriction $i_u^*(s)$ on $U$ because
\begin{equation} \label{eq:resU} s(h_1,h_2,u) = h_1^{\varpi(\mu)} h_2^{\varpi(\nu)} s(u)\ \text{ for } (h_1,h_2,u)\in H\times H\times U.
\end{equation}
So $i^*$ embeds $C_{\mu,\nu}^\lambda$ into $k[U]$ (in fact into $k[U]_{\varpi(\mu+\nu-\lambda)}$ by easy calculation).
We note that each $C_{\mu,\nu}^\lambda$ is {\em not} disjoint under this embedding.

We recall a classical interpretation of the multiplicity $\lrcoef$. 
Let $L(\mu)_\gamma$ be the weight-$\gamma$ subspace of the irreducible $G$-module $L(\mu)$.
We denote
$$L(\mu)_{\gamma}^{\nu}:=\left\{\varphi \in L(\mu)_{\gamma}\mid e_i^{\nu(i)+1}(\varphi)=0   \text{ for $i\in Q_0$} \right\}.$$

\begin{lemma}\cite{PRV}  We have that $\lrcoef = \dim L(\mu)_{\varpi(\lambda-\nu)}^\nu$.
\end{lemma}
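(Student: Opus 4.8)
The statement to prove is the classical PRV-type identity $\lrcoef = \dim L(\mu)_{\varpi(\lambda-\nu)}^\nu$, where $L(\mu)_{\varpi(\lambda-\nu)}^\nu$ is the space of weight vectors $\varphi\in L(\mu)$ of weight $\varpi(\lambda-\nu)$ killed by $e_i^{\nu(i)+1}$ for all $i$. The plan is to exhibit both sides as the dimension of the same space of $\mathfrak{n}$-highest-weight vectors in a tensor product, using the fact that $\lrcoef$ is already identified with $\dim\left(L(\mu)\otimes L(\nu)\otimes L(\lambda)^\vee\right)^G$ in the excerpt.

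First I would rewrite the triple invariant. Since $L(\lambda)^\vee \cong L(\lambda^*)$ and, for a $G$-module $V$, one has $V^G \cong \Hom_G(L(0),V)$, Schur's lemma gives
\begin{equation*}
\left(L(\mu)\otimes L(\nu)\otimes L(\lambda)^\vee\right)^G \cong \Hom_G\!\left(L(\lambda), L(\mu)\otimes L(\nu)\right),
\end{equation*}
whose dimension is the multiplicity of $L(\lambda)$ in $L(\mu)\otimes L(\nu)$, i.e. $\lrcoef$ (this is just the classical fact restated). Next, a $G$-homomorphism $L(\lambda)\to L(\mu)\otimes L(\nu)$ is determined by the image of a highest-weight vector, which must be an $\mathfrak{n}$-highest-weight vector of weight $\varpi(\lambda)$ in $L(\mu)\otimes L(\nu)$; so $\lrcoef = \dim\left(L(\mu)\otimes L(\nu)\right)_{\varpi(\lambda)}^{\mathfrak n}$, the space of $\mathfrak n$-invariants of that weight.

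The core step is then to compute this space of highest-weight vectors in the tensor product. Decompose $L(\mu)\otimes L(\nu) = \bigoplus_\gamma L(\mu)_\gamma \otimes L(\nu)$, and for a highest-weight vector one wants $\sum_\gamma \varphi_\gamma \otimes \psi$-type combinations annihilated by every $e_i$. The standard move (going back to Parthasarathy–Ranga Rao–Varadarajan, and to the treatment via the ``shifted'' action on one factor) is to use that $L(\nu)$ has a lowest-weight vector $v_{\nu}^{\mathrm{low}}$ of weight $-\varpi(\nu^*) = w_0\varpi(\nu)$, and that the condition for $\varphi\otimes v$ (with $v$ in a suitable Demazure-type piece, or after projecting) to be highest weight translates exactly into $e_i^{\nu(i)+1}\varphi = 0$ together with the weight condition $\mathrm{wt}(\varphi) = \varpi(\lambda) - (\text{lowest weight of }L(\nu)) = \varpi(\lambda) - \varpi(\nu)$ once one matches conventions; the exponent $\nu(i)+1$ arises because $e_i^{\nu(i)+1}$ kills $L(\nu)$ below its $\mathfrak{sl}_2^{(i)}$-string through the lowest weight vector. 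Carrying out this bookkeeping carefully yields a linear isomorphism between $\left(L(\mu)\otimes L(\nu)\right)_{\varpi(\lambda)}^{\mathfrak n}$ and $L(\mu)_{\varpi(\lambda-\nu)}^{\nu}$, which is the claim.

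The main obstacle I expect is the second half of the previous paragraph: pinning down precisely why the highest-weight condition on the tensor product, restricted through the lowest-weight line of $L(\nu)$, is \emph{equivalent} to the PRV annihilation conditions $e_i^{\nu(i)+1}(\varphi)=0$ — and in particular checking that no highest-weight vectors are lost by restricting to that line (injectivity of the restriction map on the highest-weight subspace) and that every solution of the PRV conditions lifts back (surjectivity). Rather than redo this, the cleanest route is to simply cite \cite{PRV} (or a modern reference such as Kumar's or Mathieu's refinements) for the identity $\dim\Hom_G(L(\lambda),L(\mu)\otimes L(\nu)) = \dim L(\mu)_{\varpi(\lambda-\nu)}^\nu$ and record it as Lemma, since it is used here only as an input and the paper already treats $\lrcoef = \dim C_{\mu,\nu}^\lambda$ as known.
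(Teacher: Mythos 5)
The paper offers no argument for this lemma at all — it is recorded exactly as you ultimately recommend, as a bare citation to \cite{PRV} — so your proposal is in line with the paper's treatment. One caution about your sketch, though it does not affect the conclusion since you defer the details to \cite{PRV}: to land on the weight $\varpi(\lambda-\nu)$ and the annihilation conditions $e_i^{\nu(i)+1}(\varphi)=0$ one should extract the component of a highest-weight vector of $L(\mu)\otimes L(\nu)$ along the \emph{highest}-weight line of $L(\nu)$ (the exponents $\nu(i)+1$ come from the relations $f_i^{\nu(i)+1}v_\nu$ generating the kernel of the Verma cover of $L(\nu)$), not the lowest-weight line, which would instead produce vectors of weight $\varpi(\lambda)+\varpi(\nu^*)$ and a dual-looking condition.
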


\begin{lemma} \label{L:c=1} For any $f\in\ind(C^2Q)$, the generalized minor $m_f$ spans the space
	$L\left(\e(f)\right)_{\varpi(\f)}^{\f_-} \subset k[\mc{A}]$.
	In particular, $c_{\e(f),\f_-}^{\f_+}=1$.
\end{lemma}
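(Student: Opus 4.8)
The plan is to unwind the definitions on both sides of the claimed equality $m_f \in L(\e(f))_{\varpi(\f)}^{\f_-}$ and then invoke the PRV-type multiplicity formula (Lemma \cite{PRV}) together with the realization \eqref{eq:irrG} of $L(\lambda)$ inside $k[\mc{A}]$. Write $\e(f)=\e_i$, so $f$ is translated from $\zero_i^+$, and fix $w\in W$ with $w(\varpi_i)=\varpi(\f)$; then by definition $m_f=m_{e,w}^{\varpi_i}$, which lies in $k[\mc{A}]=\bigoplus_\lambda L(\lambda)$ as a function on $\mc{A}=U^-\bcsl G$. First I would check that $m_f$ lands in the summand $L(\e_i)$: by Lemma \ref{L:wtminor} the left $H$-weight of $m_{e,w}^{\varpi_i}$ is $w(\varpi_i)$ is irrelevant, but the \emph{right} $H$-action weight is $\varpi_i$ (here $w'=e$ in the relevant slot after the correct reading of the minor used to define $m_f$ as an element of $k[\mc{A}]$), so by \eqref{eq:irrG} the function $m_f$ lies in the copy of $L(\e_i)$; I would double-check the side conventions against Section \ref{ss:baseaff} to make sure which translation is quotiented out and which $H$-action grades the irreducible.

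Next, inside $L(\e_i)$, I would identify the $H$-weight of $m_f$ for the \emph{other} torus action — the one giving the weight-space decomposition of the $G$-module $L(\e_i)$ — as $\varpi(\f)$, again straight from Lemma \ref{L:wtminor} with the second index $w$ of $m_{e,w}^{\varpi_i}$. So $m_f\in L(\e_i)_{\varpi(\f)}$, and it is nonzero since generalized minors are nonzero regular functions. The substantive point is then to show $m_f\in L(\e_i)_{\varpi(\f)}^{\f_-}$, i.e.\ that $e_j^{\f_-(j)+1}$ annihilates $m_f$ for every $j\in Q_0$. I would translate the action of the Chevalley generators $e_j$ on $k[\mc{A}]$ into the combinatorics of $U_j^-$-translates of the minor $m_{e,w}^{\varpi_i}$: acting by $e_j$ repeatedly corresponds to differentiating $g\mapsto \innerprod{\br{w}u_i^\vee\mid g\,x_{\br j}(t)u_i}$ in $t$, and the order of vanishing of this polynomial in $t$ is controlled by how $x_{\br j}(t)$ acts on the extremal weight vector $\br w u_i$ in the fundamental module $L(\varpi_i)^\vee$. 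Concretely the top power of $t$ that survives is $\langle \varpi_i, \text{something}\rangle$-type quantity which I expect to equal $\f_-(j)$ once one recalls (Definition \ref{D:triplewt}, Lemma \ref{L:sigma2rho} and the surrounding discussion of $\f_-$, $\f_+$ as the weight vector $(\beta_-,\beta_+)$ of the minimal presentation of $\Coker f=M$) that $\f_-(j)=\beta_-(j)=\dim \Hom_Q(P_j, P_-(M))$. This is where the hereditary/representation-theoretic input enters: the minimal presentation $P_+\to P_-$ of $M$ has $P_-$ the projective cover of $M$, and $\beta_-(j)$ is exactly the multiplicity needed so that $e_j^{\beta_-(j)+1}$ kills the relevant weight vector but $e_j^{\beta_-(j)}$ does not.

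Once $m_f\in L(\e_i)_{\varpi(\f)}^{\f_-}$ is established and $m_f\ne 0$, the bound $c_{\e(f),\f_-}^{\f_+}\geq 1$ is immediate from Lemma \cite{PRV} applied with $\mu=\e(f)$, $\nu=\f_-$, and highest weight $\lambda$ determined by $\varpi(\lambda-\nu)=\varpi(\f)$, i.e.\ $\lambda=\f+\f_-=\f_+$ (recalling $\f=\f_+-\f_-$). For the reverse inequality $c_{\e(f),\f_-}^{\f_+}\leq 1$ — equivalently $\dim L(\e(f))_{\varpi(\f)}^{\f_-}\leq 1$ — I would argue that $L(\e_i)$ is a \emph{fundamental} representation, so its nonzero weight spaces are one-dimensional (at least for the relevant extremal-type weight $\varpi(\f)=w(\varpi_i)$, which is in the Weyl-group orbit of $\varpi_i$), whence the whole weight space $L(\e_i)_{\varpi(\f)}$ is already at most one-dimensional and the PRV subspace is contained in it; combined with the lower bound this forces $c_{\e(f),\f_-}^{\f_+}=1$ and pins down $m_f$ as a spanning vector. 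The main obstacle I anticipate is the middle step: carefully computing the order of vanishing of the one-parameter family $t\mapsto m_{e,w}^{\varpi_i}(g\,x_{\br j}(t))$ and matching it precisely with $\f_-(j)$ via the minimal projective presentation — this requires being scrupulous about transpose conventions, about which of the two $H$-actions is which, and about the identification of $\beta_-$ with a $\Hom$-dimension; the Weyl-group-orbit claim for $\varpi(\f)$ is a second point to verify, since a priori $\varpi(\f)$ is only asserted to be of the form $w(\varpi_i)$ and one should confirm this is automatic for the weights $\f$ that arise from indecomposable presentations.
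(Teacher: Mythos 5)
Your overall architecture matches the paper's: (i) place $m_f$ in $L(\e(f))_{\varpi(\f)}$, (ii) show $e_j^{\f_-(j)+1}m_f=0$ for every $j$, and (iii) conclude one-dimensionality because $\varpi(\f)=w(\varpi_i)$ is extremal, so $\dim L(\e_i)_{w(\varpi_i)}=\dim L(\e_i)_{\varpi_i}=1$. Step (iii) is literally the paper's closing sentence. Step (ii) is where you diverge: the paper Taylor-expands $\varphi(x_j(t))$ at $1_G$ and invokes homogeneity for the conjugation grading on $k[U]$ together with the observation that $(\f_-(j)+1)c_j(j)=2(\f_-(j)+1)$ can never equal $(\e(f)-\f)(j)\le 1+\f_-(j)$ because $\e(f)(j)\le 1$; you instead bound the $t$-degree of $t\mapsto m_f(g\,x_j(t))$ uniformly in $g$ by the length of the $\mathfrak{sl}_2(\alpha_j)$-string through the extremal vector $\br{w}u_i$. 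These are both weight arguments and both succeed; yours is arguably more transparent since it manifestly works at every $g$ at once.

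A few corrections and clarifications to your write-up. First, the conventions you flagged are indeed off: since $m_f=m_{e,w}^{\varpi_i}$ one has $m_f(g)=\innerprod{u_i^\vee\mid g\br{w}u_i}$, and $e_j$ acts by right translation, so the relevant one-parameter family is $t\mapsto\innerprod{u_i^\vee\mid g\,x_j(t)\br{w}u_i}$, with $x_j$ (not $x_{\br{j}}$) and with $\br{w}$ on $u_i$ (not on $u_i^\vee$); the vector being moved is $\br{w}u_i\in L(\varpi_i)$. Second, the "top power" you expect to be $\f_-(j)$ is actually $\left[-w(\varpi_i)(\alpha_j^\vee)\right]_+=\left[\f_-(j)-\f_+(j)\right]_+$, which is $\le\f_-(j)$ but in general strictly less; fortunately only the inequality is needed for $e_j^{\f_-(j)+1}m_f=0$. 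Third, and relatedly, the representation-theoretic detour identifying $\f_-(j)$ with $\dim\Hom_Q(P_j,P_-)$ is unnecessary — all that enters is $\f_-,\f_+\ge 0$ and $w(\varpi_i)(\alpha_j^\vee)=(\f_+-\f_-)(j)$. Finally, the Weyl-orbit concern at the end is not something to verify: the construction of $m_f$ in Section \ref{ss:clusterU} already stipulates a $w$ with $w(\varpi_i)=\varpi(\f)$, so $\varpi(\f)$ is extremal by fiat. With these adjustments your argument is correct and complete.
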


\begin{proof} Recall that the Chevalley generator $e_i$ acts on $k[G]$ by 
	$$e_i\varphi(g)=\frac{d}{dt}\Big|_{t=0} \varphi(gx_i(t)).$$	
	For any $\varphi\in k[G]$, the coefficient of each $t^n$ in $\varphi(x_i(t))$ is equal to $e_i^n\varphi(1_G)/n!$.
	If $\varphi$ is bihomogeneous of degree $(\gamma,\gamma')$, then $\varphi(1_G)$ can be nonzero only if $\gamma=\gamma'$. 
	It follows that $\varphi(x_i(t))$ contains only $t^n$ with $n\alpha_i=\gamma-\gamma'$.
	So for $(\gamma,\gamma')=\left(\varpi(\e(f)),\varpi(\f)\right)$
	$$e_i^{\f_-(i)+1}\varphi=0\ \text{ if and only if }\ (\f_-(i)+1)c_i\neq \e(f)-\f,$$
	where $c_i$ is the $i$-th column of the Cartan matrix $C(G)$.
	Since $\e(f)(i)\leq 1$ and $c_i(i)=2$,
	we can never have $(\e(f)+\f_--\f_+)(i)=(\f_-(i)+1)c_i(i)$.
	Hence we proved that $e_i^{\f_-(i)+1}m_f=0$ for $i\in Q_0$ so that $m_f\in L\left(\e(f)\right)_{\varpi(\f)}^{\f_-}$.
	
	Suppose that $m_f=m_{e,w}^{\varpi_j}$. 
	The generalized minor $m_f$ spans the space because 
	$$\dim L(\e_j)_{w(\varpi_j)} = \dim L(\e_j)_{\varpi_j} =1.$$
\end{proof}

%

%
%
%

\noindent We recall that each $m_{\zero_i^+}$ is a principal minor, which evaluates to $1$ on $U$;
and now we set $m_{\Id_i}=1$.

\begin{lemmadef}\label{L:i_u} For any $f\in\ind(C^2Q)$, there is a unique function $s_f\in k[\Conf_{2,1}]_\triwtf$ such that
$$i_u^*(s_f)=m_f.$$
Moreover, $s\in C_{\mu,\nu}^\lambda$ satisfies $i_u^*(s)=1$ if and only if $s=s^{\mu}_+ s^{\nu}_0$, where $s^{\mu}_+:= \prod_i s_{\zero_i^+}^{\mu(i)}$ and $s^{\nu}_0:= \prod_i s_{\Id_i}^{\nu(i)}$.
\end{lemmadef}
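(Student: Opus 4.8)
The plan is to establish existence and uniqueness of $s_f$ separately, then extract the characterization of functions restricting to $1$. For \emph{uniqueness}, I would argue that $i_u^*$ is injective on each graded component $C_{\mu,\nu}^\lambda$: by the multihomogeneity relation \eqref{eq:resU}, a function $s\in C_{\mu,\nu}^\lambda$ is recovered from $i_u^*(s)$ via $s(h_1,h_2,u)=h_1^{\varpi(\mu)}h_2^{\varpi(\nu)}s(u)$ on the dense open set $\Conf_{2,1}^\circ$, hence on all of $\Conf_{2,1}$ since it is reduced and irreducible (Lemma \ref{L:UFD}). So once we know $s_f$ lives in a \emph{single} graded component, it is uniquely pinned down by $m_f$; the candidate component is $(\e(f),\f_-,\f_+)$, which is legitimate because $c_{\e(f),\f_-}^{\f_+}=1$ by Lemma \ref{L:c=1}, i.e. $C_{\e(f),\f_-}^{\f_+}$ is one-dimensional.

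For \emph{existence}, I would use exactly that one-dimensionality together with the fact that the embedding $i^*\colon C_{\mu,\nu}^\lambda\hookrightarrow k[U]_{\varpi(\mu+\nu-\lambda)}$ composed with restriction to $U$ is injective (the argument just given). Concretely, pick a nonzero $s\in C_{\e(f),\f_-}^{\f_+}$; then $i_u^*(s)$ is a nonzero element of $k[U]_{\varpi(\e(f)+\f_--\f_+)}=k[U]_{\varpi(\br\f)}$. On the other hand, by Lemma \ref{L:c=1} the minor $m_f$, viewed inside $L(\e(f))\subset k[\mathcal A]$, has weight $\varpi(\f)$ for the left $H$-action and annihilation type $\f_-$; restricting the realization \eqref{eq:irrG} down to $U$ shows $m_f\in k[U]_{\varpi(\br\f)}$ as well. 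Since that weight space of $k[U]$ is itself one-dimensional (it equals the graded dual of $U(\mathfrak n)_{\varpi(\br\f)}$, which for $\br\f$ a positive root times multiplicities is handled by Lemma \ref{L:parfun}; more simply, $i_u^*$ maps the $1$-dimensional $C_{\e(f),\f_-}^{\f_+}$ isomorphically onto it), $i_u^*(s)$ is a nonzero scalar multiple of $m_f$, and rescaling $s$ gives $i_u^*(s_f)=m_f$. This also handles the degenerate cases $f=\zero_i^+$ and $f=\Id_i$: there $m_f$ is the principal minor (resp.\ the constant $1$), which restricts to $1$ on $U$, and one checks $s_{\zero_i^+}=m^{\varpi_i}$ viewed on $\Conf_{2,1}$ via $(U^-g_1,U^-g_2u,U)\mapsto m^{\varpi_i}(g_1)$, $s_{\Id_i}=1$ do the job.

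For the final assertion, suppose $s\in C_{\mu,\nu}^\lambda$ with $i_u^*(s)=1$. Consider $t:=s_+^\mu s_0^\nu$, where $s_+^\mu=\prod_i s_{\zero_i^+}^{\mu(i)}$ and $s_0^\nu=\prod_i s_{\Id_i}^{\nu(i)}$. Each $s_{\zero_i^+}$ lies in $C_{\e_i,0}^{\e_i}$ and each $s_{\Id_i}=1$ lies in $C_{0,\e_i,\e_i}$, so by the additivity of the grading (the generic character / $\mathbf g$-vector argument, or directly from the definition of the triple-weight decomposition) $t\in C_{\mu,\nu}^{\mu+\nu}$; but we also want $t\in C_{\mu,\nu}^\lambda$, so I should be careful and instead argue as follows. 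Both $s$ and $t$ restrict to $1$ on $U$ under $i_u^*$, hence $i_u^*(s/t)=1$ wherever $t$ is nonvanishing; but more cleanly, $s$ and $t$ have the same $H\times H$-weight on the $U$-slice, so by the recovery formula \eqref{eq:resU} they agree on $\Conf_{2,1}^\circ$, hence everywhere by irreducibility. Thus $s=t$. Conversely $s_+^\mu s_0^\nu$ manifestly restricts to $1$ on $U$ since each factor does.

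\emph{Main obstacle.} The delicate point is verifying that $i_u^*$ is \emph{injective on a fixed graded component} and that the relevant weight space $k[U]_{\varpi(\br\f)}$ is one-dimensional, so that ``$i_u^*(s)$ nonzero'' upgrades to ``$i_u^*(s)$ is a scalar multiple of $m_f$''; this rests on \eqref{eq:resU} together with density of $\Conf_{2,1}^\circ$ and Lemma \ref{L:c=1}, and one must make sure the degenerate cases $\zero_i^+$, $\Id_i$ (where $m_f$ restricts to $1$, potentially colliding across different components) are consistently normalized. Everything else is bookkeeping with the $H\times H$-weights via Lemmas \ref{L:wtminor} and \ref{L:c=1}.
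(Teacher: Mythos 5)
There is a genuine gap in the existence argument. You claim that the weight space $k[U]_{\varpi(\br\f)}$ is one-dimensional, and you rest the whole existence argument on it: since $i_u^*(s)$ and $m_f$ both lie in a one-dimensional space, they must be proportional. But $k[U]_{\varpi(\br\f)}$ is {\em not} one-dimensional in general: as the paper itself records (Lemma \ref{L:parfun} and the discussion around it), $\dim k[U]_\gamma = p_Q(\gamma)$, the Kostant partition function, which already exceeds $1$ for a non-simple positive root, e.g.\ $p_Q(\alpha_1+\alpha_2)=2$ in type $A_2$. Your parenthetical attempt at a repair --- ``$i_u^*$ maps the $1$-dimensional $C_{\e(f),\f_-}^{\f_+}$ isomorphically onto it'' --- does not help, because the map is only injective {\em into} $k[U]_{\varpi(\br\f)}$; the one-dimensionality of the source tells you nothing about whether the image is the particular line $k\,m_f$ rather than some other line in the same weight space. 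In fact, showing $m_f\in i_u^*\bigl(C_{\e(f),\f_-}^{\f_+}\bigr)$ is precisely the content of the existence claim, so the argument becomes circular.

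What is actually needed, and what Lemma \ref{L:c=1} is feeding into, is the finer fact that $i_u^*$ carries $C_{\mu,\nu}^\lambda$ into the subspace $L(\mu)_{\varpi(\lambda-\nu)}^\nu\big|_U\subset k[U]_{\varpi(\mu+\nu-\lambda)}$, i.e.\ that $i_u^*$ realizes the PRV isomorphism $C_{\mu,\nu}^\lambda\cong L(\mu)_{\varpi(\lambda-\nu)}^\nu$. Granted this, Lemma \ref{L:c=1} (which shows $m_f$ {\em spans} $L(\e(f))_{\varpi(\f)}^{\f_-}$, not merely lies in the ambient weight space) identifies the image of the one-dimensional $C_{\e(f),\f_-}^{\f_+}$ with the line $k\,m_f$, and rescaling a nonzero element finishes the job. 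As written, your proof skips this $e_i$-annihilation constraint on the image of $i_u^*$, and the one-dimensionality of $k[U]_{\varpi(\br\f)}$ that you substitute for it is false.

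The rest of your proposal is fine. The uniqueness argument (injectivity of $i_u^*$ on each graded component via \eqref{eq:resU} and density of $\Conf_{2,1}^\circ$) is correct, and this is what the paper tacitly relies on as well. Your proof of the second assertion, comparing $s$ and $s_+^\mu s_0^\nu$ directly on $\Conf_{2,1}^\circ$ via the recovery formula, is valid and slightly different from the paper's: the paper first deduces $\lambda=\mu+\nu$ from the degree of $i_u^*(s)$ in $k[U]$, then uses $c_{\mu,\nu}^{\mu+\nu}=1$ to pin $s$ down; your route bypasses the explicit degree computation in $k[U]$ at the cost of a short extra argument that $s=t$ as regular functions forces $\lambda=\mu+\nu$ by the direct-sum decomposition. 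Both are acceptable.
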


\begin{proof} The first statement follows from Lemma \ref{L:c=1}.
For the last statement, it suffices to prove the ``only if" part. If $i_u^*(s)=1$, then $\mu+\nu=\lambda$ because $i_u^*(s)$ has degree $\varpi(\mu+\nu-\lambda)$.
It is clear that $c_{\mu,\nu}^{\mu+\nu}=1$ (consider tensoring the highest weight vectors of $L(\mu)$ and $L(\nu)$), but $s^{\mu}_+ s^{\nu}_0$ also has degree $(\mu,\nu,\mu+\nu)$, so $s=s^{\mu}_+ s^{\nu}_0$.
\end{proof}

\begin{remark} For $Q$ of type $A$, the analogous map $i^*$ was also considered in a quiver-invariant theory setting (see \cite[Example 3.10, 3.12]{Fs1}). In that setting, the map comes from a semi-orthogonal decomposition of the module category of a triple flag quiver.
\end{remark}

\begin{corollary} \label{C:irreducible} For each $f\in \ind(C^2Q)$, $s_f$ is irreducible in $k[\Conf_{2,1}]$.
\end{corollary}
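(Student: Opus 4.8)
The plan is to deduce the irreducibility of $s_f$ in $k[\Conf_{2,1}]$ from the fact that $m_f$ is irreducible in $k[U]$, using that $i_u^*$ is a well-behaved algebra map together with the grading by triple weights. First I would recall two facts: the ring $k[\Conf_{2,1}]$ is a UFD by Lemma \ref{L:UFD}, and each generalized minor $m_f$ for an indecomposable presentation $f$ is a prime element of $k[U]$ (this is standard for cluster variables / flag minors, and follows from $k[U]$ being a UFD in which $m_f$ is a cluster variable). Since $i_u^*(s_f)=m_f$ by Lemma-Definition \ref{L:i_u}, any factorization $s_f = s's''$ in $k[\Conf_{2,1}]$ would restrict to a factorization $m_f = i_u^*(s')\,i_u^*(s'')$ in $k[U]$, so one of the two factors, say $i_u^*(s')$, must be a unit in $k[U]$, i.e. a nonzero scalar (as $U$ has no nonconstant units). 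The task is then to upgrade "$i_u^*(s')$ is a scalar" to "$s'$ is a scalar in $k[\Conf_{2,1}]$".

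The key step is exactly that upgrade, and it is where the triple-weight grading does the work. Since $s_f$ is homogeneous of weight $\triwtf$ and $k[\Conf_{2,1}]$ is graded by $\mb{Z}_{\geqslant 0}^{3|Q_0|}$ with $C_{0,0,0}^{}=k$, both factors $s'$ and $s''$ may be taken homogeneous, say $s'\in C_{\mu',\nu'}^{\lambda'}$ and $s''\in C_{\mu'',\nu''}^{\lambda''}$ with $(\mu'+\mu'',\nu'+\nu'',\lambda'+\lambda'')=(\e(f),\f_-,\f_+)$. Now $i_u^*(s')$ being a unit in $k[U]$ forces it to be a scalar, and then by the uniqueness part of Lemma-Definition \ref{L:i_u} we get $s' = s_+^{\mu'} s_0^{\nu'}$ with $\mu' + \nu' = \lambda'$. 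Plugging this back into the weight identity and using $\e(f) = \e_i$ (so $\e(f)$ has all entries $\leq 1$) together with $\e(f)+\f_- = \f_+ + (\mu'+\nu'-\lambda') + \cdots$, I would argue that the only possibility consistent with $c_{\e(f),\f_-}^{\f_+}=1$ from Lemma \ref{L:c=1} and the minimality/indecomposability of $f$ is $\mu'=\nu'=\lambda'=0$, hence $s'\in C_{0,0,0}^{}=k$. The cleanest way to see this: $s'$ divides $s_f$, so applying $i_1^*$ or $i_2^*$ (restriction to the two copies of $H$) and $i_u^*$, and noting $\deg s_f$ is "extremal" in the sense that $m_f$ spans a one-dimensional weight space $L(\e(f))_{\varpi(\f)}$, any proper homogeneous divisor would have to have strictly smaller weight in each coordinate, contradicting that its $i_u^*$-image is already a scalar unless the divisor itself is a scalar.

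The main obstacle I anticipate is precisely making rigorous the claim that $s' = s_+^{\mu'}s_0^{\nu'}$ with $(\mu',\nu',\lambda')$ a nonzero triple cannot divide $s_f$: a priori $s_f$ could be divisible by $s_{\zero_i^+}$ or $s_{\Id_j}$ for some $j$. To rule this out I would restrict along the other two embeddings $i_l$ and $i_r$ (or equivalently use the twisted cyclic shift), under which $s_{\zero_i^+}$ and $s_{\Id_j}$ restrict to different minors on $U$; since $m_f$ restricted under each of $i_u,i_l,i_r$ is a distinct irreducible minor (not divisible by any $m_{\zero_i^+}$ or $m_{\Id_j}$, as these restrict to $1$), the product $s_+^{\mu'}s_0^{\nu'}$ with nonzero exponent restricts to something nontrivial under $i_l$ or $i_r$ that cannot divide the corresponding restriction of $s_f$. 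Alternatively, and perhaps more simply, one observes directly from the weight bookkeeping: $\deg(s_+^{\mu'}s_0^{\nu'}) = (\mu',\nu',\mu'+\nu')$, and for this to be $\leq (\e_i,\f_-,\f_+)$ coordinatewise with $\e_i$ having a single $1$, we need $\mu' \leq \e_i$ and $\mu'+\nu' \leq \f_+$; combined with $\nu' \leq \f_-$ and the fact (from Lemma \ref{L:c=1} and the structure of $f$) that $\e_i + \f_- - \f_+ = \br\f$ is the dimension vector contribution which is a genuine positive root expression, a short case analysis forces $\mu'=\nu'=0$. I would present this weight computation as the heart of the proof, keeping the UFD argument and the reduction to homogeneous factors as the routine scaffolding around it.
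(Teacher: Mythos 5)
Your overall strategy (UFD from Lemma \ref{L:UFD}, restrict along $i_u^*$, note one factor becomes a unit, invoke Lemma-Definition \ref{L:i_u} to write that factor as $s_+^{\mu'}s_0^{\nu'}$, then kill $\mu',\nu'$ by weight considerations) is exactly the paper's, but the step you yourself flag as the heart of the proof is not actually carried out, and as sketched it does not work. First, a smaller point: your premise that $m_f$ is irreducible in $k[U]$ fails when $f$ is positive or neutral, since then $i_u^*(s_f)=1$; in that case \emph{both} factors restrict to units and you must argue separately (as the paper does) that the degrees $(\e_i,0,\e_i)$ and $(0,\e_i,\e_i)$ are indecomposable among degrees of monomials in the $s_{\zero_i^+},s_{\Id_i}$, so one factor has degree $0$ and is a scalar because $C_{0,0}^{0}=k$.

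The genuine gap is the claim that the inequalities $\mu'\le \e(f)$, $\nu'\le\f_-$, $\mu'+\nu'\le\f_+$ ``force $\mu'=\nu'=0$'': they do not (take $\nu'=\e_j$ with $\f_-(j),\f_+(j)\ge 1$; note also that $i_u^*$ sends the complementary factor into the \emph{same} weight space $k[U]_{\varpi(\e(f)-\f)}$, so no contradiction appears on $U$). What is actually needed, and what the paper supplies, is: the cofactor is a nonzero element of $C_{\e(f)-\mu',\,\f_--\nu'}^{\f_+-\mu'-\nu'}$; because $f$ is a \emph{minimal} presentation (of an indecomposable non-projective module) one gets $\nu'=0$; and if $\mu'\ne 0$ then $\mu'=\e(f)$, so the cofactor lies in a component with first weight $0$, and $C_{0,\nu}^{\lambda}\ne 0$ forces $\nu=\lambda$, i.e.\ $\f_-=\f_+-\e(f)$, hence $\e(f)=\f$, contradicting that $f$ is neither positive nor neutral. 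Your fallback via $i_l,i_r$ is also problematic here: Lemma \ref{L:itwist} rests on Theorem \ref{T:cyclic}, whose proof uses the containment $\ucaCtwoQno\subseteq k[\Conf_{2,1}]$, which is itself obtained from Lemma \ref{L:RCA} using the present corollary (and Lemma \ref{L:exchange}); moreover it would additionally require irreducibility of the mutated cluster variables $\bs{\mu}_l(m_f)$ in $k[U]$ and their non-proportionality to the frozen minors, none of which is available at this point of the paper. So the scaffolding of your proof matches the paper, but the decisive weight/minimality argument is missing.
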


\begin{proof} It is clear that the zero-degree component of $k[\Conf_{2,1}]$ is $k$.
If $f$ is positive or neutral, then the degree of $f$ is obviously indecomposable.
The same argument as in \cite[Lemma 1.8]{Fs1} shows that $s_f$ is irreducible.

We remain to consider the case when $f$ is the minimal presentation of a general representation.
It is known that all generalized minors $m_f$ are irreducible in $k[U]$.
Suppose that $s_f$ factors as $s_f=s_1s_2$, then $i_u^*(s_f)=i_u^*(s_1)i_u^*(s_2)=m_f$.
So one of them, say $i_u^*(s_1)$, has to be a unit.
According to Lemma \ref{L:i_u}, $s_1$ is a polynomial in $s_{\zero_i^+}$'s and $s_{\Id_i}$'s.
So there is a homogenous component $s_2'$ of $s_2$ and some $\mu$ and $\nu$ such that
$s_2's_+^{\mu}s_0^{\nu}$ has the same degree as $s_f$, i.e., $s_2's_+^{\mu}s_0^{\nu}\in C_{\e(f),\f_-}^{\f_+}$.
Since $f$ is a minimal presentation of a general representation, we must have $\nu=0$.
If $\mu$ is nonzero, then we must have $\mu=\e(f)$.
Then the weight of $s_2'$ is $(0,\f_-,\f_+-\e(f))$, and $\f_-$ has to be equal to $\f_+-\e(f)$.
But $i_u^*$ embeds $C_{\e(f),\f_-}^{\f_+}$ into $k[U]_{\e(f)-\f}$. 
Since $f$ is not positive or neutral, $\e(f)\neq \f$, which is a contradiction.
\end{proof}

Let $p$ be a rational inverse of $i$, which is regular on $\Conf_{2,1}^\circ=i(H\times H\times U)$.
Let $p_1,p_2$ and $p_u$ be the composition of $p$ with the natural projection to first $H$, second $H$, and $U$ respectively.

\begin{corollary} \label{C:p_u} On the open subset $\Conf_{2,1}^\circ\subset \Conf_{2,1}$
\begin{align*} & p_u^* (m_f) = s_f \left(s^{{\e(f)}}_+ s^{{\f_-}}_0\right)^{-1},\\
&p_1^*(\varpi_i) = s_{\zero_i^+},\ p_2^*(\varpi_i) = s_{\Id_i}.
\end{align*}
for any indecomposable presentation $f$.
\end{corollary}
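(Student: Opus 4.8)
The plan is to exploit the fact that $i$ restricts to an isomorphism $H\times H\times U\xrightarrow{\sim}\Conf_{2,1}^\circ$ with inverse $p$, so that $p^*\circ i^*=\Id$ on $k[\Conf_{2,1}^\circ]$ and $i^*\circ p^*=\Id$ on $k[H\times H\times U]$. Writing $\pi_1,\pi_2,\pi_U$ for the three coordinate projections of $H\times H\times U$, one has $p_1=\pi_1\circ p$, $p_2=\pi_2\circ p$ and $p_u=\pi_U\circ p$, hence $p_1^*=p^*\circ\pi_1^*$, and similarly for $p_2,p_u$. So the corollary reduces to computing $i^*$ on the three asserted right-hand sides, recognizing the results as $\pi_1^*(\varpi_i)$, $\pi_2^*(\varpi_i)$ and $\pi_U^*(m_f)$, and then applying $p^*$.

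First I would work out $i^*$ on the building blocks. By \eqref{eq:resU}, every $s\in C^\lambda_{\mu,\nu}$ satisfies $i^*(s)(h_1,h_2,u)=h_1^{\varpi(\mu)}h_2^{\varpi(\nu)}\,i_u^*(s)(u)$. Feeding in the triple weights of Definition \ref{D:triplewt}, namely $s_f\in C^{\f_+}_{\e(f),\f_-}$, $s_{\zero_i^+}\in C^{\e_i}_{\e_i,0}$ and $s_{\Id_i}\in C^{\e_i}_{0,\e_i}$, together with $i_u^*(s_f)=m_f$ and $i_u^*(s_{\zero_i^+})=i_u^*(s_{\Id_i})=1$ from Lemma-Definition \ref{L:i_u}, one obtains, identifying $\varpi_i$ on $H$ with the character $h\mapsto h^{\varpi_i}$,
\[ i^*(s_{\zero_i^+})=\pi_1^*(\varpi_i),\qquad i^*(s_{\Id_i})=\pi_2^*(\varpi_i),\qquad i^*(s_f)(h_1,h_2,u)=h_1^{\varpi(\e(f))}h_2^{\varpi(\f_-)}m_f(u). \]
Since the first two are invertible on $H\times H\times U$, the functions $s_{\zero_i^+}$ and $s_{\Id_i}$ are units on $\Conf_{2,1}^\circ$, so $s_f\big(s^{\e(f)}_+s^{\f_-}_0\big)^{-1}$ is a genuine element of $k[\Conf_{2,1}^\circ]$; and by multiplicativity of $i^*$ one has $i^*(s^{\e(f)}_+)(h_1,h_2,u)=h_1^{\varpi(\e(f))}$ and $i^*(s^{\f_-}_0)(h_1,h_2,u)=h_2^{\varpi(\f_-)}$, whence $i^*\big(s_f(s^{\e(f)}_+s^{\f_-}_0)^{-1}\big)(h_1,h_2,u)=m_f(u)=\pi_U^*(m_f)(h_1,h_2,u)$.

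Finally I would apply $p^*$ to these three identities and use $p^*\circ i^*=\Id$ on $k[\Conf_{2,1}^\circ]$: this yields $p_1^*(\varpi_i)=p^*\pi_1^*(\varpi_i)=p^*i^*(s_{\zero_i^+})=s_{\zero_i^+}$, likewise $p_2^*(\varpi_i)=s_{\Id_i}$, and $p_u^*(m_f)=p^*\pi_U^*(m_f)=p^*i^*\big(s_f(s^{\e(f)}_+s^{\f_-}_0)^{-1}\big)=s_f\big(s^{\e(f)}_+s^{\f_-}_0\big)^{-1}$, as claimed.

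The only step requiring real care — and the one I would write out in full — is the regularity bookkeeping: that $\Conf_{2,1}^\circ=i(H\times H\times U)$ is precisely the locus on which $p$ is regular, that $s_{\zero_i^+}$ and $s_{\Id_i}$ are invertible there (this is what legitimizes treating $s_f(s^{\e(f)}_+s^{\f_-}_0)^{-1}$ as a regular function rather than a merely formal expression), and hence that all of the identities above are genuine equalities inside the single ring $k[\Conf_{2,1}^\circ]$. Everything else is a routine unwinding of $p\circ i=\Id$ together with the weight formula \eqref{eq:resU}.
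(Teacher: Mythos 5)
Your proof is correct and follows essentially the same route as the paper: both compute the pullbacks along $i$ (equivalently, evaluate at points $i(h_1,h_2,u)$) using the weight formula \eqref{eq:resU} and $i_u^*(s_f)=m_f$, and then conclude via the birational inverse $p$ of $i$ on $\Conf_{2,1}^\circ$. The paper phrases this more tersely as a ``straightforward calculation'' at points; your version makes the $i^*/p^*$ bookkeeping explicit, but it is the same argument.
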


\begin{proof} They follow from straightforward calculation \begin{align*}
&(p_u^* m_f)\left(i(h_1,h_2,u)\right)=m_f\left(p_ui(h_1,h_2,u)\right)=m_f(u);\\
&s_f (i(h_1,h_2,u))= h_1^{\varpi(\e(f))} h_2^{\varpi(\f_-)} m_f(u),\ (s^{{\e(f)}}_+ s^{{\f_-}}_0)(i(h_1,h_2,u))=h_1^{\varpi(\e(f))} h_2^{\varpi(\f_-)}.
\end{align*}
The statement for $p_1^*,p_2^*$ is rather obvious. For example,
$$s_{\zero_i^+}(i(h_1,h_2,u))=h_1^{\varpi_i}h_2^0i_u^*(s_{\zero_i^+})(u)=h_1^{\varpi_i}=p_1^*(\varpi_i)\left(i(h_1,h_2,u)\right).$$
\end{proof}

\begin{corollary} \label{C:localization} The localization of $k[\Conf_{2,1}]$ at all $s_f$'s for $f$ positive and neutral is exactly $k[H\times H\times U]$.
\end{corollary}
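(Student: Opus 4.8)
The plan is to show that $\Conf_{2,1}^\circ$ is precisely the principal open subset of $\Conf_{2,1}$ on which the product $s:=\prod_{i\in Q_0}s_{\zero_i^+}s_{\Id_i}$ is invertible, and then to read off the assertion from the fact that $i$ identifies $H\times H\times U$ with $\Conf_{2,1}^\circ$.

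First I would introduce the two morphisms $\pi_1,\pi_2\colon\Conf_{2,1}\to\Conf_{1,1}$ sending $[(A_1,A_2,A_0^\vee)]$ to $[(A_1,A_0^\vee)]$ and to $[(A_2,A_0^\vee)]$ respectively, so that by the definition of genericity $\Conf_{2,1}^\circ=\pi_1^{-1}(\Conf_{1,1}^\circ)\cap\pi_2^{-1}(\Conf_{1,1}^\circ)$. The lemma identifying $k[H]$ with the localization of $k[\Conf_{1,1}]$ at the $\wtd{\varpi}_i$, together with the isomorphism $H\cong\Conf_{1,1}^\circ$, shows that $\Conf_{1,1}^\circ$ is exactly the non-vanishing locus of $\prod_i\wtd{\varpi}_i$. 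The key identification is then $\pi_1^*(\wtd{\varpi}_i)=s_{\zero_i^+}$ and $\pi_2^*(\wtd{\varpi}_i)=s_{\Id_i}$: both sides are regular on all of $\Conf_{2,1}$, and by Corollary \ref{C:p_u} together with the formula $\wtd{\varpi}_i(U^-h,U)=h^{\varpi_i}$ they agree on the dense open subset $\Conf_{2,1}^\circ$, hence everywhere (as $\Conf_{2,1}$ is irreducible, $k[\Conf_{2,1}]$ being a domain). It follows that $\Conf_{2,1}^\circ=\{\,x\mid s_{\zero_i^+}(x)\neq 0,\ s_{\Id_i}(x)\neq 0\text{ for all }i\,\}$ is the principal open set $D(s)$.

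Next I would use that $\Conf_{2,1}$ is affine, being the spectrum of its finitely generated coordinate ring (cf. Lemma \ref{L:UFD}), so that $k[\Conf_{2,1}^\circ]=k[D(s)]=k[\Conf_{2,1}][s^{-1}]$; since the factors of $s$ are exactly the $s_f$ with $f$ positive or neutral, this is the localization of $k[\Conf_{2,1}]$ at all those $s_f$. Finally, $i\colon H\times H\times U\to\Conf_{2,1}^\circ$ is an isomorphism of varieties, so $i^*$ carries $k[\Conf_{2,1}^\circ]$ isomorphically onto $k[H\times H\times U]$; its restriction to $k[\Conf_{2,1}]$ is the embedding $i^*$ already in use, so under that embedding the localization $k[\Conf_{2,1}][s_f^{-1}]$ is carried onto $k[H\times H\times U]$, which is the claim.

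I expect the step requiring the most care to be the identification $\Conf_{2,1}^\circ=D(s)$, namely that a configuration on which all the minors $s_{\zero_i^+}$ and $s_{\Id_i}$ are nonzero automatically has both pairs $(A_1,A_0^\vee)$ and $(A_2,A_0^\vee)$ in the big cell. This rests on Proposition \ref{P:pminor} (detection of the Bruhat component by the functions $m^{\varpi_i}$), transported to $\Conf_{1,1}$, and on correctly matching $s_{\zero_i^+}$ (resp. $s_{\Id_i}$) with the first (resp. second) factor's $\wtd{\varpi}_i$; once this is in place the remainder is routine localization bookkeeping.
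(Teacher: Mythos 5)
Your proposal is correct and follows the same route as the paper's proof: introduce the two projections $\Conf_{2,1}\to\Conf_{1,1}$, observe that $\Conf_{2,1}^\circ$ is the intersection of the preimages of $\Conf_{1,1}^\circ$, and identify the pullbacks of $\wtd{\varpi}_i$ with $s_{\zero_i^+}$ and $s_{\Id_i}$ via agreement on the dense open set together with Corollary \ref{C:p_u}. You merely make explicit the affine-localization bookkeeping ($\Conf_{2,1}^\circ = D(s)$, $k[D(s)] = k[\Conf_{2,1}][s^{-1}]$) that the paper leaves implicit after the key identification.
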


\begin{proof} Let $\wtd{p}_1$ and $\wtd{p}_2$ be the natural projections $\Conf_{2,1}\to \Conf_{1,1}$ given by
$(A_1,A_2,A_0^\vee)\mapsto (A_1,A_0^\vee)$ and $(A_1,A_2,A_0^\vee)\mapsto (A_2,A_0^\vee)$.
Then $\Conf_{2,1}^\circ$ is the intersection $\wtd{p}_1^{-1}(\Conf_{1,1}^\circ)\cap \wtd{p}_2^{-1}(\Conf_{1,1}^\circ)$.
It suffices to show that $\wtd{p}_1^*(\wtd{\varpi}_i)=s_{\zero_i^+}$ and $\wtd{p}_2^*(\wtd{\varpi}_i)=s_{\Id_i}$.
The map $\iota p_i$ agrees with $\wtd{p}_i$ on a dense subset of $\Conf_{2,1}$ for $i=1,2$.
So $\wtd{p}_1^*(\wtd{\varpi}_i) = p_1^* \iota^*(\wtd{\varpi}_i) = p_1^*(\varpi_i) = s_{\zero_i^+}$ and
$\wtd{p}_2^*(\wtd{\varpi}_i) = p_2^* \iota^*(\wtd{\varpi}_i) = p_2^*(\varpi_i) = s_{\Id_i}$.
\end{proof}

\begin{corollary} \label{C:algind} The set of functions $\mc{S}_Q^2:=\{s_f\}_{f\in \ind(C^2Q)}$ is algebraically independent over the base field $k$.
\end{corollary}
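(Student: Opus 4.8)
The plan is to transport the statement across the open embedding $i\colon H\times H\times U\hookrightarrow\Conf_{2,1}$. Since $i^*\colon k[\Conf_{2,1}]\to k[H\times H\times U]$ is a ring homomorphism, any polynomial relation among the $s_f$ pushes forward to one among the $i^*(s_f)$; so it suffices to show that $\{i^*(s_f)\}_{f\in\ind(C^2Q)}$ is algebraically independent over $k$ (no injectivity of $i^*$ is needed). The first step is to compute these images. Writing $\varpi_i^{(1)}$ and $\varpi_i^{(2)}$ for the characters $h\mapsto h^{\varpi_i}$ on the first and second torus factors, the grading identity \eqref{eq:resU} together with $i_u^*(s_f)=m_f$ from Lemma-Definition \ref{L:i_u} gives the uniform formula
\begin{equation*}
i^*(s_f)=\Bigl(\prod_{i\in Q_0}(\varpi_i^{(1)})^{\e(f)(i)}\Bigr)\Bigl(\prod_{i\in Q_0}(\varpi_i^{(2)})^{\f_-(i)}\Bigr)\,m_f ,
\end{equation*}
where $m_f\in k[U]$ is the generalized minor attached to $f$ when $f$ is neither positive nor neutral, while $m_{\zero_i^+}|_U=m_{\Id_i}=1$, so that $i^*(s_{\zero_i^+})=\varpi_i^{(1)}$ and $i^*(s_{\Id_i})=\varpi_i^{(2)}$. (Equivalently this is Corollary \ref{C:p_u} read through $i$.)

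Next I would count. Put $n=|Q_0|$ and let $N$ be the number of positive roots of $G$. By Corollary \ref{C:indK2} and the generalized Gabriel's theorem, $|\ind(C^2Q)|=2n+N$: there are $n$ positive and $n$ neutral presentations, while the presentations of the indecomposable representations of $Q$ form $(\Delta_Q)_0$, a set of cardinality $N$ (the $n$ negative presentations together with the $N-n$ minimal presentations of non-projective indecomposables). A short computation shows $2n+N=\dim\Conf_{2,1}$, so the result is sharp. Let $L\subseteq k(H\times H\times U)$ be the subfield generated over $k$ by all $i^*(s_f)$. Being generated by $2n+N$ elements, $\op{trdeg}_k L\le 2n+N$, so it is enough to exhibit $2n+N$ elements of $L$ that are algebraically independent over $k$.

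Such elements are ready to hand. From $i^*(s_{\zero_i^+})=\varpi_i^{(1)}$ and $i^*(s_{\Id_i})=\varpi_i^{(2)}$ the field $L$ contains all $\varpi_i^{(1)},\varpi_i^{(2)}$, hence every Laurent monomial in them, hence — dividing in the displayed formula — it contains $m_f$ for every $f\in(\Delta_Q)_0$. Now $\{\varpi_i^{(1)},\varpi_i^{(2)}\}_{i\in Q_0}$ is a coordinate system on the torus $H\times H$, so these $2n$ functions are algebraically independent over $k$; and $\{m_f\}_{f\in(\Delta_Q)_0}=\mc{M}_Q$ is the extended cluster of the standard seed, so by Theorem \ref{T:Ucluster} and the definition of a seed (Definition \ref{D:seeds}) these $N$ functions are algebraically independent over $k$ in $k[U]$. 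The first family lies in the tensor factor $k[H\times H]$ and the second in the factor $k[U]$ of the domain $k[H\times H]\otimes_k k[U]=k[H\times H\times U]$, and $k$ is algebraically closed, so their union — a set of $2n+N$ elements of $L$ — is algebraically independent over $k$. Therefore $\op{trdeg}_k L=2n+N$, the generating set $\{i^*(s_f)\}$ must itself be a transcendence basis of $L$, and in particular it is algebraically independent over $k$; this is what was needed. The steps that require care rather than ideas are the uniform verification of the displayed formula across the four kinds of indecomposable presentations (where the conventions of Definition \ref{D:triplewt} and $m_{\zero_i^+}|_U=m_{\Id_i}=1$ enter) and the standard stability of algebraic independence under tensor products of $k$-algebras over an algebraically closed field; neither is a genuine obstacle, the corollary being in essence a dimension count made precise.
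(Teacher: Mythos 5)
Your proof is correct and takes essentially the same approach as the paper: both arguments transport the question across the birational map between $\Conf_{2,1}$ and $H\times H\times U$ (the paper via $p^*$, you via $i^*$), reduce to the manifest algebraic independence of $\{\varpi_i^{(1)},\varpi_i^{(2)}\}\cup\mc{M}_Q$ on the product, and account for the monomial discrepancy recorded in Corollary \ref{C:p_u}. The only cosmetic difference is that the paper closes with an (implicit) unimodular monomial change of variables, whereas you close with a transcendence-degree count; both are valid and rest on the same ingredients.
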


\begin{proof} Since the map $p:\Conf_{2,1}\to H\times H\times U$ is birational,
the pullbacks of (two copies of) $\varpi_i\in H$ and $m_f\in U$ are algebraically independent.
We have seen that up to a factor of some monomial in $s_{\zero_i^+}, s_{\Id_i} \in \mc{S}_Q^2$, they are exactly the functions in $\mc{S}_Q^2$.
Our claim follows.
\end{proof}

Suppose that $f$ as above is mutable. Let $m_f'$ be a function in $k[U]$ in the exchange relation
\begin{equation} \label{eq:exrelmf} m_f m_{f}'=\prod_{g\rightarrowtail f} m_{g}^{c_{g,f}} + \prod_{f\rightarrowtail h} m_{h}^{c_{f,h}}.\end{equation}
Let $(\mu',\nu',\lambda')=\sum_{f\rightarrowtail h} c_{f,h}(\e(h),\h_-,\h_+) - (\triwtf)$.
\begin{lemma} \label{L:exchange}  The function $s_{f}':=p_u^*(m_{f}') s^{\mu'}_+ s^{\nu'}_0$ is regular on $\Conf_{2,1}$ satisfying $i_u^*(s_f')=m_f'$ and the exchange relation
$$s_fs_f' = \prod_{g\rightarrowtail f} s_{g}^{c_{g,f}}  + \prod_{f\rightarrowtail h} s_{h}^{c_{f,h}} .$$
Moreover, $s_f'$ and $s_f$ are relatively prime in $k[\Conf_{2,1}]$.
\end{lemma}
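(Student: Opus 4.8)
The plan is to verify the exchange relation first as an identity in the function field $k(\Conf_{2,1})$, to deduce from it that $s_f s_f'$ — and then $s_f'$ itself — is globally regular, and finally to read off coprimality after restricting to $U$. Two preliminary points. First, $s_f'$ is regular on the dense open cell $\Conf_{2,1}^\circ=i(H\times H\times U)$: there $p_u$ is regular and each $s_{\zero_i^+},s_{\Id_i}$ is invertible, being (by Corollary \ref{C:localization}) $\wtd{p}_1^*$ resp.\ $\wtd{p}_2^*$ of the minors $\wtd{\varpi}_i$, which are nonvanishing on $\Conf_{1,1}^\circ\cong H$ by Proposition \ref{P:pminor}; so the (possibly negative) exponents in $s^{\mu'}_+s^{\nu'}_0$ cause no trouble on $\Conf_{2,1}^\circ$. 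Second, $i_u^*(s_f')=m_f'$: since $p$ is a rational inverse of $i$ and $i_u=i|_U$ one has $p_u\circ i_u=\op{id}_U$, hence $i_u^*p_u^*=\op{id}$ on $k[U]$, and together with $i_u^*(s_{\zero_i^+})=m_{\zero_i^+}=1$, $i_u^*(s_{\Id_i})=m_{\Id_i}=1$ (Lemma-Definition \ref{L:i_u}) this gives $i_u^*(s_f')=i_u^*\!\big(p_u^*(m_f')\big)=m_f'$.

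For the exchange relation, apply $i^*$, which is an isomorphism of function fields because $i$ is an open embedding. On $H\times H\times U$ one computes directly: $i^*(s_{\zero_j^+})=h_1^{\varpi_j}$ and $i^*(s_{\Id_j})=h_2^{\varpi_j}$ (Corollary \ref{C:p_u}, with $p_1\circ i$, $p_2\circ i$ the projections onto the two $H$–factors); $i^*\!\big(p_u^*(m_f')\big)=m_f'$ (pullback along the projection to $U$); and, by \eqref{eq:resU} together with $i_u^*(s_g)=m_g$, $i^*(s_g)=h_1^{\varpi(\e(g))}h_2^{\varpi(\g_-)}m_g$ for every $g\in\ind(C^2Q)$. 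Hence $i^*(s_f')=h_1^{\varpi(\mu')}h_2^{\varpi(\nu')}m_f'$, and then, using $(\mu',\nu',\lambda')=\sum_{f\rightarrowtail h}\sCQ(h)-(\triwtf)$ to evaluate the $H\times H$–weights and the weight–configuration identity $\sum_{g\rightarrowtail f}\sCQ(g)=\sum_{f\rightarrowtail h}\sCQ(h)$ of Corollary \ref{C:WC} (valid since $\DCQ$ is trivially valued), one finds that $i^*(s_f s_f')$, $i^*\!\big(\prod_{g\rightarrowtail f}s_g\big)$ and $i^*\!\big(\prod_{f\rightarrowtail h}s_h\big)$ all carry the same $H\times H$–weight; the remaining $U$–factors match by the exchange relation \eqref{eq:exrelmf} in $k[U]$. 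Thus $i^*(s_f s_f')=i^*\!\big(\prod_{g\rightarrowtail f}s_g+\prod_{f\rightarrowtail h}s_h\big)$, so with $P:=\prod_{g\rightarrowtail f}s_g+\prod_{f\rightarrowtail h}s_h\in k[\Conf_{2,1}]$ we get $s_f s_f'=P$ in $k(\Conf_{2,1})$ — the claimed exchange relation — and in particular $s_f s_f'\in k[\Conf_{2,1}]$.

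It remains to promote $s_f'=P/s_f$ to a regular function on all of $\Conf_{2,1}$. By Corollary \ref{C:irreducible} and Lemma \ref{L:UFD}, $s_f$ is irreducible, hence prime, in the UFD $k[\Conf_{2,1}]$, so $\{s_f=0\}$ is a prime divisor; and since $i_u^*$ maps $s_f$ to the non-constant minor $m_f$ but sends every $s_{\zero_i^+},s_{\Id_i}$ to $1$, this prime divisor is distinct from all $\{s_{\zero_i^+}=0\}$ and $\{s_{\Id_i}=0\}$. From $P/s_f$ the only prime divisor along which $s_f'$ could have a pole is $\{s_f=0\}$; but its generic point lies in $\Conf_{2,1}^\circ$, because $\Conf_{2,1}\setminus\Conf_{2,1}^\circ\subseteq\bigcup_i\big(\{s_{\zero_i^+}=0\}\cup\{s_{\Id_i}=0\}\big)$ (as in the proof of Corollary \ref{C:localization}), and on $\Conf_{2,1}^\circ$ the function $s_f'$ is regular by the first paragraph. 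So $s_f'$ is regular in codimension one, and since $k[\Conf_{2,1}]$ is a normal Noetherian domain, $s_f'\in k[\Conf_{2,1}]$.

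Finally, if $s_f$ and $s_f'$ were not coprime, primeness of $s_f$ would give $s_f'=s_f\,t$ with $t\in k[\Conf_{2,1}]$; applying $i_u^*$ yields $m_f'=m_f\cdot i_u^*(t)$, so $m_f\mid m_f'$ in $k[U]$, contradicting the coprimality of the exchange pair $(m_f,m_f')$ in the cluster algebra $k[U]$. I expect the genuinely delicate point to be the upgrade from ``regular on $\Conf_{2,1}^\circ$'' to ``regular on $\Conf_{2,1}$'' in the third paragraph: it rests on pinning down, in codimension one, the boundary $\Conf_{2,1}\setminus\Conf_{2,1}^\circ$ as the union of the divisors of the $s_{\zero_i^+}$ and $s_{\Id_i}$ and on checking that $\{s_f=0\}$ is not among them, after which normality (unique factorization) of $k[\Conf_{2,1}]$ finishes the job.
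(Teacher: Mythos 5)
Your proof is correct and follows essentially the same route as the paper's: verify the exchange relation by pulling back along the birational pair $(i,p)$, use that $s_f'=P/s_f$ with $P$ regular, then extend regularity across $Z(s_f)$ using the irreducibility of $s_f$ (Corollary~\ref{C:irreducible}), normality of $\Conf_{2,1}$ (Lemma~\ref{L:UFD}), and the fact that the generic point of $Z(s_f)$ already lies in $\Conf_{2,1}^\circ$. The one genuine difference is the coprimality step at the end: the paper argues directly that $s_f$ cannot divide $s_f'$ by comparing the first components of their triple-weight grading, whereas you apply $i_u^*$ and reduce to the known coprimality of the exchange pair $(m_f,m_f')$ in $k[U]$. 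Both are valid; the paper's weight comparison is more self-contained, while yours leans on the established cluster structure of $k[U]$ and is perhaps a bit more conceptual in that it traces the coprimality back to the same source as the exchange relation itself.
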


\begin{proof} Since $i$ is a regular section of $p$, we have that $i_u^*(s_f')=m_f'$ and $s_f'$ is regular on $\Conf_{2,1}^\circ=i(H\times H\times U)$.
We pull back \eqref{eq:exrelmf} through $p_u$, and multiply $s^{\mu'}_+ s^{\nu'}_0$ on both sides.
Then we get from Corollary \ref{C:p_u} that
\begin{equation} \label{eq:pre_exrel} s_f\left(p_u^*(m_f')s^{\mu'}_+ s^{\nu'}_0\right) = \prod_{g\rightarrowtail f} s_{g}^{c_{g,f}} + \prod_{f\rightarrowtail h} s_{h}^{c_{f,h}}.\end{equation}
We remain to show that $s_f'$ is in fact regular on whole $\Conf_{2,1}$.
By \eqref{eq:pre_exrel}, the locus of indeterminacy of $s_f'$ is contained in the zero locus $Z(s_f)$ of $s_f$.
Since $s_f\mid_U=m_f$, the intersection of $Z(s_f)$ and $\Conf_{2,1}^\circ$ is nonempty.
In other words, $Z(s_f)$ is not contained in the complement of $\Conf_{2,1}^\circ$.
Since $Z(s_f)$ is irreducible (Corollary \ref{C:irreducible}), we conclude that $s_f'$ is regular outside a codimension 2 subvariety of $\Conf_{2,1}$.
Since $\Conf_{2,1}$ is factorial (Lemma \ref{L:UFD}) and thus normal, by the algebraic Hartogs $s_f'$ is a regular function on $\Conf_{2,1}$.

For the last statement, we suppose the contrary. Since $s_f$ is irreducible, $s_f$ is then a factor of $s_f'$.
But this is clearly impossible by comparing the first component of their triple-weight.
\end{proof}

\subsection{Twisted Maps} \label{ss:twist}
Let $\vee$ (written exponentially) be the morphism 
$$ U^-\bcsl G \to G/U\ \text{ given by }\ (U^-g)^\vee \mapsto \br{\omega_0}^{-1} g^{-1} \br{\omega_0} U.$$
Let $\tilde{l}$ be the twisted cyclic shift on $\mc{A}\times \mc{A}\times \mc{A}^\vee$:
$$(A_1,A_2,A_3^\vee)\mapsto (A_2,A_3\br{w_0},(A_1\br{w_0})^\vee).$$
Suppose that $(A_1,A_2,A_3^\vee)=(U^- g_1, U^- g_2, g_3U)$, then
\begin{align}\label{eq:twistcyc} \tilde{l} (g(A_1,A_2,A_3^\vee)) &= \tilde{l} (U^-g_1g^{-1}, U^-g_2g^{-1}, g g_3U) \\ \notag
&= (U^-g_2g^{-1}, U^-\br{w_0} (gg_3)^{-1}, s_G(g_1g^{-1})^{-1} \br{w_0} U) \\ \notag
&= g (U^-g_2, U^-\br{w_0} g_3^{-1}, g_1^{-1}\br{w_0}^{-1} U)
\end{align}
By the universal property of categorical quotients, $\tilde{l}$ descends to an endomorphism $l$ on $\Conf_{2,1}$, which is an automorphism on the generic part of $\Conf_{2,1}$.

We have that
$$\tilde{l}^2(A_1,A_2,A_3^\vee)=(A_3\br{w_0}, s_GA_1,  (A_2\br{w_0})^\vee),$$
and $\tilde{l}^3 = (s_G A_1,s_G A_2,s_G A_3^\vee)= (A_1,A_2,A_3^\vee)$ is the identity.
We define $i_l:= l i_u: U\hookrightarrow \Conf_{2,1}$, that is
$$u\mapsto (U^-u,U^-\br{w_0}, \br{w_0}^{-1}U).$$
We write $r$ for $l^2$. Similarly we denote $i_r:= r i_u: U\hookrightarrow \Conf_{2,1}$.
$$\vcenter{\xymatrix@R=7ex@C=7ex{
& \Conf_{2,1} \ar[ddl]_l\\
& U \ar[u]|{i_u} \ar[dl]^{i_l} \ar[dr]_{i_r} \\
\Conf_{2,1}  \ar[rr]_{l} && \Conf_{2,1} \ar[uul]_{l}
}}$$

It turns out that the twisted cyclic shift is also related to the sequence $\bs{\mu}_l$ of mutations considered in Section \ref{ss:clusterU}.
We will see in Theorem \ref{T:cyclic} that $l^*(s_f)= \bs{\mu}_l(s_f)$ for $f$ mutable. Note that $l^*$ permutes frozen variables
$$s_{\zero_i^-}\mapsto s_{\zero_{i}^+},\ s_{\zero_i^+}\mapsto s_{\Id_{i^*}},\ s_{\Id_i}\mapsto s_{\zero_{i^*}^-}.$$

\begin{lemma} \label{L:itwist} We have the following
\begin{align*}
&i_{l}^*(s_f)=\bs{\mu}_l(m_f),\ i_{r}^*(s_f)=\bs{\mu}_r(m_f)\ \text{ for $f$ mutable}; \\
&i_{l}^*(s_{\Id_i})= m_{\zero_{i^*}^-},\ i_{l}^*(s_{\zero_i^-})=i_{l}^*(s_{\zero_i^+})=1; \\
&i_{r}^*(s_{\zero_i^+})= m_{\zero_{i}^-},\ i_{r}^*(s_{\zero_i^-})=i_{r}^*(s_{\Id_{i}})=1.
\end{align*}
\end{lemma}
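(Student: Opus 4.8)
The plan is to unwind the definitions of $i_l$ and $i_r$ in terms of the twisted cyclic shift and Theorem \ref{T:cyclic}. Since $i_l = l\circ i_u$ and $l^*(s_f) = \bs{\mu}_l(s_f)$ for $f$ mutable (Theorem \ref{T:cyclic}), we get $i_l^*(s_f) = i_u^*(l^*(s_f)) = i_u^*(\bs{\mu}_l(s_f))$. Now $\bs{\mu}_l$ is the sequence of mutations that carries the standard seed $(\Delta_Q,\mc{M}_Q)$ to the left standard seed, and it commutes with the pullback $i_u^*$ in the sense that $i_u^*$ sends the cluster $\mc{S}_Q^2$ to $\mc{M}_Q$ (together with $m_{\zero_i^+}=1$, $m_{\Id_i}=1$) by Lemma-Definition \ref{L:i_u}; hence $i_u^*(\bs{\mu}_l(s_f)) = \bs{\mu}_l(i_u^*(s_f)) = \bs{\mu}_l(m_f)$. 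The same argument with $r=l^2$ and $\bs{\mu}_r$ gives $i_r^*(s_f)=\bs{\mu}_r(m_f)$. This handles the first line.

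For the frozen vertices, the strategy is to evaluate $s_f$ directly along the explicit parametrization of $i_l$ and $i_r$. Recall $i_l(u) = (U^-u, U^-\br{w_0}, \br{w_0}^T U)$ and $i_r(u)=(U^-\br{w_0}, U^-u\br{w_0}^2, (\br{w_0})^T U)$ (the latter from $\tilde l^2$, after tidying the representative). A function $s\in C_{\mu,\nu}^\lambda$ is, via Lemma \ref{L:c=1} and the realization \eqref{eq:irrG}--\eqref{eq:irrGdual}, a matrix coefficient built from generalized minors. In particular $s_{\zero_i^+}$ pulls back from $\wtd\varpi_i$ on the first factor (Corollary \ref{C:localization}), $s_{\Id_i}$ from $\wtd\varpi_i$ on the second factor, and $s_{\zero_i^-}$ corresponds to the $i$-th negative presentation. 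So I would compute: $i_l^*(s_{\zero_i^+})$ reads off $m^{\varpi_i}$ on the first slot $U^-u$, which is $1$ since $u\in U$; $i_l^*(s_{\zero_i^-})$ similarly evaluates to $1$ because the negative presentation corresponds to the principal minor on the relevant generic pair, which is again $1$ on a unipotent element; and $i_l^*(s_{\Id_i})$ evaluates $\wtd\varpi_i$ on the pair $(U^-\br{w_0}, \br{w_0}^T U)$ sitting on the second and third slots, which by Proposition \ref{P:pminor} and Lemma \ref{L:wtminor} produces $m^{w_0(\varpi_i)}$-type data. Using $w_0(\varpi_i)=\varpi_{i^*}$ (stated just above Proposition \ref{P:pminor} in the form $w_0(\alpha_i)=-\alpha_{i^*}$) this matches $m_{\zero_{i^*}^-}$ — one has to check the weight bookkeeping so that the normalization constants from $\br{w_0}$ cancel. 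The $i_r$ line is identical with the roles of the first and second slots interchanged, so $i_r^*(s_{\zero_i^+})=m_{\zero_i^-}$ and $i_r^*(s_{\zero_i^-})=i_r^*(s_{\Id_i})=1$.

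I expect the main obstacle to be the frozen-vertex computation, specifically tracking the $H$-weights and the central element $\br{w_0}^2$ carefully enough to see that $i_l^*(s_{\Id_i})$ is exactly $m_{\zero_{i^*}^-}$ on the nose (not merely up to a scalar or a sign). The subtlety is that $\br{w_0}^2$ is a nontrivial central element of order two, so in the $\tilde l^2$ computation one must verify that its action on each fundamental representation $L(\varpi_i)$ is trivial on the relevant matrix coefficient, or that the contributions cancel between the slots; this is where the choice of normalized $U^-$- and $U$-fixed vectors $u_i,u_i^\vee$ with $\innerprod{u_i^\vee\mid u_i}=1$ is used. The reduction of the mutable case to Theorem \ref{T:cyclic} is comparatively formal once one knows $i_u^*$ intertwines the mutation sequences, which in turn follows because $i_u^*$ is a ring homomorphism sending the initial extended cluster $\mc{S}_Q^2$ to the initial extended cluster of $k[U]$ (with the extra frozen variables specialized to $1$), and mutation is defined purely algebraically by the exchange relations, which are preserved by any such specialization compatible with the $B$-matrix — here one should note that freezing/deleting the positive and neutral vertices does not disturb the mutable part, so the exchange relations in $\mc{S}_Q^2$ restrict correctly.
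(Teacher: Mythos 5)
Your argument for the mutable case is essentially the paper's: write $i_l^*=i_u^*\circ l^*$, invoke Theorem \ref{T:cyclic} to replace $l^*(s_f)$ by $\bs{\mu}_l(s_f)$, and then commute $i_u^*$ with the mutation sequence using Lemma-Definition~\ref{L:i_u}. Your justification that $i_u^*$ intertwines the mutations (the specialization of the positive and neutral frozen variables to $1$ is compatible with the exchange relations, because deleting those frozen vertices does not disturb the mutable part of $B_{\Delta_Q^2}$ nor its mutations) is correct and matches what the paper tacitly uses. So this part is fine.

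For the frozen vertices you abandon the factorization $i_l^*=i_u^*\circ l^*$ and try to evaluate $s_f$ directly on explicit coset representatives; this is where the plan breaks down. The paper continues to use the same factorization: Theorem~\ref{T:cyclic} gives $l^*(s_g)=\mu_l(s_g)$ for \emph{all} $g$ including the frozen ones (where $\mu_l$ is, by definition, the permutation $s_{\zero_i^-}\mapsto s_{\zero_i^+}$, $s_{\zero_i^+}\mapsto s_{\Id_{i^*}}$, $s_{\Id_i}\mapsto s_{\zero_{i^*}^-}$), and then one simply applies Lemma-Definition~\ref{L:i_u} together with $m_{\zero_i^+}|_U=1$ and $m_{\Id_i}=1$. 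All the weight bookkeeping and $\br{w_0}$-normalization that you worry about is already absorbed into the degree argument in the proof of Theorem~\ref{T:cyclic}. Your direct evaluation, by contrast, does not produce the stated answers as sketched. Concretely, $s_{\zero_i^+}=\wtd p_1^*(\wtd\varpi_i)$ depends on the first \emph{and third} slots, and under $i_l$ the third slot is $\br{w_0}^T U\neq U$, so the claim that one ``reads off $m^{\varpi_i}$ on the first slot $U^-u$, which is $1$'' is not justified. Worse, $s_{\Id_i}=\wtd p_2^*(\wtd\varpi_i)$ depends only on the second and third slots, and under the representative $(U^-u,U^-\br{w_0},\br{w_0}^T U)$ both of these are $u$-\emph{independent}; the direct evaluation therefore yields a constant, whereas the Lemma asserts the nonconstant $m_{\zero_{i^*}^-}$. (Your proposed formula for $i_r(u)$ is also off: applying $\tilde l^2$ to $(U^-,U^-u,U)$ gives $(U^-\br{w_0},U^-\br{w_0}^2,\br{w_0}^T u^T U)$, with $u$ appearing in the third slot, not $(U^-\br{w_0},U^-u\br{w_0}^2,\br{w_0}^TU)$.) The explicit representatives are normalization-sensitive in exactly the way you anticipated, and the intended proof deliberately avoids them by routing everything through the algebraic statement $l^*=\mu_l$.
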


\begin{proof} First consider the case when $f$ is mutable, then $l^*$ is given by a sequence of mutations $\bs{\mu}_l$.
The pullback $i_u^*$ clearly commutes with any sequence of mutations, so by Lemma \ref{L:i_u}
$$i_{l}^*(s_f)=i_u^*l^*(s_f)=i_u^*(\bs{\mu}_l(s_f)) =\bs{\mu}_l(i_u^*(s_f))=\bs{\mu}_l(m_f).$$
For $f$ frozen, by Lemma \ref{L:i_u} we have that
$$i_{l}^*(s_{\Id_i})=i_u^*(s_{\zero_{i^*}^-}) = m_{\zero_{i^*}^-},\
i_{l}^*(s_{\zero_i^-})=i_u^*(s_{\zero_i^+})=1,\ i_{l}^*(s_{\zero_i^+})=i_u^*(s_{\Id_{i^*}})=1.$$
The argument for $i_r^*$ is similar.
\end{proof}

We set $\Delta_Q^u:=\Delta_Q$ and $\mc{M}_Q^u:=\mc{M}_Q$. We define three linear natural projections $\mr{i}_\#^*:\mb{R}^{(\Delta_Q^2)_0}\to\mb{R}^{(\Delta_Q^\#)_0}$ for $\#\in\{u,l,r\}$,
where $\mr{i}_u^*$ (resp. $\mr{i}_l^*;\ \mr{i}_r^*$) forgets the coordinates corresponding to the neutral and positive (resp. positive and negative; negative and neutral) frozen vertices.

Recall the definition of the $\g$-vector with respect to a pair $(\Delta,\b{x})$ as in Section \ref{ss:gv}.
Let $\mc{L}(\mc{S}_Q^2)$ be the Laurent polynomial ring in $\mc{S}_Q^2$.
Note that $\mc{L}(\mc{S}_Q^2)$ has a basis parameterized by all possible $\g$-vectors (with respect to $(\Delta_Q^2,\mc{S}_Q^2)$), which can be identified with $\mb{Z}^{(\Delta_Q^2)_0}$.

\begin{lemma} \label{L:proj-g} If $s\in \mc{L}(\mc{S}_Q^2)\subset k(\Conf_{2,1})$ has a well-defined $\g$-vector $\g$ with respect to $(\Delta_Q^2,\mc{S}_Q^2)$, then so is the $\g$-vectors of $i_\#^*(s)$ with respect to $(\Delta_Q^\#,\mc{M}_Q^\#)$, which is equal to $\mr{i}_\#^*(\g)$ for $\#\in\{u,l,r\}$.
\end{lemma}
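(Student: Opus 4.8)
The plan is to unwind the definition \eqref{eq:z} of the $\g$-vector on each side and push the data through the ring map $i_\#^*$. Write $s=(\mc{S}_Q^2)^{\g}\,F(y_1,\dots,y_p)$, where $y_1,\dots,y_p$ are the $\b{y}$-variables of the seed $(\Delta_Q^2,\mc{S}_Q^2)$ and $F$ is a rational polynomial divisible by none of them; this is legitimate because $B_{\Delta_Q^2}$ has full rank, so the $y_u$ are algebraically independent. Since $i_\#^*$ is a ring homomorphism,
\[ i_\#^*(s)=\Big(\prod_{f} i_\#^*(s_f)^{\g(f)}\Big)\,F\big(i_\#^*(y_1),\dots,i_\#^*(y_p)\big), \]
so the whole question reduces to the individual elements $i_\#^*(s_f)$. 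Observe at once that each $i_\#^*(s_f)$ is either $1$ or a nonzero Laurent monomial in $\mc{M}_Q^\#$, so $i_\#^*$ carries $\mc{L}(\mc{S}_Q^2)$ into $\mc{L}(\mc{M}_Q^\#)$ and the $\g$-vector of $i_\#^*(s)$ with respect to $(\Delta_Q^\#,\mc{M}_Q^\#)$ is at least a meaningful object.

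Two inputs then finish the computation. First, Lemma-Definition~\ref{L:i_u} together with Lemma~\ref{L:itwist}: if $f$ is a frozen vertex of $\Delta_Q^2$ that is deleted in passing to $\Delta_Q^\#$ (equivalently $f\in\Ker(\mr{i}_\#^*)$) then $i_\#^*(s_f)=1$, while for $f\in(\Delta_Q^\#)_0$ the element $i_\#^*(s_f)$ is precisely the variable that $\mc{M}_Q^\#$ attaches to the vertex $f$ --- namely $m_f$, $\bs{\mu}_l(m_f)$, $\bs{\mu}_r(m_f)$ for mutable $f$ in the cases $\#=u,l,r$, and the appropriate frozen minor (for instance $m_{\zero_{i^*}^-}$ when $f=\Id_i$ and $\#=l$) for a surviving frozen vertex. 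Hence $\prod_{f} i_\#^*(s_f)^{\g(f)}=(\mc{M}_Q^\#)^{\mr{i}_\#^*(\g)}$. Second, $\Delta_Q^\#$ is the full subquiver of $\Delta_Q^2$ on the vertex set $(\Delta_Q^\#)_0$ (for $\#=l,r$ up to arrows between frozen vertices, which never enter a restricted $B$-matrix), so at a mutable vertex $u$ the $u$-th row of $B_{\Delta_Q^\#}$ is the restriction to the surviving coordinates of the $u$-th row of $B_{\Delta_Q^2}$; combined with $i_\#^*(s_f)=1$ on the deleted coordinates this yields $i_\#^*(y_u)=y_u^\#$, the $\b{y}$-variable of $(\Delta_Q^\#,\mc{M}_Q^\#)$ at $u$. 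Substituting into the displayed identity gives $i_\#^*(s)=(\mc{M}_Q^\#)^{\mr{i}_\#^*(\g)}\,F(y_1^\#,\dots,y_p^\#)$; since $B_{\Delta_Q^\#}$ is again of full rank (rank is preserved along $\bs{\mu}_l,\bs{\mu}_r$), the $y_u^\#$ are algebraically independent, $F$ remains a rational polynomial divisible by none of them, and the uniqueness of $\g$-vectors recalled in Section~\ref{ss:gv} forces $\g\big(i_\#^*(s)\big)=\mr{i}_\#^*(\g)$ with respect to $(\Delta_Q^\#,\mc{M}_Q^\#)$. The three cases $\#=u,l,r$ are then literally the same computation.

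I expect the genuine work to be concentrated in the first input. One must verify, using the explicit description of the mutation sequences $\bs{\mu}_l,\bs{\mu}_r$ and the frozen-vertex relabelling intervening in Theorem~\ref{T:cyclic}, that for $f\in(\Delta_Q^\#)_0$ the element $i_\#^*(s_f)$ is attached to the \emph{same} vertex $f$ in the seed $(\Delta_Q^\#,\mc{M}_Q^\#)$ as it occupies in $\Delta_Q^2$ --- that is, that the relabelling is compatible with the inclusion $\Delta_Q^\#\hookrightarrow\Delta_Q^2$. Only once this matching is pinned down is it legitimate to assert that the passage from the $\g$-vector with respect to $(\Delta_Q^2,\mc{S}_Q^2)$ to the one with respect to $(\Delta_Q^\#,\mc{M}_Q^\#)$ is exactly the coordinate projection $\mr{i}_\#^*$; the rest of the argument is the bookkeeping sketched above.
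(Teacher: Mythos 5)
Your argument is essentially the paper's own proof: write $s=\b{x}^{\g}F(\b{y})$ with $\b{x}(f)=s_f$, use Lemma \ref{L:i_u} and Lemma \ref{L:itwist} to see that $i_\#^*$ sends the deleted frozen variables to $1$ and the surviving variables to those of $\mc{M}_Q^\#$, deduce $i_\#^*(\b{x}^{\g})=(\b{x}_\#)^{\mr{i}_\#^*(\g)}$ and $i_\#^*(\b{y}(f))=\b{y}_\#(f)$ from the fact that $\Delta_Q^\#$ is obtained from $\Delta_Q^2$ by deleting frozen vertices, and conclude by uniqueness of $\g$-vectors since the restricted $B$-matrices have full rank. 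The vertex-matching you flag at the end is precisely what Lemma \ref{L:itwist} combined with the construction of the seeds $(\Delta_Q^\#,\mc{M}_Q^\#)$ (Proposition \ref{P:Ulr} and Corollary \ref{C:mu_l}) supplies, so no work beyond what the paper itself does is needed there.
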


\begin{proof} We will only prove the statement for $i_l$ because the argument for the other two is similar.
Suppose that $s=\b{x}^\g F(\b{y})$ where $\b{x}(f)=s_f$ and $\b{y}$ is as in Section \ref{ss:gv}.
We have seen in Lemma \ref{L:itwist} that $i_l^*(s_f)=\bs{\mu}_l(m_f)=\mc{M}_Q^l(f)$ if $f$ is not negative or positive. If $f$ is negative or positive, then $i_l^*(s_f)=1$ so $i_l^*(\b{x}^{\g})=(\b{x}_l)^{\mr{i}_l^*(\g)}$ where $\b{x}_l(f)=\bs{\mu}_l(m_f)$.
Recall that the quiver $\Delta_Q^l$ is obtained from $\Delta_Q^2$ by deleting the negative and positive frozen vertices.
According to this description, we have that $i_l^*(\b{y}(f))=i_l^*(\b{x}^{-b_f})=\b{x}_l^{-b_f^l}=\b{y}_l(f)$,
where $b_f$ (resp. $b_f^l$) is the row of $B_{\Delta_Q^2}$ (resp. $B_{\Delta_Q^l}$) corresponding to $f$.
Hence, we get $i_l^*(s)=(\b{x}_l)^{\mr{i}_l^*(\g)} F(\b{y}_l)$.
\end{proof}

\begin{lemma} \label{L:noless} The polytope ${\sf G}_{\Delta_Q^2}(\tripar)$ has lattice points no less than $\lrcoef$.
\end{lemma}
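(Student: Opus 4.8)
The plan is to realize each $s\in C_{\mu,\nu}^\lambda$ as an element of $\mc{L}(\mc{S}_Q^2)$ with a well-defined $\g$-vector, and then show that this $\g$-vector lies in the polytope ${\sf G}_{\Delta_Q^2}(\mu,\nu,\lambda)$; distinctness of the $\g$-vectors (Lemma \ref{L:independent}) then forces the count of lattice points to be at least $\dim C_{\mu,\nu}^\lambda = \lrcoef$. First I would observe that $k[\Conf_{2,1}]\subseteq \mc{L}(\mc{S}_Q^2)$: indeed, $\mc{S}_Q^2$ is algebraically independent (Corollary \ref{C:algind}), and by Corollary \ref{C:localization} inverting the $s_f$ for $f$ positive and neutral gives $k[H\times H\times U]$, inside which $k[\Conf_{2,1}]$ sits via $i^*$; combining with the Laurent phenomenon for $k[U]$ in the cluster $\mc{M}_Q$ (Theorem \ref{T:Ucluster}) and pulling back along $p_u$ (Corollary \ref{C:p_u}) expresses any $s\in k[\Conf_{2,1}]$ as a Laurent polynomial in $\mc{S}_Q^2$. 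In particular each homogeneous $s$ has a $\g$-vector $\g(s)$ with respect to $(\Delta_Q^2,\mc{S}_Q^2)$, which is well-defined because $B_{\Delta_Q^2}$ has full rank. Since $\deg(s)=\g(s)\,\sCQ$ and $\deg(s)=(\mu,\nu,\lambda)$, the $\g$-vector automatically satisfies the grading constraint $\g\sCQ=(\mu,\nu,\lambda)$ cutting out the polytope.

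The substance is to show $\g(s)$ satisfies the three families of inequalities $\g H_u\geq 0$, $\g H_l\geq 0$, $\g H_r\geq 0$ defining ${\sf G}_{\Delta_Q^2}$. Here I would use the three embeddings $i_u,i_l,i_r:U\hookrightarrow\Conf_{2,1}$ and the key containment \eqref{eq:keycontain}: any $s\in k[\Conf_{2,1}]$ pulls back under each $i_\#^*$ to a regular function on $U$. By Lemma \ref{L:proj-g}, $i_\#^*(s)$ has $\g$-vector $\mr{i}_\#^*(\g(s))$ with respect to $(\Delta_Q^\#,\mc{M}_Q^\#)$. Now $i_\#^*(s)\in k[U]$, which by Theorem \ref{T:Ucluster} (for $\#=u$) and Proposition \ref{P:Ulr} (for $\#=l,r$) is modelled by the QP $(\Delta_Q^\#,W_Q^\#)$; hence every element of $k[U]$ has its $\g$-vector inside $G(\Delta_Q^\#,W_Q^\#)$, which by Proposition \ref{P:GQ} and Remark \ref{r:GQlr} equals ${\sf G}_{\Delta_Q^\#}\cap\mb{Z}^{(\Delta_Q^\#)_0}$. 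Therefore $\mr{i}_u^*(\g(s))$ satisfies $\g H_u\geq 0$, and $\mr{i}_l^*(\g(s))$, $\mr{i}_r^*(\g(s))$ satisfy $\g H_l\geq 0$ and $\g H_r\geq 0$ respectively. Since the coordinates of $\Delta_Q^2$ appearing in each $H_\#$ are exactly those retained by $\mr{i}_\#^*$ (the inequalities involve only dimension vectors of subrepresentations of the $T_v$ for $v$ of the relevant type), these three conditions together say precisely $\g(s)\in {\sf G}_{\Delta_Q^2}$.

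Putting it together: the assignment $s\mapsto \g(s)$ sends the $\lrcoef$-dimensional space $C_{\mu,\nu}^\lambda$ into the lattice points of ${\sf G}_{\Delta_Q^2}(\mu,\nu,\lambda)$, and by Lemma \ref{L:independent} elements of $\mc{L}(\mc{S}_Q^2)$ with distinct $\g$-vectors are linearly independent, so the map $s\mapsto\g(s)$ restricted to a basis of $C_{\mu,\nu}^\lambda$ is injective. Hence $|{\sf G}_{\Delta_Q^2}(\tripar)\cap\mb{Z}^{(\Delta_Q^2)_0}|\geq \dim C_{\mu,\nu}^\lambda = \lrcoef$.

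The main obstacle I anticipate is bookkeeping around the frozen coordinates: I must verify carefully that $i_\#^*$ kills exactly the right frozen variables (each $s_f$ for $f$ positive/neutral/negative pulls back to $1$ under the appropriate $i_\#$, by Lemma \ref{L:i_u} and Lemma \ref{L:itwist}), so that $\mr{i}_\#^*(\g(s))$ genuinely records the $\g$-vector of $i_\#^*(s)$ over the correct seed and the index sets of $H_u, H_l, H_r$ partition the frozen directions of $\Delta_Q^2$ compatibly. A secondary subtlety is confirming that an element of $k[U]$ always has a \emph{well-defined} $\g$-vector relative to the standard (or left/right standard) seed — this is where full rank of $B_{\Delta_Q}$ and the fact that $(\Delta_Q,W_Q)$ and its mutations are cluster models (so $G(\Delta_Q^\#,W_Q^\#)$ is exactly the set of $\g$-vectors of the generic basis) are essential, and I would want to make sure the passage from ``$s$ is regular on $U$'' to ``$\g(i_\#^*(s))\in{\sf G}_{\Delta_Q^\#}$'' is valid for \emph{all} of $k[U]$, not merely the basis elements.
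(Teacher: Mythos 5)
Your proposal follows the paper's own route essentially step for step: reduction to $\mc{L}(\mc{S}_Q^2)$ via Corollary~\ref{C:localization}, the containment~\eqref{eq:keycontain}, Lemma~\ref{L:proj-g} to transport $\g$-vectors along the three embeddings, and then Theorem~\ref{T:Ucluster}, Propositions~\ref{P:GQ},~\ref{P:Ulr} and Remark~\ref{r:GQlr} to place each projection in the corresponding cone ${\sf G}_{\Delta_Q^\#}$. So the strategy and the intermediate results invoked match the paper exactly.

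However, the final paragraph has a real logical gap. You conclude from Lemma~\ref{L:independent} that the assignment $s\mapsto\g(s)$ is injective on a basis of $C_{\mu,\nu}^\lambda$. But Lemma~\ref{L:independent} is a one-way implication: distinct $\g$-vectors force linear independence. The converse is what you actually need, and it is false. Concretely, $\b{x}^{\g_0}(1+y_1)$ and $\b{x}^{\g_0}(1+y_2)$ are linearly independent in $\mc{L}(\mc{S}_Q^2)$, they share the $\g$-vector $\g_0$, and so does every nonzero vector in their span, since $a(1+y_1)+b(1+y_2)$ is a polynomial in $\b{y}$ not divisible by any $y_i$ for all $(a,b)\neq(0,0)$. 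So ``linearly independent'' does not yield ``distinct $\g$-vectors'', and the count $|{\sf G}_{\Delta_Q^2}(\tripar)\cap\mb{Z}^{(\Delta_Q^2)_0}|\geq\lrcoef$ does not follow from~\ref{L:independent} alone. (The paper's own wording is also terse here — it says it suffices to identify the lattice points of ${\sf G}_{\Delta_Q^2}(\tripar)$ with the set of weight-$(\tripar)$ vectors whose three projections lie in the respective cones — and likewise presupposes that the number of $\g$-vectors attained on $C_{\mu,\nu}^\lambda$ is at least its dimension.) To close this, one needs an argument that bounds $\dim C_{\mu,\nu}^\lambda$ above by the number of admissible $\g$'s, for instance by injecting $i_u^*(C_{\mu,\nu}^\lambda)$ into $k[U]$, expanding in the generic basis $\{C_W^u(\g')\}$, and showing — using the $l$- and $r$-constraints — that the expansion is supported only on those $\g'$ lifting to the polytope; this is not captured by citing~\ref{L:independent}. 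A secondary, smaller imprecision: you justify the existence of $\g(s)$ by ``$B_{\Delta_Q^2}$ has full rank'', but full rank gives only uniqueness; existence for homogeneous $s$ uses that $\sCQ$ is a \emph{full} weight configuration (Corollary~\ref{C:WC}), so that the monomial support of $s$ lies in a single coset of the row span of $B_{\Delta_Q^2}$.
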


\begin{proof} By Corollary \ref{C:localization}, we have that $k[\Conf_{2,1}]\subset \mc{L}(\mc{S}_Q^2)$.
Since $i_u,\ i_l,\ i_r$ are all regular, we trivially have that
\begin{equation} \label{eq:key} k[\Conf_{2,1}]\subseteq \left\{s\in \mc{L}(\mc{S}_Q^2)\mid i_{\#}^*(s)\in k[U] \text{ for $\#=u,l,r$} \right\}. \end{equation}
If $s\in \mc{L}(\mc{S}_Q^2)$ has a well-defined $\g$-vector, then by the previous lemma the $\g$-vector of $i_{\#}^*(s)$ is equal to $\mr{i}_{\#}^*(\g)$.
So if $i_{\#}^*(s)\in k[U]$, then $\mr{i}_{\#}^*(\g)\in {\sf G}_{\Delta_Q^{\#}}$ by Lemma \ref{L:independent}, Proposition \ref{P:GQ}, \ref{P:Ulr} and Remark \ref{r:GQlr}.

Due to \eqref{eq:key}, it suffices to show that the lattice points in ${\sf G}_{\Delta_Q^2}(\tripar)$ can be identified with
points $\g\in \mb{Z}^{(\Delta_Q^2)_0}$ of weight $(\tripar)$ such that $\mr{i}_{\#}^*(\g)\in {\sf G}_{\Delta_Q^{\#}}$ for $\#\in\{u,l,r\}$.
But this follows from the description of the cones ${\sf G}_{\Delta_Q^2},\ {\sf G}_{\Delta_Q},\ {\sf G}_{\Delta_Q^l}$ and ${\sf G}_{\Delta_Q^r}$ (Theorem \ref{T:GQ2}, Proposition \ref{P:GQ}, and Remark \ref{r:GQlr}).
\end{proof}

\section{Cluster Structure on $k[\Conf_{2,1}]$} \label{S:CS}

\begin{theorem} \label{T:main} Suppose that $Q$ is trivially valued.
Then the ring of regular functions on $\Conf_{2,1}$ is the graded upper cluster algebra $\ucaCtwoQ$.
Moreover, $(\Delta_Q^2,W_Q^2)$ is a cluster model.
In particular, $\lrcoef$ is counted by lattice points in ${\sf G}_{\Delta_Q^2}(\tripar)$.
\end{theorem}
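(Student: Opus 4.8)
The plan is to sandwich $k[\Conf_{2,1}]$ between two graded subalgebras and then collapse the sandwich with a graded dimension count. First I would record the chain of graded inclusions
$$\op{Span}\big(C_W({\sf G}_{\Delta_Q^2})\big)\ \subseteq\ \ucaCtwoQ\ \subseteq\ k[\Conf_{2,1}].$$
For the right-hand inclusion I would apply Lemma \ref{L:RCA} with $R=k[\Conf_{2,1}]$, a finitely generated UFD by Lemma \ref{L:UFD}, to the seed $(\Delta_Q^2,\mc{S}_Q^2)$ (which is a seed by Corollary \ref{C:algind}): the cluster and coefficient variables $s_f$ lie in $R$ (Lemma \ref{L:i_u}), each adjacent cluster variable $s_f'$ lies in $R$ and is coprime to $s_f$ (Lemma \ref{L:exchange}), and the $s_f$ are pairwise coprime because each is irreducible (Corollary \ref{C:irreducible}) and, being algebraically independent (Corollary \ref{C:algind}), no two are associate. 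Since $\deg(s_f)=\wtd{\f}=\sCQ(f)$ in the triple-weight grading of $k[\Conf_{2,1}]$, this inclusion is $\sCQ$-graded.

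For the left-hand inclusion I would use that $(\Delta_Q^2,W_Q^2)$ is rigid (Lemma \ref{L:fdrigid}), hence nondegenerate, and that $B_{\Delta_Q^2}$ has full rank; Theorem \ref{T:GCC}, applied to the seed $(\Delta_Q^2,\mc{S}_Q^2)$, then shows that $C_W$ is injective on $G(\Delta_Q^2,W_Q^2)$ with linearly independent image inside $\uca(\Delta_Q^2)=\ucaCtwoQ$, while Theorem \ref{T:GQ2} identifies the source with ${\sf G}_{\Delta_Q^2}\cap\mb{Z}^{(\Delta_Q^2)_0}$, and each $C_W(\g)$ is homogeneous of degree $\g\sCQ$. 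Now I would restrict to the graded piece of a fixed triple dominant weight $(\tripar)$. By the linear independence in Theorem \ref{T:GCC} the left-hand space there has dimension $|{\sf G}_{\Delta_Q^2}(\tripar)\cap\mb{Z}^{(\Delta_Q^2)_0}|$, and the right-hand space has dimension $\dim C_{\mu,\nu}^\lambda=\lrcoef$; hence $|{\sf G}_{\Delta_Q^2}(\tripar)\cap\mb{Z}^{(\Delta_Q^2)_0}|\leq\lrcoef$. The reverse inequality is exactly Lemma \ref{L:noless}, so equality holds; this already yields the ``in particular'' assertion that $\lrcoef$ equals the number of lattice points in ${\sf G}_{\Delta_Q^2}(\tripar)$. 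Finally, in each graded piece the three spaces are finite-dimensional and nested and the outer two now have equal dimension, so all three coincide; summing over $(\tripar)$ gives $\op{Span}(C_W({\sf G}_{\Delta_Q^2}))=\ucaCtwoQ=k[\Conf_{2,1}]$, which is the first assertion. In particular $C_W$ carries $G(\Delta_Q^2,W_Q^2)$ bijectively onto a basis of $\uca(\Delta_Q^2)$, so $(\Delta_Q^2,W_Q^2)$ is a cluster model (Definition \ref{D:model}); all three assertions follow.

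So the theorem itself is this assembly together with the one dimension count above — almost all the real content is imported. The only substantive input is Lemma \ref{L:noless}, equivalently the containment \eqref{eq:key}, and the hardest step behind \emph{that} — the one place where genuine work is required — is the mutation interpretation of the twisted cyclic shift on $\Conf_{2,1}$ (Theorem \ref{T:cyclic}). It is this that lets $\g$-vectors be transported compatibly along the three embeddings $i_u,i_l,i_r$ (Lemma \ref{L:proj-g}), so that the three blocks $\g H_u\geq 0$, $\g H_l\geq 0$, $\g H_r\geq 0$ of the defining inequalities of ${\sf G}_{\Delta_Q^2}$ match the conditions $\mr{i}_\#^*(\g)\in{\sf G}_{\Delta_Q^{\#}}$ forced by regularity of $i_\#$ for $\#=u,l,r$. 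Given Theorem \ref{T:cyclic} and the cluster structure of $k[U]$ (Theorem \ref{T:Ucluster}, Proposition \ref{P:Ulr}), the argument above runs as stated.
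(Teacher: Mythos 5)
Your proposal is correct and follows essentially the same route as the paper's own proof: Lemma \ref{L:RCA} (via Lemma \ref{L:exchange} and Corollary \ref{C:irreducible}) for $\ucaCtwoQ\subseteq k[\Conf_{2,1}]$, Theorem \ref{T:GCC} together with Theorem \ref{T:GQ2} for the linearly independent homogeneous set, and Lemma \ref{L:noless} to force the graded dimension count to collapse the chain. The extra details you supply (pairwise coprimality of the $s_f$, rigidity implying nondegeneracy, full rank of $B_{\Delta_Q^2}$) are consistent with, and implicit in, the paper's argument.
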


\begin{proof} By Lemma \ref{C:algind} and Corollary \ref{C:WC}, $(\Delta_Q^2,\mc{S}_Q^2;\bs{\sigma}_{Q}^2)$ form a graded seed.
Due to Lemma \ref{L:exchange} and Corollary \ref{C:irreducible}, we can apply Lemma \ref{L:RCA} to conclude that
$\ucaCtwoQ$ is a graded subalgebra of $k[\Conf_{2,1}]$.

By Theorem \ref{T:GCC}, $C_W\left({\sf G}_{\Delta_Q^2}(\tripar)\cap \mb{Z}^{(\Delta_Q^2)_0}\right)$ is a linearly independent set in $C_{\mu,\nu}^{\lambda}$ for any triple weights.
But Lemma \ref{L:noless} says that the cardinality of ${\sf G}_{\Delta_Q^2}(\tripar)\cap \mb{Z}^{(\Delta_Q^2)_0}$ is at least $\lrcoef$.
So they actually span the vector space $C_{\mu,\nu}^{\lambda}$.
We conclude that $\ucaCtwoQ$ is equal to $k[\Conf_{2,1}]$, and $(\Delta_Q^2,W_Q^2)$ is a cluster model.
\end{proof}

\noindent It follows that \eqref{eq:key} is in fact an equality.
The proof shows that $\ucaCtwoQ$ is a graded subalgebra of $k[\Conf_{2,1}]$ no matter if $Q$ is trivially valued.

\begin{conjecture} \label{c:nonSL} The first and last statement of Theorem \ref{T:main} is true even if $Q$ is not trivially valued.
\end{conjecture}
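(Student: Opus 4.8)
The argument establishing Theorem \ref{T:main} was deliberately arranged so that every step is compatible with valued quivers, \emph{except} for the inputs that invoke the mutation theory of quivers with potentials and the generic cluster character, which exist in the literature only for ordinary quivers. Concretely, the non-formal ingredients are: (i) a mutation theory for $(\DCQ,W_Q^2)$ making it nondegenerate with finite-dimensional Jacobian algebra; (ii) the generic cluster character $C_W$ and Theorem \ref{T:GCC}; and (iii) Theorem \ref{T:Ucluster}, that $(\Delta_Q,W_Q)$ models $k[U]$. The plan is to re-establish (i)--(iii) in the setting of \emph{species with potentials} (SP) in the sense of \cite{LZ}; once this is done, the proofs of Lemma \ref{L:noless} and Theorem \ref{T:main} apply to valued $Q$ essentially verbatim, yielding both the first statement ($k[\Conf_{2,1}]=\ucaCtwoQ$) and the last statement of Theorem \ref{T:main} (and in fact the intermediate ``cluster model'' statement as well, although the conjecture only claims the other two).

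First I would fix the SP attached to $\DCQ$: the modulation is $\Gamma_0:=\prod_{f\in(\DCQ)_0}E_f$ over the residue division algebras $E_f$ of the ARt quiver, with arrow bimodules the spaces $\Irr_{C^2Q}(-,-)$, so that the valuation of $\DCQ$ is exactly this bimodule data. One then transcribes Section \ref{S:CCQP}: the Jacobian algebra $J$, decorated representations, the reduced presentation spaces $\PHom_J(\g)$, the cokernel $\Coker(\g)$, $\mu$-supportedness, and the set $G(\DCQ,W_Q^2)$. The rigidity computation of Lemma \ref{L:fdrigid} and the path computations of Lemma \ref{L:iARt_path} are performed arrow-by-arrow using only the combinatorial relations \eqref{eq:movec}, \eqref{eq:exchangeab}, \eqref{eq:relpm}, \eqref{eq:relne}, and these carry over once the corresponding statements about compositions of bimodule maps are checked; hence $(\DCQ,W_Q^2)$ stays rigid, so nondegenerate, with finite-dimensional $J$. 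The generic character \eqref{eq:genCC} is then defined using the species Grassmannians $\Gr^\e(\Coker(\g))$, and Theorem \ref{T:GCC} is recovered by running the proof of \cite[Corollary 5.14]{Fs1} in the SP setting.

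For (iii), the proof of Theorem \ref{T:Ucluster} is already valuation-independent — the comparison of $|{\sf G}_{\Delta_Q}(-)\cap\mb{Z}^{(\Delta_Q)_0}|$ with the Kostant partition function (Lemma \ref{L:parfun}) and with $\dim U(\mf{n})_\gamma$ uses nothing about the valuation — so the SP generic character is the only missing ingredient to conclude that $(\Delta_Q,W_Q)$ models $k[U]$, and likewise for Proposition \ref{P:Ulr}. The remaining pieces are already in place for valued $Q$: the modules $T_v$ of Theorem \ref{T:Tv} and the cones ${\sf G}_{\DCQ},{\sf G}_{\Delta_Q},{\sf G}_{\Delta_Q^l},{\sf G}_{\Delta_Q^r}$ make sense as observed in the Valued Cases remark; Corollaries \ref{C:algind}, \ref{C:WC}, \ref{C:irreducible} and Lemma \ref{L:exchange} do not use the simply-laced hypothesis; the twisted cyclic shift is interpreted by mutations via Theorem \ref{T:cyclic}; and the inclusion \eqref{eq:key} together with Lemma \ref{L:RCA} and the linear independence of $C_W\big({\sf G}_{\DCQ}(\tripar)\cap\mb{Z}^{(\DCQ)_0}\big)$ forces the two claimed equalities by the same dimension count as in the proof of Theorem \ref{T:main}.

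The genuinely new work, and the expected main obstacle, is the foundational input (i): the analogue of \cite[Lemma 5.2]{DWZ2} for species with potentials — the uniqueness, up to right-equivalence, of the reduced part in the splitting of an SP — which is what makes mutation of SPs well-defined and gives meaning to ``$(\DCQ,W_Q^2)$ is nondegenerate''. Over the noncommutative ground ring $\Gamma_0$ the ``change of arrows'' automorphisms used in \cite{DWZ1,DWZ2} to absorb the trivial summand no longer interact as simply, and one must control the $\Gamma_0$-bimodule structure of the arrow spaces throughout the iteration; a secondary technical point is to verify that the species Grassmannians supporting $C_W$ are well-behaved over the (not necessarily algebraically closed) division algebras $E_f$. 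With these foundations settled I expect every remaining step of the proof of Theorem \ref{T:main} to be a routine transcription.
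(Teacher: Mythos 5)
Since this is a conjecture, the paper supplies no proof; its Section~\ref{S:epi} instead reduces the conjecture to one missing foundational ingredient. Your roadmap is in the same spirit but goes wrong in two places. First, you list ``the twisted cyclic shift is interpreted by mutations via Theorem~\ref{T:cyclic}'' among pieces that are ``already in place for valued $Q$.'' That is not the case: the proof of Theorem~\ref{T:cyclic} rests on Proposition~\ref{P:mu_sqrtl_Q}, Corollary~\ref{C:mu_sqrtl}, and Corollary~\ref{C:mu_l^3}, all of which draw on Geiss--Leclerc--Schr\"{o}er's preprojective-algebra results, which are stated and proved only in the simply-laced setting. The paper flags this explicitly as Conjecture~\ref{c:cyclic_valued} and, in Section~\ref{S:epi}, names the generalization of Theorem~\ref{T:cyclic} as one of the two missing ingredients for Conjecture~\ref{c:nonSL}.

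Second, you misidentify the content of the ``analogue of \cite[Lemma 5.2]{DWZ2}'' that you single out as the main obstacle, describing it as the uniqueness, up to right-equivalence, of the reduced part of a species with potential. That splitting result is the core of \cite{LZ} and is already available --- it is exactly what makes SP mutation well-defined --- so it is not a gap. What is actually missing, and what the paper records as Lemma~\ref{L:key}, is the DWZ2-style mutation rule for decorated representations: how $F$-polynomials and $\g$-vectors transform under a single mutation step. The paper observes that both the SP generic cluster character (hence your items (ii) and (iii)) and, via an unpublished longer argument, Theorem~\ref{T:cyclic} reduce to this one lemma, so the whole conjecture hinges on it. Correcting that mislabeling, your items (i)--(iii) do collapse to the paper's one-ingredient reduction; as written, you propose to re-prove something already in the literature while leaving the genuine gap --- and the conjectural status of the cyclic-shift interpretation --- unaddressed.
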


We illustrate by example that in general the upper cluster algebra $\uca(\DCQ)$ strictly contains the cluster algebra $\mc{C}(\Delta_Q^2)$.
\begin{example} \label{ex:contain} Let $Q$ be of type $D_4$ as in Example \ref{ex:iARtD4}.
It can be checked by the 44 inequalities in Example \ref{ex:iARtD4_ct} that $\g=\e_{34,12}-\e_{34,1}+\e_{2,0}$ (in weight labelling) is $\mu$-supported.
Its triple weights is $(\e_2,\e_2,\e_2)$, and moreover $c_{\e_2,\e_2}^{\e_2}=1$. This is clearly an extremal weight.
So it suffices to show that no cluster variable has this $\g$-vector.
For this, we need a result in \cite{DF}, which says that the $\g$-vector of a cluster variable is a $\g$-vector of a {\em rigid} presentation in the Jacobian algebra.
The rigidity is characterized by the vanishing of the $\E$-invariant introduced in \cite{DWZ2,DF}.
It is not hard to show that $\E(f,f):=\Hom_{K^b(J)}(f,f[1])\neq 0$ where $f$ is a general presentation in $\Hom_J(P([\g]_+),P([\g]_-))$.
\end{example}

\section{Epilogue} \label{S:epi}
\subsection{Cluster Structure of Base Affine Spaces} \label{ss:CSBA}
We denote the ice quiver obtained from $\Delta_Q^2$ by deleting neutral frozen vertices by $\Delta_Q^\sharp$.
Let $\mc{M}_Q^\sharp:=\{m_{f}\}_{f\in (\Delta_Q^\sharp)_0}$.
We define the weight configuration $\bs{\sigma}_Q^\sharp$ on $\Delta_Q^\sharp$ by $\bs{\sigma}_Q^\sharp(f)=(\e(f),\f)$.
Recall from Lemma \ref{L:wtminor} that the degree of $m_f$ is $\varpi(\bs{\sigma}_Q^\sharp(f))$.
As a side result, we will show that the ring of regular functions on the base affine space $\mc{A}:=U^-\bcsl G$
is the graded upper cluster algebra $\uca(\Delta_Q^\sharp,\mc{M}_Q^\sharp; \varpi(\bs{\sigma}_Q^\sharp))$.

The proof is similar to but easier than that for $\Conf_{2,1}$ so our treatment may be a little sketchy.
Recall the open set $G_0=U^-HU$ of $G$. We have an open embedding
$$\iota_0: H\times U=U^-\bcsl G_0 \hookrightarrow U^-\bcsl G.$$
The localization of $k[\mc{A}]$ at all $m_f$'s for $f$ positive is exactly $k[H\times U]$.
In particular, $k[\mc{A}]$ is contained in the Laurent polynomial ring in $\mc{M}_Q^\sharp$.
Consider the open embedding
$$\iota=(\iota_1,\iota_2): H\times \mc{A} \hookrightarrow \Conf_{2,1},\ (h,U^-g)\mapsto (U^-h, U^-g, U).$$
Note that the map $\iota(\Id_H,\iota_0):H\times H\times U\to \Conf_{2,1}$ is the map $i$ defined in Section \ref{ss:standard}.
Let $p$ be a rational inverse of $\iota$, and $p_2$ be the map $p$ followed by the second component projection.
We define the birational map $r':U^-\bcsl G \to U^-\bcsl G$ to be the composition $r':=p_2 r \iota_2$.
The map $r'$ can be viewed as a variation of Fomin-Zelevinsky's {\em twist automorphism} \cite{FZ} on the big cell $U^-\bcsl G_0$.
Readers can check that they differ by a (fibrewise) rescaling along the toric fibre of $H\times U$, but we do not need this fact.

Let $i_{u'}:=\iota_0\mid_U$, and $i_{r'}:=r' i_{u'}$. Then we have the following commutative diagram. The map $i_{r'}$ is in fact regular because $r'$ is regular on the image of $i_{u'}$.
$$\vcenter{\xymatrix@R=5ex@C=5ex{
\Conf_{2,1} \ar[dd]_{p_2} \ar[rr]^{r} && \Conf_{2,1} \ar[dd]^{p_2} \\
& U \ar@{_(->}[ul]_{i_u} \ar@{_(->}[ur]^{i_r} \ar@{_(->}[dl]_{i_{u'}} \ar@{_(->}[dr]^{i_{r'}} \\
U^-\bcsl G \ar[rr]^{r'} && U^-\bcsl G
}}$$
\noindent To finish the proof, we only need three maps $i_{u'},\ i_{r'}$ and $r'$.
Analogous to Theorem \ref{T:cyclic}, we have that the pullback $(r')^*$ is related to the sequence of mutations $\bs{\mu}_r$.
More precisely, we have that
\begin{equation*}\bs{\mu}_r(m_f) = (r')^*(m_f) \text{ for any $f$ mutable.}
\end{equation*}
So analogous to Lemma \ref{L:itwist}, we have that $i_{r'}^*(m_f)=(r')^*(m_f)=\bs{\mu}_r(m_f)$ for $f$ mutable.
Obviously we also have that
$i_{r'}^*(m_{\zero_i^+})= m_{\zero_{i}^-}$ and $i_{r'}^*(m_{\zero_i^-})=1$.

Analogous to $\mr{i}_{u}^*$ and $\mr{i}_{r}^*$, we have the maps $\mr{i}_{u'}^*:\mb{R}^{(\Delta_Q^\sharp)_0}\to\mb{R}^{(\Delta_Q)_0}$ and $\mr{i}_{r'}^*:\mb{R}^{(\Delta_Q^\sharp)_0}\to\mb{R}^{(\Delta_Q^r)_0}$. The map $\mr{i}_{u'}^*$ (resp. $\mr{i}_{r'}^*$) forgets the coordinates corresponding to the positive (resp. negative) frozen vertices. We have the following analog of Lemma \ref{L:proj-g}.
If $m\in \mc{A}$ has a well-defined $\g$-vector $\g$ with respect to $(\mc{M}_Q^\sharp,\Delta_Q^\sharp)$, then so are the $\g$-vectors of $i_{u'}^*(m)$ and $i_{r'}^*(m)$ with respect to $(\mc{M}_Q,\Delta_Q)$ and $(\mc{M}_Q^r,\Delta_Q^r)$.
They are equal to $\mr{i}_{u'}^*(\g)$ and $\mr{i}_{r'}^*(\g)$ respectively.

We restrict the iARt QP $(\Delta_Q^2,W_Q^2)$ to the subquiver $\Delta_Q^\sharp$. We denote the restricted potential by $W_Q^\sharp$.
Almost the same proof as in Theorem \ref{T:GQ2} shows that the set $G(\Delta_Q^\sharp,W_Q^\sharp)$ of $\mu$-supported $\g$-vectors are given by the lattice points in the polyhedral cone ${\sf G}_{\Delta_Q^\sharp}$. The cone is defined by the two sets of defining conditions of ${\sf G}_{\Delta_Q^2}$, namely, $\g H_u\geq 0$ and $\g H_r\geq 0$.
By almost the same argument, we get the following analog of Lemma \ref{L:noless}.
For the weight configuration $\varpi(\bs{\sigma}_Q^\sharp)$ we define the polytope ${\sf G}_{\Delta_Q^\sharp}(\mu,\lambda)$ as in Definition \ref{D:polytope}.

\begin{lemma} \label{L:noless1}
The polytope ${\sf G}_{\Delta_Q^\sharp}(\mu,\lambda)$ has lattice points no less than $\dim k[\mc{A}]_{\mu,\lambda}$.
\end{lemma}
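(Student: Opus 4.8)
The plan is to reproduce the proof of Lemma~\ref{L:noless} with the obvious substitutions: replace $\Conf_{2,1}$ by $\mc{A}=U^-\bcsl G$, the triple of maps $(i_u,i_l,i_r)$ by the pair $(i_{u'},i_{r'})$, the cluster $\mc{S}_Q^2$ by the standard cluster $\mc{M}_Q^\sharp$ on $\Delta_Q^\sharp$, and the three polyhedral blocks $\g H_u\geq 0,\ \g H_l\geq 0,\ \g H_r\geq 0$ by the two blocks $\g H_u\geq 0$ and $\g H_r\geq 0$ that cut out ${\sf G}_{\Delta_Q^\sharp}$. In this dictionary $\dim k[\mc{A}]_{\mu,\lambda}$ plays the role of $\lrcoef=\dim C_{\mu,\nu}^\lambda$, and the assertion to be proved is exactly the inequality $\dim k[\mc{A}]_{\mu,\lambda}\leq |{\sf G}_{\Delta_Q^\sharp}(\mu,\lambda)\cap\mb{Z}^{(\Delta_Q^\sharp)_0}|$.

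First I would record the inclusion $k[\mc{A}]\subseteq\mc{L}(\mc{M}_Q^\sharp)$, which follows from the fact noted above that the localization of $k[\mc{A}]$ at the positive generalized minors is $k[H\times U]\subseteq\mc{L}(\mc{M}_Q^\sharp)$. Since $i_{u'}$ and $i_{r'}$ are regular on $\mc{A}$, this gives the analogue of \eqref{eq:key},
\[ k[\mc{A}]\subseteq\{\,m\in\mc{L}(\mc{M}_Q^\sharp)\mid i_{u'}^*(m)\in k[U]\ \text{and}\ i_{r'}^*(m)\in k[U]\,\}. \]
Next I would feed in the analogue of Lemma~\ref{L:proj-g} already recorded in the text: if $m\in\mc{L}(\mc{M}_Q^\sharp)$ has a well-defined $\g$-vector $\g$ with respect to $(\Delta_Q^\sharp,\mc{M}_Q^\sharp)$, then $i_{u'}^*(m)$ and $i_{r'}^*(m)$ have well-defined $\g$-vectors $\mr{i}_{u'}^*(\g)$ and $\mr{i}_{r'}^*(\g)$. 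Combining this with Lemma~\ref{L:independent}, Proposition~\ref{P:GQ}, Proposition~\ref{P:Ulr} and Remark~\ref{r:GQlr}, the two membership conditions $i_{u'}^*(m)\in k[U]$ and $i_{r'}^*(m)\in k[U]$ force $\mr{i}_{u'}^*(\g)\in{\sf G}_{\Delta_Q}$ and $\mr{i}_{r'}^*(\g)\in{\sf G}_{\Delta_Q^r}$, i.e.\ $\g H_u\geq 0$ and $\g H_r\geq 0$. As $\Delta_Q^\sharp$ carries no neutral frozen vertex, the neutral block $\g H_l\geq 0$ is simply absent, so these two conditions are precisely the defining inequalities of ${\sf G}_{\Delta_Q^\sharp}$; hence $\g\in{\sf G}_{\Delta_Q^\sharp}$, and after imposing the weight $\g\varpi(\bs{\sigma}_Q^\sharp)=(\mu,\lambda)$, $\g$ is a lattice point of ${\sf G}_{\Delta_Q^\sharp}(\mu,\lambda)$ in the sense of Definition~\ref{D:polytope}.

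Finally I would run the dimension count exactly as in Lemma~\ref{L:noless}: $\mc{L}(\mc{M}_Q^\sharp)$ has a Laurent-monomial basis indexed by $\g$-vectors, and a standard leading-term reduction shows that the weight-$(\mu,\lambda)$ part of the right-hand set above is spanned by elements whose well-defined $\g$-vectors all lie in ${\sf G}_{\Delta_Q^\sharp}(\mu,\lambda)$; by Lemma~\ref{L:independent} those elements are linearly independent, so $\dim k[\mc{A}]_{\mu,\lambda}$ is bounded by the number of such lattice points, which is the lemma. Every ingredient — the inclusion into $\mc{L}(\mc{M}_Q^\sharp)$, the regularity of $i_{u'}$ and $i_{r'}$, the $\g$-vector behaviour under their pullbacks (resting in turn on the analogue of Theorem~\ref{T:cyclic}, that $\bs{\mu}_r(m_f)=(r')^*(m_f)$ for $f$ mutable), and the two-block hyperplane description of ${\sf G}_{\Delta_Q^\sharp}$ — is already in place in the preceding paragraphs, so this really is ``almost the same argument''. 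I expect the only point demanding care to be the same as in the $\Conf_{2,1}$ case: the $\g$-vector bookkeeping under $i_{u'}^*$ and $i_{r'}^*$, namely the leading-term argument that replaces an arbitrary homogeneous element by ones with well-defined $\g$-vectors, and the identification of membership in $k[U]$ with lattice membership in ${\sf G}_{\Delta_Q}$ and ${\sf G}_{\Delta_Q^r}$; here the bookkeeping is lighter, since only two maps and no neutral block intervene.
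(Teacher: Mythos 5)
Your proof is correct and is exactly what the paper intends: the text before Lemma~\ref{L:noless1} literally says ``By almost the same argument, we get the following analog of Lemma~\ref{L:noless},'' and your write-up carries out that argument with the substitutions $\Conf_{2,1}\rightsquigarrow\mc{A}$, $(i_u,i_l,i_r)\rightsquigarrow(i_{u'},i_{r'})$, $\mc{S}_Q^2\rightsquigarrow\mc{M}_Q^\sharp$, and the three hyperplane blocks replaced by the two blocks $\g H_u\geq 0$, $\g H_r\geq 0$. All the ingredients you cite — the localization statement giving $k[\mc{A}]\subseteq\mc{L}(\mc{M}_Q^\sharp)$, the regularity of $i_{u'},i_{r'}$, the analogue of Lemma~\ref{L:proj-g}, the identity $\bs{\mu}_r(m_f)=(r')^*(m_f)$, and the two-block description of ${\sf G}_{\Delta_Q^\sharp}$ — are indeed recorded in the surrounding paragraphs, so nothing is missing.

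If anything, your version is slightly more explicit than the paper's, since the proof of Lemma~\ref{L:noless} itself does not spell out the leading-term reduction that passes from the containment $k[\mc{A}]\subseteq\{\,m\mid i_{u'}^*(m),i_{r'}^*(m)\in k[U]\,\}$ together with the $\g$-vector behaviour under $\mr{i}_{u'}^*,\mr{i}_{r'}^*$ to a bound on $\dim k[\mc{A}]_{\mu,\lambda}$ by the number of lattice points; that step (choosing a basis of the graded piece whose elements have distinct well-defined $\g$-vectors, and then applying Lemma~\ref{L:independent}) is exactly what makes the count work, and you are right to flag it as the only point requiring care.
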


\begin{theorem}  \label{T:side} Suppose that $Q$ is trivially valued.
Then the ring of regular functions on $\mc{A}$ is the graded upper cluster algebra $\uca(\Delta_Q^\sharp,\mc{M}_Q^\sharp;\varpi(\bs{\sigma}_Q^\sharp))$.
Moreover, $(\Delta_Q^\sharp,W_Q^\sharp)$ is a cluster model.
In particular, the weight multiplicity $\dim L(\mu)_{\varpi(\lambda)}$ is counted by lattice points in ${\sf G}_{\Delta_Q^\sharp}(\mu,\lambda)$.
\end{theorem}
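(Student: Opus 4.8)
The plan is to transcribe the proof of Theorem \ref{T:main}, substituting the ingredients already assembled above for $\mc{A}$ in place of those for $\Conf_{2,1}$. First I would verify that $(\Delta_Q^\sharp,\mc{M}_Q^\sharp;\varpi(\bs{\sigma}_Q^\sharp))$ is a graded seed. Algebraic independence of $\mc{M}_Q^\sharp$ is read off from the birational chart $\iota_0\colon H\times U\hookrightarrow\mc{A}$: for $f$ positive, $\iota_0^*(m_f)$ is the character $h\mapsto h^{\varpi_i}$ on the $H$-factor, while for the remaining $f$ one has $\iota_0^*(m_f)=h^{\varpi(\e(f))}\otimes(m_f|_U)$, and the $m_f|_U$ are precisely the cluster $\mc{M}_Q$ of $k[U]$ from Theorem \ref{T:Ucluster}; since $|(\Delta_Q^\sharp)_0|=(\dim U-n)+2n=\dim U+\dim H=\dim\mc{A}$, the full set is algebraically independent. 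That $\bs{\sigma}_Q^\sharp$ is a weight configuration on $\Delta_Q^\sharp$ follows from Corollary \ref{C:WC}: it is the image of $\sCQ$ under the linear map $(\e,\f_-,\f_+)\mapsto(\e,\f_+-\f_-)$, hence a weight configuration on $\DCQ$, and the deleted neutral vertices $\Id_i$ have $\bs{\sigma}_Q^\sharp(\Id_i)=0$ and lie only on arrows whose removal leaves every weight-configuration equation at a mutable vertex intact; the degree identity $\deg m_f=\varpi(\bs{\sigma}_Q^\sharp(f))$ is Lemma \ref{L:wtminor}. Full rank of $B_{\Delta_Q^\sharp}$ is the same linear-algebra fact as for $\Delta_Q$ and $\DCQ$, and $(\Delta_Q^\sharp,W_Q^\sharp)$ is non-degenerate since $(\DCQ,W_Q^2)$ is rigid (Lemma \ref{L:fdrigid}) and rigidity passes to the restriction by \cite[Proposition 8.9]{DWZ1}, exactly as for $(\Delta_Q^l,W_Q^l)$.

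Next I would show $\uca(\Delta_Q^\sharp,\mc{M}_Q^\sharp;\varpi(\bs{\sigma}_Q^\sharp))\subseteq k[\mc{A}]$ as a graded subalgebra via Lemma \ref{L:RCA}. The ring $k[\mc{A}]=k[U^-\bcsl G]$ is a finitely generated UFD, because $k[G]$ is a UFD and $U^-$ has no nontrivial characters (\cite[Theorem 3.17]{PV}); the minors $m_f$ all lie in $k[\mc{A}]$; and the analogues of Corollary \ref{C:irreducible} and Lemma \ref{L:exchange} carry over unchanged: each $m_f$ is irreducible in $k[\mc{A}]$, and the adjacent cluster variable $m_f'$ read off from the exchange relation \eqref{eq:exrelmf} in $k[U]$ is a priori regular only on the big cell $\iota_0(H\times U)$, but its locus of indeterminacy is contained in the irreducible hypersurface $Z(m_f)$, which meets $\iota_0(H\times U)$, so algebraic Hartogs on the normal variety $\mc{A}$ gives $m_f'\in k[\mc{A}]$; relative primeness of the relevant pairs is then immediate from irreducibility and a comparison of weights.

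Finally I would combine: we obtain graded inclusions $\op{Span}\big(C_W(G(\Delta_Q^\sharp,W_Q^\sharp))\big)\subseteq\uca(\Delta_Q^\sharp,\mc{M}_Q^\sharp;\varpi(\bs{\sigma}_Q^\sharp))\subseteq k[\mc{A}]$, the first by Theorem \ref{T:GCC}, the second just proved. For fixed $(\mu,\lambda)$, Theorem \ref{T:GCC} makes $C_W\big({\sf G}_{\Delta_Q^\sharp}(\mu,\lambda)\cap\mb{Z}^{(\Delta_Q^\sharp)_0}\big)$ a linearly independent subset of the graded component $k[\mc{A}]_{\mu,\lambda}$, while Lemma \ref{L:noless1} gives $\big|{\sf G}_{\Delta_Q^\sharp}(\mu,\lambda)\cap\mb{Z}^{(\Delta_Q^\sharp)_0}\big|\geq\dim k[\mc{A}]_{\mu,\lambda}$; hence this set is a basis of $k[\mc{A}]_{\mu,\lambda}$. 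Summing over all $(\mu,\lambda)$ forces both inclusions to be equalities, so $\uca(\Delta_Q^\sharp,\mc{M}_Q^\sharp;\varpi(\bs{\sigma}_Q^\sharp))=k[\mc{A}]$ and $(\Delta_Q^\sharp,W_Q^\sharp)$ is a cluster model; the multiplicity statement then follows because $k[\mc{A}]_{\mu,\lambda}$ is the weight space $L(\mu)_{\varpi(\lambda)}$ under \eqref{eq:irrG} and the $H$-weight grading. I expect the only step requiring genuine (if routine) work to be the irreducibility-and-Hartogs input for Lemma \ref{L:RCA}; everything else is a line-by-line translation of the $\Conf_{2,1}$ argument, the genuinely hard ingredient — the analogue of the key containment \eqref{eq:key} — having already been absorbed into Lemma \ref{L:noless1}.
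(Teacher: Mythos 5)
Your proof is correct, and its second half is verbatim the paper's argument: the paper also concludes by combining Lemma \ref{L:RCA} (to get $\uca(\Delta_Q^\sharp,\mc{M}_Q^\sharp;\varpi(\bs{\sigma}_Q^\sharp))\subseteq k[\mc{A}]$ as a graded subalgebra) with Theorem \ref{T:GCC} and Lemma \ref{L:noless1} (linear independence plus the lattice-point lower bound force both inclusions to be equalities). Where you genuinely diverge is in how the hypotheses of Lemma \ref{L:RCA} are supplied: the paper disposes of this in one line by citing \cite{BFZ}, which already provides the seed $(\Delta_Q^\sharp,\mc{M}_Q^\sharp)$ inside $k[\mc{A}]$ together with the regularity of adjacent cluster variables and the coprimality conditions, whereas you re-derive all of this by transplanting the $\Conf_{2,1}$ arguments of Section \ref{ss:standard} (factoriality of $k[\mc{A}]$ via \cite{PV} as in Lemma \ref{L:UFD}, irreducibility of the minors as in Corollary \ref{C:irreducible}, and the Hartogs extension of the exchange partner as in Lemma \ref{L:exchange}). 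Your route is longer but self-contained and makes the statement independent of the double-Bruhat-cell machinery; the paper's is shorter at the cost of an external citation. Two small points to tighten if you write this out: the adjacent variable for the seed $(\Delta_Q^\sharp,\mc{M}_Q^\sharp)$ is not literally the exchange partner $m_f'$ of \eqref{eq:exrelmf} in $k[U]$ but that function corrected by a monomial in the frozen minors $m_{\zero_i^+}$ (the degree bookkeeping exactly parallels the factor $s_+^{\mu'}s_0^{\nu'}$ in Lemma \ref{L:exchange}); and the equality $|(\Delta_Q^\sharp)_0|=\dim\mc{A}$ by itself does not yield algebraic independence — what does is your preceding chart description, namely that $\iota_0^*$ sends the positive frozen minors to independent characters of $H$ and the remaining minors, up to those unit monomials, to the cluster $\mc{M}_Q$ of $k[U]$ from Theorem \ref{T:Ucluster}.
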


\begin{proof} We know from \cite{BFZ} that $(\Delta_Q^\sharp,\mc{M}_Q^\sharp)$ is a seed satisfying the condition of Lemma \ref{L:RCA}.
By Lemma \ref{L:RCA}, $\uca(\Delta_Q^\sharp,\mc{M}_Q^\sharp;\varpi(\bs{\sigma}_Q^\sharp))$ is a graded subalgebra of $k[\mc{A}]$.

By Theorem \ref{T:GCC}, $C_W\left({\sf G}_{\Delta_Q^\sharp}(\mu,\lambda)\cap \mb{Z}^{(\Delta_Q^\sharp)_0}\right)$ is a linearly independent set in $k[\mc{A}]_{\mu,\lambda}$ for any $\mu,\lambda$.
But Lemma \ref{L:noless1} implies that they actually span the vector space $k[\mc{A}]_{\mu,\lambda}$.
We conclude that $\uca(\Delta_Q^\sharp,\mc{M}_Q^\sharp;\varpi(\bs{\sigma}_Q^\sharp))$ is equal to $k[\mc{A}]$, and $(\Delta_Q^\sharp,W_Q^\sharp)$ is a cluster model.
\end{proof}

\begin{remark} For $Q$ of type $A$, the theorem was proved in \cite[Example 3.10]{Fs2} using the technique of projection.
The map $p_2$ is induced from a semi-orthogonal decomposition of the module category of a triple flag quiver.

In general, the cluster algebra $\mc{C}(\Delta_Q^\sharp)$ is also strictly contained in its upper cluster algebra.
The example is still given by the $Q$ of type $D_4$. We take the same $\g$-vector as in Example \ref{ex:contain}.
\end{remark}

\begin{remark}
Let $K^2Q:=K^2(\proj Q)$ be the homotopy category of $C^2Q$ as in \cite{DF}.
Then the ice quiver $\Delta_Q^\sharp$ can be obtained from the ARt quiver $\Delta(K^2Q)$ by freezing the negative and positive vertices.

There are two other seeds mutation equivalent to this seed via $\bs{\mu}_l$ and $\bs{\mu}_r$. Their quivers are obtained from $\Delta_Q^2$ by deleting the positive and negative frozen vertices respectively.
\end{remark}

%

\subsection{Remark on the Non-simply Laced Cases}
Suppose that $G$ is not simply laced, or equivalently $Q$ is not trivially valued.
To prove Conjecture \ref{c:nonSL}, let us exam the arguments in proving Theorem \ref{T:main}.
We find that there are two missing ingredients.
One is the straightforward generalization of Theorem \ref{T:cyclic},
and the other is an analogous cluster character for species with potentials.
Although the proof of Theorem \ref{T:cyclic} depends on a result involving preprojective algebras,
the author has a much longer proof for all cases using only a conjectural generalization of Lemma \ref{L:key} to the species with potentials.
For the missing cluster character, as proved in \cite{Fs1} for the usual QPs, the existence of such a map to the upper cluster algebra is also equivalent to Lemma \ref{L:key}. The argument there can be generalized.
So the upshot is that the only missing part for Conjecture \ref{c:nonSL} is certain generalization of Lemma \ref{L:key}.
But to author's best knowledge, the existing theory of species with potentials, such as \cite{LZ}, is far from reaching this goal.

\begin{appendices}
\section{The Twisted Cyclic Shift via Mutations} \label{A:cyclic}
\subsection{Mutation of Representations, $\g$-vectors, and $F$-polynomials} \label{ss:mu}
We review some material from \cite{DWZ2}. Let $(\Delta,W)$ be a QP as in Section \ref{ss:QP}.
The {\em mutation} $\mu_u$ of $(\Delta,W)$ at a vertex $u$ is defined as follows.
The first step is to define the following new QP $\wtd{\mu}_u(\Delta,W)=(\wtd{\Delta},\wtd{W})$.
We put $\wtd{\Delta}_0=\Delta_0$ and $\wtd{\Delta}_1$ is the union of three different kinds
\begin{enumerate}
\item[$\bullet$] all arrows of $\Delta$ not incident to $u$,
\item[$\bullet$] a composite arrow $[ab]$ from $t(a)$ to $h(b)$ for each $a,b$~with~$h(a)=t(b)=u$,
\item[$\bullet$] an opposite arrow $a^*$ (resp. $b^*$) for each incoming arrow $a$ (resp. outgoing arrow $b$) at $u$.
\end{enumerate}
Note that this $\wtd{\Delta}$ is the result of first two steps in Definition \ref{D:Qmu}.
The new potential on $\wtd{\Delta}$ is given by
$$\wtd{W}:=[W]+\sum_{h(a)=t(b)=u}b^*a^*[ab],$$
where $[W]$ is obtained by substituting $[ab]$ for each words $ab$ occurring in $W$.
Finally we define $(\Delta',W')=\mu_u(\Delta,W)$ as the {\em reduced part} (\cite[Definition 4.13]{DWZ1}) of $(\wtd{\Delta},\wtd{W})$.
For this last step, we refer readers to \cite[Section 4,5]{DWZ1} for details.

Now we start to define the mutation of decorated representations of $(\Delta,W)$.
Consider the resolution of the simple module $S_u$
\begin{align}
\label{eq:Su_proj} \cdots \to \bigoplus_{h(a)=u} P_{t(a)}\xrightarrow{_a(\partial_{[ab]})_b} \bigoplus_{t(b)=u} P_{h(b)} \xrightarrow{_b(b)}  P_u \to S_u\to 0,\\
\label{eq:Su_inj} 0\to S_u \to I_u \xrightarrow{(a)_a} \bigoplus_{h(a)=u} I_{t(a)} \xrightarrow{_a(\partial_{[ab]})_b} \bigoplus_{t(b)=u} I_{h(b)} \to \cdots,
\end{align}
where $I_u$ is the indecomposable injective representation of $J(\Delta,W)$ corresponding to a vertex $u$.
We thus have the triangle of linear maps with $\beta_u \gamma_u=0$ and $\gamma_u \alpha_u=0$.
$$\vcenter{\xymatrix@C=5ex{
& M(u) \ar[dr]^{\beta_u} \\
\bigoplus_{h(a)=u} M(t(a)) \ar[ur]^{\alpha_u} && \bigoplus_{t(b)=u} M(h(b)) \ar[ll]^{\gamma_u} \\
}}$$

We first define a decorated representation $\wtd{\mc{M}}=(\wtd{M},\wtd{M}^+)$ of $\wtd{\mu}_u(\Delta,W)$.
We set \begin{align*}
&\wtd{M}(v)=M(v),\quad  \wtd{M}^+(v)=M^+(v)\quad (v\neq u); \\
&\wtd{M}(u)=\frac{\Ker \gamma_u}{\Img \beta_u}\oplus \Img \gamma_u \oplus \frac{\Ker \alpha_u}{\Img \gamma_u} \oplus M^+(u),\quad \wtd{M}^+(u)=\frac{\Ker \beta_u}{\Ker \beta_u\cap \Img \alpha_u}.
\end{align*}
We then set $\wtd{M}(a)=M(a)$ for all arrows not incident to $u$, and $\wtd{M}([ab])=M(ab)$.
It is defined in \cite{DWZ1} a choice of linear maps
$\wtd{M}(a^*), \wtd{M}(b^*)$ making
$\wtd{M}$ a representation of $(\wtd{\Delta},\wtd{W})$.
We refer readers to \cite[Section 10]{DWZ2} for details.
Finally, we define $\mc{M}'=\mu_u(\mc{M})$ to be the {\em reduced part} (\cite[Definition 10.4]{DWZ1}) of $\wtd{\mc{M}}$.

Recall the $\g$-vector form Definition \ref{D:gv}. We can also define the dual $\g$-vectors using the injective presentations.
Let $M$ be a representation of $J(\Delta,W)$ with minimal injective presentation $0\to M\to I(\beta_-^\vee)\to I(\beta_+^\vee)$, then the {\em dual $\g$-vector} $\g^\vee(M)$ of $M$ is the reduced weight vector $\g^\vee=\beta_+^\vee-\beta_-^\vee$.
The definition can also be extended to all decorated representations similar to $\g$-vectors.

Recall the $y$-variables $y_u=\b{x}^{-b_u}$ as in Section \ref{ss:gv}. The seed mutation of Definition \ref{D:seeds} induces the {\em $\b{y}$-seed mutation}.
We recall the mutation rule \cite[(3.8)]{FZ4}.
Let $(\b{y}',\Delta'):=\mu_u(\b{y},\Delta)$ and $B(\Delta)=(b_{u,v})$, then $\Delta'=\mu_u(\Delta)$ and
\begin{equation} \label{eq:mu_y} y_v'=\begin{cases}
y_u^{-1} & \text{if } v=u;\\
y_v y_u^{[-b_{u,v}]_+}(y_u+1)^{b_{u,v}} & \text{if } v\neq u.
\end{cases}
\end{equation}

\begin{definition}[\cite{DWZ2}] 
We define the dual $F$-polynomial of a representation $M$ by
\begin{equation} F_M^\vee(\b{y}) = \sum_\e \chi(\Gr_\e(M)) \b{y}^\e,
\end{equation}
where $\Gr_{\e}(M)$ is the variety parameterizing $\e$-dimensional subrepresentations~of~$M$.
\end{definition}

\begin{remark} The $F$-polynomial of $M$ is $F_M(\b{y}) = \sum_\e \chi(\Gr^\e(M)) \b{y}^\e$. We only need the dual version in \ref{ss:algorithm}.
Our dual $\g$-vector and dual $F$-polynomial is the $\g$-vector and $F$-polynomial in \cite{FZ4,DWZ2}.
\end{remark}

Here is the key Lemma in \cite{DWZ2}.
\begin{lemma} \label{L:key} Let $\mc{M}$ be an arbitrary representation of a nondegenerate QP $(\Delta,W)$, and let $\mc{M}'=\mu_u(\mc{M})$, then \begin{enumerate}
\item The $F$-polynomials of ${M}$ and ${M}'$ are related by
\begin{equation} \label{eq:F-poly} (y_u+1)^{-\beta_-^\vee(u)} F_{M}^\vee(\b{y})=(y_u'+1)^{-(\beta_-^\vee)'(u)} F_{M'}^\vee(\b{y}').
\end{equation}
\item The $\g$-vector and $\g^\vee$-vector of $\mc{M}$ and $\mc{M}'$ are related by
\begin{equation} \label{eq:g_mu}  \g'(v)=\begin{cases}
-\g(u) & \text{if } u=v;\\
\g(v)+[b_{v,u}]_+\g(u)+b_{v,u}\beta_-(u) & \text{if } u\neq v,
\end{cases}
\end{equation}
\begin{equation} \label{eq:gdual_mu}  (\g^\vee)'(v)=\begin{cases}
-\g^\vee(u) & \text{if } u=v;\\
\g^\vee(v)+[-b_{v,u}]_+\g^\vee(u)-b_{v,u}\beta_-^\vee(u) & \text{if } u\neq v.
\end{cases}
\end{equation}
\end{enumerate}
\end{lemma}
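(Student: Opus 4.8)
The statement is the key technical lemma of \cite{DWZ2}, and my plan follows that source. I would split the two parts, treating (2) first (it is essentially bookkeeping once the right resolutions are in hand) and (1) last (it carries the real difficulty). In both cases the first move is a reduction to the \emph{premutation} $\wtd{\mu}_u$: passing from $(\wtd{\Delta},\wtd{W})$ and $\wtd{\mc{M}}$ to their reduced parts only splits off trivial direct summands — pairs $P_v\cong I_v$, together with trivially decorated pieces — and such summands change neither the $\g$-vector, the $\g^\vee$-vector, nor the dual $F$-polynomial. Hence it suffices to establish \eqref{eq:F-poly}--\eqref{eq:gdual_mu} with $\wtd{M}$ in place of $M'$ and $\wtd{\Delta}$ in place of $\Delta'$, where everything is explicit.

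For part (2) I would work directly with minimal projective and injective presentations. Writing a minimal presentation $P(\beta_+)\to P(\beta_-)$ of $M$ (the decoration $M^+$ contributing to $\beta_+$), so that $\g(\mc{M})=\beta_+-\beta_-$, I would splice it against the resolution \eqref{eq:Su_proj} of $S_u$ and read off, vertex by vertex, the multiplicities of indecomposable projectives in a presentation of $\wtd{M}$. At a vertex $v\neq u$ the only new contributions come from the composite arrows $[ab]$ and the opposite arrows $a^{*},b^{*}$ created at $u$; counting these against the dimensions of $\Img\beta_u$, $\Ker\alpha_u/\Img\gamma_u$, etc., produces precisely the correction $[b_{v,u}]_+\g(u)+b_{v,u}\beta_-(u)$. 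At $v=u$ the middle summand $\Ker\gamma_u/\Img\beta_u\oplus\Img\gamma_u\oplus\Ker\alpha_u/\Img\gamma_u\oplus M^+(u)$ together with $\wtd{M}^+(u)=\Ker\beta_u/(\Ker\beta_u\cap\Img\alpha_u)$ is arranged exactly so that the roles of $\beta_+$ and $\beta_-$ at $u$ are interchanged, giving the sign flip. Running the dual argument with minimal injective copresentations and the resolution \eqref{eq:Su_inj} yields \eqref{eq:gdual_mu}, with $\beta_-^\vee(u)=\dim\Hom_J(S_u,M)$ playing the role that $\beta_-(u)=\dim(M/\mathrm{rad}\,M)_u$ played before.

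For part (1) I would set up a comparison of quiver Grassmannians. Away from the vertex $u$ a subrepresentation of $\wtd{M}$ is literally a subrepresentation of $M$, so all the content is local at $u$, where one has the triangle $(\alpha_u,\beta_u,\gamma_u)$ for $M$ and the new triangle built in the mutation for $\wtd{M}$. I would stratify $\Gr_{\e}(M)$ and $\Gr_{\e'}(\wtd{M})$, with $\e'$ obtained from $\e$ by the dimension-vector mutation rule at $u$, according to the dimensions of the intersections of the chosen subspace at $u$ with $\Img\beta_u$, $\Ker\gamma_u$, $\Img\gamma_u$ and their analogues, and then show that matching strata are related by affine-space (or small Grassmannian) fibrations. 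Passing to Euler characteristics, only the pieces with non-vanishing $\chi$ survive, and a one-variable generating-function identity collapses the surviving sum to the scalar factor $(y_u+1)^{\pm\beta_-^\vee(u)}$ on each side — the exponent $\beta_-^\vee(u)$ measuring the dimension of the space of ``extra'' choices of a subspace at $u$ that one makes on the $M$-side but not on the $\wtd{M}$-side. The main obstacle is exactly this step: carrying the stratifications of the two quiver Grassmannians in parallel and verifying that the fibers are the affine/Grassmannian pieces whose Euler characteristics telescope to the required power of $y_u+1$. I would expect to need the auxiliary Euler-characteristic estimates of \cite{DWZ2} for quiver Grassmannians (or a torus-action reduction to fixed points) rather than a bare-hands count.
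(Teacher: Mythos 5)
There is nothing in the paper to compare your argument with: Lemma \ref{L:key} is imported wholesale from \cite{DWZ2} (the paper explicitly calls it ``the key Lemma in \cite{DWZ2}'' and only translates conventions, as noted in the remark after the definition of the dual $F$-polynomial), so the relevant benchmark is the proof in that source. Your outline is essentially a reconstruction of it: reduce to the premutation $\wtd{\mu}_u$, do linear-algebra bookkeeping for \eqref{eq:g_mu}--\eqref{eq:gdual_mu}, and prove \eqref{eq:F-poly} by fibering the quiver Grassmannians over the choice of subspaces away from $u$, the fiber at $u$ being the Grassmannian of subspaces pinched between the image of the incoming maps and the preimage of the chosen subspaces under $\beta_u$, whose Euler characteristics sum to a monomial times a power of $y_u+1$; the exponent bookkeeping works because $\beta_-^\vee(u)=\dim\Hom_J(S_u,M)=\dim\Ker\beta_u$, which is $-h_u$ in the notation of \cite{DWZ2}. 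Two points in your sketch need repair, though neither changes the strategy. First, the passage to the reduced part does not ``split off trivial direct summands $P_v\cong I_v$'' of the representation: reduction happens at the level of the QP, via a right-equivalence splitting off the trivial part of $\wtd{W}$, and what you must invoke is that right-equivalences (restricting to the identity on $k^{\Delta_0}$) preserve dimension vectors of subrepresentations and the $\g$-, $\g^\vee$-vectors, hence $F^\vee$; this is proved in \cite{DWZ1,DWZ2} and should be cited as such rather than argued by summand-splitting. Second, in part (1) the subrepresentation conditions away from $u$ are \emph{not} literally the same for $M$ and $\wtd{M}$: the composite arrows $[ab]$ impose compatibility with $M(ab)$ directly, bypassing the subspace at $u$, and reconciling the two conditions (together with the $\b{y}$-mutation rule \eqref{eq:mu_y}) is precisely the computation carried out in \cite{DWZ2}; once the strata are set up correctly no auxiliary Euler-characteristic estimates are needed beyond multiplicativity of $\chi$ for the resulting Grassmannian fibrations. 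With those corrections your plan coincides with the argument of \cite{DWZ2}, and part (2) can indeed be done either by your presentation-splicing or by the intrinsic formula for $\g$ in terms of $\Ker\gamma_u$ used there.
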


\begin{remark} \label{r:gcoh} If $\mc{M}$ is {\em $\g$-coherent} (that is $\min(\beta_+(u),\beta_-(u))=0$ for all~vertices~$u$,
or equivalently, $\beta_+=[\g]_+$ and $\beta_-=[-\g]_+$), then \eqref{eq:g_mu} reads
\begin{equation} \label{eq:g_mu_gen} \g'(v):=\begin{cases}
-\g(u) & \text{if } u=v;\\
\g(v)+b_{v,u}[\g(u)]_+ & \text{if } b_{v,u}\geq 0; \\
\g(v)+b_{v,u}[-\g(u)]_+ & \text{if } b_{v,u}\leq 0.
\end{cases}\end{equation}
It is known \cite{Fs1} that if $\mc{M}$ corresponds to a general presentation, then $\g(\mc{M})$ is $\g$-coherent.
If $\mc{M}$ is obtained from positive simple $(0,S_u)$ via a sequence of mutations, then $\mc{M}$ corresponds to an indecomposable rigid presentation.
In particular, it is general \cite{DF}.
\end{remark}

\subsection{The Twisted Cyclic Shift via Mutations} \label{ss:cyclic}
We recall from Section \ref{ss:twist} the twisted cyclic shift $l$ on $\Conf_{2,1}$ induced by
$$(A_1,A_2,A_3^\vee)\mapsto (A_2, A_3\br{w_0}, (A_1\br{w_0})^\vee),$$
where $\vee: \mc{A} \to \mc{A}^\vee,\ (U^-g)^\vee \mapsto \br{\omega_0}^{-1} g^{-1} \br{\omega_0} U$.
To understand this map, we introduce a ``half" of this map.
Consider the involution $*: G\to G$ as in \cite{GSdt} defined by
$$x_i(a)^*=x_{i^*}(a),\ y_{i}(b)^*=y_{i^*}(b),\ h^*=\br{w_0}^{-1} h^{-1} \br{w_0},\ (i\in Q_0, h\in H).$$
The involution $*$ preserves $U,U^-$ and $H$ so it induces an involution of $\mc{A}$ and $\mc{A}^\vee$.
Thus it acts on $\Conf_{2,1}$.
We consider the automorphism of $\Conf_{2,1}$ induced by
$$(A_1,A_2,A_3^\vee)\mapsto (A_3^*, (A_1\br{w_0})^*, (A_2^*)^\vee).$$
We denote this map by $\sqrt{l}$. It is clear that $l=(\sqrt{l})^2$.
It will turn out (see Remark \ref{r:DT}) that this map is the {\em Donaldson-Thomas transformation} of $\Conf_{2,1}$ in the sense of \cite{GSdt}.

Let us recall a sequence of mutations constructed in \cite[Section 13]{GLSk}.
The sequence is originally defined for the ice quiver $\Delta_Q$ in terms of reduced expression of $w_0$. We now translate it into our setting.
Recall that we can label the non-neutral vertices of $\Delta_Q^2$ by a pair $(i,t)$ (before Lemma \ref{L:sigma2rho}).
Let $$t_i:=t^*(\zero_i^+)=\max\{t \mid (i,t)\in\Delta_Q^2 \}.$$
We first assume that the vertices of $Q$ are ordered such that $i<j$ if $(i,j)\in Q_1$.
Then we totally order the mutable vertices of $\Delta_Q^2$ by the relation that $(i,t)<(i',t')$ if $t<t'$, or $t= t'$ and $i<i'$.

Starting from the minimal vertex $(1,1)$ in the ascending order just defined, we perform a sequence of mutations $\bs{\mu}_{i,t}$ for each (mutable) vertex of $\Delta_Q^2$.
For the vertex $(i,t)$, the sequence of mutations is defined to be $\bs{\mu}_{i,t}:=\mu_{i,t_i-t}\cdots\mu_{i,2}\mu_{i,1}$.
So the whole sequence of mutations $\bs{\mu}_{\sqrt{l}} := \cdots \bs{\mu}_{2,1}\bs{\mu}_{1,1}$.

Let $\pi$ be the permutation on $(\Delta_Q^2)_0$ defined by
\begin{gather*}  (i,t)\mapsto (i,t_i-t)\ \text{ if $(i,t)$ is mutable,}\\
\zero_i^-\mapsto \zero_{i^*}^+,\ \zero_i^+\mapsto \Id_{i},\ \Id_i \mapsto \zero_i^-.
\end{gather*}
It is clear that $\pi$ is an involution on the set of mutable vertices.
When applying it to the quiver $\Delta_Q^2$ or its $B$-matrix, we view $\pi$ as a relabelling of the vertices.

For a (trivially valued) Dynkin quiver $Q$, let $\Pi_Q$ be its associated {\em preprojective algebra}.
Recall that the module category $\module\Pi_Q$ is a {\em Frobenius category}.
Let $\underline{\module}\Pi_Q$ be its {\em stable category}.
Recall that such a stable category is naturally triangulated with the shift functor given by the (relative) inverse Syzygy functor $\Omega^{-1}$.
Readers can find these standard terminology in, for example, \cite{GLSk}.
We will write $\Hom_{\underline{\Pi}_Q}$ for $\Hom_{\underline{\module}{\Pi_Q}}$.
In \cite{GLSa} the authors defined a tilting module $V:=\bigoplus_{u\in (\Delta_Q)_0} V_u$ in $\module\Pi_Q$
(it is denoted by ${\rm I}_Q$ in \cite{GLSa}).
They proved that the quiver of the endomorphism algebra $\End_{\Pi_Q}(V)^\opp$ is exactly $\Delta_Q$ with its frozen vertices of corresponding to the projective-injective objects in $\module\Pi_Q$.
Moreover, it is known \cite{BIRS} that the {\em stable} endomorphism algebra $\End_{\underline{\Pi}_Q}(V)^\opp$ is the Jacobian algebra $J^\mu$ of the QP $(\Delta_Q^2,W_Q^2)$ restricted to its mutable part $\Delta_Q^\mu$.

For each rigid $\Pi_Q$-module $M$, we can associate a function $\varphi_M\in k[U]$ as in \cite{GLSr}.
This function turns out to be a cluster variable for the standard seed $(\Delta_Q,\mc{M}_Q)$.
In \cite{GLSr} the {\em mutation of maximal rigid modules} of $\Pi_Q$ is defined so that
it is compatible with the seed mutation.
In particular, it is compatible with the quiver mutation, so
the quiver of $\End_{\Pi_Q}(\mu_u(V))$ is exactly $\mu_u(\Delta_Q)$.

\begin{proposition}{\cite[Proposition 13.4]{GLSk}} \label{P:mu_sqrtl_Q} The sequence of mutations $\bs{\mu}_{\sqrt{l}}$ takes the modules $V_{\pi(u)}$ to $\Omega^{-1}(V_{u})$ for $u$ mutable in $\Delta_Q$.
\end{proposition}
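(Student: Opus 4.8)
The final statement to prove is Proposition \ref{P:mu_sqrtl_Q}: the sequence of mutations $\bs{\mu}_{\sqrt{l}}$ takes $V_{\pi(u)}$ to $\Omega^{-1}(V_u)$ for $u$ mutable in $\Delta_Q$. Let me think about how to prove this.

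The statement is cited as [GLSk, Proposition 13.4]. So the plan is essentially: translate the setup of Geiß-Leclerc-Schröer into the ARt-quiver language of this paper, and then invoke/reprove their result. Let me sketch the structure.

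First, I would recall the precise dictionary. The ice quiver $\Delta_Q$ in this paper is the AR-quiver of $C^2 Q$ with frozen vertices removed appropriately — but as noted in the excerpt, $\Delta_Q$ corresponds (up to arrows between frozen vertices) to the ice quiver associated by [BFZ] to a reduced expression of $w_0$ adapted to $Q$. The tilting module $V = \bigoplus_{u} V_u$ in $\module \Pi_Q$ from [GLSa] has endomorphism quiver exactly $\Delta_Q$, and the stable endomorphism algebra is the Jacobian algebra $J^\mu$. The key point is that mutation of (maximal) rigid $\Pi_Q$-modules (defined in [GLSr]) is compatible with quiver mutation and with seed mutation of the cluster structure on $k[U]$. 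So the proposition is really a statement purely about the module category $\module \Pi_Q$ and its combinatorics.

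Second, I would set up the combinatorial bookkeeping. The vertices $(i,t)$ of $\Delta_Q$ correspond under the dictionary to indices in a reduced word; the total order on mutable vertices (by $t$, then by $i$) corresponds to the "column-by-column from the left" ordering of GLS. The permutation $\pi$, sending $(i,t) \mapsto (i, t_i - t)$ on mutable vertices and cycling the frozen vertices $\zero_i^- \mapsto \Id_i \mapsto \zero_i^+ \mapsto \zero_{i^*}^-$, matches the permutation appearing in [GLSk, §13] which governs how the initial seed is relabelled after applying the "half twist". Then I would verify that $\bs{\mu}_{\sqrt l} = \cdots \bs{\mu}_{2,1}\bs{\mu}_{1,1}$ with $\bs{\mu}_{i,t} = \mu_{i,t_i-t}\cdots \mu_{i,2}\mu_{i,1}$ is precisely the sequence of mutations GLS use, translated. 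This is essentially a matter of carefully matching indices; the combinatorics of reduced words adapted to $Q$ and the AR-translation $\tau$ provide the translation (Proposition \ref{P:iARtDCQ} gives the AR-structure explicitly, and $t^*(\zero_i^+) = t_i$ records the length of each $\tau$-orbit).

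Third — and this is the analytic heart — I would establish that applying one mutation $\mu_u$ to the rigid $\Pi_Q$-module $V$ (in the GLS sense of module mutation) replaces the summand $V_u$ by the other indecomposable $V_u^*$ completing an exchange triangle $V_u \to E \to V_u^* \to \Omega^{-1}V_u$ (or its reverse), and then track, orbit by orbit, what the composite $\bs{\mu}_{i,t}$ does to the summand $V_{(i,t)}$. The cleanest route: show by induction on the total order that after performing all mutations at vertices strictly below $(i, t_i - t)$ in a given column-orbit, the module sitting at position $\pi(i,t) = (i, t_i - t)$ has become $\Omega^{-1}(V_{(i,t)})$, using that $\Omega^{-1}$ on $\underline{\module}\Pi_Q$ is the shift of a triangulated category and that the exchange triangles in each column telescope. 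The fact that $V$ is a tilting module (so $\Ext^1_{\Pi_Q}(V,V)=0$) and that $\Pi_Q$ is a Frobenius category with $\Omega^{-1}$ the shift is what makes these triangles exist and compose correctly; the AR-structure of $C^2 Q$ (equivalently of $\Delta_Q$) controls the arrows, hence the shape of $E$ in each exchange triangle.

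The main obstacle, I expect, is precisely this third step: verifying that the long composite of mutations telescopes to a single application of $\Omega^{-1}$ on each summand, rather than producing some more complicated object. One must check that no "collision" occurs — that at each stage the vertex being mutated is indeed a vertex where the current rigid module has the expected local structure, so that the module-mutation rule applies and yields the next term of the telescoping triangle. The cleanest way around this is to lean on [GLSk, §13] directly: once the dictionary between $\Delta_Q$ (this paper) and the GLS ice quiver from a $w_0$-adapted reduced word is nailed down — together with the identification of $\pi$ and of the mutation sequence — the proposition is a verbatim transcription of [GLSk, Proposition 13.4], and the telescoping is exactly their computation. So the proof I would write is: (1) state the dictionary, citing [GLSa] for the quiver of $\End_{\Pi_Q}(V)^\opp$ and [BIRS] for the Jacobian identification; (2) match $\bs{\mu}_{\sqrt l}$ and $\pi$ with the GLS data using the AR-combinatorics of Proposition \ref{P:iARtDCQ}; (3) invoke [GLSk, Proposition 13.4].
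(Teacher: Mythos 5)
Your proposal matches what the paper actually does: the paper sets up the dictionary in the surrounding text (recalling from \cite{GLSa} that the quiver of $\End_{\Pi_Q}(V)^\opp$ is $\Delta_Q$, from \cite{BIRS} the Jacobian identification, and translating the mutation sequence of \cite[Section 13]{GLSk} into its labelling $(i,t)$), and then states the proposition as a direct citation of \cite[Proposition 13.4]{GLSk} with no further proof. Your steps (1)--(3) are essentially this same route, so the proposal is correct and in line with the paper; the telescoping argument you sketch as an alternative is not needed since the result is simply quoted.
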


\begin{corollary} \label{C:mu_sqrtl_Q} Identifying $\Delta_Q$ as a subquiver of $\Delta_Q^2$,
we have that $\bs{\mu}_{\sqrt{l}}(\Delta_Q)$ and $\pi(\Delta_Q)$ have the same restricted $B$-matrix.
\end{corollary}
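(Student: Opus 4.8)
The plan is to obtain the corollary from Proposition~\ref{P:mu_sqrtl_Q} by passing through the dictionary between maximal rigid $\Pi_Q$-modules, graded seeds, and ice quivers. Recall from \cite{GLSa} that the quiver of $\End_{\Pi_Q}(V)^{\opp}$ is $\Delta_Q$, with the frozen vertices labelling the projective--injective summands $V_u$; and recall from \cite{GLSr} that the mutation of maximal rigid modules is compatible with quiver mutation. Since every step of $\bs{\mu}_{\sqrt{l}}$ is performed at a mutable vertex, the projective--injective summands are never touched, and iterating the compatibility gives that $\bs{\mu}_{\sqrt{l}}(\Delta_Q)$ is the quiver of $\End_{\Pi_Q}\big(\bs{\mu}_{\sqrt{l}}(V)\big)^{\opp}$, still with frozen vertices the $V_u$, $u$ frozen. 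By Proposition~\ref{P:mu_sqrtl_Q}, the summand of $\bs{\mu}_{\sqrt{l}}(V)$ at the mutable vertex $\pi(u)$ is $\Omega^{-1}(V_u)$.

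First I would dispose of the mutable--mutable block of the restricted $B$-matrix. Because $\Omega^{-1}$ is a triangle autoequivalence of $\underline{\module}\Pi_Q$, it induces isomorphisms $\Hom_{\underline{\Pi}_Q}(\Omega^{-1}V_u,\Omega^{-1}V_{u'})\cong \Hom_{\underline{\Pi}_Q}(V_u,V_{u'})$ compatible with composition; since projective--injective summands contribute nothing to stable Homs, these identify the full mutable subquiver of $\End_{\Pi_Q}(\bs{\mu}_{\sqrt{l}}V)^{\opp}$ with that of $\Delta_Q$ after relabelling the mutable vertices by the involution $\pi$, which is exactly the mutable--mutable part of $\pi(\Delta_Q)$.

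Next I would handle the arrows between a mutable summand $\Omega^{-1}(V_u)$ and a frozen projective--injective $V_w$, keeping in mind that $\pi$ sends $\zero_i^-$ to $\Id_i$, $\zero_i^+$ to $\zero_{i^*}^-$, and $\Id_i$ to $\zero_i^+$, so that the frozen vertex of $\bs{\mu}_{\sqrt{l}}(\Delta_Q)$ labelled $\zero_i^-$ is identified with the vertex $\Id_i$ of $\pi(\Delta_Q)$, both carrying the module $V_{\zero_i^-}$. Concretely, the point to check is that for a rigid, non-projective $N$ and a projective--injective indecomposable $P$ one has
$$\#\Irr_{\Pi_Q}(\Omega^{-1}N,P)-\#\Irr_{\Pi_Q}(P,\Omega^{-1}N)=\#\Irr_{\Pi_Q}(N,P)-\#\Irr_{\Pi_Q}(P,N).$$
Here $\Omega^{-1}$ is of no direct use, as it annihilates $P$ stably; instead I would apply $\Hom_{\Pi_Q}(-,P)$ and $\Hom_{\Pi_Q}(P,-)$ — both exact, $P$ being injective and projective — to the defining sequence $0\to N\to I(N)\to \Omega^{-1}(N)\to 0$, and combine this with the facts that a minimal right (resp. left) almost split map into (resp. out of) $P$ is $\operatorname{rad}P\hookrightarrow P$ (resp. $P\twoheadrightarrow P/\operatorname{soc}P$), rewriting the left-hand side in terms of the multiplicities of $N$ and $I(N)$ inside $\operatorname{rad}P$ and $P/\operatorname{soc}P$. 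The combinatorial description of $\Delta_Q$ in Proposition~\ref{P:iARtDCQ} then identifies the outcome with the corresponding arrows of $\pi(\Delta_Q)$. Since only the mutable rows enter the restricted $B$-matrix, arrows between frozen vertices never need to be controlled, which is exactly why the statement is phrased in terms of the restricted $B$-matrix.

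I expect this last step — the $\Omega^{-1}$-invariance of the net count of irreducible maps between a mutable summand and a projective--injective — to be the only real obstacle; the rest is formal bookkeeping with Proposition~\ref{P:mu_sqrtl_Q} and the autoequivalence $\Omega^{-1}$. Should the direct computation prove cumbersome, an alternative route for this step is to use that $\bs{\sigma}_Q$ is a full weight configuration (Corollary~\ref{C:WC}): once the mutable--mutable block and the explicitly tracked transform of the weight configuration under $\bs{\mu}_{\sqrt{l}}$ are in hand, the mutable--frozen entries are pinned down by the relations $B_\Delta\bs{\sigma}=0$ wherever the relevant frozen weight vectors are linearly independent, which covers all cases except the few simply-laced types with a non-trivial diagram involution, where the hands-on argument above is still needed.
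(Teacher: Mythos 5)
The mutable--mutable block of your argument is correct and is exactly what the paper does: $\Omega^{-1}$ is a triangle autoequivalence of $\underline{\module}\Pi_Q$, and since $\End_{\underline{\Pi}_Q}(V)^\opp$ is the mutable part of the Jacobian algebra, this identifies the mutable subquiver of $\bs{\mu}_{\sqrt{l}}(\Delta_Q)$ with that of $\pi(\Delta_Q)$.

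The mutable--frozen step, however, has a genuine gap. The number of arrows between a mutable vertex and a frozen vertex $P$ of $\Delta_Q$ is an invariant of the additive category $\op{add}V$ (it is the arrow count of the quiver of $\End_{\Pi_Q}(V)^\opp$), and after mutation it is an invariant of $\op{add}\bs{\mu}_{\sqrt{l}}(V)$. Your computation instead uses the minimal almost split maps of the ambient category $\module\Pi_Q$ into and out of $P$, namely $\op{rad}P\hookrightarrow P$ and $P\twoheadrightarrow P/\op{soc}P$, together with exactness of $\Hom(-,P)$ and $\Hom(P,-)$ on $\module\Pi_Q$. These are not the same thing: the minimal right almost split map into $P$ inside $\op{add}V$ is a minimal right $\op{add}(V/P)$-approximation, whose indecomposable summands are summands of $V$, whereas the indecomposable summands of $\op{rad}P$ and $P/\op{soc}P$ are in general not in $\op{add}V$ at all (for $|Q_0|\geq 5$ the algebra $\Pi_Q$ is wild while $V$ has only finitely many summands). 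Consequently $\Irr_{\op{add}V}(-,P)$ can be strictly larger than $\Irr_{\module\Pi_Q}(-,P)$, since more maps survive the quotient by $\op{rad}^2$ when the only allowed middle terms lie in $\op{add}V$. Your proposed identity is also comparing $\Irr$ computed in two different additive categories, $\op{add}V$ before and $\op{add}\bs{\mu}_{\sqrt{l}}(V)$ after, which the notation $\Irr_{\Pi_Q}$ conceals. The weight-configuration fallback does not rescue this: on $\Delta_Q$ the frozen weights are $\bs{\sigma}_Q(\zero_i^-)=\e_i+\e_{i^*}$, which are linearly dependent precisely when the diagram involution $i\mapsto i^*$ is nontrivial ($A_n$ with $n\geq 2$, $D_{2k+1}$, $E_6$), and in exactly those cases you are sent back to the flawed hands-on argument. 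The paper sidesteps all of this: it cites the step-by-step description of the intermediate quivers $\br{\bs{\mu}}_{i,t}(\Delta_Q)$ in \cite[Section 13.1]{GLSk} and reads off the mutable--frozen arrows directly, a finite combinatorial verification rather than a structural invariance statement. A correct structural proof of your intended identity would have to work with $\op{add}V$-approximations and exchange triangles (tracking the contribution of the frozen summand through each exchange) rather than with $\module\Pi_Q$-almost-split sequences.
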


\begin{proof} Since $\Omega$ is an autoequivalence, $\bs{\mu}_{\sqrt{l}}(\Delta_Q)$ and $\pi(\Delta_Q)$ have the same mutable part.
Let $\br{\bs{\mu}}_{i,t}:=\bs{\mu}_{i,t}\cdots\bs{\mu}_{1,1}$.
In \cite[Section 13.1]{GLSk}, the authors described the each mutated quiver after applying each $\br{\bs{\mu}}_{i,t}$.
It follows easily from their description that after forgetting arrows between frozen vertices,
$\bs{\mu}_{\sqrt{l}}(\Delta_Q)$ is a subquiver of $\Delta_Q^2$,
whose frozen vertex $\zero_{i^*}^+$ is identified with the frozen vertex $\zero_i^-$ of $\bs{\mu}_{\sqrt{l}}(\Delta_Q)$.
\end{proof}

Let $(\Delta_Q^\mu,W_Q^\mu)$ be the restriction of $(\Delta_Q,W_Q)$ to its mutable part.
Next, we compute the $\g$-vectors of cluster variables after applying $\bs{\mu}_{\sqrt{l}}$ to the seed $(\Delta_Q^\mu,\b{x})$.
The $\g$-vector of the initial cluster variable $\b{x}_{i,t}$ at the vertex $(i,t)$ is the unit vector $\e_{i,t}\in \mb{Z}^{(\Delta_Q^\mu)_0}$.
By \cite[Theorem 5.2]{DWZ2},
the $\g$-vector of $\bs{\mu}_{\sqrt{l}}(\b{x}_{i,t})$ is nothing but the $\g$-vector of $\bs{\mu}_{\sqrt{l}}^{-1}(0,S_{i,t})$, where $(0,S_{i,t})$ is the positive simple representation of $\bs{\mu}_{\sqrt{l}}(\Delta_Q^\mu,W_Q^\mu)$.

\begin{lemma} \label{L:mu_sqrtl_QG} We have that $\bs{\mu}_{\sqrt{l}}^{-1}(\e_{i,t}) = -\e_{i,t_i-t}$ for each $\e_{i,t}$.
Here, we view $\e_{i,t}$ as the $\g$-vector of the simple representation $(0,S_{i,t})$ of $\bs{\mu}_{\sqrt{l}}(\Delta_Q^\mu,W_Q^\mu)$.
\end{lemma}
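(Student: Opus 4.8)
The plan is to identify the cluster variable $\bs{\mu}_{\sqrt{l}}(\b{x}_{i,t})$ precisely enough to read off its $\g$-vector, and then to invoke the duality already recalled just above (\cite[Theorem 5.2]{DWZ2}): with respect to the initial seed $(\Delta_Q^\mu,\b{x})$ one has $\g\big(\bs{\mu}_{\sqrt{l}}^{-1}(0,S_{i,t})\big)=\g\big(\bs{\mu}_{\sqrt{l}}(\b{x}_{i,t})\big)$, and since $\bs{\mu}_{\sqrt{l}}^{-1}(\e_{i,t})$ is by our viewing convention exactly $\g\big(\bs{\mu}_{\sqrt{l}}^{-1}(0,S_{i,t})\big)$, the lemma reduces to the claim $\g\big(\bs{\mu}_{\sqrt{l}}(\b{x}_{i,t})\big)=-\e_{i,t_i-t}$.

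First I would pass to the preprojective picture. Since $\b{x}_{i,t}=\varphi_{V_{i,t}}$ and, by \cite{GLSr}, the mutation of rigid $\Pi_Q$-modules is compatible with seed mutation, performing the sequence $\bs{\mu}_{\sqrt{l}}$ turns the cluster variable at the vertex $(i,t)$ into $\varphi_{V'_{i,t}}$, where $V':=\bs{\mu}_{\sqrt{l}}(V)$. By Proposition \ref{P:mu_sqrtl_Q} the sequence $\bs{\mu}_{\sqrt{l}}$ carries the summand at position $\pi(u)$ to $\Omega^{-1}(V_u)$; because $\pi$ is an involution on the mutable vertices (and the labelling behaves as in Corollary \ref{C:mu_sqrtl_Q}), this gives $V'_{i,t}=\Omega^{-1}\big(V_{\pi(i,t)}\big)=\Omega^{-1}(V_{i,t_i-t})$, which is a genuine (rigid) module since it is a summand of the maximal rigid module $V'$.

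It then remains to compute $\g\big(\varphi_{\Omega^{-1}(V_{i,t_i-t})}\big)$ relative to the initial seed. Here I would quote the standard fact that, for a rigid $\Pi_Q$-module $M$, the $\g$-vector of the cluster character value $\varphi_M$ is the index $\ind_V(M)$ of $M$ in the $2$-Calabi--Yau stable category $\underline{\module}\Pi_Q$ relative to the cluster-tilting object $V$, normalized so that $\ind_V(V_v)=\e_v$ (in agreement with $\g(\b{x}_v)=\e_v$). As that stable category is triangulated with shift $\Sigma=\Omega^{-1}$, the split triangle $V_v\xrightarrow{\Id}V_v\to 0\to\Sigma V_v$ displays $0$ and $V_v$ as the two terms of the triangle computing $\ind_V(\Sigma V_v)$, whence $\ind_V\big(\Omega^{-1}(V_v)\big)=[0]-[V_v]=-\e_v$. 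Taking $v=(i,t_i-t)$ gives $\g\big(\bs{\mu}_{\sqrt{l}}(\b{x}_{i,t})\big)=-\e_{i,t_i-t}$, which finishes the argument.

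The only real obstacle is the bookkeeping in the second paragraph: one must carefully track, through the whole sequence $\bs{\mu}_{\sqrt{l}}$, which summand of the mutated cluster-tilting object occupies which vertex, pin down the effect of the involution $\pi$ (and of $\pi^2=\Id$ on the mutable vertices) in the spirit of Corollary \ref{C:mu_sqrtl_Q}, and quote the $\g$-vector/index dictionary in a form consistent with the paper's conventions. If one prefers to avoid Proposition \ref{P:mu_sqrtl_Q}, there is a more elementary but considerably longer route: track the $\g$-vector $\e_{i,t}$ of $(0,S_{i,t})$ backwards through the reverse of each elementary mutation composing $\bs{\mu}_{\sqrt{l}}$, iterating the rule \eqref{eq:g_mu_gen} — legitimate because every decorated representation occurring in this process is $\g$-coherent by Remark \ref{r:gcoh} — with the explicit intermediate quivers of \cite[Section 13.1]{GLSk} supplying the needed combinatorics.
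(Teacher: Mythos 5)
Your main route is genuinely different from the paper's and is sound modulo the dictionary you flag at the end; the paper itself takes precisely the ``elementary but longer'' alternative that you sketch in your final paragraph. Concretely, the paper's proof works directly with \eqref{eq:g_mu_gen}: using the explicit intermediate quivers from \cite[Section~13.1]{GLSk}, it first observes that the submutations $\bs{\mu}_{j,s}$ with $j\neq i$ do not affect the vector $-\e_{i,t_i-t}$, and then, iterating the coherent rule \eqref{eq:g_mu_gen} along the single linear string of vertices $(i,1),\dots,(i,t_i-1)$, proves by induction that $\br{\bs{\mu}}_{i,s}(-\e_{i,t_i-t})$ equals $\e_{i,t_i-s}-\e_{i,t_i-s-t}$ when $s+t<t_i$ and equals $\e_{i,t}$ otherwise, so that $\bs{\mu}_{\sqrt{l}}(-\e_{i,t_i-t})=\e_{i,t}$. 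Your main argument instead passes to $\underline{\module}\,\Pi_Q$: identify $\bs{\mu}_{\sqrt{l}}(\b{x}_{i,t})=\varphi_{V'_{i,t}}$, apply Proposition~\ref{P:mu_sqrtl_Q} together with the involutivity of $\pi$ on mutable vertices to get $V'_{i,t}\cong\Omega^{-1}(V_{i,t_i-t})$, and then read off the $\g$-vector via $\ind_V(\Sigma V_v)=-\e_v$. This is conceptually cleaner and avoids the bookkeeping, but it imports one ingredient the paper never states or uses, namely that the $\g$-vector of $\varphi_M$ with respect to the standard seed equals $\ind_V(M)$ in the $2$-Calabi--Yau stable category. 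Note that the initial-seed normalization $\ind_V(V_v)=\e_v=\g(\b{x}_v)$ by itself does not distinguish index from coindex (they agree on $\op{add}V$), so matching the dictionary against the paper's DWZ-style $\g$-vector conventions is a real step, as you acknowledge. Given that fact (standard in the $2$-CY categorification literature, e.g.\ Fu--Keller, Plamondon), your argument closes the lemma and indeed eliminates the mutation-by-mutation computation the paper carries out.
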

\begin{proof} It is equivalent to show that $\bs{\mu}_{\sqrt{l}}(-\e_{i,t_i-t}) = \e_{i,t}$.
Let $\br{\bs{\mu}}_{i,t}:=\bs{\mu}_{i,t}\cdots\bs{\mu}_{1,1}$.
We study the mutated $\g$-vector $\br{\bs{\mu}}_{i,t}(-\e_{i,t_i-t})$ for each $(i,t)$.
From the description of mutated quiver $\br{\bs{\mu}}_{i,t}(\Delta_Q)$ \cite[Section 13.1]{GLSk} and the formula \eqref{eq:g_mu_gen}, we find that for $j\neq i$,
each $\bs{\mu}_{j,s}$ does not play any role in $\bs{\mu}_{\sqrt{l}}(-\e_{i,t_i-t})$.
In other words, it suffices to compute $\bs{\mu}_{\sqrt{l}}(-\e_{i,t_i-t})$ on the full linear subquiver with vertices $(i,t)$ for $t=1,\cdots,t_i-1$.
Using the formula \eqref{eq:g_mu_gen}, we can easily prove by induction that
$$\br{\bs{\mu}}_{i,s} (-\e_{i,t_i-t})= \begin{cases}
\e_{i,t_i-s} - \e_{i,t_i-s-t} & \text{if $s+t< t_i$,} \\
\e_{i,t} & \text{otherwise.}
\end{cases}$$
We conclude from this that $\bs{\mu}_{\sqrt{l}}(-\e_{i,t_i-t}) = \e_{i,t}.$
\end{proof}

Let $\b{g}_{\sqrt{l}}^\pi$ be the set of $\g$-vectors given by
\begin{equation} \label{eq:extg_sqrtl}
\b{g}_{\sqrt{l}}^\pi(f) = -\e_{f} + \sum_{i\in Q_0} \left(\e(f)(i)\e_{\zero_i^-}+\f_+(i)\e_{\zero_i^+}+\f_-(i)\e_{\Id_i}\right).
\end{equation}
Let $\bs{\sigma}_{\sqrt{l}}^\pi$ be the weight configuration defined by
$$\bs{\sigma}_{\sqrt{l}}^\pi(f)=(\f_+,\e(f)^*,\f_-),$$
where $\f\mapsto \f^*$ is the involution on $\mb{Z}^{Q_0}$ induced by $\e_i\mapsto \e_{i^*}$,
so $-w_0(\varpi(\f)) = \varpi(\f^*)$.

\begin{corollary} \label{C:mu_sqrtl_Q2G} For any $f$ mutable, we have that
$\bs{\mu}_{\sqrt{l}}^{-1}(\e_f) = \b{g}_{\sqrt{l}}^\pi(\pi(f))$.
Here, we view $\e_f$ as the $\g$-vector of the simple representation $(0,S_f)$ of $\bs{\mu}_{\sqrt{l}}(\Delta_Q^2,W_Q^2)$.
So $\bs{\mu}_{\sqrt{l}}(\bs{\sigma}_Q^2)(f)= \bs{\sigma}_{\sqrt{l}}^\pi(\pi(f))$.
\end{corollary}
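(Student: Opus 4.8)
The statement extends Lemma \ref{L:mu_sqrtl_QG} from the mutable part of $\Delta_Q^2$ to the whole iARt quiver, so the plan is to compute $\bs{\mu}_{\sqrt{l}}^{-1}(\e_f)$ coordinate by coordinate, starting from Lemma \ref{L:mu_sqrtl_QG}. Fix the $\tau$-orbit $\mc{O}$ of $f$. The first observation, already implicit in the proof of Lemma \ref{L:mu_sqrtl_QG}, is a localization: as long as the mutable support of the current $\g$-vector lies in $\mc{O}$, any mutation at a mutable vertex outside $\mc{O}$ acts on a coordinate which is then $0$, so by \eqref{eq:g_mu_gen} it acts trivially on \emph{every} coordinate, frozen ones included. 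Hence $\bs{\mu}_{\sqrt l}^{-1}(\e_f)$ depends only on the mutations along $\mc{O}$, and its mutable part agrees with the one computed in $\Delta_Q^\mu$, namely it contributes the leading term $-\e_{\pi(f)}$.

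It remains to track the frozen coordinates. By the same localization only the mutable vertices of $\mc{O}$ move them, so one first reads off from Proposition \ref{P:iARtDCQ} (and the classification of type $A$, $B$, $C$ arrows in Section \ref{ss:iARt2}) which frozen vertices are adjacent to $\mc{O}$ and with which signs; the delicate points are the two endpoints of $\mc{O}$ --- a positive presentation and a negative presentation --- and the neutral vertices entering through the exceptional arrows of \eqref{eq:res00} and \eqref{eq:res3}, the incidences being constrained by the additivity of $f\mapsto(\f_-,\f_+)$ along $\mc{O}$ (Lemma \ref{L:additive}). One then runs the induction from the proof of Lemma \ref{L:mu_sqrtl_QG} inside $\Delta_Q^2$; this is legitimate because $\bs{\mu}_{\sqrt l}^{-1}(0,S_f)$ is obtained from a positive simple by a sequence of mutations and is therefore $\g$-coherent at every step (Remark \ref{r:gcoh}), so \eqref{eq:g_mu_gen} applies throughout. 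Keeping track of the relabelling $\pi$ of frozen vertices that accompanies $\bs{\mu}_{\sqrt l}$ --- this is exactly what turns the geometric endpoints of $\mc{O}$ into the combinatorial terms $\e_{\zero_i^-}$, $\e_{\zero_i^+}$, $\e_{\Id_i}$ and produces the Dynkin twists --- one finds that the frozen part of $\bs{\mu}_{\sqrt l}^{-1}(\e_f)$ is exactly $\sum_i\big(\e(\pi(f))(i)\,\e_{\zero_i^-}+\pi(f)_+(i)\,\e_{\zero_i^+}+\pi(f)_-(i)\,\e_{\Id_i}\big)$, which together with the previous paragraph gives $\bs{\mu}_{\sqrt l}^{-1}(\e_f)=\b{g}_{\sqrt l}^\pi(\pi(f))$ as in \eqref{eq:extg_sqrtl}.

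The weight-configuration identity then follows formally. An element with a well-defined $\g$-vector $\g$ with respect to a graded seed is homogeneous of degree $\g\bs{\sigma}$, and mutation preserves the grading (Section \ref{ss:gv}); applying this to the cluster variable whose $\g$-vector is $\e_f$ in the graded seed $\bs{\mu}_{\sqrt l}(\Delta_Q^2)$ with induced weight configuration $\bs{\mu}_{\sqrt l}(\bs{\sigma}_Q^2)$ yields
\[
\bs{\mu}_{\sqrt l}(\bs{\sigma}_Q^2)(f)=\e_f\cdot\bs{\mu}_{\sqrt l}(\bs{\sigma}_Q^2)=\bs{\mu}_{\sqrt l}^{-1}(\e_f)\cdot\bs{\sigma}_Q^2=\b{g}_{\sqrt l}^\pi(\pi(f))\cdot\bs{\sigma}_Q^2,
\]
and pairing $\b{g}_{\sqrt l}^\pi(\pi(f))$ against the triple weights $\bs{\sigma}_Q^2(\zero_i^-),\ \bs{\sigma}_Q^2(\zero_i^+),\ \bs{\sigma}_Q^2(\Id_i)$ of the frozen vertices collapses the right-hand side to $(\pi(f)_+,\e(\pi(f))^*,\pi(f)_-)=\bs{\sigma}_{\sqrt l}^\pi(\pi(f))$. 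I expect the main obstacle to be the frozen part of the second step: tracking \emph{all} frozen coordinates along a $\tau$-orbit through the entire mutation sequence, and in particular getting the relabelling $\pi$ right near the two endpoints of the orbit and at the neutral vertices, where the arrow pattern departs from the generic one. Everything else is either contained in Lemma \ref{L:mu_sqrtl_QG} or purely formal.
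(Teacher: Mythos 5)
Your proposal takes a genuinely different route from the paper, and it has a real gap in the step that you yourself flag as the hard one.

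The paper does \emph{not} track the frozen coordinates through the mutation sequence at all. Instead, it uses three structural facts to pin them down after obtaining the mutable part $-\e_{\pi(f)}$ from Lemma~\ref{L:mu_sqrtl_QG}: (a) $\mu$-supportedness of a $\g$-vector is invariant under mutation \cite[Proposition 5.15]{Fs1}, and $(0,S_f)$ is $\mu$-supported because $f$ is mutable, so $\bs{\mu}_{\sqrt l}^{-1}(\e_f)$ is $\mu$-supported; (b) the general presentation with $\g$-vector $-\e_{\pi(f)}$ has cokernel $\P_{\pi(f)}$, and Lemma~\ref{L:iARt_path} gives $\P_{\pi(f)}(\zero_i^+)\cong\Hom_Q(P_+,P_i)$, $\P_{\pi(f)}(\Id_i)\cong\Hom_Q(P_-,P_i)$, $\P_{\pi(f)}(\zero_i^-)\cong\Hom_Q(P_{i^*(\pi(f))},P_i)$, which forces the frozen coordinates to be \emph{at least} those of $\b{g}_{\sqrt l}^\pi(\pi(f))$; (c) $(0,S_f)$ is indecomposable and mutation of decorated representations is an additive bijection, so the general presentation of $\bs{\mu}_{\sqrt l}^{-1}(\e_f)$ must be indecomposable; any extra frozen summand would split off, forcing equality. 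This completely sidesteps the combinatorics of intermediate quivers. (You do get the degree computation at the end right, and it matches the paper's.)

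The gap in your argument is that running the induction of Lemma~\ref{L:mu_sqrtl_QG} ``inside $\Delta_Q^2$'' requires the $B$-matrix entries $b_{v,u}$ for $u\in\mc{O}$ and $v$ a \emph{positive or neutral} frozen vertex at each intermediate stage of $\bs{\mu}_{\sqrt l}$. The reference you lean on, \cite[Section 13.1]{GLSk} via the proof of Lemma~\ref{L:mu_sqrtl_QG}, only records the intermediate quivers for $\Delta_Q$, which contains the negative frozen vertices and none of the positive or neutral ones. Knowing how the arrows into $\zero_i^+$ and $\Id_i$ evolve under $\bs{\mu}_{\sqrt l}$ is, in effect, the content of Corollary~\ref{C:mu_sqrtl}, whose proof in the paper \emph{uses} the present corollary; so following your route as written either borrows a later result circularly, or leaves a substantial and delicate computation unstated (you also can't just read the arrows off the initial quiver: they change under the mutations you \emph{do} perform inside $\mc{O}$). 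Your description of a ``relabelling $\pi$ that accompanies $\bs{\mu}_{\sqrt l}$'' also conflates two things: $\bs{\mu}_{\sqrt l}$ never relabels vertices, and no relabelling is needed to compare $\bs{\mu}_{\sqrt l}^{-1}(\e_f)$ with $\b{g}_{\sqrt l}^\pi(\pi(f))$; both live in $\mb{Z}^{(\Delta_Q^2)_0}$ with the original vertex labels. A direct computation along $\mc{O}$ is presumably doable, but it is much more work than your sketch admits, and it is not what the paper does.
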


\begin{proof} In Lemma \ref{L:mu_sqrtl_QG} we have obtained the principal part of $\bs{\mu}_{\sqrt{l}}^{-1}(\e_f)$, which is $-\e_{\pi(f)}$.
Now we are going to recover its extended part from its principal part.

We recall from \cite[Proposition 5.15]{Fs1} that being $\mu$-supported for a $\g$-vector is mutation invariant.
The general presentation with $\g$-vector equal to $-\e_{f}$ corresponds to the indecomposable projective representation $\P_{f}$.
By Lemma \ref{L:iARt_path} $\P_{f}(\zero_i^+) \cong \Hom_Q(P_+,P_i)$.
So to make the $\g$-vector $-\e_{f}$ not supported on $\zero_i^+$, we must add at least $\f_+(i)\e_{\zero_i^+}$ to it.
Similarly we must add at least $\f_-(i)\e_{\Id_i}$ and $\e(f)(i)\e_{\zero_i^-}$ as well.
On the other hand, it is easy to check that \eqref{eq:extg_sqrtl} is $\mu$-supported.
So adding any additional positive component on the frozen vertices will make the general presentation of this $\g$-vector decomposable, which is not the case.
Hence, $\bs{\mu}_{\sqrt{l}}(\e_f) = \b{g}_{\sqrt{l}}^\pi(\pi(f))$.

For the last statement, recall that $\bs{\sigma}_Q^2(f)=(\e(f),\f_-,\f_+)$.
Then \begin{align*}
\b{g}_{\sqrt{l}}^\pi(f) \bs{\sigma}_Q^2 &=
-(\e(f),\f_-,\f_+)+(\e(f),\e(f)^*,0)+(\f_+,0,\f_+)+(0,\f_-,\f_-) \\ &= (\f_+,\e(f)^*,\f_-).\\
\text{Hence,\quad } \bs{\mu}_{\sqrt{l}}(\bs{\sigma}_Q^2)(f) &= \bs{\mu}_{\sqrt{l}}(\e_f)\bs{\sigma}_Q^2 = \b{g}_{\sqrt{l}}^\pi(\pi(f)) \bs{\sigma}_Q^2= \bs{\sigma}_{\sqrt{l}}^\pi(\pi(f)).
\end{align*}
\end{proof}

\begin{corollary} \label{C:mu_sqrtl} $\bs{\mu}_{\sqrt{l}}(\Delta_Q^2)$ and $\pi(\Delta_Q^2)$ have the same restricted $B$-matrix.
\end{corollary}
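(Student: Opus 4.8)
The plan is to compare the restricted $B$-matrices of $\bs{\mu}_{\sqrt{l}}(\Delta_Q^2)$ and $\pi(\Delta_Q^2)$ column by column, with the columns split into three blocks: the mutable vertices, the negative frozen vertices $\zero_i^-$, and the positive and neutral frozen vertices $\zero_i^+,\ \Id_i$. First I would dispose of the mutable and negative-frozen columns. Since $\bs{\mu}_{\sqrt{l}}$ is a composition of mutations at mutable vertices, and the matrix-mutation rule changes an entry $b_{v,w}$ at a mutable vertex $u$ using only $b_{v,u}$ and $b_{u,w}$ (with $u$ mutable), the submatrix of $B(\bs{\mu}_{\sqrt{l}}(\Delta_Q^2))$ with mutable rows and columns among the mutable and negative-frozen vertices depends only on the corresponding submatrix of $B(\Delta_Q^2)$. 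Under the identification of $\Delta_Q$ with the subquiver of $\Delta_Q^2$ on those vertices (Proposition~\ref{P:iARtDCQ}), that submatrix is precisely $B(\bs{\mu}_{\sqrt{l}}(\Delta_Q))$; by Corollary~\ref{C:mu_sqrtl_Q} this equals the restricted $B$-matrix of $\pi(\Delta_Q)$, and passing through the relabelling $\pi$ (which fixes the mutable part and sends $\zero_i^-$ into the frozen part) identifies it with the corresponding block of $B(\pi(\Delta_Q^2))$. In particular $\bs{\mu}_{\sqrt{l}}(\Delta_Q^2)$ and $\pi(\Delta_Q^2)$ have the same mutable part.

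For the positive and neutral frozen columns I would use the weight-configuration constraint $B\bs{\sigma}=0$. By Corollary~\ref{C:mu_sqrtl_Q2G} the mutated weight configuration is $\bs{\mu}_{\sqrt{l}}(\bs{\sigma}_Q^2):\ f\mapsto\bs{\sigma}_{\sqrt{l}}^\pi(\pi(f))$; and the assignment $f\mapsto\bs{\sigma}_{\sqrt{l}}^\pi(f)=(\f_+,\e(f)^*,\f_-)$ is a full weight configuration of $\Delta_Q^2$, since each of its three blocks is one — the outer two because $f\mapsto(\f_-,\f_+)$ is a weight configuration by Lemma~\ref{L:additive}, and the middle one because $f\mapsto\e(f)$ is a weight configuration by Corollary~\ref{C:WC} and $*$ is linear — while fullness follows from the fact that $\bs{\sigma}_{\sqrt{l}}^\pi$ differs from $\bs{\sigma}_Q^2$ by the invertible twist $(a,b,c)\mapsto(c,a^*,b)$. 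Consequently $\bs{\sigma}:=\bs{\mu}_{\sqrt{l}}(\bs{\sigma}_Q^2)$ is a full weight configuration for both $\bs{\mu}_{\sqrt{l}}(\Delta_Q^2)$ (by construction) and $\pi(\Delta_Q^2)$ (after relabelling), so the difference $D$ of the two restricted $B$-matrices satisfies $D\bs{\sigma}=0$. By the first paragraph $D$ is supported on the positive and neutral frozen columns; since the rows of $\bs{\sigma}$ attached to those $2|Q_0|$ vertices are linearly independent (immediate from the explicit form of $\bs{\sigma}_{\sqrt{l}}^\pi$), we conclude $D=0$, i.e. the two restricted $B$-matrices coincide.

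The genuinely computational input — the mutated $\g$-vectors $\b{g}_{\sqrt{l}}^\pi$ and the mutated weight configuration — is already contained in Corollary~\ref{C:mu_sqrtl_Q2G}, so this corollary is essentially a repackaging. The step I expect to be the main nuisance is the relabelling bookkeeping: the inclusion $\bs{\mu}_{\sqrt{l}}(\Delta_Q)\hookrightarrow\Delta_Q^2$ used in the proof of Corollary~\ref{C:mu_sqrtl_Q} is not literally the map $\pi$ (and $\pi$ is an involution only on the mutable vertices, having order $3$ or $6$ on the frozen ones), so one must verify carefully that the two identifications agree on the negative-frozen columns and that $\bs{\mu}_{\sqrt{l}}(\bs{\sigma}_Q^2)$ really does serve as a weight configuration for $\pi(\Delta_Q^2)$, not merely for $\bs{\mu}_{\sqrt{l}}(\Delta_Q^2)$, so that the relation $B\bs{\sigma}=0$ genuinely pins down the extended columns of both.
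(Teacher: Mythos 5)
Your proposal is correct and follows essentially the same two-step route as the paper: the mutable and negative-frozen columns are settled by Corollary~\ref{C:mu_sqrtl_Q}, and the positive and neutral columns are pinned down by the weight-configuration constraint $B\bs{\sigma}=0$ together with the full row rank of the frozen block $\bs{\sigma}_{+,\Id}=\sm{I & 0 & I\\ 0 & I & I}$ --- the paper phrases this as uniquely solving an overdetermined linear system for the missing block $C_{\sqrt{l}}$, while you phrase it as the difference $D$ of the two restricted $B$-matrices annihilating $\bs{\sigma}$, but these are the same argument. The ``relabelling nuisance'' you flag does check out cleanly: since $\e(\zero_i^-)=\e_{i^*}$, one verifies directly that $\pi(\bs{\sigma}_{\sqrt{l}}^\pi)$ coincides with $\bs{\mu}_{\sqrt{l}}(\bs{\sigma}_Q^2)$ at \emph{every} vertex, frozen ones included (e.g., $\bs{\sigma}_{\sqrt{l}}^\pi(\pi^{-1}(\zero_i^-))=\bs{\sigma}_{\sqrt{l}}^\pi(\zero_{i^*}^+)=(\e_{i^*},\e_i,0)=\bs{\sigma}_Q^2(\zero_i^-)$), so $\bs{\mu}_{\sqrt{l}}(\bs{\sigma}_Q^2)$ really is a weight configuration for $\pi(\Delta_Q^2)$.
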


\begin{proof} By Corollary \ref{C:mu_sqrtl_Q}, $\bs{\mu}_{\sqrt{l}}(\Delta_Q^2)$ and $\pi(\Delta_Q^2)$ have the same mutable part as well.
It remains to show that the blocks of their restricted $B$-matrices corresponding to positive and neutral frozen vertices are equal.
But this follows from an easy linear algebra consideration. Indeed,
let $B_{\sqrt{l}}^2$ and $B_{\sqrt{l}}$ be the restricted $B$-matrices of $\bs{\mu}_{\sqrt{l}}(\Delta_Q^2)$ and $\bs{\mu}_{\sqrt{l}}(\Delta_{Q})$ and denote $\bs{\sigma}_{\sqrt{l}}^2:=\bs{\mu}_{\sqrt{l}}(\sCQ)$.
We have that $B_{\sqrt{l}}^2 \bs{\sigma}_{\sqrt{l}}^2 = 0$.
We write $B_{\sqrt{l}}^2$ and $\bs{\sigma}_{\sqrt{l}}^2$ in blocks:
$(B_{\sqrt{l}},C_{\sqrt{l}})$ and $\sm{\bs{\sigma}_{\sqrt{l}}\\ \bs{\sigma}_{+,\Id}},$
where $\bs{\sigma}_{+,\Id}=\sm{I & 0 & I\\ 0 & I & I}$ contains weights corresponding to the positive and neutral frozen vertices. We have that
\begin{equation} \label{eq:B+0} (B_{\sqrt{l}},C_{\sqrt{l}})\sm{\bs{\sigma}_{\sqrt{l}}\\ \bs{\sigma}_{+,\Id}}=B_{\sqrt{l}}\bs{\sigma}_{\sqrt{l}} + C_{\sqrt{l}}\bs{\sigma}_{+,\Id}= 0. \end{equation}
Since $\bs{\sigma}_{+,\Id}$ is of full rank, this linear equation is underdetermined (for solving $C_{\sqrt{l}}$).
We have seen that $B_{\sqrt{l}}= B_{\pi(\Delta_Q)}$ and $\bs{\sigma}_{\sqrt{l}}(f)=\bs{\sigma}_{\sqrt{l}}^\pi(\pi(f))$,
then it is clear that $C_{\sqrt{l}}=\pi(C)$ satisfies \eqref{eq:B+0}, where $C$ is the block in $B_{\Delta_Q^2}=(B_{\Delta_Q},C)$.
\end{proof}

\begin{definition}Let $\bs{\mu}_{\sqrt{l}}^\pi := \pi\bs{\mu}_{\sqrt{l}}\pi^{-1}$.
We define $\bs{\mu}_l:=\bs{\mu}_{\sqrt{l}}^\pi\bs{\mu}_{\sqrt{l}}$ and $\bs{\mu}_r=\bs{\mu}_l^2$.
\end{definition}

\begin{corollary} \label{C:mu_l} $\bs{\mu}_{l}(\Delta_Q^2)$ and $\pi^2(\Delta_Q^2)$ have the same restricted $B$-matrix.
Moreover, $\bs{\mu}_{l}(\bs{\sigma}_Q^2)(f)= (\f_-,\f_+^*,\e(f)^*)$ for $f$ mutable.
\end{corollary}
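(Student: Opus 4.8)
The plan is to deduce the statement for $\bs{\mu}_l$ from Corollaries \ref{C:mu_sqrtl} and \ref{C:mu_sqrtl_Q2G} by exploiting the factorisation $\bs{\mu}_l=\bs{\mu}_{\sqrt{l}}^\pi\bs{\mu}_{\sqrt{l}}$ together with two elementary principles: a sequence of mutations depends only on the restricted $B$-matrix, and it commutes with a relabelling of the vertices (mutating the $\pi$-relabelled seed in direction $\pi(u)$ is the $\pi$-relabelling of mutating in direction $u$), so that $\bs{\mu}_{\sqrt{l}}^\pi(\pi(\Sigma))=\pi(\bs{\mu}_{\sqrt{l}}(\Sigma))$ for any seed $\Sigma$. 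For the $B$-matrix assertion: by Corollary \ref{C:mu_sqrtl}, $\bs{\mu}_{\sqrt{l}}(\Delta_Q^2)$ has the same restricted $B$-matrix as $\pi(\Delta_Q^2)$, so $\bs{\mu}_l(\Delta_Q^2)=\bs{\mu}_{\sqrt{l}}^\pi\big(\bs{\mu}_{\sqrt{l}}(\Delta_Q^2)\big)$ has the same restricted $B$-matrix as $\bs{\mu}_{\sqrt{l}}^\pi(\pi(\Delta_Q^2))=\pi\big(\bs{\mu}_{\sqrt{l}}(\Delta_Q^2)\big)$; applying Corollary \ref{C:mu_sqrtl} once more, and using that relabelling by $\pi$ sends quivers with a common restricted $B$-matrix to quivers with a common restricted $B$-matrix, this in turn agrees with $\pi(\pi(\Delta_Q^2))=\pi^2(\Delta_Q^2)$. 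Alternatively, once the weight-configuration formula below is in hand, the columns of $B_{\bs{\mu}_l(\Delta_Q^2)}$ at the frozen vertices are determined uniquely by $B_{\bs{\mu}_l(\Delta_Q^2)}\,\bs{\mu}_l(\sCQ)=0$ together with the full rank of the frozen-weight block, exactly as in the proof of Corollary \ref{C:mu_sqrtl}.

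For the weight-configuration formula, run the same bootstrap one level higher. By Corollary \ref{C:mu_sqrtl_Q2G}, after relabelling by $\pi$ (and discarding arrows among frozen vertices) the graded seed $\bs{\mu}_{\sqrt{l}}(\Delta_Q^2,\sCQ)$ becomes $(\Delta_Q^2,\bs{\sigma}_{\sqrt{l}}^\pi)$ with $\bs{\sigma}_{\sqrt{l}}^\pi(f)=(\f_+,\e(f)^*,\f_-)$. The crucial point is that the $\g$-vector transformation underlying Corollary \ref{C:mu_sqrtl_Q2G}, namely $\e_f\mapsto\b{g}_{\sqrt{l}}^\pi(\pi(f))$ with $\b{g}_{\sqrt{l}}^\pi(g)=-\e_g+\sum_i\big(\e(g)(i)\e_{\zero_i^-}+\g_+(i)\e_{\zero_i^+}+\g_-(i)\e_{\Id_i}\big)$, is proved (Lemma \ref{L:mu_sqrtl_QG} together with the projectivity argument of Corollary \ref{C:mu_sqrtl_Q2G}) using only the quiver $\Delta_Q^2$ and not the grading; hence it applies verbatim to the seed $(\Delta_Q^2,\bs{\sigma}_{\sqrt{l}}^\pi)$. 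Consequently the second half of $\bs{\mu}_l$ sends $\bs{\sigma}_{\sqrt{l}}^\pi$ to the weight configuration $f\mapsto\b{g}_{\sqrt{l}}^\pi(\pi(f))\cdot\bs{\sigma}_{\sqrt{l}}^\pi$. Evaluating $\bs{\sigma}_{\sqrt{l}}^\pi$ at the frozen vertices (using $\e(\zero_i^-)=\e_{i^*}$ and the weight vectors of $\zero_i^\pm,\Id_i$) and telescoping this dot product, in complete analogy with the computation closing the proof of Corollary \ref{C:mu_sqrtl_Q2G} and using $w_0(\varpi_i)=-\varpi_{i^*}$, collapses it to $(\f_-,\f_+^*,\e(f)^*)$; since passing from $\bs{\mu}_{\sqrt{l}}^\pi\bs{\mu}_{\sqrt{l}}$ back to $\bs{\mu}_l$ is just the relabelling $\pi$, which is the identity on mutable vertices, this yields $\bs{\mu}_l(\sCQ)(f)=(\f_-,\f_+^*,\e(f)^*)$ for $f$ mutable.

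The main obstacle is bookkeeping rather than conceptual: one must keep careful account of the relabelling $\pi$ as it is applied and undone along the word $\bs{\mu}_{\sqrt{l}}^\pi\bs{\mu}_{\sqrt{l}}$, remembering that $\pi$ is an involution only on the mutable vertices, while on the frozen ones it is the three-cycle $\zero_i^-\mapsto\Id_i\mapsto\zero_i^+\mapsto\zero_{i^*}^-$ (so $\pi^2$ is a genuine permutation of the frozen vertices, not the identity), and one must place the $*$-conjugations correctly when telescoping $\b{g}_{\sqrt{l}}^\pi(\pi(f))\cdot\bs{\sigma}_{\sqrt{l}}^\pi$. Once the three frozen-vertex values of $\bs{\sigma}_{\sqrt{l}}^\pi$ are tabulated and the order of composition in $\bs{\mu}_l=\bs{\mu}_{\sqrt{l}}^\pi\bs{\mu}_{\sqrt{l}}$ is respected, the remaining calculation is mechanical.
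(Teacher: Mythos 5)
Your overall route is the intended one: the paper offers no separate argument for this corollary, and it is meant to follow by running Corollaries \ref{C:mu_sqrtl} and \ref{C:mu_sqrtl_Q2G} through the factorisation $\bs{\mu}_l=\bs{\mu}_{\sqrt{l}}^\pi\bs{\mu}_{\sqrt{l}}$ exactly as you set up; your $B$-matrix argument is fine, and your key observation that the $\g$-vector/weight transformation attached to $\bs{\mu}_{\sqrt{l}}$ depends only on the restricted $B$-matrix and so can be re-used with the new grading is also correct. Two points, however, need attention. The harmless one: the identification of the relabelled seed with $(\Delta_Q^2,\bs{\sigma}_{\sqrt{l}}^\pi)$ is not a consequence of Corollary \ref{C:mu_sqrtl_Q2G}, which only speaks about mutable vertices; at the frozen vertices the weights are unchanged by mutation and get permuted by the relabelling, so you must check separately that $\sCQ(\pi(w))=\bs{\sigma}_{\sqrt{l}}^\pi(w)$ for frozen $w$ (it does hold: one gets $(0,\e_i,\e_i)$, $(\e_i,\e_{i^*},0)$, $(\e_i,0,\e_i)$ at $\zero_i^-$, $\zero_i^+$, $\Id_i$ respectively, and this matters because it must also hold with the correct direction of the relabelling).

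The serious one is the final bookkeeping step, which as written fails. You claim that passing back to $\bs{\mu}_l$ is "just the relabelling $\pi$, which is the identity on mutable vertices" and therefore that $\b{g}_{\sqrt{l}}^\pi(\pi(f))\cdot\bs{\sigma}_{\sqrt{l}}^\pi$ is the answer at $f$. But $\pi$ restricted to the mutable vertices is the nontrivial involution $(i,t)\mapsto(i,t_i-t)$, not the identity (it is $\pi^2$ that fixes the mutable vertices), and only one such relabelling is introduced when you replace $\bs{\mu}_{\sqrt{l}}^\pi$ acting on $\bs{\mu}_{\sqrt{l}}(\Delta_Q^2,\sCQ)$ by $\bs{\mu}_{\sqrt{l}}$ acting on the relabelled seed. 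Undoing that single relabelling shifts the vertex by $\pi$, and this $\pi$ must cancel the $\pi$ occurring inside the transported Corollary \ref{C:mu_sqrtl_Q2G}: the weight of $\bs{\mu}_l(\sCQ)$ at $f$ is $\b{g}_{\sqrt{l}}^\pi(\pi(\pi(f)))\cdot\bs{\sigma}_{\sqrt{l}}^\pi=\b{g}_{\sqrt{l}}^\pi(f)\cdot\bs{\sigma}_{\sqrt{l}}^\pi$, and it is this dot product that telescopes to $(\f_-,\f_+^*,\e(f)^*)$. Your expression $\b{g}_{\sqrt{l}}^\pi(\pi(f))\cdot\bs{\sigma}_{\sqrt{l}}^\pi$ instead telescopes to the same formula evaluated at $\pi(f)$, i.e.\ to the triple weight of the wrong vertex, so carried out literally your computation does not return the stated formula. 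The fix is exactly the cancellation just described; with it, your argument goes through.
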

\noindent Note that $\pi^{2}$ fixes the mutable vertices and shuffles the frozen vertices of $\Delta_Q^2$
$$\zero_{i}^+\mapsto \zero_{i}^-,\ \zero_i^-\mapsto \Id_{i^*},\ \Id_{i^*}\mapsto \zero_{i}^+.$$

\begin{corollary} \label{C:mu_l^3}
We have that $\bs{\mu}_l^3(B_{\Delta_Q^2},\b{x})=(B_{\Delta_Q^2},\b{x})$.
\end{corollary}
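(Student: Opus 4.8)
\textbf{Proof plan for Corollary \ref{C:mu_l^3}.} Since each step of $\bs{\mu}_l$ is a mutation at a \emph{mutable} vertex, it leaves the frozen (coefficient) variables of $(\Delta_Q^2,\b{x})$ untouched, so it suffices to check two things: that $\bs{\mu}_l^3$ preserves the restricted $B$-matrix $B_{\Delta_Q^2}$, and that it preserves the cluster $\b{x}$.

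\emph{The $B$-matrix.} Here I would simply iterate Corollary \ref{C:mu_l}. Mutation at a mutable vertex $u$ reads off only entries $b_{v,w}$, $b_{v,u}$, $b_{u,w}$ with $v$ mutable, i.e.\ only entries of the restricted $B$-matrix, and it commutes with any relabelling that fixes the mutable vertices; in particular it commutes with $\pi^2$, which (being an involution on the mutable vertices and a relabelling of the frozen ones) fixes every mutable vertex. Writing $\Delta\equiv\Delta'$ for equality of restricted $B$-matrices, Corollary \ref{C:mu_l} gives $\bs{\mu}_l(\Delta_Q^2)\equiv\pi^2(\Delta_Q^2)$, and then by induction $\bs{\mu}_l^k(\Delta_Q^2)\equiv\bs{\mu}_l^{k-1}(\pi^2(\Delta_Q^2))=\pi^2\bs{\mu}_l^{k-1}(\Delta_Q^2)\equiv\pi^{2k}(\Delta_Q^2)$. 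By the list following Corollary \ref{C:mu_l} the permutation $\pi^2$ cyclically permutes the three families of frozen vertices, $\zero_i^-\mapsto\zero_i^+\mapsto\Id_{i^*}\mapsto\zero_{(i^*)^*}^-=\zero_i^-$, and is trivial on mutable vertices, so $\pi^6=\Id$; hence $\bs{\mu}_l^3(\Delta_Q^2)\equiv\pi^6(\Delta_Q^2)=\Delta_Q^2$.

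\emph{The cluster.} A cluster variable lying in $\mc{L}(\b{x})$ is of the form $\b{x}^{\g}F(\b{y})$ with $F$ the $F$-polynomial of the associated indecomposable rigid presentation, which is itself determined by $\g$ (alternatively, once one knows the $\g$-vector equals that of $x_f$, Lemma \ref{L:independent} pins the element down). So it is enough to show that, for each mutable $f$, the cluster variable at vertex $f$ of the seed $\bs{\mu}_l^3(\Delta_Q^2,\b{x})$ has $\g$-vector $\e_f$ with respect to the initial seed. By \cite[Theorem 5.2]{DWZ2}, in the form used in the proof of Lemma \ref{L:mu_sqrtl_QG}, this $\g$-vector is $(\bs{\mu}_l^3)^{-1}(\e_f)$, where $\e_f$ is the $\g$-vector of the positive simple at $f$ of the QP $\bs{\mu}_l^3(\Delta_Q^2,W_Q^2)$ and $(\bs{\mu}_l^3)^{-1}$ is the reverse sequence of $\g$-vector mutations. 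Feeding Corollary \ref{C:mu_sqrtl_Q2G} ($\bs{\mu}_{\sqrt l}$ sends $\e_f$ to $\b{g}_{\sqrt l}^\pi(\pi(f))$) into $\bs{\mu}_l=\bs{\mu}_{\sqrt l}^\pi\bs{\mu}_{\sqrt l}$, one computes that a single $\bs{\mu}_l$ rotates the triple-weight labelling by $(\e(f),\f_-,\f_+)\mapsto(\f_-,\f_+^*,\e(f)^*)$ (matching the weight-configuration part of Corollary \ref{C:mu_l}) together with the corresponding shift of the principal part; since $f\mapsto f^*$ is an involution this rotation has order $3$, and — as in the proof of Corollary \ref{C:mu_sqrtl_Q2G} — the principal part plus the triple-weight data plus the requirement that the general presentation with this $\g$-vector be $\mu$-supported and indecomposable recover the full $\g$-vector. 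Hence $(\bs{\mu}_l^3)^{-1}(\e_f)=\e_f$, and so $\bs{\mu}_l^3(\b{x})=\b{x}$.

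\emph{Conceptual packaging and main obstacle.} The cluster step can be phrased more cleanly: $\bs{\mu}_l^3(\Delta_Q^2,W_Q^2)$ is again rigid (rigidity is mutation-invariant, Lemma \ref{L:fdrigid} and \cite{DWZ1}) with the same mutable part as $(\Delta_Q^2,W_Q^2)$, hence $\mu$-right-equivalent to it via a \emph{vertex-fixing} isomorphism, which identifies the positive simple decorated representations at corresponding vertices and thereby forces the $\g$-vectors to return. The step where genuine work is unavoidable — and what I expect to be the main obstacle — is checking that the sequence $\bs{\mu}_l^3$ of \emph{representation} mutations really carries the positive simple at $f$ of $(\Delta_Q^2,W_Q^2)$ to the positive simple at $f$ (and not at some other vertex, nor to a non-simple rigid representation) of $\bs{\mu}_l^3(\Delta_Q^2,W_Q^2)$; this is exactly the content of the triple-weight computation above, and tracking it through the long explicitly-ordered sequence $\bs{\mu}_l$ and the conjugating relabellings $\pi$ (so that one can invoke Corollary \ref{C:mu_sqrtl} and Corollary \ref{C:mu_sqrtl_Q2G} at each of the three stages rather than re-deriving them) is the technical heart of the proof.
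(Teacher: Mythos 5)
Your $B$-matrix argument is essentially the paper's (iterate Corollary~\ref{C:mu_l} and use $\pi^6=\Id$, noting $\pi^2$ commutes with $\bs{\mu}_l$ since it fixes mutable vertices); that part is fine. The cluster half, however, has a genuine gap, and you flag the exact spot yourself without closing it.

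The $\g$-vector tracking route does not go through as sketched. Corollary~\ref{C:mu_sqrtl_Q2G} gives $\bs{\mu}_{\sqrt l}^{-1}(\e_f)=\b{g}_{\sqrt l}^\pi(\pi(f))$, whose principal part is the \emph{negative} vector $-\e_{\pi(f)}$, and this principal part is computed once and for all in Lemma~\ref{L:mu_sqrtl_QG} by an induction over the intermediate quivers of \cite{GLSk} together with the sign-dependent formula \eqref{eq:g_mu_gen}. Nothing in the paper hands you ``the corresponding shift of the principal part'' for the next five $\bs{\mu}_{\sqrt l}$-stages of $\bs{\mu}_l^3$: you would have to re-run the Lemma-\ref{L:mu_sqrtl_QG} computation each time, starting from a $\g$-vector with negative principal part and nonzero frozen part, and keeping track of signs. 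Moreover the triple-weight periodicity alone cannot rescue you: Example~\ref{ex:contain} exhibits a $\mu$-supported $\g$-vector that shares its triple weight with a cluster variable without being the $\g$-vector of one, so the argument of Corollary~\ref{C:mu_sqrtl_Q2G} uses the principal part as an \emph{input}, not an output. Your ``conceptual packaging'' also overclaims: rigidity plus equality of mutable parts gives a vertex-fixing $\mu$-right-equivalence $\varphi$ between $(\Delta_Q^2,W_Q^2)$ and $\bs{\mu}_l^3(\Delta_Q^2,W_Q^2)$, and $\varphi$ indeed identifies $(0,S_f)$ with $(0,S_f)$; but the mutation functor $\bs{\mu}_l^3$ is a priori a \emph{different} equivalence of $\mc{R}ep^\mu$-categories, and the statement to be proved is precisely that these two agree on positive simples. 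The existence of $\varphi$ does not force that.

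What the paper does instead is short-circuit the bookkeeping by a representation-theoretic periodicity. By Proposition~\ref{P:mu_sqrtl_Q}, $\bs{\mu}_{\sqrt l}$ realizes (up to the relabelling $\pi$) the inverse syzygy $\Omega^{-1}$ on the Gei\ss--Leclerc--Schr\"oer maximal rigid $\Pi_Q$-module $V$; by Auslander--Reiten \cite{AR}, $\Omega$ is $6$-periodic on $\underline{\module}\,\Pi_Q$; and by \cite[Proposition 6.3]{GLSg}, mutation of maximal rigid $\Pi_Q$-modules is compatible with mutation of the corresponding decorated representations. Since $\bs{\mu}_l^3$ consists of six $\bs{\mu}_{\sqrt l}$-type stages, it implements $\Omega^{\mp 6}=\Id$, so the positive simples $(0,S_f)$ of $(\Delta_Q,W_Q)$ are fixed, and the invariance extends to $(\Delta_Q^2,W_Q^2)$. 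This is the missing ingredient that makes the proof clean and finite.
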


\begin{proof} Since $\pi^6=\Id$, the fact that $\bs{\mu}_l^3(B_{\Delta_Q^2})=B_{\Delta_Q^2}$ follows from Corollary \ref{C:mu_l}.
As pointed in \cite{GLSa}, it is well-known \cite{AR} that the functor $\Omega$ is also 6-periodic.
\cite[Proposition 6.3]{GLSg} says that the mutation of maximal rigid modules of $\Pi_Q$ is compatible with the mutation of the corresponding QP-representations (see remarks before Proposition \ref{P:mu_sqrtl_Q}).
So by Proposition \ref{P:mu_sqrtl_Q} the positive simples $(0,S_f)$ of $(\Delta_Q,W_Q)$ are invariant under $\bs{\mu}_l^3$.
The invariance obviously extends to $(\Delta_Q^2,W_Q^2)$. The desired result follows.
\end{proof}

\noindent By experiment with \cite{Ke} we believe the following conjecture.
\begin{conjecture} \label{c:cyclic_valued} Corollary \ref{C:mu_sqrtl} (and thus Corollary \ref{C:mu_l}, \ref{C:mu_l^3} and Theorem \ref{T:cyclic} below) hold for non-trivially valued $Q$ as well.
\end{conjecture}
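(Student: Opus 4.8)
The plan is to follow the trivially valued argument of Corollaries~\ref{C:mu_sqrtl_Q}--\ref{C:mu_sqrtl} step by step, isolating the two places where the triviality of the valuation is actually used and supplying a valued replacement for each. First I would set up the background objects in the symmetrizable setting: for a valued Dynkin quiver $Q$, replace the preprojective algebra $\Pi_Q$ and the tilting module $V=\bigoplus_u V_u$ of \cite{GLSa} by their species (equivalently modulated-graph) analogues. The category $\module\Pi_Q$ is still Frobenius, $\underline{\module}\Pi_Q$ is still triangulated with shift $\Omega^{-1}$, and $\Omega$ is still $6$-periodic by \cite{AR}. Granting the symmetrizable versions of Proposition~\ref{P:mu_sqrtl_Q} --- that $\bs{\mu}_{\sqrt{l}}$ carries $V_{\pi(u)}$ to $\Omega^{-1}(V_u)$ --- and of the explicit descriptions of the intermediate mutated quivers in \cite[Section~13.1]{GLSk}, Corollary~\ref{C:mu_sqrtl_Q} is immediate for valued $Q$: the mutable parts of $\bs{\mu}_{\sqrt{l}}(\Delta_Q)$ and $\pi(\Delta_Q)$ agree because $\Omega$ is an autoequivalence, and the quiver descriptions pin down the frozen columns.

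The second, harder ingredient is the $\g$-vector bookkeeping of Lemma~\ref{L:mu_sqrtl_QG} and Corollaries~\ref{C:mu_sqrtl_Q2G}, \ref{C:mu_sqrtl}. Here I would first establish the species-with-potential analogue of the key Lemma~\ref{L:key} of \cite{DWZ2} --- the compatibility of mutation with $\g$-vectors, dual $\g$-vectors, and (dual) $F$-polynomials --- for the iARt species-QP $(\DCQ,W_Q^2)$; this is exactly the missing piece flagged in the epilogue and expected from the forthcoming work of \cite{LZ}. With it in hand, the transformation rule \eqref{eq:g_mu_gen} and the inductive reduction in Lemma~\ref{L:mu_sqrtl_QG} (which use only \eqref{eq:g_mu_gen} together with the quiver descriptions above) transfer verbatim, giving $\bs{\mu}_{\sqrt{l}}^{-1}(\e_{i,t})=-\e_{i,t_i-t}$. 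To recover the extended part as in Corollary~\ref{C:mu_sqrtl_Q2G} I would use the valued analogue of Lemma~\ref{L:iARt_path}, defining the Jacobian algebra by the Ext-completion of the Auslander algebra $A_Q^2$ as in the ``Valued Cases'' remark: the general presentation with $\g$-vector $-\e_f$ is the indecomposable projective $\P_f$, whose restrictions to the frozen vertices are $\Hom_Q(P_+,P_i)$, $\Hom_Q(P_-,P_i)$ and $\Hom_Q(P_{i^*(f)},P_i)$ as before, so $\mu$-supportedness forces adding exactly $\e(f)(i)\e_{\zero_i^-}+\f_+(i)\e_{\zero_i^+}+\f_-(i)\e_{\Id_i}$, i.e. $\b{g}_{\sqrt{l}}^\pi(\pi(f))$. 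The concluding linear-algebra step of Corollary~\ref{C:mu_sqrtl} --- solving the overdetermined system $B_{\sqrt{l}}\bs{\sigma}_{\sqrt{l}}+C_{\sqrt{l}}\bs{\sigma}_{+,\Id}=0$ using that $\bs{\sigma}_{+,\Id}$ has full rank --- is valuation-free.

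Once Corollary~\ref{C:mu_sqrtl} is available for valued $Q$, Corollary~\ref{C:mu_l} follows by conjugating with $\pi$ exactly as stated, and Corollary~\ref{C:mu_l^3} follows from $\pi^6=\Id$, the $6$-periodicity of $\Omega$, and the compatibility of preprojective-module mutation with QP-representation mutation (the species form of \cite[Proposition 6.3]{GLSg}). Finally, Theorem~\ref{T:cyclic} --- that $l^*(s_f)=\bs{\mu}_l(s_f)$ for $f$ mutable and $\bs{\mu}_l(\DCQ)=\pi^2(\DCQ)$ --- follows by combining this quiver-level identity with the valued versions of the comparison maps of Section~\ref{ss:cyclic}, which the epilogue notes also reduce to the same generalization of Lemma~\ref{L:key}. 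I expect that generalization to be the main obstacle: mutation of representations of species with potentials requires the splitting data of \cite[Sections~4, 5, 10]{DWZ1} over non-commutative (possibly non-split) arrow bimodules, and checking that the $\g$-vector and $F$-polynomial rules persist through those reductions is the one step that does not reduce to formal manipulation of results already in the paper.
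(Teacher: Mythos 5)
The statement you are proving is not proved in the paper at all: it is stated as a conjecture, supported only by experiments with Keller's mutation applet \cite{Ke}, and the epilogue (Section \ref{S:epi}) records that even the author only knows a proof of Theorem \ref{T:cyclic} in the valued case \emph{conditionally} on a generalization of Lemma \ref{L:key} to species with potentials, expected from the sequel to \cite{LZ}. So there is no paper proof to compare against, and your proposal does not supply one either: it is a reduction of the conjecture to unproven inputs, with every step in which the valuation genuinely enters being ``granted'' rather than established. Concretely, you assume the symmetrizable analogues of Proposition \ref{P:mu_sqrtl_Q} and of the explicit intermediate-quiver descriptions of \cite[Section 13.1]{GLSk}; these rest on the Geiss--Leclerc--Schr\"oer categorification through preprojective algebras (the tilting module $V$, mutation of maximal rigid modules and its compatibility with QP-representation mutation as in \cite[Proposition 6.3]{GLSg}, the $6$-periodicity statement used for Corollary \ref{C:mu_l^3}), all of which is developed in the cited literature only for simply-laced $Q$, and no species version is provided or referenced. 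Likewise, your $\g$-vector bookkeeping (the analogues of Lemma \ref{L:mu_sqrtl_QG} and Corollaries \ref{C:mu_sqrtl_Q2G}, \ref{C:mu_sqrtl}) presupposes a full mutation theory of decorated representations of species with potentials, including the $\g$-vector, dual $\g$-vector and $F$-polynomial transformation rules of Lemma \ref{L:key}; at the relevant time only the mutation of strongly primitive species with potentials \cite{LZ} was available, without this representation-theoretic half, and even nondegeneracy and reduction for the iARt species-QP would need to be checked.

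To your credit, the shape of the reduction is sound and matches the author's own assessment: the valuation-free steps you single out (the overdetermined linear system in Corollary \ref{C:mu_sqrtl}, the recovery of the extended part of the $\g$-vector via the Ext-completion definition of the Jacobian algebra and the valued form of Lemma \ref{L:iARt_path}) are plausible as stated, and your identification of the species analogue of Lemma \ref{L:key} as the bottleneck is exactly the missing ingredient named in Section \ref{S:epi}. But identifying the obstacle is not overcoming it: as written, your argument establishes the conjecture only modulo the very results whose absence is the reason it is a conjecture, so it should be presented as a conditional reduction, not a proof.
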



We define a rational self-map $\mu_l$ of $\ucaCtwoQno$ induced by 
$$\begin{cases}
s_f\mapsto \bs{\mu}_l(s_f)\ \text{ if $f$ is mutable,}\\
s_{\zero_{i}^+}\mapsto s_{\zero_i^-},\ s_{\zero_{i}^-}\mapsto s_{\Id_{i^*}},\ s_{\Id_{i^*}}\mapsto s_{\zero_i^+}.
\end{cases}$$
Since our main theorem (Theorem \ref{T:main}) use the following theorem, we cannot assume that $\ucaCtwoQno = k[\Conf_{2,1}]$.
But from the proof of Theorem \ref{T:main}, we have that $\ucaCtwoQno\subseteq k[\Conf_{2,1}]$.
So the pullback $l^*$ is defined on $\ucaCtwoQno$.

\begin{theorem} \label{T:cyclic} The map $\mu_l$ is equal to
the pullback $l^*$ of the twisted cyclic shift.
\end{theorem}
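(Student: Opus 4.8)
The plan is to check the identity on the cluster $\mc{S}_Q^2=\{s_f\}_{f\in\ind(C^2Q)}$. Both $l^*$ and $\mu_l$ are automorphisms of the function field $k(\Conf_{2,1})$: the map $\tilde l$, hence $l$, is birational with $l^3$ the identity on the generic locus, and $\mu_l$ is a composition of cluster mutations with a relabelling of frozen variables, which is birational and, by Corollary \ref{C:mu_l^3}, of order $3$. Since $\mc{S}_Q^2$ is algebraically independent and (as the proof of Corollary \ref{C:algind} shows) generates $k(\Conf_{2,1})$ over $k$, it suffices to prove $l^*(s_f)=\mu_l(s_f)$ for every $f\in\ind(C^2Q)$; this also shows a posteriori that $l^*$ preserves $\ucaCtwoQno$ and restricts there to $\mu_l$.

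First I would determine how $l^*$ acts on the triple-weight grading. Tracking the $H\times H\times H$-equivariance of $\tilde l$ --- that is, how the right twists by $\br{w_0}$ and the duality $\vee\colon U^-\bcsl G\to G/U$ permute and dualise the three torus factors --- one checks that $l^*$ carries the component $C_{\mu,\nu}^\lambda$ into $C_{\nu,\lambda^*}^{\mu^*}$, where $*$ is the diagram involution. Hence $l^*(s_f)$ is $H\times H$-semi-invariant of triple weight $(\f_-,\f_+^*,\e(f)^*)$ when $f$ is mutable; and for $f$ frozen the same rule sends the weights $(\e_{i^*},\e_i,0)$, $(\e_i,0,\e_i)$, $(0,\e_i,\e_i)$ of $s_{\zero_i^-},s_{\zero_i^+},s_{\Id_i}$ to the weights of $s_{\zero_i^+},s_{\Id_{i^*}},s_{\zero_{i^*}^-}$ respectively. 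On the other side, $\mu_l(s_f)=\bs{\mu}_l(s_f)$ has triple weight $(\f_-,\f_+^*,\e(f)^*)$ for mutable $f$ by Corollary \ref{C:mu_l}, while for frozen $f$ the defining assignments $s_{\zero_i^-}\mapsto s_{\zero_i^+}$, $s_{\zero_i^+}\mapsto s_{\Id_{i^*}}$, $s_{\Id_i}\mapsto s_{\zero_{i^*}^-}$ of $\mu_l$ are exactly the weight-preserving matches just listed. So $l^*(s_f)$ and $\mu_l(s_f)$ are, in every case, $H\times H$-semi-invariant rational functions on $\Conf_{2,1}$ of the same triple weight.

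Such a function is determined by its restriction along $i_u\colon U\hookrightarrow\Conf_{2,1}$, by \eqref{eq:resU}, so it remains to compare $i_u^*(l^*(s_f))$ with $i_u^*(\mu_l(s_f))$. Since $i_l=l\circ i_u$, the former equals $i_l^*(s_f)$, which by Lemma \ref{L:itwist} is $\bs{\mu}_l(m_f)$ for $f$ mutable and is $1,1,m_{\zero_{i^*}^-}$ for $f=\zero_i^-,\zero_i^+,\Id_i$. On the other hand $i_u^*$ intertwines any sequence of mutations and $i_u^*(s_f)=m_f$, so $i_u^*(\mu_l(s_f))=\bs{\mu}_l(m_f)$ for mutable $f$, and for the frozen cases $i_u^*(s_{\zero_i^+})=i_u^*(s_{\Id_{i^*}})=1$ and $i_u^*(s_{\zero_{i^*}^-})=m_{\zero_{i^*}^-}$. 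In each case the two pullbacks coincide; combined with the weight identification from the previous step this gives $l^*(s_f)=\mu_l(s_f)$, and the theorem follows.

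The main obstacle is the equivariance computation in the second step: one must be careful about the $\br{w_0}$-twists and the interplay of $U^-\bcsl G$ with $G/U$ under $\vee$ to confirm that the induced permutation of weight triples is precisely $(\mu,\nu,\lambda)\mapsto(\nu,\lambda^*,\mu^*)$, and --- what really matters --- to know that $l^*(s_f)$ is genuinely $H\times H$-semi-invariant, since that is what licenses the use of \eqref{eq:resU}. One could avoid the explicit permutation by arguing that the order-$3$ automorphism $l^*$ must permute the frozen variables among themselves and hence match them up by their (pairwise distinct) triple weights, but the semi-invariance input is still needed. The remaining ingredients are all in place: Lemma \ref{L:itwist}, Corollaries \ref{C:mu_l}, \ref{C:mu_l^3}, \ref{C:algind}, and \eqref{eq:resU}.
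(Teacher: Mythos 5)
Your overall strategy --- check the identity on the transcendence basis $\mc{S}_Q^2$, match triple-weights, then match restrictions along $i_u$ using \eqref{eq:resU} --- is natural, and the weight bookkeeping $(\mu,\nu,\lambda)\mapsto(\nu,\lambda^*,\mu^*)$ does agree with what the paper's own proof uses implicitly. But there is a genuine circularity that makes this attempt fail as written.

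The crucial step is your appeal to Lemma~\ref{L:itwist} to evaluate $i_l^*(s_f)$. In the paper, that lemma is not an independent input: its proof begins ``$l^*$ is given by a sequence of mutations $\bs{\mu}_l$'' and then computes $i_l^*(s_f)=i_u^*l^*(s_f)=i_u^*\bs{\mu}_l(s_f)=\bs{\mu}_l(m_f)$ (and likewise for the frozen cases via $l^*(s_{\Id_i})=s_{\zero_{i^*}^-}$, etc.). That is, Lemma~\ref{L:itwist} is \emph{deduced from} Theorem~\ref{T:cyclic}, the very statement you are trying to prove. Indeed, once you have the weight matching, the assertion $i_l^*(s_f)=\bs{\mu}_l(m_f)$ for a mutable $f$ is \emph{equivalent} to $l^*(s_f)=\mu_l(s_f)$ by \eqref{eq:resU}, so invoking the lemma is not a step but a restatement. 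To make your route work you would need an independent computation of $s_f\bigl(U^-u,\,U^-\br{w_0},\,\br{w_0}^TU\bigr)$, i.e.\ of $s_f$ off the image of $i_u$; the point $i_l(u)$ lies outside $\Conf_{2,1}^\circ$'s ``standard chart'' and moving it back via the diagonal $G$-action leads precisely to the Fomin--Zelevinsky twist, which is where the hard content sits.

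The paper avoids this by taking the opposite tack: it works with the cluster $\bs{\mu}_l^{-1}(\mc{S}_Q^2)=\bs{\mu}_l^{2}(\mc{S}_Q^2)$, uses Corollary~\ref{C:mu_l} to compute the triple-weight of a mutated variable $\varphi=\bs{\mu}_l^2(s_f)$, then observes that $\bs{\mu}_l(\varphi)=s_f$ and $l^*(\varphi)$ have the \emph{same} triple-weight $(\e(f),\f_-,\f_+)$, whose graded component is one-dimensional by Lemma~\ref{L:c=1}. This forces $\bs{\mu}_l(\varphi)=c\,l^*(\varphi)$ for some scalar $c$, and the scalar is then pinned to $1$ by a $\mb{Z}/3$-periodicity argument using Corollary~\ref{C:mu_l^3}, $l^3=\Id$, and a base change to $\mb{Q}$. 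The key replacements for your Lemma~\ref{L:itwist} step are thus the dimension count $c_{\e(f),\f_-}^{\f_+}=1$ and the order-$3$ argument; neither requires knowing $i_l^*(s_f)$ in advance.
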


\begin{proof} We need to show that $\mu_l(\varphi)= \varphi l$.
It suffices to prove the statement for $\varphi$'s in some extended cluster
because of the definition of the upper cluster algebra.
The cluster we shall take is $\bs{\mu}_l^2(\mc{S}_Q^2)$, which is the same as $\bs{\mu}_l^{-1}(\mc{S}_Q^2)$ by Corollary \ref{C:mu_l^3}.
The statement can be easily checked for the frozen variables. For example, $s_{\zero_{i}^+} = s_{\Id_{i^*}} l$, that is
\begin{equation} \label{eq:rel} s_{\zero_{i}^+}(A_1, A_2, A_3^\vee) = s_{\Id_{i^*}}(A_2, A_3\br{w_0}, (A_1 \br{w_0})^\vee).
\end{equation}
Indeed, by Corollary \ref{C:p_u} we have that $s_{\zero_i^+}=p_1^*(\varpi_i)$ and $s_{\Id_i}=p_2^*(\varpi_i)$. So by \eqref{eq:twistcyc} \begin{align*}
s_{\zero_{i}^+}(U^-h_1, U^-h_2u, U)&=h_1^{\varpi_i},\\
s_{\Id_{i^*}}(U^-h_2u, U^- \br{w_0}, h_1^{-1} \br{w_0}^{-1} U) &= (\br{w_0}h_1^{-1} \br{w_0}^{-1})^{\varpi_{i^*}} = h_1^{\varpi_i}.
\end{align*}

Next let us assume that $\varphi=\bs{\mu}_l^2(s_f)$ for some mutable $f\in \ind(C^2 Q)$.
We need to show that $\bs{\mu}_l(\varphi)(A_1,A_2,A_3^\vee)=\varphi(A_2,A_3\br{w_0}, \br{w_0} A_1^\vee)$.
We argue by multidegrees. By Corollary \ref{C:mu_l}, the degree of $\varphi$ is $({\f_+^*,\e(f),\f_-^*})$.
Then according to \eqref{eq:irrG} and \eqref{eq:irrGdual},
the degree of $\varphi(A_2,A_3\br{w_0}, \br{w_0} A_1^\vee)$ is $(\triwtf)$, which is also the degree of $\bs{\mu}_l(\varphi)=s_f$.
So by Lemma \ref{L:c=1}, we must have that $\bs{\mu}_l(\varphi) = c\varphi l$ for some $c\in k$.
Again by the relation \eqref{eq:rel} we must have $c=1$.

\end{proof}

\begin{remark} \label{r:DT} The similar argument can show that $\pi \bs{\mu}_{\sqrt{l}}$ is equal to the pullback $\sqrt{l}^*$.
In view of Lemma \ref{L:mu_sqrtl_QG}, the automorphism $\sqrt{l}$ is the Donaldson-Thomas transformation of $\Conf_{2,1}$ in the sense of \cite{GSdt}.
\end{remark}

\subsection{An Algorithm} \label{ss:algorithm}
In this section, we present an algorithm to find all subrepresentations of $T_v$ defined in Section \ref{ss:iARt2}.
As said in the introduction, only few $T_v$'s for type $E_7$ and $E_8$ need this algorithm.
With a little more effort, one can show that Corollary \ref{C:mu_l} can be strengthened to
$\bs{\mu}_{l}(\Delta_Q^2)=\pi^2(\Delta_Q^2)$
for any orientation of type $D_n$ and $E_7,E_8$, and for some orientations of type $A_n$ and $E_6$.
For any particular case whether $\bs{\mu}_{l}(\Delta_Q^2)=\pi^2(\Delta_Q^2)$ holds can be checked by computer.

We first observe that by the description of $T_v$'s in Theorem \ref{T:Tv}, all subrepresentations of $T_{\zero_i^-}$ are known.
In particular, we have the dual $F$-polynomial of $T_{\zero_i^-}$.
Even better, $T_{\zero_i^-}$ can be mutated from the positive simple $(0,S_{\zero_i^-})$ of the (unfrozen) QP $\bs{\mu}_{i}\left(\Delta(C^2Q),W_Q^2\right)$ via $\bs{\mu}_{i}^{-1}$,
where $\bs{\mu}_i:=\bs{\mu}_{i,0}\mu_{i,0}$ and the bold $\bs{\mu}_{i,0}$ is defined in the beginning of Appendix \ref{ss:cyclic}.
Indeed, it can be easily checked by \eqref{eq:gdual_mu} that the $\g^\vee$-vector of such a mutated representation is exactly $\e_{\zero_{i}^-}-\sum_{i\to j} \e_{\zero_{j}^-}.$
Since $T_{\zero_i^-}$ is the cokernel of a general presentation of such a weight, our claim follows from Remark \ref{r:gcoh}.
The idea of the algorithm is that we can generate the dual $F$-polynomial of $T_{\Id_{i^*}}$ (resp. $T_{\zero_i^+}$) from that of $T_{\zero_i^-}$ through the sequence of mutations $\bs{\mu}_l$ (resp. $\bs{\mu}_r$) just defined.

\begin{proposition} \label{P:algorithm} If $\bs{\mu}_{l}(\Delta_Q^2)=\pi^2(\Delta_Q^2)$,
then $\bs{\mu}_l(T_{\zero_i^-})=T_{\Id_{i^*}} \text{ and } \bs{\mu}_r(T_{\zero_i^-})=T_{\zero_i^+}.$
Here, we view $T_{\Id_{i^*}}$ and $T_{\zero_i^+}$ as representations of the original QP via the automorphisms $\pi^{-2}$ and $\pi^{2}$.
\end{proposition}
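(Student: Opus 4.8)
The plan is to reduce the proposition to tracking dual $\g$-vectors through the mutation sequences $\bs{\mu}_l$ and $\bs{\mu}_r$, using that each $T_v$ is the generic representation carrying a prescribed dual $\g$-vector. I describe the argument for $\bs{\mu}_l$; the case $\bs{\mu}_r=\bs{\mu}_l^2$ is the same computation run twice.

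First I would record the dual $\g$-vectors in play. By the discussion preceding the proposition, $T_{\zero_i^-}$ is the generic representation with $\g^\vee(T_{\zero_i^-})=\e_{\zero_i^-}-\sum_{i\to j}\e_{\zero_j^-}$ (obtained as $\bs{\mu}_i^{-1}$ of a positive simple, the dual $\g$-vector being computed from \eqref{eq:gdual_mu}); and from \eqref{eq:injres2}, \eqref{eq:injres3} — whose minimality is part of the proof of Theorem \ref{T:Tv} — one reads off $\g^\vee(T_{\zero_i^+})=\e_{\zero_i^+}-\sum_{i\to j}\e_{\zero_j^+}$ and $\g^\vee(T_{\Id_{i^*}})=\e_{\Id_{i^*}}-\sum_{i^*\to j}\e_{\Id_j}$. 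Under the hypothesis $\bs{\mu}_{l}(\Delta_Q^2)=\pi^2(\Delta_Q^2)$, the mutated IQP $\bs{\mu}_l(\Delta_Q^2,W_Q^2)$ is right-equivalent to the $\pi^2$-relabelling of $(\Delta_Q^2,W_Q^2)$ (right-equivalence of potentials affecting neither dual $\g$-vectors nor dual $F$-polynomials), so $\bs{\mu}_l(T_{\zero_i^-})$, read as a representation of $(\Delta_Q^2,W_Q^2)$ through $\pi^{-2}$, is again generic for its dual $\g$-vector; and since generic representations are determined up to isomorphism by their dual $\g$-vectors (Remark \ref{r:gcoh} together with \cite[Prop.~5.15]{Fs1}), it will be enough to show that this dual $\g$-vector, after the relabelling, equals $\g^\vee(T_{\Id_{i^*}})$. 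Equality of dual $F$-polynomials — hence of the multisets of subrepresentation dimension vectors, which is all the algorithm needs — then follows from Lemma \ref{L:key}.

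The heart of the proof is the dual $\g$-vector computation. I would push $\g^\vee(T_{\zero_i^-})$ through $\bs{\mu}_l=\bs{\mu}_{\sqrt l}^{\pi}\,\bs{\mu}_{\sqrt l}$ using \eqref{eq:gdual_mu}, in the spirit of Lemma \ref{L:mu_sqrtl_QG} and Corollary \ref{C:mu_sqrtl_Q2G}: since the support of $\g^\vee(T_{\zero_i^-})$ lies on the negative frozen vertices, only the mutations along the linear strings through those vertices contribute, and an induction along these strings (dual to the one in Lemma \ref{L:mu_sqrtl_QG}) should carry the $\e_{\zero^-}$-supported vector to an $\e_{\Id}$-supported one, after which the second half $\bs{\mu}_{\sqrt l}^{\pi}$ and the relabelling $\pi^{-2}$ land on $\e_{\Id_{i^*}}-\sum_{i^*\to j}\e_{\Id_j}$; running it once more gives the $\bs{\mu}_r$ claim. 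A cleaner alternative would be to use the factorization $T_{\zero_i^-}=\bs{\mu}_i^{-1}(0,S_{\zero_i^-})$ in the unfrozen QP and deduce the identity $\bs{\mu}_l\bs{\mu}_i^{-1}(0,S_{\zero_i^-})=\bs{\mu}_{i^*}^{-1}(0,S_{\Id_{i^*}})$ directly from Corollary \ref{C:mu_l} by comparing the composite mutation sequences on $B$-matrices and positive simples, which would recast the whole proposition as a manipulation of mutation sequences.

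The main obstacle will be this bookkeeping. Unlike in Lemma \ref{L:mu_sqrtl_QG}, the starting dual $\g$-vector is supported on frozen vertices, which are never mutated, so in \eqref{eq:gdual_mu} the correction terms $b_{v,u}\beta_-^\vee(u)$ are genuinely active — they are exactly what makes the dual $\g$-vector change even though $\bs{\mu}_l$ returns the $B$-matrix to itself up to the relabelling $\pi^2$ — and one must therefore carry the $\beta_-^\vee$-data (the injective envelopes of the intermediate representations) along the entire sequence, while also verifying that $\g$-coherence persists at each step so that the reduced formula of Remark \ref{r:gcoh} stays applicable and genericity is not lost. The degeneracies caused by frozen vertices having no translate (relations of type \eqref{eq:relpm}) are precisely what must be handled with care here.
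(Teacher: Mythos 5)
Your reduction of the proposition to a dual $\g$-vector computation is correct, and your use of genericity (Remark \ref{r:gcoh}) to identify the mutated representation with the one prescribed by its dual $\g$-vector is exactly what is needed. The gap is in your diagnosis of the central computation: you assert that, because the dual $\g$-vector $\e_{\zero_i^-}-\sum_{i\to j}\e_{\zero_j^-}$ is supported on frozen vertices, the correction terms $-b_{v,u}\beta_-^\vee(u)$ in \eqref{eq:gdual_mu} are ``genuinely active'' and drive a change in the dual $\g$-vector that must be tracked along the whole sequence $\bs{\mu}_l$. This is backwards. At each mutation step the vertex $u$ being mutated is mutable, so $\g^\vee(u)=0$; and since the decorated representation at every stage of the sequence is a mutation of a positive simple (by the paragraph preceding the proposition), it is rigid, hence general, hence coherent in the dual sense, so $\beta_-^\vee(u)=\max(-\g^\vee(u),0)=0$ as well. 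Both correction terms in \eqref{eq:gdual_mu} therefore vanish identically, and the dual $\g$-vector is \emph{unchanged} by the entire sequence $\bs{\mu}_l$.

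That one observation is the whole proof, and it eliminates the bookkeeping you anticipated. Once the hypothesis $\bs{\mu}_l(\Delta_Q^2)=\pi^2(\Delta_Q^2)$ identifies the mutated quiver with a relabelling, the (unchanged) dual $\g$-vector $\e_{\zero_i^-}-\sum_{i\to j}\e_{\zero_j^-}$ read through $\pi^{-2}$ becomes $\e_{\Id_{i^*}}-\sum_{i^*\to j^*}\e_{\Id_{j^*}}$, which by \eqref{eq:injres3} is precisely the dual $\g$-vector of $T_{\Id_{i^*}}$; the $\bs{\mu}_r$ statement follows by iterating. Your ``cleaner alternative'' (comparing composite mutation sequences on positive simples via Corollary \ref{C:mu_l}) asks for more than Corollary \ref{C:mu_l} gives (it only controls $B$-matrices, not the decorated representations) and still lacks this key invariance, so it does not patch the gap. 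Beyond that, your appeal to right-equivalence in Step~2 is unnecessary under the stated hypothesis, which asserts equality of the ice quivers, not merely $\mu$-right-equivalence.
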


\begin{proof} It is clear from \eqref{eq:gdual_mu} that the $\g^\vee$-vector $\e_{\zero_{i}^-}-\sum_{i\to j} \e_{\zero_{j}^-}$ is unchanged under $\bs{\mu}_{l}$.
If $\bs{\mu}_{l}(\Delta_Q^2)=\pi^2(\Delta_Q^2)$, by Corollary \ref{C:mu_l} the mutated $\g^\vee$-vector can be view as
$\e_{\Id_{i^*}}-\sum_{i\to j} \e_{\Id_{j^*}}$ for the original QP $(\Delta_Q^2,W_Q^2)$ via the automorphism $\pi^{-2}$.
Note that $i\to j$ if and only if $i^*\to j^*$.
Clearly $T_{\Id_{i^*}}$ is the cokernel of the general presentation of weight $\e_{\Id_{i^*}}-\sum_{i^*\to j^*} \e_{\Id_j^*}$.
Hence, $\bs{\mu}_l(T_{\zero_i^-})=T_{\Id_{i^*}}$.
The argument for $\bs{\mu}_r(T_{\zero_i^-})=T_{\zero_i^+}$ is similar.
\end{proof}

\noindent
By the positivity of the cluster variables \cite{GHKK,LS}, the coefficients of $F$-polynomials of these cluster variables are all positive.
So we can compute $F_{T_{\Id_{i^*}}}^\vee$ (resp. $F_{T_{\zero_i^+}}^\vee$) by applying $\bs{\mu}_l$ (resp. $\bs{\mu}_r$) to $F_{T_{\zero_i^-}}^\vee$ using the formula \eqref{eq:F-poly}.
In this way, we find all subrepresentations of $T_v$'s.


\end{appendices}

\section*{Acknowledgement}
The author would like to thank the NCTS (National Center for Theoretical Sciences) and Shanghai Jiao Tong University for the financial support.
He would also like to thank Professor Bernard Leclerc for some interesting discussion during the Taipei Conference in Representation Theory Jan 2016.
Finally he thanks the anonymous referee for the careful review and helpful comments.

\bibliographystyle{amsplain}

\end{document}